\theoremstyle{plain}
\newtheorem{lemma}{Lemma}[section] 
\newtheorem{theorem}[lemma]{Theorem}
\newtheorem{corollary}[lemma]{Corollary}
\newtheorem{proposition}[lemma]{Proposition}
\newtheorem{conjecture}[lemma]{Conjecture}
\theoremstyle{definition}
\newtheorem{remark}[lemma]{Remark}
\newtheorem{example}[lemma]{Example}
\newtheorem{definition}[lemma]{Definition}
\newcommand{\M}{\operatorname{\mathbb M}}
\newcommand{\gr}{\operatorname{gr}}
\newcommand\Gr[1][]{{\operatorname{{Gr}^{#1}-}}}
\newcommand{\Pgr}[1][]{{\operatorname{{Pgr}^{#1}-}}}
\newcommand{\ol}{\overline}
\newcommand{\Z}{\mathbb Z}
\newcommand{\C}{\mathbb C}
\newcommand{\Hom}{\operatorname{Hom}}
\newcommand\sima{\sim^{\hskip-.22cm a}}
\newcommand\sims{\sim^{\hskip-.22cm *}}
\newcommand{\so}{\mathbf{s}}
\newcommand{\ra}{\mathbf{r}}
\title[K-theory classification of graded ultramatricial algebras with involution]{K-theory Classification of Graded Ultramatricial Algebras with Involution}
\author{Roozbeh Hazrat}
\address{Centre for Research in Mathematics,
Western Sydney University,
Australia}\email{r.hazrat@westernsydney.edu.au}
\author{Lia Va\v s}
\address{Department of Mathematics, Physics and Statistics, University of the Sciences, Philadelphia, PA 19104, USA}
\email{l.vas@usciences.edu}
\subjclass[2000]{16W50, 16W10, 16W99, 16S99, 16D70} 
\keywords{Graded Grothendieck group, classification, graded ring, involutive ring, ultramatricial algebras, $K$-theory, Leavitt path algebra, Isomorphism Conjecture} 
\thanks{We would like to thank the referee for a careful reading, insightful comments and useful suggestions. The first author would like to acknowledge Australian Research Council grant DP160101481. A part of this work was done at the University of Bielefeld, where the first author was a Humboldt Fellow.}
\begin{document}
 
\begin{abstract} We consider a generalization $K_0^{\gr}(R)$ of the standard Grothendieck group $K_0(R)$ of a graded ring $R$ with involution. If $\Gamma$ is an abelian group, we show that $K_0^{\gr}$ completely classifies graded ultramatricial $*$-algebras over a $\Gamma$-graded $*$-field $A$ such that 
(1) each nontrivial graded component of $A$ has a unitary element in which case we say that $A$ has enough unitaries, and (2) the zero-component $A_0$ is 2-proper (for any $a,b\in A_0,$ $aa^*+bb^*=0$ implies $a=b=0$) and $*$-pythagorean (for any $a,b\in A_0,$ $aa^*+bb^*=cc^*$ for some $c\in A_0$). If the involutive structure is not considered, our result implies that $K_0^{\gr}$ completely classifies graded ultramatricial algebras over any graded field $A.$ If the grading is trivial and the involutive structure is not considered, we obtain some well-known results as corollaries. 

If $R$ and $S$ are graded matricial $*$-algebras over a $\Gamma$-graded $*$-field $A$ with enough unitaries and $f: K_0^{\gr}(R)\to K_0^{\gr}(S)$ is a contractive $\Z[\Gamma]$-module homomorphism, we present a specific formula for a
graded $*$-homomorphism $\phi: R\to S$ with $K_0^{\gr}(\phi) = f.$ If the grading is trivial and the involutive structure is not considered, our constructive proof implies the known results with existential proofs. If $A_0$ is 2-proper and $*$-pythagorean, we also show that two graded $*$-homomorphisms $\phi,\psi:R\to S,$ are such that $K_0^{\gr}(\phi) = K_0^{\gr}(\psi)$ if and only if there is a unitary element $u$ of degree zero in $S$ such that $\phi(r) = u\psi(r)u^*$ for any $r\in R.$  

As an application of our results, we show that the graded version of the Isomorphism Conjecture holds for a class of Leavitt path algebras: 
if $E$ and $F$ are countable, row-finite, no-exit graphs in which every infinite path ends in a sink or a cycle and $K$ is a 2-proper and $*$-pythagorean field, then the Leavitt path algebras $L_K(E)$ and $L_K(F)$ are isomorphic as graded rings if any only if they are isomorphic as graded $*$-algebras. We also present examples which illustrate that $K_0^{\gr}$ produces a finer invariant than $K_0.$
\end{abstract}

\maketitle
 
\section{Introduction}

Let $A$ be a field and let $\mathcal C$ be a class of unital $A$-algebras. One says that the $K_0$-group is a complete invariant for algebras in $\mathcal C$ and that $K_0$ completely classifies algebras in $\mathcal C$ if any algebras $R$ and $S$ from $\mathcal C$ are isomorphic as algebras if and only if there is a group isomorphism $K_0(R) \cong K_0(S)$ which respects the natural pre-order structure of the $K_0$-groups and their order-units. If $\mathcal C$ is a class of $A$-algebras which are not necessarily unital, one can define the $K_0$-groups using unitization of the algebras. The role of an order-unit is taken by the generating interval. A map which respects the pre-order structure and the generating interval is referred to as {\em contractive}. In this case, $K_0$ {\em completely classifies algebras} in $\mathcal C$ if any algebras $R$ and $S$ from $\mathcal C$ are isomorphic as algebras if and only if there is a contractive group isomorphism $K_0(R) \cong K_0(S).$ If so, then any algebras $R$ and $S$ from $\mathcal C$ are isomorphic as algebras if and only if $R$ and $S$ are isomorphic as rings. 

Assume that the field $A$ and the algebras in the class $\mathcal C$ are equipped with an involution so that the algebras from $\mathcal C$ are $*$-algebras over $A.$ In this case, the $*$-homomorphisms take over the role of homomorphisms and there is a natural action of $\Z_2:=\Z/2\Z$ on the $K_0$-groups given by $[P]\mapsto [\Hom_R(P, R)]$ for any $R\in \mathcal C.$ We say that $K_0$ completely classifies the $*$-algebras in $\mathcal C$ if any $R, S\in \mathcal C$ are $*$-isomorphic as $*$-algebras if and only if there is a contractive $\Z[\Z_2]$-module isomorphism $K_0(R) \cong K_0(S).$ If so, any $*$-algebras $R$ and $S$ from $\mathcal C$ are isomorphic as $*$-algebras if and only if $R$ and $S$ are isomorphic as $*$-rings. In particular, if the action of $\Z_2$ on $K_0$-groups is trivial and $K_0$ completely classifies $*$-algebras in $\mathcal C,$ then any $*$-algebras from $\mathcal C$ are isomorphic as rings precisely when they are isomorphic as any of the following: $*$-rings, algebras, and $*$-algebras.   

In the seminal paper \cite{Elliott}, George Elliott introduced the $K_0$-group precisely as an invariant which completely classifies approximately finite-dimensional $C^*$-algebras (AF-algebras) as $*$-algebras. Elliott's work ignited an interest in classifying a wider class of $C^*$-algebras using related $K$-theory invariants and this agenda became known as the Elliott's Classification Program. The $K_0$-group has been generalized for non-unital algebras in \cite{Goodearl_Handelman} where it was shown that $K_0$ completely classifies ultramatricial algebras (i.e., countable direct limits of matricial algebras) over any field (\cite[Theorem 12.5]{Goodearl_Handelman}). 
 
Recall that an involution $*$ on $A$ is said to be \emph{$n$-proper} if, for all $x_1, \dots, x_n \in A$, $\sum_{i=1}^n x_ix_i^* = 0$ implies $x_i=0$ for each $i=1,\ldots,n$ and that it is {\em positive definite} if it is $n$-proper for every $n.$ By \cite[Theorem 3.4]{Baranov}, $K_0$ is a complete invariant for ultramatricial $*$-algebras over an algebraically closed field with the identity involution which is positive definite. In \cite{Ara_matrix_rings}, Ara considered another relevant property of the involution: a $*$-ring $A$ is said to be {\em $*$-pythagorean} if for every $x,y$ in $A$ there is $z$ such that $xx^*+yy^*=zz^*.$ It is direct to check that a $*$-pythagorean ring is 2-proper if and only if it is positive definite. By \cite[Proposition 3.3]{Ara_matrix_rings}, if $A$ is a 2-proper, $*$-pythagorean field, then $K_0$ completely classifies the ultramatricial $*$-algebras over $A.$ 

In \cite{Roozbeh_Annalen}, the $K_0$-group is generalized for {\em graded} algebras. It has been shown that the graded $K_0$-group preserves information about algebras which is otherwise lost in the non-graded case (see \cite[Introduction]{Roozbeh_Annalen}) and the examples in \cite{Roozbeh_Annalen} illustrate that the new invariant $K^{\gr}_0$ is better behaved than the usual, non-graded $K_0$-group. In \cite{Roozbeh_Annalen}, the involutive structure of the algebras was not taken into account. 

There seem to be two trends: to consider the involutive structure of algebras (e.g. \cite{Ara_matrix_rings} and \cite{Baranov}) and to consider the graded structure of algebras (e.g. \cite{Roozbeh_Annalen}). However, no attempt has been made to combine these two structures. Our work fills this gap by studying graded rings with involution such that the graded and the involutive structures agree. Both the graded and the involutive structures are relevant in many examples including ultramatricial algebras, group rings, corner skew Laurent polynomial rings and Leavitt path algebras. 

If $A$ is an involutive ring graded by an abelian group $\Gamma,$ the involutive and the graded structures agree if $A_\gamma^*=A_{-\gamma},$ for any $\gamma\in \Gamma.$ In this case, we say that $A$ is a graded $*$-ring. The graded Grothendieck group $K^{\gr}_0(A)$ of a graded $*$-ring $A$ has both a natural $\Z_2$-action as well as a natural $\Gamma$-action and we study these actions in section \ref{section_graded_star}.  In particular, $\Z_2$ acts trivially on the $K_0^{\gr}$-groups of graded ultramatricial $*$-algebras over a graded $*$-field.

We say that $K_0^{\gr}$ completely classifies $*$-algebras in a class $\mathcal C$ of graded $*$-algebras if any graded $*$-algebras $R$ and $S$ from $\mathcal C$ are isomorphic as $*$-algebras if and only if there is a contractive $\Z[\Gamma]$-$\Z[\Z_2]$-bimodule isomorphism $K^{\gr}_0(R) \cong K^{\gr}_0(S).$ 

The main results of sections \ref{section_fullness}, \ref{section_faithfulness}, and \ref{section_ultramatricial} are contained in Theorems  \ref{fullness},  \ref{faithfulness} and \ref{ultramatricial} which we summarize as follows. 
\begin{theorem} \label{main}
Let $\Gamma$ be an abelian group and let $A$ be a $\Gamma$-graded $*$-field. 

If $A$ has enough unitaries (every nontrivial graded component of A has a unitary element), $R$ and $S$ are graded matricial $*$-algebras over $A$ and $f:K^{\gr}_0(R)\rightarrow K^{\gr}_0(S)$ is a contractive $\Z[\Gamma]$-module homomorphism, then there is a graded $A$-algebra $*$-homomorphism $\phi:R\rightarrow S$ such that $K^{\gr}_0(\phi)=f$. Furthermore, if $f([R])=[S]$, then $\phi$ is a unital homomorphism. 

If the zero-component $A_0$ of $A$ is 2-proper and $*$-pythagorean and $R$ and $S$ are graded matricial $*$-algebras over $A,$ then  
every two graded $A$-algebra $*$-homomorphisms $\phi, \psi: R\to S$ are such that $K^{\gr}_0(\phi)=K^{\gr}_0(\psi)$ if and only if 
there exist a unitary element of degree zero $u\in S$ such that $\phi(r)=u\psi(r)u^*,$ for all $r\in R.$  

If $A$ has enough unitaries and $A_0$ is 2-proper and $*$-pythagorean, then $K_0^{\gr}$ completely classifies the class of graded ultramatricial $*$-algebras over $A.$ In particular, if $R$ and $S$ are graded ultramatricial $*$-algebras and $f: K^{\gr}_0(R)\to K^{\gr}_0(S)$ is a contractive  $\Z[\Gamma]$-module isomorphism, then there is a graded $A$-algebra $*$-isomorphism $\phi: R\to S$ such that $K^{\gr}_0(\phi)=f.$
\end{theorem}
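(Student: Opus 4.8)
The plan is to derive the classification from the two preceding parts of Theorem~\ref{main}---fullness (Theorem~\ref{fullness}) and faithfulness (Theorem~\ref{faithfulness})---by an Elliott-type intertwining argument adapted to the graded $*$-setting. The forward direction is routine functoriality: a graded $A$-algebra $*$-isomorphism $\phi\colon R\to S$ induces a contractive $\Z[\Gamma]$-module isomorphism $K_0^{\gr}(\phi)$, which is automatically a $\Z[\Gamma]$-$\Z[\Z_2]$-bimodule isomorphism since $\Z_2$ acts trivially on the $K_0^{\gr}$-groups of graded ultramatricial $*$-algebras. So I would concentrate on the substantial converse: given a contractive $\Z[\Gamma]$-module isomorphism $f\colon K_0^{\gr}(R)\to K_0^{\gr}(S)$, produce a graded $*$-isomorphism $\phi\colon R\to S$ with $K_0^{\gr}(\phi)=f$.

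First I would fix presentations $R=\varinjlim(R_n,\iota_n)$ and $S=\varinjlim(S_m,\kappa_m)$ as countable direct limits of graded matricial $*$-algebras along graded $*$-homomorphisms, and invoke continuity of $K_0^{\gr}$ to identify $K_0^{\gr}(R)=\varinjlim K_0^{\gr}(R_n)$ and $K_0^{\gr}(S)=\varinjlim K_0^{\gr}(S_m)$ as pre-ordered $\Z[\Gamma]$-modules with their generating intervals. The key finiteness input is that each $K_0^{\gr}(R_n)$ is a finitely generated $\Z[\Gamma]$-module (for a graded matricial algebra it is a finite direct sum of cyclic modules of the form $\Z[\Gamma/\Gamma_A]$), so the composite $K_0^{\gr}(R_n)\to K_0^{\gr}(R)\xrightarrow{f}K_0^{\gr}(S)$ carries its finitely many generators into a finite stage $K_0^{\gr}(S_{m})$; together with contractiveness of $f$ this yields a contractive $\Z[\Gamma]$-module map into some $K_0^{\gr}(S_{m})$, which the fullness result then realizes by a graded $*$-homomorphism.

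The core of the argument is the back-and-forth construction of a commuting ladder. By induction I would build increasing index sequences $k_1<k_2<\cdots$ and $l_1<l_2<\cdots$ together with graded $*$-homomorphisms $\phi_n\colon R_{k_n}\to S_{l_n}$ and $\psi_n\colon S_{l_n}\to R_{k_{n+1}}$ whose induced $K_0^{\gr}$-maps are the finite-stage approximants of $f$ and $f^{-1}$ furnished above. These are arranged so that $\psi_n\phi_n$ and the structure map $R_{k_n}\to R_{k_{n+1}}$ induce the same map on $K_0^{\gr}$ (because $f^{-1}f=\mathrm{id}$), and likewise $\phi_{n+1}\psi_n$ agrees on $K_0^{\gr}$ with the structure map $S_{l_n}\to S_{l_{n+1}}$. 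Here the faithfulness result is exactly what upgrades ``equal on $K_0^{\gr}$'' to genuine commutativity: since $A_0$ is $2$-proper and $*$-pythagorean, there is a degree-zero unitary conjugating $\psi_n\phi_n$ to the structure map, and absorbing this unitary into $\psi_n$ makes the triangle commute on the nose. Passing to the direct limit of the strictly commuting ladder then produces mutually inverse graded $*$-homomorphisms $\phi\colon R\to S$ and $\psi\colon S\to R$, with $K_0^{\gr}(\phi)=f$ by construction.

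The main obstacle I anticipate is the bookkeeping that makes the unitary corrections at successive stages mutually compatible, so that the adjusted triangles assemble into a single commuting ladder whose limit maps are well defined and mutually inverse; this is the delicate part of any intertwining. A secondary technical point is checking that contractiveness at the limit descends to a finite stage---that the generating interval of $K_0^{\gr}(R_{k_n})$ maps into the generating interval of $K_0^{\gr}(S_{l_n})$ and not merely into that of $K_0^{\gr}(S)$---which may force enlarging $l_n$. I would stress that the algebraic setting is friendlier than the $C^*$-algebraic one: there is no topology to approximate, so faithfulness gives \emph{exact} conjugacy and each triangle can be made to commute exactly rather than only approximately, dispensing with the limiting and perturbation estimates needed in the analytic case.
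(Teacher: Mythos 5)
Your proposal is correct and follows essentially the same route as the paper: the paper's proof of the ultramatricial classification (Theorem \ref{ultramatricial}) is exactly an Elliott--Bratteli intertwining in which Proposition \ref{matricial_to_ultramatricial} (built on the finite generation of each $K_0^{\gr}(R_n)$ as a $\Z[\Gamma]$-module and a lemma descending contractiveness to a finite stage) realizes the finite-stage approximants of $f$ and $f^{-1}$ via Theorem \ref{fullness}, and Theorem \ref{faithfulness} supplies the degree-zero unitary that is absorbed into $\sigma_i$ to make each triangle commute exactly before passing to the limit. The two technical points you flag are precisely the paper's Lemmas \ref{dir_lim_group_lemma} and \ref{unit_preservation}.
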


As a corollary, if $A$ is a graded $*$-field which satisfies the assumptions of the last part of Theorem \ref{main}, then any graded ultramatricial $*$-algebras over $A$ are isomorphic as graded rings precisely when they are isomorphic as any of the following: graded $*$-rings, graded algebras, or graded $*$-algebras.

Theorem \ref{main} may seem to be a specialization of the well-known classification results on ultramatricial algebras over a field because 
it may seem that we are adding structure to the field. However, our result is actually a {\em generalization}: if the grade group is trivial, we obtain \cite[Proposition 3.3]{Ara_matrix_rings}. If we ignore the involutive structure, we obtain \cite[Theorem 2.13]{Roozbeh_Annalen} and \cite[Theorem 5.2.4]{Roozbeh_graded_ring_notes}. When the grade group is trivial and the involutive structure disregarded, we obtain \cite[Theorem 15.26]{Goodearl_book}. For the field of complex numbers and the complex conjugate involution with a trivial grade group, we obtain the classification theorem for AF $C^*$-algebras, \cite[Theorem 4.3]{Elliott} as a corollary of Theorem \ref{main}.

Our proof of the first claim of Theorem \ref{main} is constructive. We emphasize that, besides being more general, our proof is constructive while the proofs of the  non-graded, non-involutive version from \cite{Goodearl_book} and the non-involutive version from \cite{Roozbeh_Annalen} (and \cite{Roozbeh_graded_ring_notes}) are existential.

In section \ref{section_LPAs}, we apply our results to a class of Leavitt path algebras. Leavitt path algebras are the algebraic counterpart of graph $C^*$-algebras. The increasing popularity of graph $C^*$-algebras in the last two decades is likely due to the relative simplicity of their construction (especially compared to some other classes of $C^*$-algebras) and by the wide diversity of $C^*$-algebras which can be realized as graph $C^*$-algebras. Leavitt path algebras are even simpler objects than graph $C^*$-algebras and the class of algebras which can be represented as Leavitt path algebras also contains very diverse examples.  

Constructed from an underlying field and a directed graph $E$ to which so-called ghost edges have been added to the ``real'' ones, Leavitt path algebras were introduced by two groups of authors in \cite{Abrams_Aranda_Pino_1} and in \cite{Ara_Moreno_Pardo}. While both groups emphasized the close relation of a Leavitt path algebra $L_\C(E)$ over the complex numbers $\C$ and its $C^*$-algebra completion, the graph $C^*$-algebra $C^*(E)$, the second group of authors was particularly interested in the $K_0$-groups of algebras $L_\C(E)$ and $C^*(E)$ and showed that these two algebras have isomorphic $K_0$-groups when $E$ is a row-finite graph (\cite[Theorem 7.1]{Ara_Moreno_Pardo}). 
 
Any Leavitt path algebra is naturally equipped with an involution. Considering Leavitt path algebras over $\C$ equipped with the complex conjugate involution, the authors of \cite{Abrams_Tomforde} conjecture that any such algebras are isomorphic as algebras precisely when they are isomorphic as $*$-algebras, which is known as the {\em Isomorphism Conjecture} (IC), and that any such algebras are isomorphic as rings precisely when they are isomorphic as $*$-algebras, which is known as the {\em Strong Isomorphism Conjecture} (SIC). Results \cite[Propositions 7.4 and 8.5]{Abrams_Tomforde} show that (SIC) holds for Leavitt path algebras of countable acyclic graphs as well as row-finite cofinal graphs with at least one cycle and such that every cycle has an exit. 

When the underlying field is assumed to be any involutive field instead of $\C$, (IC) becomes the {\em Generalized Isomorphism Conjecture} (GIC) and (SIC) becomes the {\em Generalized Strong Isomorphism Conjecture} (GSIC). By \cite[Theorem 6.3]{Gonzalo_paper}, (GSIC) holds for the Leavitt path algebras of finite graphs in which no cycle has an exit (no-exit graphs). 

These isomorphism conjectures are closely related to the consideration of the $K_0$-group. It is known that the $K_0$-group alone is not sufficient for classification of Leavitt path algebras (see \cite{Ruiz_Tomforde}, examples of \cite{Roozbeh_Annalen} or our Examples \ref{infinite_graphs} and \ref{line_and_clock}). While \cite{Gabe_et_al} and \cite{Ruiz_Tomforde} present the classification of classes of Leavitt path algebras using higher $K$-groups, the first author of this paper took a different route to overcome the insufficiency of the $K_0$-group alone and attempted a classification of Leavitt path algebras by taking their grading and the {\em graded} $K_0$-group into account. 

A Leavitt path algebra is naturally equipped with a $\Z$-grading originating from the lengths of paths in the graph. Using this grading, one can replace the $K_0$-group with its graded version and attempt the classification of Leavitt path algebras by this new invariant. As demonstrated in \cite{Roozbeh_Israeli} and \cite{Roozbeh_Annalen}, the graded $K_0$-group carries more information about the Leavitt path algebra than the non-graded group and \cite[Theorem 4.8 and Corollary 4.9]{Roozbeh_Annalen} show that $K_0^{\gr}$ completely classifies the class of Leavitt path algebras of a class of finite graphs which are referred to as the polycephaly graphs. 

However, to relate the graded $K_0^{\gr}$-group with the isomorphism conjectures, one has to take into account the involution too.
As the results of  \cite{Gonzalo_Ranga_Lia} illustrate, the involutive structure of the underlying field $K$ directly impacts the involutive structure of the algebra $L_K(E).$  Consequently, it is important to consider {\em both the graded and the involutive structure} as we do in our approach. In particular, using Theorem \ref{main}, we show that the graded version of (GSIC) holds for a class of Leavitt path algebras. We summarize the main results of section \ref{section_LPAs}, Theorem \ref{classification} and Corollary \ref{graded_GSIC}, as follows. 

\begin{theorem}\label{main_LPA}
Let $E$ and $F$ be countable, row-finite, no-exit graphs in which each infinite path ends in a sink or a cycle and let $K$ be a 2-proper, $*$-pythagorean field.  

The Leavitt path algebras $L_K(E)$ and $L_K(F)$ are graded $*$-isomorphic if and only if there is a contractive $\Z[x,x^{-1}]$-module isomorphism $K^{\gr}_0(L_K(E))\cong K^{\gr}_0(L_K(F)).$ Consequently, $L_K(E)$ and $L_K(F)$ are isomorphic as graded $*$-algebras if and only if they are isomorphic as graded rings. 
\end{theorem}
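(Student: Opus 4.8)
The plan is to reduce the statement to the abstract classification of Theorem \ref{main} by first realizing the Leavitt path algebras in question as graded ultramatricial $*$-algebras over suitable graded $*$-fields built from $K$, and then to obtain the ``consequently'' clause from the fact that the $\Z_2$-action on the relevant $K_0^{\gr}$-groups is trivial. Throughout, the grade group is $\Gamma=\Z$, so $\Z[\Gamma]=\Z[x,x^{-1}]$ matches the statement.

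First I would establish the structure theorem that drives everything: for a graph $E$ in the given class, $L_K(E)$ is graded $*$-isomorphic to a direct limit of graded matricial $*$-algebras. The no-exit hypothesis, together with the requirement that every infinite path end in a sink or a cycle, forces $E$ to be realized as a direct limit of finite ``polycephaly''-type pieces, each of whose Leavitt path algebra decomposes, as in the non-involutive theory of \cite{Roozbeh_Annalen}, into a finite direct sum of blocks indexed by the sinks and the cycles: a sink contributes a block graded-isomorphic to some $M_n(K)(\bar\lambda)$, while a cycle of length $l$ contributes a block graded-isomorphic to some $M_m(K[x^l,x^{-l}])(\bar\mu)$, the sizes and shifts being determined by the paths into the sink or cycle. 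The genuinely new ingredient is to upgrade these graded isomorphisms to graded $*$-isomorphisms, i.e. to check that the natural involution of $L_K(E)$ corresponds, block by block, to the conjugate-transpose involution (twisted by the involution of $K$) on each matrix block; this is exactly where the hypotheses that $K$ is $2$-proper and $*$-pythagorean enter, guaranteeing that the induced involutions on the blocks are well behaved (cf. \cite{Gonzalo_Ranga_Lia}).

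Next I would verify that each graded $*$-field occurring as a block coefficient meets the hypotheses of Theorem \ref{main}. The field $K$, trivially $\Z$-graded, has enough unitaries vacuously (its only nontrivial component is $K$, with unitary $1$), and $K[x^l,x^{-l}]$ has enough unitaries since each nontrivial component $Kx^{kl}$ contains the unitary $x^{kl}$; in both cases the zero-component is $K$, which is $2$-proper and $*$-pythagorean by hypothesis. Grouping the blocks by coefficient field yields a decomposition of $L_K(E)$ as a direct sum of graded ultramatricial $*$-algebras, one over $K$ (the acyclic part) and one over each $K[x^l,x^{-l}]$ (the cycles of length $l$). A contractive $\Z[x,x^{-1}]$-module isomorphism $K_0^{\gr}(L_K(E))\cong K_0^{\gr}(L_K(F))$ must respect this decomposition, since the torsion submodule is canonical and the summands $\Z[x,x^{-1}]/(x^l-1)$ arising from length-$l$ cycles are distinguished by the ideals annihilating them, whereas the acyclic part is torsion-free. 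Applying the last clause of Theorem \ref{main} over $K$ and over each $K[x^l,x^{-l}]$ then produces graded $*$-isomorphisms of the matching summands, which assemble to the desired graded $*$-isomorphism $L_K(E)\cong L_K(F)$; the converse is immediate, a graded $*$-isomorphism inducing a contractive $\Z[x,x^{-1}]$-module isomorphism on $K_0^{\gr}$.

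Finally, for the ``consequently'' clause one direction is trivial. For the other, a graded ring isomorphism $L_K(E)\to L_K(F)$ induces, by functoriality of $K_0^{\gr}$ on graded rings, a $\Z[x,x^{-1}]$-module isomorphism of the graded Grothendieck groups that is automatically contractive; by the equivalence just proved this yields a graded $*$-algebra isomorphism. Here it is crucial that the $\Z_2$-action coming from the involution is trivial on these $K_0^{\gr}$-groups, so that no compatibility with a $\Z[\Z_2]$-structure need be imposed on the module isomorphism. I expect the main obstacle to be the first step: producing the block decomposition as a graded \emph{$*$-isomorphism} rather than merely a graded isomorphism, and in particular verifying that the Leavitt path algebra involution is carried to the intended involution on each matrix block uniformly across the direct limit.
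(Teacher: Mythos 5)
Your overall route is the same as the paper's: decompose $L_K(E)$ as a graded ultramatricial $*$-algebra with blocks $\M_{\kappa_i}(K)(\ol\alpha^i)$ and $\M_{\mu_i}(K[x^{n_i},x^{-n_i}])(\ol\gamma^i)$ (Proposition \ref{no-exit_graphs}), split the contractive isomorphism along the blocks, and apply the ultramatricial classification to each piece. However, there is a genuine gap at the splitting step. You assert that a contractive $\Z[x,x^{-1}]$-module isomorphism must respect the decomposition because ``the summands $\Z[x,x^{-1}]/(x^l-1)$ are distinguished by the ideals annihilating them.'' Annihilators do not isolate the length-$l$ blocks: the submodule of $\bigoplus_i \Z^{n_i}$ killed by $x^l-1$ meets \emph{every} block $\Z^{n_i}$ in a copy of $\Z^{\gcd(n_i,l)}$, and plain $\Z[x,x^{-1}]$-module isomorphisms need not respect the block decomposition at all (already $\Z\oplus\Z^{2}$ has the automorphism $e\mapsto e+u_1+u_2$, $u_k\mapsto u_k$, which mixes the summands; its inverse is not order-preserving). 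The correct argument, which is Proposition \ref{missing_part} in the paper, uses the positivity of both $f$ and $f^{-1}$ in an essential way: expanding $u^{i_0}_1=f^{-1}(f(u^{i_0}_1))$ in nonnegative coordinates forces all cross terms to vanish and yields a bijection of blocks matching the cycle lengths. (A similar positivity argument, not pure torsion-theory, is also what forces $f$ to carry the free part $\Z[x,x^{-1}]^I$ into $\Z[x,x^{-1}]^{I'}$ rather than merely inducing a map on the quotient by torsion.) Without some version of this lemma your proof does not go through.

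A secondary point: you locate the use of the $2$-proper and $*$-pythagorean hypotheses in the wrong place. The block decomposition of Proposition \ref{no-exit_graphs} is a graded $*$-isomorphism for \emph{any} involutive field $K$; it is verified directly on the explicit basis $\{p_{ij}p_{il}^*\}\cup\{q_{ij}x_{ijl}q_{il}^*\}\cup\{r_{ij}c_i^kr_{il}^*\}$, where one checks by hand that the Leavitt involution goes to the $*$-transpose. The hypotheses on $K$ enter only when you invoke Theorem \ref{ultramatricial}, whose faithfulness half (Theorem \ref{faithfulness}) needs algebraically equivalent projections to be $*$-equivalent. Since you do verify those hypotheses at the point where you apply Theorem \ref{main}, this misattribution does not by itself break the argument, but it suggests the structure theorem step is being made to carry weight it does not need. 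Your treatment of the ``consequently'' clause via the triviality of the $\Z_2$-action is correct and matches Corollary \ref{graded_GSIC}.
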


By extending the class of Leavitt path algebras for which $K_0^{\gr}$ provides a complete classification, this result widens the class of algebras for which the Classification Conjectures from \cite{Roozbeh_Annalen} hold. With the involutive structure of Leavitt path algebras taken into account, \cite[Conjecture 1]{Roozbeh_Annalen} can be reformulated as follows.  

\begin{conjecture} 
If $K$ is a 2-proper, $*$-pythagorean field, then $K^{\gr}_0$ completely classifies Leavitt path algebras over $K$ as graded $*$-algebras.  
\end{conjecture}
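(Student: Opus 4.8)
The plan is to reduce the classification to the last assertion of Theorem~\ref{main} by exhibiting the Leavitt path algebras in question as graded ultramatricial $*$-algebras over a suitable $\Z$-graded $*$-field, and then to deduce the concluding equivalence by functoriality. The natural graded $*$-field to work with is $A=K[x,x^{-1}]$, graded by $\deg x=1$, with involution extending that of $K$ and determined by $x^{*}=x^{-1}$; then $A_{\gamma}^{*}=A_{-\gamma}$, so $A$ is a $\Z$-graded $*$-field with $A_{0}=K$. Since $x^{n}(x^{n})^{*}=1$, each nonzero homogeneous component contains a unitary, so $A$ has enough unitaries, and $A_{0}=K$ is 2-proper and $*$-pythagorean by hypothesis. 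Because $\Gamma=\Z$ yields $\Z[\Gamma]=\Z[x,x^{-1}]$ and, as recalled in the introduction, $\Z_{2}$ acts trivially on the graded Grothendieck groups of graded ultramatricial $*$-algebras over a graded $*$-field, a contractive $\Z[x,x^{-1}]$-module isomorphism carries precisely the data of a contractive $\Z[\Gamma]$-$\Z[\Z_{2}]$-bimodule isomorphism. Thus, once the structural identification below is in place, the first assertion is immediate from Theorem~\ref{main}.

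The heart of the argument, and the step I expect to be the main obstacle, is to prove that the graph hypotheses force $L_{K}(E)$ to be a graded ultramatricial $*$-algebra over $A$. Using countability and row-finiteness, I would write $L_{K}(E)=\varinjlim_{n}L_{K}(E_{n})$ as a direct limit over an ascending chain of finite complete subgraphs, a presentation compatible with the $\Z$-grading and with the canonical involution $e\mapsto e^{*}$, so that the connecting maps are graded $*$-homomorphisms. The no-exit hypothesis, together with the requirement that every infinite path terminate in a sink or a cycle, is what pins down the structure of each finite layer $L_{K}(E_{n})$ as a finite direct product of graded matrix $*$-algebras indexed by the sinks and the exitless cycles reachable in $E_{n}$. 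The genuinely delicate point is that these two kinds of terminal behaviour produce graded-simple blocks of different graded-Morita type---the trees feeding a sink contribute blocks modelled on $A_{0}=K$, whereas a length-$\ell$ cycle contributes a block built from $x^{\ell}$ and modelled on $A$---so the real work is to organise all of these uniformly within the graded matricial framework over $A$, with graded $A$-algebra $*$-homomorphisms as transition maps. It is here that the structure theory of Leavitt path algebras of no-exit graphs, refined to track both the grading and the involution, must be invoked; I expect to handle the two types of block by splitting $L_{K}(E)$ into its acyclic (sink) part and its cyclic part, classifying each by the appropriate instance of Theorem~\ref{main}, and recombining by means of the canonical decomposition that any $\Z[x,x^{-1}]$-module isomorphism must preserve.

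With $L_{K}(E)$ and $L_{K}(F)$ identified as graded ultramatricial $*$-algebras to which Theorem~\ref{main} applies, the classification gives that they are graded $*$-isomorphic if and only if there is a contractive $\Z[x,x^{-1}]$-module isomorphism $K^{\gr}_{0}(L_{K}(E))\cong K^{\gr}_{0}(L_{K}(F))$. The forward implication is merely functoriality of $K^{\gr}_{0}$ applied to a graded $*$-isomorphism, while the reverse implication is the substance of Theorem~\ref{main}.

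For the concluding equivalence, one direction is free: a graded $*$-isomorphism is in particular a graded ring isomorphism. Conversely, suppose $\theta\colon L_{K}(E)\to L_{K}(F)$ is an isomorphism of graded rings. The group $K^{\gr}_{0}$, together with its pre-order, its generating interval, and the $\Z[\Gamma]$-action by degree shifts, is defined solely from the category of graded finitely generated projective modules and hence depends only on the underlying graded ring; therefore $K^{\gr}_{0}(\theta)$ is a contractive $\Z[x,x^{-1}]$-module isomorphism. Feeding it into the first assertion produces a (possibly different) graded $*$-isomorphism $L_{K}(E)\to L_{K}(F)$, so the two Leavitt path algebras are isomorphic as graded $*$-algebras exactly when they are isomorphic as graded rings.
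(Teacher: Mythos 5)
The statement you are proving is a \emph{conjecture}: it asserts that $K^{\gr}_0$ classifies \emph{all} Leavitt path algebras over $K$ as graded $*$-algebras, and the paper does not prove it --- it explicitly leaves it open and establishes only the special case of countable, row-finite, no-exit graphs in which every infinite path ends in a sink or a cycle (Theorem \ref{classification} and Corollary \ref{graded_GSIC}). Your argument quietly imports exactly those graph hypotheses (``using countability and row-finiteness\dots the no-exit hypothesis, together with the requirement that every infinite path terminate in a sink or a cycle''), so at best it reproves the paper's Theorem \ref{classification}, not the conjecture. The gap is not cosmetic: for a general graph, $L_K(E)$ is \emph{not} a graded ultramatricial $*$-algebra --- the rose with two petals gives the Leavitt algebra $L_K(1,2)$, which is purely infinite simple and admits no presentation as a direct limit of graded matricial algebras --- so the entire reduction to Theorem \ref{main}/\ref{ultramatricial} collapses outside the no-exit class. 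Any proof of the conjecture would have to handle cycles with exits by genuinely different means.

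Within the restricted class your outline does track the paper's route: the structural identification you call ``the heart of the argument'' is Proposition \ref{no-exit_graphs}, and the ``splitting into acyclic and cyclic parts and recombining'' is Proposition \ref{missing_part} together with the block-by-block application of Theorem \ref{ultramatricial} in the proof of Theorem \ref{classification}. Two details there deserve correction. First, the base graded $*$-field cannot be taken uniformly to be $A=K[x,x^{-1}]$: the sink blocks are of the form $\M_{\kappa}(K)(\ol\alpha)$ over the \emph{trivially} graded field $K$ (a single sink gives $L_K(E)\cong K$, which is not a $K[x,x^{-1}]$-algebra), and the cycle blocks live over the distinct graded fields $K[x^{\ell},x^{-\ell}]$ for varying cycle lengths $\ell$; Theorem \ref{ultramatricial} must be invoked separately over each of these, after grouping cycles by length. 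Second, the ``canonical decomposition that any $\Z[x,x^{-1}]$-module isomorphism must preserve'' is not automatic from the module structure alone: separating $\Z[x,x^{-1}]^I$ from $\bigoplus_i\Z^{n_i}$ uses the $x$-action, but matching the individual $\Z^{n_i}$ summands across the isomorphism (so that cycle lengths correspond) requires the positivity of the map, i.e.\ contractivity, as in Proposition \ref{missing_part}. These points are repairable, but the restriction to the no-exit class is not, and it is the reason the statement remains a conjecture.
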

Theorem \ref{main_LPA} implies that this conjecture holds for Leavitt path algebras of countable, row-finite, no-exit graphs in which each infinite path ends in a sink or a cycle.

\section{Graded rings with involution}\label{section_graded_star}

We start this section by recalling some concepts of the involutive and the graded ring theory. Then, we introduce the graded $*$-rings and study the action of $\Z_2$ on their graded Grothendieck groups. 

In sections \ref{section_graded_star}--\ref{section_faithfulness}, all the rings are assumed to be unital while in sections \ref{section_ultramatricial} and \ref{section_LPAs} we emphasize when that may not be the case.  

\subsection{Rings with involution}\label{subsection_non-graded_involutive}

Let $A$ be a ring with an involution denoted by  $a\mapsto  a^*$, i.e., ${}^*:A \rightarrow A$ is 
an anti-automorphism of order two. In this case the ring $A$ is also called a $*$-ring. The involution $^*$ is a ring isomorphism between $A$ and the opposite ring of $A.$ Thus, every right $A$-module $M$ also has a left $A$-module structure given by 
\[am:=m a^*.\]
As a consequence, the categories of the left and right $A$-modules are equivalent. 

In particular, if $A$ is an involutive ring and $M$ a right $A$-module, the left $A$-module $\Hom_A(M,A)$ is also a right $A$-module by $(fa)(x)=a^* f(x)$ for any $f\in \Hom_A(M,A),$ $a\in A$ and $x\in M.$ This action induces the action $[P]\mapsto [\Hom_A(P, A)]$ of the group $\Z_2$ on the monoid of the isomorphism classes of finitely generated projective modules and, ultimately, the $\Z_2$-action on $K_0(A).$

The matrix ring  $\M_n(A)$ also becomes an involutive ring with $(a_{ij})^*=(a_{ji}^*)$ and we refer to this involution as the $*$-transpose. If $p$ is an idempotent matrix in $\M_n(A),$ then the modules $p^*A^n$ and $\Hom_A(pA^n, A)$ are isomorphic as right $A$-modules. Thus, the $\Z_2$-action on $K_0(A)$ represented via the conjugate classes of idempotent matrices is given by $[p]\mapsto [p^*].$ The natural isomorphism of $K_0(A)$ and $K_0(\M_n(A))$ is a $\Z_2$-module isomorphism. 

If $A$ is a $*$-field, any idempotent of $\M_n(A)$ is conjugate with the diagonal matrix with $0$ or $1$ on each diagonal entry and the action of $\Z_2$ on $K_0(A)$ is trivial. The involution of $A\times A$ where $A$ is a field, given by $(a,b)^*=(b,a),$ induces a nontrivial action on $K_0(A\times A)\cong \Z\times \Z$ given by $(m,n)\mapsto (n,m)$ for $m, n\in \Z.$ 

If a $*$-ring $A$ is also a $K$-algebra for some commutative, involutive ring $K,$ then $A$ is a {\em
$*$-algebra} if $(kx)^*=k^*x^*$ for $k\in K$ and $x\in A.$

\subsection{Graded rings} 

If $\Gamma$ is an abelian group, a ring $A$ is called a \emph{$\Gamma$-graded ring} if $ A=\bigoplus_{ \gamma \in \Gamma} A_{\gamma}$ such that each $A_{\gamma}$ is
an additive subgroup of $A$ and $A_{\gamma}  A_{\delta} \subseteq
A_{\gamma + \delta}$ for all $\gamma, \delta \in \Gamma$. The group $A_\gamma$ is called the $\gamma$-\emph{component} of $A.$
When it is clear from the context that a ring $A$ is graded by a group $\Gamma,$ we simply say that $A$ is a \emph{graded ring}. 

For example, if $A$ is any ring, the group ring $A[\Gamma]$ is $\Gamma$-graded 
with the $\gamma$-component consisting of the elements of the form $a\gamma$ for $a\in A.$

The subset $\bigcup_{\gamma \in \Gamma} A_{\gamma}$ of a graded ring $A$ is called the set of \emph{homogeneous elements} of $A.$ The nonzero elements of $A_\gamma$ are called \emph{homogeneous of degree $\gamma$} and we write $\deg(a) = \gamma$ for $a \in A_{\gamma}\backslash \{0\}.$ The set $\Gamma_A=\{ \gamma \in \Gamma \mid A_\gamma \not = 0 \}$ is called the \emph{support}  of $A$. We say that a $\Gamma$-graded ring $A$ is \emph{trivially graded} if the support of $A$ is the trivial group $\{0\}$, i.e., $A_0=A$ and $A_\gamma=0$ for $\gamma \in \Gamma \backslash \{0\}$. Note that every ring can be trivially graded by any abelian group. 

If $A$ is a $\Gamma$-graded ring, we define the {\em inversely graded ring} $A^{(-1)}$ to be the $\Gamma$-graded ring with the $\gamma$-component 
\[A^{(-1)}_\gamma = A_{-\gamma}.\]
We note that the inversely graded ring is the $(-1)$-th Veronese subring of $A$ as defined in \cite[Example 1.1.19]{Roozbeh_graded_ring_notes} if $\Gamma$ is the group of integers. 

A graded ring $A$ is a \emph{graded division ring} if every
nonzero homogeneous element has a multiplicative inverse. If a graded division ring $A$ is also commutative then $A$ is a {\em graded field}. 
Note that if $A$ is a $\Gamma$-graded division ring, then $\Gamma_A$ is a subgroup of $\Gamma.$ 

A ring homomorphism $f$ of $\Gamma$-graded rings $A$ and $B$ is a $\Gamma$-graded ring homomorphism, or simply a {\em graded homomorphism}, if $f(A_\gamma)\subseteq B_\gamma$
for all $\gamma\in \Gamma$. If such map $f$ is also an isomorphism, we say that it is a {\em graded isomorphism} and write $A \cong_{\gr} B.$ 

A \emph{graded right $A$-module} is a right $A$-module $M$ with a direct sum decomposition $M =\bigoplus_{\gamma\in\Gamma} M_\gamma$ where $M_\gamma$
is an additive subgroup of $M$ such that $M_\gamma A_\delta \subseteq M_{\gamma+\delta}$ for all $\gamma,\delta\in \Gamma$. In this case, for $\delta\in\Gamma,$ the
$\delta$-\emph{shifted} graded right $A$-module $M(\delta)$ is defined by  
\[M(\delta) =\bigoplus_{\gamma \in \Gamma} M(\delta)_\gamma, \text{ where }M(\delta)_\gamma = M_{\delta+\gamma}.\] 

A right $A$-module homomorphism $f$ of graded right $A$-modules $M$ and $N$ is a {\em graded homomorphism} if $f(M_\gamma)\subseteq N_\gamma$
for any $\gamma\in \Gamma$. If such map $f$ is also an isomorphism, we say that it is a {\em graded isomorphism} and write $M \cong_{\gr} N.$ 
A graded ring $A$ is a \emph{graded algebra} over a graded field $K$, if $A$ is an algebra over $K$ and the unit map $K\to A$ is a graded homomorphism.

A right $A$-module homomorphism $f$ of graded right $A$-modules $M$ and $N$ is a {\em graded homomorphism of degree $\delta$} if $f(M_\gamma)\subseteq N_{\gamma+\delta}$ for all $\gamma\in \Gamma$. 
When $M$ is finitely generated, every element of the abelian group $\Hom_A(M, N)$ can be represented as a sum of the graded homomorphisms of degree $\delta$ for $\delta\in \Gamma.$ Thus, $\Hom_A(M, N)$ is $\Gamma$-graded and we write $\Hom_A(M, N)_\delta$ for the $\delta$-component of this group (see \cite[Theorem 1.2.6]{Roozbeh_graded_ring_notes}). If $\Gr A $ denotes the category of graded right $A$-modules and graded homomorphisms, then $\Hom_{\Gr A}(M,N)$ corresponds exactly to the zero-component $\Hom_A(M,N)_0.$ Moreover, $\Hom_A(M, N)_\gamma =\Hom_{\Gr A}(M(-\gamma), N)$ as sets (see \cite[(1.15)]{Roozbeh_graded_ring_notes}).

Graded left modules, graded bimodules, and graded homomorphisms of left modules and bimodules are defined analogously. 
If $M$ is a finitely generated  graded right $A$-module, then the abelian group $\Hom_A(M,A)$ has a structure of graded left $A$-module. Thus, the {\em  dual} $M^*=\Hom_A(M,A)$ of $M$
is a graded left $A$-module with 
\begin{equation}\label{dual_shift}
M^*(\gamma)=\Hom_A(M, A)(\gamma)=\Hom_A(M(-\gamma), A)=M(-\gamma)^*
\end{equation}
by \cite[Theorem 1.2.6 and (1.19)]{Roozbeh_graded_ring_notes}. 

\subsection{Graded rings with involution} 

\begin{definition}
Let $A$ be a $\Gamma$-graded ring with involution $^*$  and let $A_\gamma ^*$ denote the set $\{ a^* \,|\, a\in A_\gamma\}.$ We say that $A$ is a \emph{graded $*$-ring} if 
$$A_\gamma ^*\subseteq A_{-\gamma}$$
for every $\gamma\in \Gamma.$
\end{definition}
Note that if $A_\gamma ^*\subseteq A_{-\gamma}$ for every $\gamma\in \Gamma$ then  
$$A_\gamma ^*= A_{-\gamma}$$ for every  $\gamma\in \Gamma$. Indeed, if $a\in A_{-\gamma}$ then $a^*\in A_{-\gamma}^*\subseteq A_{\gamma}$ and so $a=(a^*)^*\in A_{\gamma}^*.$

A graded ring homomorphism $f$ of graded rings $A$ and $B$ is a {\em graded $*$-homomorphism} if $f(x^*)=f(x)^*$ for every $x\in A.$  

For example, if $A$ is a $*$-ring, the group ring $A[\Gamma]$ is a $\Gamma$-graded $*$-ring with the grading given by $A[\Gamma]_\gamma=\{a\gamma\, |\,a\in A\}$ and the involution given by $(a\gamma)^*=a^*(-\gamma)$ for $a\in A$ and $\gamma\in\Gamma.$ Leavitt path algebras (see section \ref{subsection_LPAs}) are also graded $*$-rings. Example \ref{example_graded} exhibits more graded $*$-rings and some graded $*$-homomorphisms.

If $M$ is a graded right module over a $\Gamma$-graded $*$-ring $A$, the {\em inversely graded module} $M^{(-1)}$ is the $\Gamma$-graded left $A$-module with $M^{(-1)}_\gamma=M_{-\gamma}$ and the left $A$-action given by $am:=ma^*$ for $a\in A$ and $m\in M.$   
If $N$ is a graded left $A$-module, we define the inversely graded module $N^{(-1)}$ analogously. If $M$ is a graded right module and $\gamma, \delta\in \Gamma,$ then $\left(M^{(-1)}(\gamma)\right)_\delta=\left(M^{(-1)}\right)_{\delta+\gamma}=M_{-\delta-\gamma}=M(-\gamma)_{-\delta}=\left(M(-\gamma)^{(-1)}\right)_\delta.$ Thus   
\begin{equation}\label{inverse_shift}
M^{(-1)}(\gamma)=M(-\gamma)^{(-1)} 
\end{equation}
for any $\gamma\in \Gamma.$

\begin{lemma}\label{hom_lemma}
Let $M$ be a finitely generated graded right $A$-module. Then the graded right $A$-modules below are graded isomorphic.  
\[
\Hom_A(M, A)^{(-1)} \cong_{\gr} \Hom_A(M^{(-1)}, A)
\]
\end{lemma}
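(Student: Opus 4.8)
The plan is to write down one explicit map, namely post-composition with the involution, and check that it is a graded right $A$-module isomorphism. Since $M$ and $M^{(-1)}$ share the same underlying abelian group, I would define $\Phi\colon \Hom_A(M,A)^{(-1)}\to \Hom_A(M^{(-1)},A)$ by $\Phi(f)(m)=f(m)^*$ for $f\in\Hom_A(M,A)$ and $m\in M$. The point is that applying ${}^*$ converts a right $A$-linear map into a left $A$-linear one: recalling that the left action on $M^{(-1)}$ is $a\cdot m=ma^*$ and that ${}^*$ is an anti-automorphism, one computes $\Phi(f)(a\cdot m)=f(ma^*)^*=(f(m)a^*)^*=af(m)^*=a\,\Phi(f)(m)$, so $\Phi(f)$ is indeed a homomorphism of left $A$-modules $M^{(-1)}\to A$.

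Next I would check that $\Phi$ preserves degrees, i.e.\ that it is a homomorphism of degree $0$. An element $f$ of the $\gamma$-component of $\Hom_A(M,A)^{(-1)}$ lies in $\Hom_A(M,A)_{-\gamma}$, so $f(M_\delta)\subseteq A_{\delta-\gamma}$ for all $\delta$. Applying the involution and using $A_{\delta-\gamma}^*=A_{\gamma-\delta}$ together with the identification $M_\delta=(M^{(-1)})_{-\delta}$, one obtains $\Phi(f)\big((M^{(-1)})_{-\delta}\big)\subseteq A_{-\delta+\gamma}$, which says exactly that $\Phi(f)$ has degree $\gamma$ in $\Hom_A(M^{(-1)},A)$. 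Thus $\Phi$ maps the $\gamma$-component into the $\gamma$-component for each $\gamma$. To see that $\Phi$ is right $A$-linear, I would use that the right action on the inversely graded dual is $f\cdot a=a^*f$, whence $\Phi(f\cdot a)(m)=(a^*f(m))^*=f(m)^*a=(\Phi(f)\cdot a)(m)$.

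Finally, a two-sided inverse is given by the same recipe, $g\mapsto\big(m\mapsto g(m)^*\big)$; the composite in either direction is the identity because ${}^*$ has order two, which simultaneously yields injectivity and surjectivity. I expect the only real work to be bookkeeping: keeping straight the interplay of the left/right module conventions, the inverse-grading actions $a\cdot m=ma^*$ and $f\cdot a=a^*f$, and the degree conventions for $\Hom$ of left modules used in the grading check. The conceptual content is simply that composing with the involution interchanges $\Hom_A(M,A)^{(-1)}$ and $\Hom_A(M^{(-1)},A)$; the hypothesis that $M$ is finitely generated is what guarantees that both $\Hom$-groups actually decompose as the direct sum of their degree components, so that each is a genuine graded module and $\Phi$ is a graded isomorphism in the sense of $\cong_{\gr}$.
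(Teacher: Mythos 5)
Your proposal is correct and is essentially the paper's own proof: the same map $f\mapsto\big(m\mapsto f(m)^*\big)$, the same degree computation showing the $\gamma$-component lands in the $\gamma$-component, and the same observation that the involution being of order two supplies the inverse. You merely spell out the module-linearity checks that the paper leaves as ``direct to check,'' and you correctly identify the role of the finite generation hypothesis.
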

\begin{proof}
Consider the map $\Hom_A(M, A)^{(-1)}\to \Hom_A(M^{(-1)}, A)$ given by  $f\mapsto f^*$ where $f^*(m)=(f(m))^*.$ It is direct to check that this is a well-defined map and that it is  
a right $A$-module isomorphism. It is a graded map too since for $f\in \Hom_A(M, A)^{(-1)}_\gamma$, we have $f\in \Hom_A(M, A)_{-\gamma}$ for any $\gamma\in \Gamma$. This means that $f(M_\delta)\subseteq A_{\delta-\gamma}$ for any $\delta\in \Gamma.$ Thus, 
$$f^*(M^{(-1)}_{\delta})=f^*(M_{-\delta})=f(M_{-\delta})^*\subseteq (A_{-\delta-\gamma})^*\subseteq A_{\delta+\gamma}.$$
This shows that $f^*$  is in $\Hom_A(M^{(-1)}, A)_\gamma.$ 
\end{proof}

By this lemma, we have that 
\begin{equation}\label{inverse_dual}
\left(M^*\right)^{(-1)}\cong_{\gr}(M^{(-1)})^* 
\end{equation}
for any finitely generated right $A$-module $M$. Moreover, the action $\underline{\hskip.3cm}^{*(-1)}$ commutes with the shifts by (\ref{inverse_shift}) and (\ref{dual_shift}) so that 
\begin{equation}\label{action_and_shifts}
M^{*(-1)}(\gamma)=M(\gamma)^{*(-1)}  
\end{equation}
for any finitely generated right $A$-module $M$ and any $\gamma\in \Gamma$.

\subsection{Graded matrix *-rings and graded matricial *-algebras}\label{matrices}
For a $\Gamma$-graded ring $A$ and $(\gamma_1,\dots,\gamma_n)$ in $\Gamma^n$, let $\M_n(A)(\gamma_1,\dots,\gamma_n)$ denote the $\Gamma$-graded ring $\M_n(A)$ with the $\delta$-component consisting of the matrices $(a_{ij}),$ $i,j=1,\ldots, n,$ such that $a_{ij}\in A_{\delta+\gamma_j-\gamma_i}$ (more details in \cite[Section 1.3]{Roozbeh_graded_ring_notes}). We denote this fact by writing 
\begin{equation}\label{grading_on_matrices}
\M_n(A)(\gamma_1,\dots,\gamma_n)_{\delta} =
\begin{pmatrix}
A_{ \delta+\gamma_1 - \gamma_1} & A_{\delta+\gamma_2  - \gamma_1} & \cdots &
A_{\delta +\gamma_n - \gamma_1} \\
A_{\delta + \gamma_1 - \gamma_2} & A_{\delta + \gamma_2 - \gamma_2} & \cdots &
A_{\delta+\gamma_n  - \gamma_2} \\
\vdots  & \vdots  & \ddots & \vdots  \\
A_{\delta + \gamma_1 - \gamma_n} & A_{ \delta + \gamma_2 - \gamma_n} & \cdots &
A_{\delta + \gamma_n - \gamma_n}
\end{pmatrix}.
\end{equation}

If $A$ is a graded $*$-ring, then the \emph{$*$-transpose} $(a_{ij})^*=(a_{ji}^*)$, for $(a_{ij}) \in \M_n(A)(\gamma_1,\dots,\gamma_n)$, makes $\M_n(A)(\gamma_1,\dots,\gamma_n)$ into a graded $*$-ring.

The following proposition is the involutive version of \cite[Theorem 1.3.3]{Roozbeh_graded_ring_notes} and we shall extensively use it.  We also prove its generalization to matrices of possibly infinite size in Proposition \ref{permutation_general}. 

\begin{proposition}\label{permutation_of_components}
Let $A$ be a $\Gamma$-graded $*$-ring and let $\gamma_i \in \Gamma$, $i=1, \ldots,n$. 
\begin{enumerate}[\upshape(1)]
\item If $\delta$ is in $\Gamma$, and $\pi$ is a permutation  of $\{1, 2, \ldots, n \},$ then  the matrix rings
\begin{center}
$\M_n (A)(\gamma_1, \ldots, \gamma_n)\;\;$ and $\;\;\M_n (A)(\gamma_{\pi(1)}+\delta, \ldots, \gamma_{\pi(n)}+\delta)$
\end{center}
are graded $*$-isomorphic.

\item   If $\delta_1,\dots, \delta_n \in \Gamma$ are such that there is an element  $a_i\in A_{\delta_i}$ with the property that $a_ia^*_i=a^*_ia_i=1$ for $i=1,\ldots, n,$ then the matrix rings
\begin{center}
$\M_n (A)(\gamma_1, \ldots, \gamma_n)\;\;$ and $\;\;\M_n (A)(\gamma_1+\delta_1, \ldots, \gamma_n+\delta_n)$
\end{center}
are graded $*$-isomorphic.
\end{enumerate}
\end{proposition}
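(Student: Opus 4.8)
The plan is to construct explicit graded $*$-isomorphisms in each case by using permutation matrices and diagonal matrices built from homogeneous unitary elements, and then to verify directly that these preserve both the grading and the $*$-transpose involution.

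For part (1), I would first reduce to the two generating cases that compose to give an arbitrary permutation-plus-shift: the pure shift by $\delta$ (the case where $\pi$ is the identity) and a single transposition (the case $\delta=0$). For the shift by $\delta$, I would simply observe from the grading formula \eqref{grading_on_matrices} that $\M_n(A)(\gamma_1+\delta,\dots,\gamma_n+\delta)_\epsilon$ has $(i,j)$-entry in $A_{\epsilon+(\gamma_j+\delta)-(\gamma_i+\delta)}=A_{\epsilon+\gamma_j-\gamma_i}$, which is literally the same component as $\M_n(A)(\gamma_1,\dots,\gamma_n)_\epsilon$; so the identity map on $\M_n(A)$ is the desired graded $*$-isomorphism, since it trivially commutes with the $*$-transpose. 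For the permutation part, I would use conjugation by the permutation matrix $P_\pi$ associated to $\pi$, that is, the map $(a_{ij})\mapsto P_\pi(a_{ij})P_\pi^{-1}$, which permutes rows and columns and sends $\M_n(A)(\gamma_1,\dots,\gamma_n)$ onto $\M_n(A)(\gamma_{\pi(1)},\dots,\gamma_{\pi(n)})$ as graded rings by the non-involutive version \cite[Theorem 1.3.3]{Roozbeh_graded_ring_notes}. The new ingredient here is checking compatibility with the involution: since $P_\pi$ is a $0$-graded matrix with entries $0$ and $1$ from the identity of $A$, we have $P_\pi^*=P_\pi^{-1}$, so conjugation by $P_\pi$ commutes with the $*$-transpose, i.e.\ $(P_\pi a P_\pi^{-1})^*=(P_\pi^{-1})^*a^*P_\pi^*=P_\pi a^* P_\pi^{-1}$.

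For part (2), I would conjugate by the diagonal matrix $D=\operatorname{diag}(a_1,\dots,a_n)$ where $a_i\in A_{\delta_i}$ satisfies $a_ia_i^*=a_i^*a_i=1$. The map $\Phi\colon(x_{ij})\mapsto D^{-1}(x_{ij})D$ sends the $(i,j)$-entry $x_{ij}$ to $a_i^{-1}x_{ij}a_j=a_i^*x_{ij}a_j$ (using $a_i^{-1}=a_i^*$). If $x_{ij}\in A_{\epsilon+\gamma_j-\gamma_i}$, then $a_i^*x_{ij}a_j\in A_{-\delta_i}A_{\epsilon+\gamma_j-\gamma_i}A_{\delta_j}\subseteq A_{\epsilon+(\gamma_j+\delta_j)-(\gamma_i+\delta_i)}$, which is exactly the $(i,j)$-entry of the $\epsilon$-component of $\M_n(A)(\gamma_1+\delta_1,\dots,\gamma_n+\delta_n)$, so $\Phi$ is a graded ring isomorphism with inverse $(y_{ij})\mapsto D(y_{ij})D^{-1}$. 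The involution check is again the crux: since $D^*=\operatorname{diag}(a_1^*,\dots,a_n^*)$ and the unitarity relations give $D^*=D^{-1}$, conjugation by $D$ commutes with the $*$-transpose exactly as in part (1), so $\Phi((x_{ij})^*)=\Phi((x_{ij}))^*$.

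I do not expect a serious obstacle in either part; the main point requiring care is verifying that the conjugating matrices are themselves unitary in the graded sense so that conjugation is a $*$-homomorphism. This hinges precisely on the hypotheses: for part (1) the identity of $A$ lies in $A_0$ and is self-adjoint, making $P_\pi$ unitary, while for part (2) the condition $a_ia_i^*=a_i^*a_i=1$ is exactly what forces $D$ to be unitary and hence $D^*=D^{-1}$. Without this two-sided unitarity the conjugation would fail to respect the involution, so I would emphasize that it is these relations, rather than mere invertibility of the $a_i$, that the argument depends on. Finally, to obtain the full statement of part (1) I would compose the transposition-conjugations realizing $\pi$ with the identity-map shift by $\delta$, noting that a composition of graded $*$-isomorphisms is again a graded $*$-isomorphism.
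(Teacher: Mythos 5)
Your proposal is correct and follows essentially the same route as the paper: observe that the uniform shift by $\delta$ does not change the graded components, conjugate by the permutation matrix $P_\pi$ (using $P_\pi^*=P_\pi^{-1}$) for part (1), and conjugate by the diagonal matrix of homogeneous unitaries (using $D^*=D^{-1}$) for part (2). Your explicit degree computation $a_i^*x_{ij}a_j\in A_{\epsilon+(\gamma_j+\delta_j)-(\gamma_i+\delta_i)}$ in part (2) is exactly the verification the paper leaves to the reader.
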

\begin{proof}
To prove the first part, we note that adding $\delta$ to each of the shifts does not change the matrix ring by formula (\ref{grading_on_matrices}). Consider the $n\times n$ permutation matrix $P_{\pi}$, with 1 at the $(i,\pi(i))$-th place for $i=1, \ldots, n$ and zeros elsewhere and the map
\[\phi:\M_n (A)(\gamma_1, \ldots, \gamma_n) \rightarrow  \M_n (A)(\gamma_{\pi(1)}, \ldots, \gamma_{\pi(n)})\;\;\;\mbox{ given by }\;\;\;
\phi: M \mapsto P_{\pi} M P_{\pi}^{-1}.\]
Since $P_{\pi}^{-1}=P_{\pi}^*$, $\phi$ is a $*$-isomorphism of rings. 

Note that $P_{\pi} e_{ij} P_{\pi}^{-1}=e_{\pi^{-1}(i)\pi^{-1}(j)}$ for any standard matrix unit 
$e_{ij},$  $i,j=1, \ldots, n,$ of $\M_n(A)(\gamma_1, \ldots, \gamma_n).$ In addition, we have that 
$\deg(e_{\pi^{-1}(i),\pi^{-1}(j)})=\delta_{\pi(\pi^{-1}(i))}-\delta_{\pi(\pi^{-1}(j))}=\delta_{i}-\delta_j=\deg(e_{ij}).$ This shows that $\phi$ is also a graded isomorphism.  

To prove the second part, let $a_i \in A_{\delta_i}$ such that $a_ia_i^*=a_i^*a_i=1$ and consider the diagonal matrix $P$ with $a_1,\dots,a_n$ on the diagonal. One can check that the map $M\mapsto P^{-1}MP$ gives a $*$-graded isomorphism. 
\end{proof}

\begin{remark}\label{noninvolutive_permutation} As we mentioned, Proposition \ref{permutation_of_components} is the involutive version of \cite[Theorem 1.3.3]{Roozbeh_graded_ring_notes}. The first parts of both Proposition \ref{permutation_of_components} and \cite[Theorem 1.3.3]{Roozbeh_graded_ring_notes} have analogous formulations and proofs. 

The assumptions of the second part of \cite[Theorem 1.3.3]{Roozbeh_graded_ring_notes} are adjusted to fit the involutive structure. In particular, the second part of \cite[Theorem 1.3.3]{Roozbeh_graded_ring_notes} states the following: if $\delta_1,\dots, \delta_n \in \Gamma$ are such that there is an invertible element  $a_i\in A_{\delta_i}$ for $i=1,\ldots, n,$ then the matrix rings
$\M_n (A)(\gamma_1, \ldots, \gamma_n)$ and $\M_n (A)(\gamma_1+\delta_1, \ldots, \gamma_n+\delta_n)$
are graded isomorphic. 
\end{remark}

\begin{definition}
Let  $A$ be a $\Gamma$-graded $*$-field. A \emph{graded matricial $*$-algebra over $A$}  is a graded $A$-algebra of the form 
\[\M_{n_1}(A)(\overline \gamma_1) \oplus \dots \oplus   \M_{n_k}(A)(\overline \gamma_k),\;\;\mbox{ for }\;\;\overline \gamma_i\in  \Gamma^{n_i},\; i=1, \ldots, k,\] where the involution is the $*$-transpose in each coordinate. 
\end{definition}

If $\Gamma$ is the trivial group, and the involutive structure is not considered, this definition reduces to the definition of a matricial algebra as in \cite[Section 15]{Goodearl_book}. Note that if $R$ is a graded matricial $*$-algebra over a graded field $A$, then $R_0$ is a matricial $A_0$-algebra with the involution being the coordinate-wise $*$-transpose.

\subsection{Graded Morita theory}
 
If $A$ is a $\Gamma$-graded ring, a graded free right $A$-module is defined as a graded right module which is a free right $A$-module with a
homogeneous basis (see \cite[Section 1.2.4]{Roozbeh_graded_ring_notes}). Graded free left modules are defined analogously.  
If $A$ is a $\Gamma$-graded ring, then  
\begin{equation}\label{n-th_power}
A^n(\gamma_1, \ldots,\gamma_n):=A(\gamma_1)\oplus \dots \oplus A(\gamma_n)
\end{equation} 
for $\gamma_1, \ldots,\gamma_n\in\Gamma$ is a graded free right $A$-module. Conversely, any finitely generated graded free right $A$-module is of this form. If we denote $\overline \gamma=(\gamma_1, \dots, \gamma_n)$ then 
$A^n(\overline \gamma)$ is a graded right $\M_n(A)(\overline \gamma)$-module and $A^n(-\overline \gamma)$ is a graded left $\M_n(A)(\overline \gamma)$-module.

Using $\Gr A$ to denote the category of graded right $A$-modules again, we recall the following result from \cite{Roozbeh_graded_ring_notes}. 

\begin{proposition}\cite[Proposition 2.1.1]{Roozbeh_graded_ring_notes}\label{equivalences_phi_and_psi}
Let $A$ be a $\Gamma$-graded ring and let $\ol \gamma = (\gamma_1 , \ldots ,
\gamma_n)\in \Gamma^n$. Then the functors
\begin{center}
\begin{tabular}{lllll}
$\phi : \Gr A$ & $\rightarrow \Gr \M_n(A)(\ol \gamma),\;\;$ & given by & 
$M \mapsto M \otimes_A A^n(\ol \gamma)$ & and \\
$\psi : \Gr \M_n(A)(\ol \gamma)$ & $\rightarrow \Gr A,\;\;$ & given by &
$N \mapsto  N\otimes_{\M_n(A)(\ol \gamma)} A^n(-\ol \gamma)$ &
\end{tabular}
\end{center}
are inverse equivalences of categories and they commute with the shifts. 
\end{proposition}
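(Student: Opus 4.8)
The plan is to prove the equivalence by exhibiting explicit natural isomorphisms $\psi\varphi \cong \mathrm{id}_{\Gr A}$ and $\varphi\psi \cong \mathrm{id}_{\Gr \M_n(A)(\ol\gamma)}$ arising from two graded bimodule isomorphisms, following the pattern of classical Morita theory but keeping careful track of the shifts. First I would record the bimodule structures: regarding elements of $A^n(\ol\gamma)$ as row vectors makes it a graded $(A, \M_n(A)(\ol\gamma))$-bimodule, and regarding elements of $A^n(-\ol\gamma)$ as column vectors makes it a graded $(\M_n(A)(\ol\gamma), A)$-bimodule, where in both cases the matrix acts by the usual matrix-vector multiplication. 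By associativity of the tensor product one has natural isomorphisms
\begin{align*}
\psi\varphi(M) &\cong M \otimes_A \big(A^n(\ol\gamma) \otimes_{\M_n(A)(\ol\gamma)} A^n(-\ol\gamma)\big), \\
\varphi\psi(N) &\cong N \otimes_{\M_n(A)(\ol\gamma)} \big(A^n(-\ol\gamma) \otimes_A A^n(\ol\gamma)\big),
\end{align*}
so the whole proof reduces to the two graded bimodule isomorphisms
\[
A^n(\ol\gamma) \otimes_{\M_n(A)(\ol\gamma)} A^n(-\ol\gamma) \cong_{\gr} A \quad\text{and}\quad A^n(-\ol\gamma) \otimes_A A^n(\ol\gamma) \cong_{\gr} \M_n(A)(\ol\gamma).
\]

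The two maps are the inner and outer products: the inner product $(a_1,\dots,a_n)\otimes(b_1,\dots,b_n)^{\mathrm t}\mapsto \sum_i a_ib_i$ and the outer product $(b_1,\dots,b_n)^{\mathrm t}\otimes(a_1,\dots,a_n)\mapsto (b_ia_j)_{ij}$. The crucial point, and the place where the grading really enters, is to verify that these maps are degree preserving; this is exactly what the shifts $\ol\gamma$ and $-\ol\gamma$ are engineered to ensure. If $(a_i)$ is homogeneous of degree $\delta$ in $A^n(\ol\gamma)$ and $(b_i)$ is homogeneous of degree $\epsilon$ in $A^n(-\ol\gamma)$, then $\deg a_i=\gamma_i+\delta$ and $\deg b_i=-\gamma_i+\epsilon$, so each $a_ib_i$ has degree $\delta+\epsilon$ and the inner product is homogeneous of degree $\delta+\epsilon$; likewise $b_ia_j$ has degree $\epsilon+\delta+\gamma_j-\gamma_i$, which by (\ref{grading_on_matrices}) is precisely the degree of the $(i,j)$-entry of a homogeneous matrix of degree $\delta+\epsilon$ in $\M_n(A)(\ol\gamma)$. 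That both maps are bimodule homomorphisms is a routine check of balancedness over the appropriate ring, and that they are isomorphisms follows as in the ungraded Morita theory (the outer product hits every matrix unit $e_{ij}$, and the inner product sends $e_1\otimes e_1^{\mathrm t}$ to $1$), so the grading arguments upgrade the classical isomorphisms to graded ones.

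Finally I would check that $\varphi$ and $\psi$ commute with the shifts. For any graded $(A,B)$-bimodule $P$ and any graded right $A$-module $M$ there is a natural graded isomorphism $M(\delta)\otimes_A P\cong_{\gr}(M\otimes_A P)(\delta)$, since the shift merely reindexes the left tensor factor; indeed an element $m\otimes p$ of $M(\delta)\otimes_A P$ lies in degree $\epsilon$ exactly when $\deg_M m + \deg p = \epsilon+\delta$. Applying this with $P=A^n(\ol\gamma)$ gives $\varphi(M(\delta))\cong_{\gr}\varphi(M)(\delta)$ and with $P=A^n(-\ol\gamma)$ gives $\psi(N(\delta))\cong_{\gr}\psi(N)(\delta)$.

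I expect the main obstacle to be the degree bookkeeping just described: all the content beyond classical Morita theory lives in checking that the inner and outer products are degree preserving with the stated shift data, and in confirming that the resulting natural transformations are themselves graded of degree zero rather than merely ungraded module isomorphisms. Everything else, namely associativity of the tensor product, balancedness, and surjectivity onto the matrix units, is formal and carries over verbatim from the non-graded setting.
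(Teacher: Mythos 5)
Your proof is correct, and it is essentially the same argument as the one the paper relies on: the paper cites this proposition rather than proving it, but it explicitly extracts the inner-product isomorphism $A\cong_{\gr} A^n(\ol\gamma)\otimes_{\M_n(A)(\ol\gamma)} A^n(-\ol\gamma)$ ``from the proof of'' the cited result as its equation (\ref{bimodules_and_tensor}), which is precisely one of the two graded bimodule isomorphisms you establish via the inner and outer products. Your degree bookkeeping (using (\ref{grading_on_matrices}) for the outer product) and the shift-compatibility via $M(\delta)\otimes_A P\cong_{\gr}(M\otimes_A P)(\delta)$ (the paper's (\ref{tensor_product_shift})) are all verified correctly.
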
 

In particular, from the proof of \cite[Proposition 2.1.1]{Roozbeh_graded_ring_notes}, it follows that the following graded $A$-bimodules are graded isomorphic for any $\ol \gamma\in \Gamma^n.$
\begin{equation}\label{bimodules_and_tensor}
A\cong_{\gr} A^n(\ol \gamma)\otimes_{\M_n(A)(\ol \gamma)} A^n(-\ol \gamma)
\end{equation}

A graded projective right $A$-module is a projective object in the abelian
category $\Gr A$. Equivalently (see \cite[Proposition 1.2.15]{Roozbeh_graded_ring_notes}), it is a graded module which is graded isomorphic to a direct summand of a graded free right $A$-module. If 
$e_{ij}$, $i,j=1, \ldots, n$ denote the standard matrix units in $\M_n(A)(\overline \gamma)$, then $e_{ii}\M_n(A)(\overline \gamma)$ is a finitely generated graded projective right $\M_n(A)(\overline \gamma)$-module. It is direct to check that there is a graded $\M_n(A)(\overline \gamma)$-module isomorphism 
\begin{equation} \label{matrix_unit_generated}
e_{ii}\M_n(A)(\overline \gamma)\cong_{\gr} A(\gamma_1-\gamma_i)\oplus A(\gamma_2-\gamma_i)\oplus \dots \oplus A(\gamma_n-\gamma_i), 
\end{equation}
given by $e_{ii}(x_{jl}) \mapsto  (x_{i1},x_{i2},\dots,x_{in}).$
 
We use the following fact in the proof of the next lemma. If $M$ is a graded right and $N$ a graded left $A$-module, then
\begin{equation}\label{tensor_product_shift} 
(M\otimes_A N)(\gamma)= M(\gamma)\otimes_A N= M\otimes_A N(\gamma)
\end{equation}
for every $\gamma\in \Gamma.$ This follows from \cite[Section 1.2.6]{Roozbeh_graded_ring_notes}

\begin{lemma}\label{matrix_units_lemma}
If $A$ is a $\Gamma$-graded ring, $\overline\gamma\in \Gamma^n,$ $e_{ii}$ is the standard matrix unit in $\M_n(A)(\overline \gamma)$ for some $i=1,\ldots, n,$ and $\psi$ is the equivalence from Proposition~\ref{equivalences_phi_and_psi}, then  $$\psi\left(e_{ii}\M_n(A)(\overline \gamma)\right)= A(-\gamma_i)$$
and 
\[e_{ii}\M_n(A)(\overline \gamma)\cong_{\gr}e_{jj}\M_n(A)(\overline \gamma)\;\;\mbox{ if and only if }\;\;\gamma_i-\gamma_j\in \Gamma_A\]
for any $i,j=1,\ldots, n.$
\end{lemma}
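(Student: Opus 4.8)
The plan is to reduce both assertions to the inverse Morita equivalences $\varphi$ and $\psi$ of Proposition~\ref{equivalences_phi_and_psi}. For the first claim I would not compute $\psi$ on the nose but instead compute $\varphi(A(-\gamma_i))$ and recognize the output. Applying \eqref{tensor_product_shift} with $M=A$ and $N=A^n(\ol\gamma)$, together with the canonical identification $A\otimes_A A^n(\ol\gamma)\cong_{\gr} A^n(\ol\gamma)$, gives
\[\varphi(A(-\gamma_i))=A(-\gamma_i)\otimes_A A^n(\ol\gamma)\cong_{\gr} A^n(\ol\gamma)(-\gamma_i),\]
and distributing this shift over the direct sum \eqref{n-th_power} yields $A(\gamma_1-\gamma_i)\oplus\cdots\oplus A(\gamma_n-\gamma_i)$. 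By \eqref{matrix_unit_generated} this is precisely $e_{ii}\M_n(A)(\ol\gamma)$. Since $\psi\varphi$ is naturally isomorphic to the identity, applying $\psi$ gives $\psi\big(e_{ii}\M_n(A)(\ol\gamma)\big)\cong_{\gr}\psi\varphi(A(-\gamma_i))\cong_{\gr}A(-\gamma_i)$, which is the asserted identity. (Equivalently, one may expand $\psi\big(e_{ii}\M_n(A)(\ol\gamma)\big)=e_{ii}\M_n(A)(\ol\gamma)\otimes_{\M_n(A)(\ol\gamma)}A^n(-\ol\gamma)$ and use the standard idempotent--tensor isomorphism $eR\otimes_R N\cong eN$ to cut out the $i$-th coordinate $A(-\gamma_i)$ of $A^n(-\ol\gamma)$.)

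For the second claim, the idea is to transport the graded isomorphism across $\psi$. As an equivalence of categories, $\psi$ both preserves and reflects graded isomorphism, so $e_{ii}\M_n(A)(\ol\gamma)\cong_{\gr}e_{jj}\M_n(A)(\ol\gamma)$ holds if and only if $\psi\big(e_{ii}\M_n(A)(\ol\gamma)\big)\cong_{\gr}\psi\big(e_{jj}\M_n(A)(\ol\gamma)\big)$, which by the first part is the statement $A(-\gamma_i)\cong_{\gr}A(-\gamma_j)$. It therefore remains to prove that $A(-\gamma_i)\cong_{\gr}A(-\gamma_j)$ if and only if $\gamma_i-\gamma_j\in\Gamma_A$.

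This last equivalence I would settle by the usual description of graded maps between shifted copies of $A$. Any graded right $A$-module homomorphism $f:A(-\gamma_i)\to A(-\gamma_j)$ is left multiplication by $a:=f(1)$; since the unit $1$ is homogeneous of degree $\gamma_i$ in $A(-\gamma_i)$, being graded of degree $0$ forces $a\in A(-\gamma_j)_{\gamma_i}=A_{\gamma_i-\gamma_j}$, and $f$ is an isomorphism exactly when $a$ is invertible in $A$. Hence a graded isomorphism exists if and only if there is an \emph{invertible} homogeneous element of degree $\gamma_i-\gamma_j$. This is the crux of the argument and the step I expect to be the real obstacle: the bare condition $\gamma_i-\gamma_j\in\Gamma_A$ only furnishes a \emph{nonzero} homogeneous element of that degree, so the two conditions coincide precisely because in a graded division ring every nonzero homogeneous element is invertible. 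I would therefore carry out this direction under the graded division ring hypothesis governing the graded $*$-fields of the main theorems; the forward implication $A(-\gamma_i)\cong_{\gr}A(-\gamma_j)\Rightarrow\gamma_i-\gamma_j\in\Gamma_A$ is valid for any graded ring, whereas the converse genuinely needs invertibility, as the example $A=\C[x]$ with $\deg x=1$ (where $1\in\Gamma_A$ yet $A(-1)\not\cong_{\gr}A(0)$) shows.
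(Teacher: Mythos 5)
Your proof is correct and follows essentially the same route as the paper's. For the first claim the paper computes $\psi\left(e_{ii}\M_n(A)(\ol\gamma)\right)$ directly using (\ref{matrix_unit_generated}), (\ref{n-th_power}), (\ref{tensor_product_shift}) and (\ref{bimodules_and_tensor}), whereas you compute $\varphi(A(-\gamma_i))$ with the same ingredients and then invoke $\psi\varphi\cong\mathrm{id}$; these are the same calculation read in opposite directions, and your parenthetical shortcut via $e R\otimes_R N\cong eN$ is also fine. For the second claim the paper likewise reduces, via the first part and the fact that an equivalence of categories reflects isomorphisms, to the statement that $A(-\gamma_i)\cong_{\gr}A(-\gamma_j)$ if and only if $\gamma_i-\gamma_j\in\Gamma_A$, but it simply cites \cite[Proposition 1.3.17]{Roozbeh_graded_ring_notes} for this, while you prove it by identifying graded maps between shifted copies of $A$ with left multiplications by homogeneous elements. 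Your closing observation is well taken: that equivalence, and hence the ``if'' direction of the lemma as literally stated for an arbitrary $\Gamma$-graded ring, genuinely requires invertibility of nonzero homogeneous elements of the relevant degree (the cited proposition is a statement about graded division rings), and your $\C[x]$ example correctly exhibits the failure in general. Since the paper only ever applies the lemma to graded ($*$-)fields, this is an imprecision in the lemma's hypotheses rather than a gap in either argument, and your decision to carry out that direction under the graded division ring hypothesis is the right one.
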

\begin{proof} For any $i=1,\ldots, n$ and a matrix unit $e_{ii},$ we have the following by the definition of $\psi,$ (\ref{matrix_unit_generated}), (\ref{n-th_power}), (\ref{tensor_product_shift}), and (\ref{bimodules_and_tensor}) respectively. 
$$\begin{array}{ll}
\psi\left(e_{ii}\M_n(A)(\overline \gamma)\right) & = e_{ii}\M_n(A)(\overline \gamma) \otimes_{\M_n(A)(\ol \gamma)} A^n(-\ol \gamma)   \\ 
& \cong_{\gr}\left( A(\gamma_1-\gamma_i)\oplus A(\gamma_2-\gamma_i)\oplus \dots \oplus A(\gamma_n-\gamma_i) \right)   \otimes_{\M_n(A)(\ol \gamma)} A^n(-\ol \gamma)  \\
& \cong_{\gr} A^n(\ol \gamma)(-\gamma_i)\otimes_{\M_n(A)(\ol \gamma)} A^n(-\ol \gamma)\\
& \cong_{\gr} \big( A^n(\ol \gamma)\otimes_{\M_n(A)(\ol \gamma)} A^n(-\ol \gamma)\big)(-\gamma_i)\\
& \cong_{\gr} A(-\gamma_i).  
\end{array}
$$ 
The second part of the lemma follows from the first and \cite[Proposition 1.3.17]{Roozbeh_graded_ring_notes} by which  
\begin{center}
$A(-\gamma_i)\cong_{\gr}A(-\gamma_{j})$ if and only if $\gamma_i-\gamma_j\in\Gamma_A$
\end{center}
for any $i,j=1,\ldots, n.$
\end{proof}

\subsection{Graded K-theory}

For a $\Gamma$-graded ring $A$, let $\Pgr[\Gamma] A$  denote the category of  finitely generated
graded  projective right $A$-modules. This is an exact category with the usual notion of (split) short exact
sequences. Thus, one can apply Quillen's construction (\cite{Quillen}) to obtain $K$-groups
$K_i(\Pgr[\Gamma] A),$ for $i\geq 0$, which we denote by $K_i^{\gr}(A)$. 
Note that for $\gamma \in \Gamma$,  the  $\gamma$-shift functor $\mathcal T_\gamma:\Gr A\rightarrow \Gr A$, $M \mapsto M(\gamma)$  is an isomorphism with the property
$\mathcal T_\gamma \mathcal T_\delta=\mathcal T_{\gamma + \delta}$, $\gamma,\delta\in \Gamma$.
Furthermore, $\mathcal T_\gamma$ restricts to $\Pgr A$. Thus the group $\Gamma$ acts on the category
$\Pgr[\Gamma] A$ by $(\gamma, P) \mapsto P(\gamma)$. By functoriality of the $K$-groups this equips $K_i^{\gr}(A)$
with the structure of a $\Z[\Gamma]$-module. 

Now, consider $A$ to be a graded $*$-ring and $P$ a finitely generated graded projective right $A$-module.   
Since $P^{**} \cong_{\gr} P$ as graded right $A$-modules, $(P^*)^{(-1)}\cong_{\gr}(P^{(-1)})^*$ and $P^{*(-1)}(\gamma)= P(\gamma)^{*(-1)}$ by (\ref{inverse_dual}) and (\ref{action_and_shifts}), 
the group $\Z_2$ acts on $\Pgr[\Gamma] A$, by $P \mapsto {P^*}^{(-1)}$ and this action commutes with the $\Gamma$-action. 
This makes $K^{\gr}_i$ a $\Z[\Gamma]$-$\Z[\Z_2]$-bimodule. In this paper, we exclusively work with the graded Grothendieck group $K^{\gr}_0$. We review an explicit construction of this group in the following section.

\subsection{The graded Grothendieck group of a graded *-ring.}

Let $A$ be a graded $*$-ring and let $\mathcal V^{\gr}(A)$ denote the monoid of graded isomorphism classes $[P]$ of finitely generated graded projective $A$-modules $P$ with the direct sum $[P]+[Q]=[P\oplus Q]$ as the addition operation. The $\Gamma$-$\Z_2$-action on the elements of the monoid $\mathcal V^{\gr}(A)$ from the previous section agrees with the addition. 

The \emph{graded Grothendieck group}  $K_0^{\gr}(A)$ is defined as the group completion of  the monoid $\mathcal V^{\gr}(A)$ which naturally inherits 
the $\Z[\Gamma]$-$\Z[\Z_2]$-bimodule structure from $\mathcal V^{\gr}(A)$. In the case when the group $\Gamma$ is trivial, $K^{\gr}_0(A)$ becomes the usual $K_0$-group equipped with the $\Z_2$-action we considered in section \ref{subsection_non-graded_involutive}. 

The action of $\Z_2$ is trivial on the elements of $\mathcal V^{\gr}(A)$ of the form $[P]$ where $P$ is a finitely generated graded {\em free} module as the following lemma shows.

\begin{lemma}\label{fin_gen_free}
Let $A$ be a $\Gamma$-graded $*$-ring and let $P$ be a finitely generated graded {\em free} right $A$-module. Then 
\[P^{*(-1)}\cong_{\gr} P.\]
\end{lemma}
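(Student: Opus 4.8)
The plan is to reduce the claim to the single-generator case and then invoke the shift-compatibility already established in the excerpt. A finitely generated graded free right $A$-module $P$ is, by (\ref{n-th_power}), graded isomorphic to $A(\gamma_1)\oplus\dots\oplus A(\gamma_n)$ for some $\gamma_1,\dots,\gamma_n\in\Gamma$. Since the operation $\underline{\hskip.3cm}^{*(-1)}$ is additive (it sends direct sums to direct sums, being built from the $\Hom$-dual and the inverse-grading functor, both of which are additive), it suffices to prove the result when $P=A(\gamma)$ is a single shifted copy of $A$; the general case then follows by taking the direct sum over the $n$ summands.

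So the core computation is to show $A(\gamma)^{*(-1)}\cong_{\gr} A(\gamma)$ for every $\gamma\in\Gamma$. First I would use (\ref{action_and_shifts}), which states $M^{*(-1)}(\gamma)=M(\gamma)^{*(-1)}$, to move the shift outside: taking $M=A$ gives $A(\gamma)^{*(-1)}=A^{*(-1)}(\gamma)$. Thus the problem collapses to the unshifted case $A^{*(-1)}\cong_{\gr} A$. For this, I would compute the graded dual $A^*=\Hom_A(A,A)$, which is graded isomorphic to $A$ as a graded left $A$-module via $f\mapsto f(1)$ (evaluation at the identity), and then apply the inverse-grading functor. The map $f\mapsto f(1)^*$, combining the evaluation isomorphism with the involution, should furnish the desired graded right $A$-module isomorphism $A^{*(-1)}\cong_{\gr} A$; concretely one checks that sending the identity homomorphism to $1\in A$ is degree-preserving because $A$ is concentrated in the way dictated by the grading and $1\in A_0$, so no degree shift is introduced.

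The one point requiring care is verifying that the candidate isomorphism respects the grading and the right $A$-action correctly after the two functors $\underline{\hskip.3cm}^*$ and $\underline{\hskip.3cm}^{(-1)}$ are composed, since each functor individually reverses the sense of the action (dualizing turns right modules into left modules, and the inverse-grading functor turns them back into right modules via $am:=ma^*$). I would track these twists exactly as in the proof of Lemma \ref{hom_lemma}: the composite two reversals cancel, and the degree bookkeeping is precisely the content of (\ref{inverse_dual}) and (\ref{action_and_shifts}), which are already available.

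The main obstacle, and the step I expect to demand the most attention, is the degree-tracking in the unshifted isomorphism $A^{*(-1)}\cong_{\gr} A$: one must confirm that evaluation at $1$ composed with the involution lands a homogeneous element of degree $\gamma$ in $A^{*(-1)}$ onto a homogeneous element of degree $\gamma$ in $A$, rather than degree $-\gamma$. This is exactly where the two sign reversals (from dualization and from inverse grading) must be checked to cancel; once that is confirmed, the rest is the routine additivity reduction and the invocation of (\ref{action_and_shifts}) to handle the shift $\gamma$, so the proof closes quickly.
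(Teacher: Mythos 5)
Your proposal is correct and takes essentially the same route as the paper: both arguments reduce to the unshifted isomorphism $A^{*(-1)}\cong_{\gr}A$ (your evaluation map $f\mapsto f(1)^*$ is precisely the inverse of the paper's map $a\mapsto L_{a^*}$, with the same degree check showing the two reversals cancel) and then handle the shifts and the direct-sum decomposition via (\ref{dual_shift}) and (\ref{inverse_shift}), which is exactly what your invocation of (\ref{action_and_shifts}) packages.
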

\begin{proof}
Note that $A\cong_{\gr} (A^{(-1)})^*$ by the map $a\mapsto L_{a^*}$ where $L_{a^*}$ is the left multiplication by $a^*.$ This right $A$-module isomorphism is also a graded homomorphism since if $a\in A_\gamma$ and $b\in A_{\delta}$ then $L_{a^*}(b)=a^*b\in A_{\delta-\gamma}$ and so $L_{a^*}\in \Hom_A(A, A)_{-\gamma}=\Hom_A(A, A)^{(-1)}_{\gamma}=(A^*)^{(-1)}_\gamma.$ 

If $P$ is a finitely generated graded free right $A$-module, so that $P\cong_{\gr} A(\gamma_1)\oplus \dots \oplus A(\gamma_n)$, for some positive integer $n$ and $\gamma_i \in \Gamma,$ $i=1,\ldots, n$, then  
$$
\begin{array}{llllll}
P^{*(-1)} & \cong_{\gr} & \Big(A(\gamma_1)^* \Big.& \oplus\;\; \dots\;\; \oplus &\Big. A(\gamma_n)^*\Big)^{(-1)}&\\
& = & \Big(A^*(-\gamma_1)\Big. &\oplus\;\; \dots\;\;  \oplus & \Big.A^*(-\gamma_n)\Big)^{(-1)}& \mbox{ by (\ref{dual_shift})}\\
&\cong_{\gr}& A^*(-\gamma_1)^{(-1)} &\oplus\;\; \dots\;\;  \oplus & A^*(-\gamma_n)^{(-1)}&\\
&=& A^{*(-1)}(\gamma_1) & \oplus\;\; \dots\;\;  \oplus & A^{*(-1)}(\gamma_n)& \mbox{ by  (\ref{inverse_shift})}\\
& \cong_{\gr}& A(\gamma_1) & \oplus\;\; \dots\;\;  \oplus & A(\gamma_n)& \mbox{ by  the previous paragraph}\\
&\cong_{\gr}& P.&& \\
\end{array}$$   
\end{proof}

Lemma \ref{fin_gen_free} is used in the following result which demonstrates that the $\Z_2$-action on $K^{\gr}_0(A)$ is trivial when $A$ is a graded division $*$-ring. We also describe the graded Grothendieck group in this case. We use $\mathbb N$ to denote the set of nonnegative integers.    

\begin{proposition}\label{graded_division_ring}
Let $A$ be a graded division $*$-ring.
\begin{enumerate}[\upshape(1)]
\item The action of $\Z_2$ on $K_0^{\gr}(A)$ is trivial.

\item The action of $\Z_2$ on $K^{\gr}_0(\M_n(A)(\gamma_1,\dots,\gamma_n))$ is trivial for any positive integer $n$ and $\gamma_i \in \Gamma,$ for $i=1,\ldots, n.$ 

\item \cite[Proposition 3.7.1]{Roozbeh_graded_ring_notes} There is a monoid isomorphism of $\mathcal V^{\gr}(A)$ and $\mathbb N[\Gamma /\Gamma_A]$ which induces a canonical $\Z[\Gamma]$-module isomorphism 
\begin{center}
$K_0^{\gr}(A) \rightarrow \Z[\Gamma /\Gamma_A]\;\;$ given by 
$\;\;[A^n(\gamma_1,\dots,\gamma_n)] \mapsto \sum_{i=1}^n \gamma_i +\Gamma_A.$ 
\end{center}
\end{enumerate}
\end{proposition}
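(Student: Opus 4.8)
The plan is to prove the three parts of Proposition \ref{graded_division_ring} in the order they are stated, using each earlier part to bootstrap the next. For part (1), the key observation is that over a graded division $*$-ring every finitely generated graded projective module is graded free. This follows from the standard graded-module theory for graded division rings: any graded module over a graded division ring is graded free (it has a homogeneous basis), and a direct summand of a graded free module is again graded free in this setting. Once we know $P$ is graded free, Lemma \ref{fin_gen_free} gives $P^{*(-1)}\cong_{\gr} P$ directly, so the generator $[P]$ of $\mathcal V^{\gr}(A)$ is fixed by the $\Z_2$-action. Since these classes generate $K_0^{\gr}(A)$ as a group and the action is additive, the induced $\Z_2$-action on all of $K_0^{\gr}(A)$ is trivial.

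For part (2), I would reduce to part (1) via graded Morita equivalence. By Proposition \ref{equivalences_phi_and_psi}, the category $\Pgr[\Gamma]\M_n(A)(\ol\gamma)$ is equivalent to $\Pgr[\Gamma] A$ through the functors $\varphi$ and $\psi$, and this equivalence commutes with the shifts, hence induces a $\Z[\Gamma]$-module isomorphism $K_0^{\gr}(\M_n(A)(\ol\gamma))\cong K_0^{\gr}(A)$. The point I must check is that this isomorphism is also compatible with the respective $\Z_2$-actions, i.e.\ that the equivalence intertwines the duality-plus-inverse functor $\underline{\hskip.2cm}^{*(-1)}$ on the two categories. Granting this compatibility, the triviality of the $\Z_2$-action on $K_0^{\gr}(A)$ from part (1) transports to triviality on $K_0^{\gr}(\M_n(A)(\ol\gamma))$. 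Alternatively, and perhaps more cleanly, I would argue directly: $\M_n(A)(\ol\gamma)$ is itself a graded $*$-ring whose finitely generated graded projectives, being summands of graded free modules, are determined by the indecomposables $e_{ii}\M_n(A)(\ol\gamma)$; using Lemma \ref{matrix_units_lemma} to identify these with shifts of $A$ and Lemma \ref{fin_gen_free} to see each is fixed under $\underline{\hskip.2cm}^{*(-1)}$, the same generator-by-generator argument as in part (1) applies.

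Part (3) is quoted from \cite[Proposition 3.7.1]{Roozbeh_graded_ring_notes}, so I may invoke it rather than reprove it. The content I would still verify is that the stated map is well defined and a $\Z[\Gamma]$-module homomorphism: the monoid $\mathcal V^{\gr}(A)$ is generated by the classes $[A(\gamma)]$ subject only to the graded isomorphisms $A(\gamma)\cong_{\gr} A(\delta)$ when $\gamma-\delta\in\Gamma_A$ (by \cite[Proposition 1.3.17]{Roozbeh_graded_ring_notes}, as used in Lemma \ref{matrix_units_lemma}), which matches exactly the relation $\gamma+\Gamma_A=\delta+\Gamma_A$ in $\Gamma/\Gamma_A$. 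The $\Gamma$-action $[P]\mapsto[P(\gamma)]$ corresponds to multiplication by the group element $\gamma$ in $\Z[\Gamma/\Gamma_A]$, giving $\Z[\Gamma]$-linearity.

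The main obstacle I anticipate is the $\Z_2$-equivariance in part (2): it is tempting to assert that the Morita equivalence respects the $*$-structure, but this requires care because the duality functor involves $\Hom_A(-,A)$ composed with the inverse grading, and one must confirm that $\varphi$ and $\psi$ commute with this composite and not merely with the shifts. The direct generator-level argument sidesteps this by only using that each indecomposable graded projective $e_{ii}\M_n(A)(\ol\gamma)$ is a shifted copy of $A$ (hence graded free after the Morita identification) and is therefore fixed under $\underline{\hskip.2cm}^{*(-1)}$ by Lemma \ref{fin_gen_free}; I would favor this route precisely to avoid the functorial compatibility check, though I would remark that the two arguments are equivalent.
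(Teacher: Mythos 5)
Your parts (1) and (3) coincide with the paper's proof, but part (2) contains a genuine gap, and it sits exactly at the point you flagged and then claimed to sidestep. The route you say you prefer does not work as written: the modules $e_{ii}\M_n(A)(\ol\gamma)$ are \emph{not} graded free over $\M_n(A)(\ol\gamma)$ (for $n>1$ each is a proper direct summand of the rank-one free module), so Lemma \ref{fin_gen_free} says nothing about them directly. They become shifts of $A$ only after applying the Morita functor $\psi$ of Proposition \ref{equivalences_phi_and_psi} (that is the content of Lemma \ref{matrix_units_lemma}), so deducing that $e_{ii}\M_n(A)(\ol\gamma)$ is fixed under $P\mapsto P^{*(-1)}$ from the fact that $A(-\gamma_i)$ is fixed requires precisely the compatibility of $\psi$ with $P\mapsto P^{*(-1)}$ that you wanted to avoid. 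Your first route (transport along the Morita equivalence) is the correct one, but you leave its key step unproved, writing only ``granting this compatibility.''

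The paper closes that step concretely and locally, for one module at a time rather than by a general naturality argument. Writing $F=A(-\gamma_1)\oplus\dots\oplus A(-\gamma_n)$, so that $\psi(P)=P\otimes_{\M_n(A)(\ol\gamma)}F$, it computes
\[
\psi(P)^{*(-1)}=(P\otimes F)^{*(-1)}\cong_{\gr}(F^{*}\otimes P^{*})^{(-1)}\cong_{\gr}P^{*(-1)}\otimes F^{*(-1)}\cong_{\gr}P^{*(-1)}\otimes F=\psi(P^{*(-1)}),
\]
using the duality $(M\otimes N)^{*}\cong N^{*}\otimes M^{*}$ for the finitely generated projective bimodule $F$ together with $F^{*(-1)}\cong_{\gr}F$ from part (1); since also $\psi(P)^{*(-1)}\cong_{\gr}\psi(P)$ by part (1) and $\psi$ reflects graded isomorphisms, this yields $P^{*(-1)}\cong_{\gr}P$. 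If you insist on a generator-by-generator argument that genuinely avoids the Morita compatibility, the right ingredient is not Lemma \ref{fin_gen_free} but the fact that $e_{ii}$ is a \emph{projection}: one has $\bigl(e_{ii}\M_n(A)(\ol\gamma)\bigr)^{*}\cong_{\gr}\M_n(A)(\ol\gamma)e_{ii}$ and $\bigl(\M_n(A)(\ol\gamma)e_{ii}\bigr)^{(-1)}\cong_{\gr}e_{ii}^{*}\M_n(A)(\ol\gamma)=e_{ii}\M_n(A)(\ol\gamma)$, i.e.\ the computation of Lemma \ref{dual_of_idempotent_generated_module} and Proposition \ref{K0_via_idempotents} carried out over the matrix ring itself. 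Either repair makes your argument complete; as submitted, part (2) is not proved.
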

\begin{proof}
The first part follows directly from Lemma \ref{fin_gen_free} since any graded module over a graded division ring is graded free.

To prove the second part, let $\overline{\gamma}=(\gamma_1,\dots,\gamma_n)\in \Gamma^n$ and let $P$ be a finitely generated graded projective right $\M_n(A)(\overline{\gamma})$-module. If $\psi$ is the equivalence of categories from Proposition \ref{equivalences_phi_and_psi}, then $\psi(P)^{*(-1)}$ and  $\psi(P)$ are graded isomorphic by the first part. Thus, to show that $P$ and $P^{*(-1)}$ are graded isomorphic, it is sufficient to show that $\psi(P)^{*(-1)}$ and $\psi(P^{*(-1)})$ are graded isomorphic. Let $F=A(-\gamma_1)\oplus \dots \oplus A(-\gamma_n)$ so that $\psi(P)=P \otimes_{\M_n(A)(\ol \gamma)}F$ and $\psi(P^{*(-1)})=P^{*(-1)} \otimes_{\M_n(A)(\ol \gamma)} F.$ Since $F^{*(-1)}$ is graded isomorphic to $F$ by part (1), we have that  
\begin{multline*}
\psi(P)^{*(-1)} = (P \otimes_{\M_n(A)(\ol \gamma)} F )^{*(-1)}\cong_{\gr} (F^* \otimes_{\M_n(A)(\ol \gamma)} P^*)^{(-1)}\cong_{\gr}\\
P^{*(-1)} \otimes_{\M_n(A)(\ol \gamma)} F^{*(-1)}\cong_{\gr}P^{*(-1)} \otimes_{\M_n(A)(\ol \gamma)} F=\psi(P^{*(-1)}).
\end{multline*}

The third part of the lemma holds since the map 
\[\mathcal V^{\gr}(A)\to \mathbb N[\Gamma /\Gamma_A]\mbox{ given by }[A(\gamma_1)^{r_1}\oplus\ldots\oplus A(\gamma_n)^{r_n}]\mapsto r_1(\gamma_1+\Gamma_A)+ \ldots +r_n(\gamma_n+\Gamma_A)\]
is a well-defined monoid isomorphism. The details can be found in \cite[Proposition 3.7.1]{Roozbeh_graded_ring_notes}. 
\end{proof}

\subsection{Pre-order on the graded Grothendieck group}
The group $K^{\gr}_0(A)$ of a $\Gamma$-graded ring $A$ is a pre-ordered $\Z[\Gamma]$-module with the monoid $\mathcal V^{\gr}(A)$ as the positive cone of the pre-order, i.e. $[P]\leq [Q]$ if and only if $[Q]-[P]\in \mathcal V^{\gr}(A).$ An element $u$ of $K^{\gr}_0(A)$ is called an \emph{order-unit}  if for any $x\in K^{\gr}_0(A)$, there is a positive integer $n$ and $\gamma_1,\dots,\gamma_n \in \Gamma,$ such that 
$x\leq \sum_{i=1}^n \gamma_i u .$
Thus, the element $[A]\in K^{\gr}_0(A)$ is an order-unit. If $\Gamma$ is the trivial group, the definition of an order-unit boils down to the one in \cite[page 203]{Goodearl_book}. 

If $A$ and $B$ are graded rings, a $\Z[\Gamma]$-module homomorphism $f: K_0^{\gr}(A)\to K_0^{\gr}(B)$ is {\em contractive} if $f$ is order-preserving (i.e. $x\geq 0$ implies $f(x)\geq 0$) and $0\leq x\leq [A]$ implies that $0\leq f(x)\leq [B].$ The map $f$ is {\em unit-preserving} if  $f([A])=[B].$
It is direct to check that $f$ is contractive if and only if $f$ is order-preserving and $f([A])\leq [B].$ We use the term contractive following the terminology from the $C^*$-algebra theory.

It is direct to check that any graded ring homomorphism induces a contractive map on the $K^{\gr}_0$-groups. Moreover, if $A$ and $B$ are $\Gamma$-graded $*$-rings and $\phi: A\to B$ is a graded $*$-homomorphism, then $K^{\gr}_0(\phi)$ is a contractive $\Z[\Gamma]$-$\Z[\Z_2]$-bimodule homomorphism. 

\subsection{The graded Grothendieck group via the idempotent matrices.}
The graded Grothendieck group of a $\Gamma$-graded ring $A$ can be represented by the equivalence classes of idempotent matrices, just as in the non-graded case. We refer the reader to \cite[Section 3.2]{Roozbeh_graded_ring_notes} for details of the fact that every finitely generated graded projective right $A$-module $P$ is graded isomorphic to the module $pA^n(-\overline\gamma)$ for some idempotent $p\in\M_n(A)(\overline\gamma)_0$ and some $\overline\gamma\in \Gamma^n.$ Two such finitely generated graded projective modules $P\cong_{\gr}pA^n(-\overline\gamma)$ and $Q\cong_{\gr}qA^m(-\overline\delta),$ where $q$ is an idempotent in $\M_m(A)(\overline\delta)_0$ and $\overline\delta\in\Gamma^m,$ are graded isomorphic if and only if the idempotents $p$ and $q$ are equivalent in the following sense: there are  $x\in \Hom_{\Gr A}(A^n(-\overline\gamma), A^m(-\overline\delta))$ and $y\in\Hom_{\Gr A}(A^m(-\overline\delta), A^n(-\overline\gamma))$ such that $xy = p$ and $yx = q$ 
where the idempotents $p$ and $q$ are considered as maps in $\Hom_{\Gr A}(A^n(-\overline\gamma), A^n(-\overline\gamma))$ and $\Hom_{\Gr A}(A^m(-\overline\delta), A^m(-\overline\delta))$ respectively. The equivalence of homogeneous matrix idempotents can also be defined using the mixed shifts on matrices as in \cite[Section 1.3.4]{Roozbeh_graded_ring_notes}. In particular, $\Hom_{\Gr A}(A^n(-\overline\gamma), A^m(-\overline\delta))=\Hom_A(A^n(-\overline\gamma), A^m(-\overline\delta))_0=\M_{m\times n}(A)[-\ol\delta][-\ol\gamma]$ and $\Hom_{\Gr A}(A^n(-\overline\gamma), A^n(-\overline\gamma))=\M_{n\times n}(A)[-\ol\gamma][-\ol\gamma]=\M_n(A)(\ol\gamma)_0$.
 
Thus, $K_0^{\gr}(A)$ can be defined as the Grothendieck group of the monoid of the equivalence classes $[p]$ of homogeneous idempotents $p$ with the addition given by 
\[[p]+[q]=\left[\left(
\begin{array}{cc}
p & 0 \\0 & q 
\end{array}\right)\right].
\]
The two representations of $K_0^{\gr}(A)$ are isomorphic by $[p]\mapsto [pA^n(-\ol\gamma)]$ for $p\in\M_n(A)(\ol\gamma)_0$. 
The group $\Gamma$ acts on the equivalence classes of idempotent matrices by $\delta[p]=[\delta p]$ where $\delta p$ is represented by the same matrix as $p\in \M_n(A)(\ol\gamma)_0$ except that it is considered as an element of $\M_n(A)(\ol\gamma-\delta)_0$ for $\ol\gamma-\delta=(\gamma_1-\delta, \ldots, \gamma_n-\delta).$ The matrix rings $\M_n(A)(\ol\gamma)_0$ and $\M_n(A)(\ol\gamma-\delta)_0$ are the same by definition (see formula (\ref{grading_on_matrices})). 

Consider the standard matrix units $e_{ii}\in\M_n(A)(\ol\gamma)$ for $i=1,\ldots, n$ now. Lemma \ref{matrix_units_lemma} and the definition of the action of $\Gamma$ on $K_0^{\gr}(\M_n(A)(\ol\gamma))$ imply that 
\begin{equation}\label{matrix_units_identity}
[e_{ii}]=(\gamma_j-\gamma_i)[e_{jj}] 
\end{equation}
for any  $i,j=1,\ldots, n.$

If the graded ring $A$ is also a graded $*$-ring, we show that the $\Z_2$-action on $K_0^{\gr}(A)$ can be represented using the $*$-transpose. We need a preliminary lemma and some notation before proving this fact in Proposition \ref{K0_via_idempotents}. 

If $A$ is a $\Gamma$-graded ring and $\ol\gamma\in \Gamma^n$ we let 
$e_i, i=1,\ldots, n$ denote the standard basis of $A^n(-\ol\gamma)$ represented as the column vectors
$\left(\begin{array}{c}
1\\ 0 \\ \vdots\\ 0   
 \end{array}\right),$
 $\left(\begin{array}{c}
0\\ 1 \\ \vdots\\ 0   
 \end{array}\right), \ldots$
 $\left(\begin{array}{c}
0\\ 0 \\ \vdots\\ 1   
 \end{array}\right).$
Note that $\deg(e_i)=\gamma_i.$ Also, let $e^i$ denote the standard basis of $A^n(\ol\gamma)$ represented as the row vectors
$(1\; 0\; \ldots\; 0), (0\; 1\; \ldots\; 0),\ldots, (0\; 0\; \ldots\; 1).$ Note that $\deg(e^i)=-\gamma_i.$

\begin{lemma}\label{dual_of_idempotent_generated_module}
Let $A$ be a $\Gamma$-graded ring, let $\ol\gamma\in \Gamma^n,$ and let $p\in \M_n(A)(\ol \gamma)_0$ be an idempotent. Then 
$$ \Hom_A(pA^n(-\ol\gamma), A)\cong_{\gr} A^n(\ol\gamma)p$$
and the maps 
$$
\begin{array}{ll}
\phi: \Hom_A(pA^n(-\ol\gamma), A)\to A^n(\ol\gamma)p\hskip1cm & f \mapsto \sum_{i=1}^n f(pe_i)e^ip\mbox{ and}\\
\psi: A^n(\ol\gamma)p\to \Hom_A(pA^n(-\ol\gamma), A)\hskip1cm & xp=\sum_{i=1}^n x_ie^ip\mapsto \left(pe_i\mapsto x_i\right),\;\;i=1,\ldots,n
\end{array}
$$
are mutually inverse graded homomorphisms of graded left $A$-modules. Analogously, the graded right modules $\Hom_A(A^n(\ol\gamma)p, A)$ and $pA^n(-\ol\gamma)$ are graded isomorphic.  
\end{lemma}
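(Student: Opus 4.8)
The plan is to check directly that $\phi$ and $\psi$ are well-defined, mutually inverse graded homomorphisms of graded left $A$-modules, and then to obtain the ``analogously'' statement by symmetry. Throughout I use that $p\in\M_n(A)(\overline\gamma)_0$ is a homogeneous idempotent of degree zero, so that $pe_i$ is homogeneous of degree $\gamma_i=\deg(e_i)$ and the $i$-th row $e^ip$ of $p$ is homogeneous of degree $-\gamma_i=\deg(e^i)$. I also record the degree conventions on the two sides. Since $\Hom_A(M,A)_\delta=\{f:f(M_\sigma)\subseteq A_{\sigma+\delta}\}$ and $pe_i$ lies in degree $\gamma_i$ of $M=pA^n(-\overline\gamma)$, a homogeneous $f$ of degree $\delta$ satisfies $f(pe_i)\in A_{\gamma_i+\delta}$; on the other side, a homogeneous element $w$ of degree $\delta$ in the left module $A^n(\overline\gamma)p$ has coordinates $w_i\in A_{\delta+\gamma_i}$.

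I expect the well-definedness of $\psi$ to be the main obstacle, since $pe_1,\dots,pe_n$ generate $pA^n(-\overline\gamma)$ but do not form a basis. First I would note that for $w=xp\in A^n(\overline\gamma)p$ the identity $w=wp$ forces $w=\sum_i x_ie^ip$ with $x_i$ the genuine coordinates of $w$, so the recipe $pe_i\mapsto x_i$ is unambiguous. Then I must verify it respects all relations among the generators. A relation $\sum_i pe_ia_i=0$ with $a_i\in A$ is precisely the statement $pa=0$ for the column $a=(a_1,\dots,a_n)^{T}$, and applying the proposed map gives $\sum_i x_ia_i=wa=(wp)a=w(pa)=0$. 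Hence $\psi(w)$ extends to a well-defined right $A$-module homomorphism, and its right $A$-linearity is immediate. By contrast $\phi$ requires no relation check, since $\phi(f)=\bigl(\sum_i f(pe_i)e^i\bigr)p$ manifestly lies in $A^n(\overline\gamma)p$.

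Next I would prove that $\phi$ and $\psi$ are mutually inverse. The key identity is $pe_j=\sum_i (pe_i)p_{ij}$, which follows from idempotency because $\sum_i (pe_i)p_{ij}=p\bigl(\sum_i e_ip_{ij}\bigr)=p(pe_j)=pe_j$. Combined with right $A$-linearity of $f$ this yields $f(pe_j)=\sum_i f(pe_i)p_{ij}$, and the right-hand side is exactly the $j$-th coordinate of $\phi(f)=\sum_i f(pe_i)e^ip$ (since $e^ip$ has $j$-th entry $p_{ij}$). Therefore $\psi(\phi(f))(pe_j)=f(pe_j)$, so $\psi\phi=\mathrm{id}$. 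In the other direction, $\phi(\psi(w))=\sum_i x_ie^ip=\bigl(\sum_i x_ie^i\bigr)p=wp=w$, using $w=wp$ once more, so $\phi\psi=\mathrm{id}$.

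Finally I would confirm compatibility with the grading and the left $A$-action, both routine. For homogeneous $f$ of degree $\delta$, each summand $f(pe_i)e^ip$ has degree $(\gamma_i+\delta)+(-\gamma_i)=\delta$, so $\phi$ preserves degree, and the symmetric computation shows the same for $\psi$; thus both are graded of degree zero. Left $A$-linearity follows from $\phi(af)=\sum_i af(pe_i)e^ip=a\phi(f)$, with the analogous identity for $\psi$. The ``analogously'' clause is then obtained by running the identical argument with the roles of the column module $pA^n(-\overline\gamma)$ and the row module $A^n(\overline\gamma)p$ interchanged; alternatively it follows from the first isomorphism by dualizing and invoking $P^{**}\cong_{\gr}P$.
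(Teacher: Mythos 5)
Your proposal is correct and follows essentially the same route as the paper: both verify that the explicitly given maps $\phi$ and $\psi$ are mutually inverse graded left $A$-module homomorphisms, with the same degree bookkeeping ($\deg(pe_i)=\gamma_i$, $\deg(e^ip)=-\gamma_i$). The only difference in emphasis is that you spell out the well-definedness of $\psi$ and the inverse identities (which the paper dismisses as ``direct to check''), while the paper writes out in full the degree computation for $\psi$ that you compress into ``the symmetric computation''; both versions are complete.
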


\begin{proof}
It is direct to check that $\phi$ and $\psi$ are mutually inverse homomorphisms of left $A$-modules. We show that $\phi$ and $\psi$ are graded homomorphisms. Since $p\in \Hom_A(A^n(-\ol\gamma), A^n(-\ol\gamma))_0,$ we have that $\deg(pe_i)=\gamma_i$ for all $i=1,\ldots,n.$ Similarly, $\deg(e^ip)=-\gamma_i$ for $i=1,\ldots,n.$
If $f\in \Hom_A(pA^n(-\ol\gamma), A)_\delta,$ for some $\delta\in\Gamma,$ then $\deg(f(pe_i))=\gamma_i+\delta$ for all $i=1,\ldots,n.$ Thus, $\deg( f(pe_i)e^ip)=\gamma_i+\delta-\gamma_i=\delta$ for any $i=1,\ldots,n$ and hence $\deg(\psi(f))=\delta.$ Consequently, $\phi$ is a graded homomorphism. 

To show that $\psi$ is a graded homomorphism, let $x=\sum_{i=1}^n x_ie^i\in A^n(\ol\gamma)$ be such that $\deg(xp)=\delta$ so that $\deg(x_i)=\delta+\gamma_i.$ We show that $\psi(xp)$ is a graded homomorphism of degree $\delta.$ Indeed, if $py=\sum_{i=1}^n pe_iy_i$ is of degree $\alpha$ for some $\alpha\in \Gamma,$ then  $\deg(y_i)=\alpha-\gamma_i.$ Thus,  
$\deg(x_iy_i)=\gamma_i+\delta+\alpha-\gamma_i=\delta+\alpha$ for any $i=1,\ldots,n,$ so that $\psi(xp)(py)=\sum_{i=1}^n x_iy_i$ has degree $\delta+\alpha.$ Thus, $\psi$ is a graded homomorphism.
\end{proof}

\begin{proposition}\label{K0_via_idempotents}
If $A$ is a $\Gamma$-graded $*$-ring, $P$ a finitely generated graded projective right $A$-module and $p\in M_n(A)(\overline{\gamma})_0$ an idempotent such that $pA^n(-\overline{\gamma})\cong_{\gr}P,$ then the modules $p^*A^n(-\overline{\gamma})$ and $(pA^n(-\overline{\gamma}))^{*(-1)}$ are isomorphic as graded right $A$-modules. 
Thus,  the $\Z_2$-action 
\begin{center}
$[P]=[pA^n(-\overline{\gamma})]\mapsto [P^{*(-1)}]=[p^*A^n(-\overline{\gamma})]$ corresponds to $[p]\mapsto [p^*].$
\end{center}
\end{proposition}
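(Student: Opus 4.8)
The plan is to combine Lemma~\ref{dual_of_idempotent_generated_module} with the $*$-transpose to produce an explicit graded isomorphism, after which the statement about the $\Z_2$-action is immediate. First I would observe that $p^*$ is again a degree-zero idempotent in $\M_n(A)(\ol\gamma)$: since the $*$-transpose is a graded $*$-homomorphism on $\M_n(A)(\ol\gamma)$ it preserves the zero-component, and $(p^*)^2=(p^2)^*=p^*$. Hence $p^*A^n(-\ol\gamma)$ is a genuine finitely generated graded projective right $A$-module and the statement makes sense.

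Next, by Lemma~\ref{dual_of_idempotent_generated_module} the dual $P^*=\Hom_A(pA^n(-\ol\gamma),A)$ is graded isomorphic to the left module $A^n(\ol\gamma)p$, so that $P^{*(-1)}\cong_{\gr}(A^n(\ol\gamma)p)^{(-1)}$ as graded right $A$-modules. It therefore suffices to exhibit a graded right $A$-module isomorphism $(A^n(\ol\gamma)p)^{(-1)}\cong_{\gr}p^*A^n(-\ol\gamma)$. The key idea is that the $*$-transpose carries the row vectors comprising $A^n(\ol\gamma)p$ to column vectors in $p^*A^n(-\ol\gamma)$. I would define $\phi\colon (A^n(\ol\gamma)p)^{(-1)}\to p^*A^n(-\ol\gamma)$ by $\phi(xp)=(xp)^*=p^*x^*$, where for a row vector $x=\sum_i x_ie^i$ its transpose is the column vector $x^*=\sum_i e_i x_i^*$; the obvious candidate for the inverse is $z\mapsto z^*$. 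Because applying $*$ twice is the identity, these two maps are mutually inverse bijections, so the real content is in checking that $\phi$ is a graded right $A$-module homomorphism.

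For the module structure, recall that in the inverse grading the right action on the left module $A^n(\ol\gamma)p$ is $(xp)\cdot a=a^*(xp)=(a^*x)p$, where $a^*x=\sum_i(a^*x_i)e^i$. Since $*$ is an anti-automorphism one computes $(a^*x)^*=\sum_i e_i x_i^* a=x^*a$, whence $\phi((xp)\cdot a)=p^*x^*a=\phi(xp)\cdot a$, so $\phi$ is $A$-linear. For the grading, I would use $\deg(e_i)=\gamma_i$ in $A^n(-\ol\gamma)$ together with $\deg(x_i^*)=-\deg(x_i)$: by the computation in the proof of Lemma~\ref{dual_of_idempotent_generated_module}, if $xp$ has degree $\delta$ then $\deg(x_i)=\delta+\gamma_i$, so $\deg(e_i x_i^*)=\gamma_i-(\delta+\gamma_i)=-\delta$ and $\phi(xp)$ has degree $-\delta$, which is exactly the degree of $xp$ in the inverse grading. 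Thus $\phi$ is a graded isomorphism, giving $P^{*(-1)}\cong_{\gr}p^*A^n(-\ol\gamma)$.

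I expect the main obstacle to be purely bookkeeping rather than conceptual: one must keep straight the row/column conventions, the placement of scalars, and the two simultaneous degree reversals (one from $(-1)$ and one from $*$), so that the anti-automorphism property of $*$ is invoked in precisely the right place when verifying $A$-linearity. Once $\phi$ is confirmed to be a graded $A$-module isomorphism, the final description of the action is immediate: the $\Z_2$-action on $K_0^{\gr}(A)$ is $[P]\mapsto[P^{*(-1)}]$, and since $P^{*(-1)}\cong_{\gr}p^*A^n(-\ol\gamma)$, it is represented on idempotents by $[p]\mapsto[p^*]$.
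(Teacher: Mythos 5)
Your proof is correct, and it reaches the same intermediate module as the paper but traverses the square of functors in the opposite order. The paper first applies $(-1)$ to $pA^n(-\ol\gamma)$, obtaining the left module $A^n(\ol\gamma)p^*$ via $px\mapsto x^*p^*$, then uses Lemma \ref{hom_lemma} to commute $(-1)$ past $\Hom_A(-,A)$, and finally invokes the right-module half of Lemma \ref{dual_of_idempotent_generated_module} to identify $\Hom_A(A^n(\ol\gamma)p^*,A)$ with $p^*A^n(-\ol\gamma)$. You instead dualize first, using the left-module half of Lemma \ref{dual_of_idempotent_generated_module} to get $P^*\cong_{\gr}A^n(\ol\gamma)p$, and then dispose of the remaining inversion by writing down the graded isomorphism $(A^n(\ol\gamma)p)^{(-1)}\cong_{\gr}p^*A^n(-\ol\gamma)$, $xp\mapsto p^*x^*$, explicitly. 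Your degree bookkeeping ($\deg(e_ix_i^*)=\gamma_i-(\delta+\gamma_i)=-\delta$, matching the degree of $xp$ in the inverse grading) and the $A$-linearity check using the anti-automorphism property are both right. What your route buys is that Lemma \ref{hom_lemma} is not needed at all --- the commutation of $*$ and $(-1)$ is absorbed into the single explicit transpose map --- at the cost of one more hands-on verification; the paper's route is slightly more modular, reusing Lemma \ref{hom_lemma} which it needs elsewhere anyway. The concluding statement about the $\Z_2$-action follows identically in both versions.
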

\begin{proof}
Note that the module $pA^n(-\overline{\gamma})^{(-1)}$ is graded isomorphic to $A^n(\ol\gamma)p^*$ by the map $px\mapsto x^*p^*.$ Thus we have that \[(pA^n(-\overline{\gamma}))^{*(-1)}=\Hom_A(pA^n(-\overline{\gamma}), A)^{(-1)}\cong_{\gr}\Hom_A(pA^n(-\overline{\gamma})^{(-1)}, A)\cong_{\gr} \Hom_A(A^n(\overline{\gamma})p^*, A)\]
by Lemma \ref{hom_lemma}. The module $\Hom_A(A^n(\overline{\gamma})p^*, A)$ is graded isomorphic to  $p^*A^n(-\overline{\gamma})$ by Lemma \ref{dual_of_idempotent_generated_module}.
\end{proof}

By this proposition, if an idempotent matrix $p\in\M_n(A)(\ol\gamma)_0$ is a {\em projection} (i.e. a self-adjoint idempotent), then the action of $\Z_2$ on $[pA^n(-\gamma)]$ is trivial.

\section{Fullness}\label{section_fullness}

Let $\mathcal C$ denote the category whose objects are graded matricial algebras over a $\Gamma$-graded $*$-field $A$ and whose morphisms are graded $*$-algebra homomorphisms. Let $\mathcal P$ denote the category whose objects are of the form $(G, u)$ where $G$ is a pre-ordered $\Z[\Gamma]$-module and $u$ is an order-unit in $G,$ and whose morphisms are contractive $\Z[\Gamma]$-module homomorphisms (not necessarily unit-preserving). Then $K_0^{\gr}: \mathcal C \to \mathcal P$ defines a functor. The goal of this section is to show that this functor is full under the assumption that each nontrivial graded component of $A$ has a unitary element in which case we say that $A$ has enough unitaries. This assumption is satisfied in most of the relevant cases as well as when $A$ is trivially graded. 

Thus, starting from a contractive homomorphism $f: K^{\gr}_0(R, [R])\rightarrow K^{\gr}_0(S, [S])$, of graded matricial $*$-algebras $R$ and $S$ over a graded $*$-field $A$ with enough unitaries, we present a specific formula for a graded $*$-homomorphism $\phi: R\rightarrow S$ such that $K_0^{\gr}(\phi)=f.$ We also show that if $f$ is unit-preserving, then $\phi$ is unital. This result, contained in Theorem \ref{fullness} is a graded, involutive generalization of an analogous non-graded and non-involutive result from \cite{Goodearl_book} and a non-involutive result from \cite{Roozbeh_Annalen}. Besides being more general, we emphasize that our proof of Theorem \ref{fullness} is {\em constructive} while the proofs of the  non-graded, non-involutive version from \cite{Goodearl_book} and the non-involutive version from \cite{Roozbeh_Annalen} (and \cite{Roozbeh_graded_ring_notes}) are existential. 

After introducing some notation, we briefly summarize the idea of the proof in the non-graded case. Our reasons for doing so are the following. The proof of Theorem \ref{fullness} involves formulas with cumbersome subscripts and superscripts. So, first we consider the non-graded case to gradually introduce the notation and formulas. This approach also highlights the specifics of the graded case. In addition, since the proofs of the analogous, non-involutive statements are not constructive, we include our constructive proof in the non-graded case too.  

Although rings in this section are assumed to be unital, the homomorphisms between them are not necessarily unit-preserving.

\subsection{Notation} We introduce some notation we extensively use in this section. 

If $A$ is any ring, we let $1_n$ denote the identity matrix in the matrix ring $\M_n(A)$  and $0_n$ the zero matrix in $\M_n(A).$ 
If $x\in M_n(A)$ and $y\in \M_m(A)$ we define their direct sum by
\[x\oplus y =
\left( 
\begin{array}{cc}
x & 0_{n\times m} \\
0_{m\times n} & y\\
\end{array}
\right)\in \M_{n+m}(A).\]

If $A$ is a $*$-ring and $k$ a positive integer, we define $*$-algebra homomorphisms $\phi_k^n:\M_n(A)\to \M_{nk}(A)$ for any positive integer $n$
as follows. For $x\in \M_n(A),$ we let $\phi^n_k (x)=x\otimes 1_{k}
\in \M_{kn}(A)$ where 
\[
(x_{ij})\otimes 1_{k}:=
\left( 
\begin{array}{cccc}
x_{11}1_k& x_{12}1_k& \ldots &x_{1n}1_k\\
x_{21}1_k& x_{22}1_k& \ldots & x_{2n}1_k\\
\vdots & \vdots &\ddots &\vdots\\
x_{n1}1_k& x_{n2}1_k& \ldots & x_{nn}1_k\\
\end{array}
\right)\;\;\mbox{ for }\;\;
x_{ij}1_k=
\left( 
\begin{array}{cccc}
x_{ij}& 0& \ldots & 0\\
0& x_{ij}& \ldots & 0\\
\vdots & \vdots &\ddots &\vdots\\
0& 0& \ldots & x_{ij}\\
\end{array}
\right)
\]
for all $i,j=1,\ldots, n.$ 
It is direct to check that $\phi^n_k$ is a unital $*$-algebra homomorphism. Note that $\phi^n_1$ is the identity map for any $n$.

If $R$ is a direct sum of rings $R_i, i\in I,$ we let $\pi_i$ denote the projection of $R$ onto $R_i$ and $\iota_i$ denote the inclusion of $R_i$ into $R.$
Thus, every element of $R$ can be represented as $a=\sum_{i\in I} \iota_i\pi_i(a).$

\subsection{The dimension formulas in the non-graded case}\label{subsection_non-graded} 

Consider matricial $*$-algebras 
\begin{center}
$R=\bigoplus_{i=1}^n \M_{p(i)}(A)\;\;$ and $\;\;S=\bigoplus_{j=1}^m \M_{q(j)}(A)$
\end{center} over a (non-graded) $*$-field $A.$ Let $e_{kl}^i$, $i=1, \ldots, n$,  $k,l=1,\ldots, p(i),$ be the elements of $R$ such that $\pi_i(e^i_{kl})$ are the standard matrix units of 
$\M_{p(i)}(A)$ for every $i=1,\ldots, n,$ and let $f_{kl}^j$,  $j=1, \ldots, m$,  $k,l=1,\ldots, q(j),$ be the elements of $S$ such that 
$\pi_j(f^j_{kl})$ are the standard matrix units of $\M_{q(j)}(A)$ for every $j=1,\ldots, m.$
 
Let $1_R$ and $1_S$ be the identity maps on $R$ and $S$ respectively. 
If $f: K_0(R)\rightarrow K_0(S)$ is a contractive homomorphism, let $$f([e_{11}^i])=\sum_{j=1}^m a_{ji}[f_{11}^j]$$ so that $(a_{ji}):\Z^n\to \Z^m$ is an $m\times n$ matrix which corresponds to the map $f$ under the isomorphisms $ K_0(R)\cong \Z^n$ and $K_0(S)\cong \Z^m.$ The fact that $f$ is order-preserving implies that $a_{ji}$ are {\em nonnegative} integers. The fact that $f([1_R])\leq [1_S]$ implies the  {\em dimension formulas}.
\footnote{In the case when $A$ is the field of complex numbers with the complex-conjugate involution, the dimension formulas hold for maps already on the algebra level not just on the $K_0$-group level (see e.g. \cite[page 75]{Davidson}).} 
\begin{equation}\label{non-graded_dimension}
\sum_{i=1}^n a_{ji}\,p(i)\leq q(j)
\end{equation}
for all $j=1,\ldots, m.$ Moreover, it can be shown that $\sum_{i=1}^n a_{ji}\,p(i)= q(j)$ if and only if $f([1_R])=[1_S],$ i.e. if $f$ is unit-preserving.

We define a $*$-map $\phi: R\to S$ corresponding to $f: K_0(R)\to K_0(S)$ as follows. Let $N_j$ denote the sum $\sum_{i=1}^n a_{ji}p(i)$ if $a_{ji}>0$ for at least one $i=1,\ldots, n$ and let $N_j=0$ otherwise. Then let 
\[
\phi(x)= \sum_{j=1}^m\iota_j\left(\left(\bigoplus_{i=1, a_{ji}>0}^n \pi_i(x)\otimes 1_{a_{ji}}\right)\oplus 0_{q(j)-N_j}\right)
\]
for any $x\in R.$ If  $q(j)-N_j=0,$ we consider $0_{q(j)-N_j}=\varnothing$ as a matrix of size $0\times 0.$ If $N_j=0,$ we consider the term  $\bigoplus_{i=1, a_{ji}>0}^n \pi_i(x)\otimes 1_{a_{ji}}$ to be a $0\times 0$ matrix. 

Note that $\pi_i(x)\otimes 1_{a_{ji}}$ is a $p(i)a_{ji}\times p(i)a_{ji}$ matrix for every $i$ and $j$ with $a_{ji}>0.$ The sum $$\bigoplus_{i=1, a_{ji}>0}^n \pi_i(x)\otimes  1_{a_{ji}}$$
is an $N_j\times N_j$ matrix and the sum $$\left(\bigoplus_{i=1, a_{ji}>0}^n \pi_i(x)\otimes  1_{a_{ji}}\right)\oplus 0_{q(j)-N_j}$$ is a  $q(j)\times q(j)$ matrix for any $j.$ The term $$\iota_j\left(\left(\bigoplus_{i=1, a_{ji}>0}^n \pi_i(x)\otimes  1_{a_{ji}}\right)\oplus 0_{q(j)-N_j}\right)$$ is an element of $S$
and so $\phi(x)$ is in $S$ for any $x\in R.$ The map  $\phi$ is a $*$-algebra homomorphism since the maps $\phi^{p(i)}_{a_{ji}}, \iota_j,$ and $\pi_j$ are  $*$-algebra homomorphisms. 

Consider the image $\phi(e^i_{11})$ of $e_{11}^i\in R$ in two cases: if $a_{ji}=0$ for all $j=1,\ldots, m$ and if $a_{ji}>0$ for some $j=1,\ldots, m.$
In the first case, $\phi(e_{11}^i)=0\in S$ by the definition of $\phi$ and $\sum_{j=1}^m a_{ij}[f_{11}^j]=0\in K_0(S)$ so that $[\phi(e_{11}^i)]=\sum_{j=1}^m a_{ij}[f_{11}^j].$

In the second case, assume that $a_{ji}>0$ for some $j=1,\ldots, m.$ Then $\pi_i(e^i_{11})\otimes  1_{a_{ji}} \in \M_{a_{ij}p(i)}(A)$
is a diagonal matrix with the first $a_{ij}$ entries on the diagonal being 1 and the rest being 0. 
Since $\pi_{i'}(e^i_{11})=0_{p(i')}$ for $i'\neq i$ we have that $E_{11}^{ij}:=\left(\bigoplus_{i'=1, a_{ji'}>0}^n \pi_{i'}(e^i_{11})\otimes  1_{a_{ji'}}\right)\oplus 0_{q(j)-N_j},$
is a diagonal $q(j)\times q(j)$ matrix with exactly $a_{ji}$ nonzero entries of value 1 on the diagonal. Thus, 
\[[\iota_j(E_{11}^{ij})]=  a_{ij}[f_{11}^j]\] in $K_0(S).$ 

If $a_{j'i}=0$ for some $j'=1,\ldots, m,$ we let $E_{11}^{ij'}$ be the $0\times 0$ matrix so that we have   
$\phi(e_{11}^i)= \sum_{j=1}^m\iota_j(E_{11}^{ij})$ by the definition of $\phi$ and the assumption that $a_{ji}> 0$ for at least one $j.$ Thus we have that 
\[K_0(\phi)([e^i_{11}])=[\phi(e_{11}^i)]= [\sum_{j=1}^m\iota_j(E^{ij}_{11})]=\sum_{j=1, a_{ji}>0}^m [\iota_j(E^{ij}_{11})]=
\sum_{j=1, a_{ji}>0}^m a_{ji}[f^j_{11}]=
\sum_{j=1}^m a_{ji}[f^j_{11}]=f([e_{11}^i]).\]
Note that we are using the assumption that  $a_{ji}> 0$ for at least one $j$ in the equality $\sum_{j=1, a_{ji}>0}^m a_{ji}[f^j_{11}]=
\sum_{j=1}^m a_{ji}[f^j_{11}].$ The equality $K_0(\phi)([e^i_{11}])=f([e_{11}^i])$ for every $i=1,\ldots, n$ implies that $K_0(\phi)=f.$ 

One can also show that $\phi$ maps $1_R$ onto $1_S$ if $f$ is unit-preserving (which happens exactly when $N_j=q(j)$ for every $j=1,\ldots,m$) but we leave the proof of this statement for the graded case. 

We illustrate this construction with an example.

\begin{example} If $R=\M_2(A)\oplus A$ and $S=\M_5(A)\oplus \M_4(A),$  any possible contractive map on $K_0$-groups is a $2\times 2$ matrix 
$\left(\begin{array}{cc}
a_{11} & a_{12}\\
a_{21} & a_{22}\end{array}\right)$ with nonnegative entries satisfying the dimension formulas  
\begin{center}
$2a_{11}+1a_{12}\leq 5\;\;$ and $\;\;2a_{21}+1a_{22}\leq 4.$
\end{center}
Let us consider one of them, for example, 
$f=\left(\begin{array}{cc}
2 & 1\\
0 & 3\end{array}\right).$ This map will induce a non-unital map $\phi$ since the inequality in the second dimension formula is strict. The induced map $\phi$ is given by 
\[(\left(\begin{array}{cc}
a & b\\
c & d\end{array}\right), e)\mapsto (
\left(\begin{array}{cccc|c|}
a & 0 & b & 0 & 0\\
0 & a & 0 & b & 0 \\ 
c & 0 & d & 0 & 0\\
0 & c & 0 & d & 0\\ \hline 
0 & 0 & 0 & 0 & e \\ \hline
\end{array}\right),
\left(\begin{array}{|ccc|c} \hline
e & 0 & 0 & 0\\
0 & e & 0 & 0 \\ 
0 & 0 & e & 0\\\hline
0 & 0 & 0 & 0\\ 
\end{array}\right)).\]\label{example_nongraded}
\end{example}

\subsection{The dimension formulas in the graded case}\label{subsection_graded_fullness} 
Let $A$ be a $\Gamma$-graded $*$-field now. Consider 
a graded matrix $*$-ring $\M_n(A)(\gamma_1,\dots,\gamma_n)$, where $\gamma_i \in \Gamma$, for $i=1,\ldots, n$. 
Since $A$ is a graded field, $\Gamma_A$ is a subgroup of $\Gamma$. The shifts $\gamma_1, \ldots, \gamma_n$ can be partitioned such that the elements of the same partition part belong to the same coset of the quotient group $\Gamma/\Gamma_A$ and the elements from different partition parts belong to different cosets. Let $\gamma_{l1},\dots,\gamma_{lr_l}$ be the $l$-th partition part for $l=1, \ldots, k$ where $k$ is the number of parts and $r_l$ is the number of elements in the $l$-th partition part. Thus, we partition $(\gamma_1,\dots,\gamma_n)$ as 
\begin{equation}\label{rearanging}
(\gamma_{11},\dots,\gamma_{1r_1}, \gamma_{21},\dots,\gamma_{2r_2}, \;\;\ldots,\;\; \gamma_{k1},\dots,\gamma_{kr_k })
\end{equation}
Note that $\sum_{l=1}^k r_l=n.$ Moreover, we can assume that $\gamma_1=0$ and, thus, to have that $\gamma_{11}=0$ by the first part of Proposition \ref{permutation_of_components}.

By Lemma \ref{matrix_units_lemma} and formula (\ref{matrix_units_identity}), we have that 
$[e_{ii}]=[e_{jj}]$ if and only if $\gamma_i$ and $\gamma_j$ are in the same $\Gamma/\Gamma_A$ coset. Otherwise we have that $[e_{ii}]=(\gamma_j-\gamma_i)[e_{jj}]\neq[e_{jj}].$
Under the assumption that $\gamma_1=0,$ we have that 
\begin{equation}\label{identity_equation}
[1_n]=\sum_{i=1}^n[e_{ii}]= \sum_{i=1}^n -\gamma_i[e_{11}]=\sum_{l=1}^k\sum_{l'=1}^{r_l} -\gamma_{ll'}[e_{11}]=
\sum_{l=1}^k -r_l \gamma_{l1}[e_{11}].
\end{equation} 

Consider graded matricial $*$-algebras $R$ and $S$ over $A$ now. Let 
\[R=\bigoplus_{i=1}^n\M_{p(i)}(A)(\gamma^i_1,\dots,\gamma^i_{p(i)})\;\;\;\mbox{ and }\;\;\;S=\bigoplus_{j=1}^m\M_{q(j)}(A)(\delta^j_1,\dots,\delta^j_{q(j)}).\]
Analogously to formula (\ref{rearanging}) above, we can partition the shifts $(\delta^j_1,\dots,\delta^j_{q(j)})$ in the $j$-th component of $S$ and represent them 
as follows.  
\[(\delta^j_{11},\dots,\delta^j_{1s^j_1}, \delta^j_{21},\dots,\delta^j_{2s^j_2}, \;\;\ldots, \;\; \delta^j_{l^j1},\dots,\delta^j_{l^j s^j_{l^j}})\]
Note that for every $j=1,\ldots, m$ we have that 
$\sum_{j'=1}^{l^j} s^j_{j'}=q(j).$ Also, using the first part of Proposition \ref{permutation_of_components}, we can assume that $\gamma^i_1=0=\delta^j_1$ for every $i=1,\ldots,n$ and $j=1,\ldots,m.$

If we let $e_{kl}^i\in R$, $i=1, \ldots, n$,  $k,l=1,\ldots, p(i),$ be such that $\pi_i(e_{kl}^i)$ are the standard matrix units in $\pi_i(R),$ and $f_{kl}^j\in S$,  $j=1, \ldots, m$,  $k,l=1,\ldots, q(i),$ such that $\pi_j(f^j_{kl})$ are the standard matrix units in $\pi_j(S),$ then we have that $\{[e_{11}^i]\;|\; i=1,\ldots, n\}$ generates $K^{\gr}_0(R)$ and  $\{[f_{11}^j]\;|\; j=1,\ldots, m\}$ generates $K^{\gr}_0(S)$ by Proposition \ref{equivalences_phi_and_psi} and Lemma \ref{matrix_units_lemma}. 

Let $f: K^{\gr}_0(R)\rightarrow K^{\gr}_0(S)$ be a contractive $\Z[\Gamma]$-module homomorphism. Our ultimate goal is to find a graded $*$-algebra homomorphism $\phi$ such that $K_0^{\gr}(\phi)=f.$
Let 
\[f([e_{11}^i])=\sum_{j=1}^m\sum_{t=1}^{k_{ji}} a_{jit}\alpha_{jit}[f_{11}^j]\]
for some $a_{jit}\in \Z$ and $\alpha_{jit}\in \Gamma.$ Since $f$ is order-preserving, we have that $\sum_{j=1}^m\sum_{t=1}^{k_{ji}} a_{jit}\alpha_{jit}[f_{11}^j]\geq 0.$   
Under the isomorphisms $ K^{\gr}_0(R)\cong \Z[\Gamma/\Gamma_A]^n$ and $K^{\gr}_0(S)\cong \Z[\Gamma/\Gamma_A]^m$ from part (3) of Proposition \ref{graded_division_ring}, 
this last relation corresponds to $\sum_{j=1}^m\sum_{t=1}^{k_{ji}} a_{jit}(\alpha_{jit}+\Gamma_A)\geq 0.$ By  part (3) of Proposition \ref{graded_division_ring} again, this implies that $a_{jit}\geq 0$ for all $j=1,\ldots, m,$ $i=1,\ldots, n$ and $t=1, \ldots, k_{ji}.$ 

If we let $\ol a_{ji}:=\sum_{t=1}^{k_{ji}} a_{jit}\alpha_{jit},$ then the $m\times n$ matrix $(\ol a_{ji}):\Z[\Gamma/\Gamma_A]^n\to \Z[\Gamma/\Gamma_A]^m$ corresponds to the map $f$ under the the isomorphisms $ K^{\gr}_0(R)\cong \Z[\Gamma/\Gamma_A]^n$ and $K^{\gr}_0(S)\cong \Z[\Gamma/\Gamma_A]^m.$ If $f$ is the zero map, we can trivially take $\phi$ to be the zero map as well, so let us assume that the map $f$ is nonzero. In this case, at least one $\ol a_{ji}$ is nonzero
so we can write $\ol a_{ji}=\sum_{t=1}^{k_{ji}} a_{jit}\alpha_{jit}$ with  {\em positive} $a_{jit}$ for every $t=1,\ldots, k_{ji}$ or $\ol a_{ji}=0.$ If $\ol a_{ji=0},$ we define $k_{ji}$ to be 0.

With this convention, we have that 
\[f([e_{11}^i])= \sum_{j=1, \ol a_{ji}\neq 0}^m \ol a_{ji}[f^j_{11}]= \sum_{j=1, k_{ji}>0}^m \sum_{t=1}^{k_{ji}} a_{jit}\alpha_{jit}[f_{11}^j]\]
if $k_{ji}>0$ for some $j=1,\ldots, m$ and $f([e_{11}^i])=0$ otherwise. We also have that 
\[\sum_{i=1}^n f([e_{11}^i])=\sum_{i=1}^n\sum_{j=1, k_{ji}>0}^m \sum_{t=1}^{k_{ji}} a_{jit}\alpha_{jit}[f_{11}^j]= \sum_{j=1}^m \sum_{i=1, k_{ji}>0}^n\sum_{t=1}^{k_{ji}} a_{jit}\alpha_{jit}[f_{11}^j]\] where the first equality holds by the assumption that $k_{ji}$ is positive for at least one value of $i$ and $j$. 

Using (\ref{identity_equation}) and the assumption that $f$ is contractive, we have that 
\begin{multline*}
f([1_R])=f\left([\sum_{i=1}^n\sum_{k=1}^{p(i)} e^i_{kk}]\right) = f\left([\sum_{i=1}^n\sum_{k=1}^{p(i)} -\gamma_{k}^ie^i_{11}]\right) = \\
\sum_{i=1}^n\sum_{k=1}^{p(i)} -\gamma_{k}^if\left([e^i_{11}]\right)=
\sum_{j=1}^m \sum_{i=1, k_{ji}>0}^n\sum_{t=1}^{k_{ji}}\sum_{k=1}^{p(i)} -\gamma_{k}^i a_{jit}\alpha_{jit}[f_{11}^j]=\\
\sum_{j=1}^m\sum_{i=1, k_{ji}>0}^n \sum_{t=1}^{k_{ji}} \sum_{k=1}^{p(i)}  a_{jit}(-\gamma_{k}^i+\alpha_{jit})[f_{11}^j]\leq  
\sum_{j=1}^m\sum_{j'=1}^{l^j} -s^j_{j'}\delta_{j'1}^j [f^j_{11}]= [1_S].
\end{multline*}
Since we assumed that $\delta^j_1=0$ for all $j=1,\ldots, m,$ the class $[f^j_{11}]$ corresponds to the coset $\Gamma_A$ under the graded $*$-isomorphism $S\cong_{\gr} \Z[\Gamma/\Gamma_A]^m.$ Thus, the last relation above is equivalent to
\[\sum_{j=1}^m \sum_{i=1, k_{ji}>0}^n\sum_{t=1}^{k_{ji}}\sum_{k=1}^{p(i)} a_{jit}(-\gamma_{k}^i+ \alpha_{jit}+\Gamma_A)  \leq  
\sum_{j=1}^m\sum_{j'=1}^{l^j} s^j_{j'} (-\delta_{j'1}^j+\Gamma_A)
\]
and implies that
\begin{equation}\label{predimension} 
\sum_{i=1, k_{ji}>0}^n\;  \sum_{t=1}^{k_{ji}}\sum_{k=1}^{p(i)} a_{jit}(-\gamma_{k}^i+ \alpha_{jit}+\Gamma_A)\leq \sum_{j'=1}^{l^j} s^j_{j'} (-\delta_{j'1}^j+\Gamma_A)
\end{equation}
for all $j=1,\ldots, m$ for which $k_{ji}>0$ for at least one $i=1,\ldots, n.$ We refer to this family of formulas as the {\em pre-dimension formulas}. 

The pre-dimension formulas imply two sets of relations. First, 
note that any coset on the left-hand side with multiplicity $a_{jit}$ should appear on the right-hand side as well at least $a_{jit}$ times, i.e. for any $i=1,\ldots,n,$ with $k_{ji}>0$ and any $t=1, \ldots, k_{ji}$, and $k=1, \ldots, p(i),$ there is $j'=1, \ldots, l^j$ such that 
\[
a_{jit}(-\gamma_{k}^i+ \alpha_{jit}+\Gamma_A)=a_{jit}(-\delta_{j'1}^j+\Gamma_A)
\]
Since $a_{jit}>0,$ we have that 
\begin{equation}\label{coset_equation}
-\gamma_{k}^i+ \alpha_{jit}+\Gamma_A=-\delta_{j'1}^j+\Gamma_A. 
\end{equation} 
in this case. We refer to the formulas above as the {\em coset equations}. 

For any $j=1,\ldots, m,$ let $S_j$ be the set 
\[S_j=\{(i,t,s,k)\,|\, i=1, \ldots, n,\mbox{ with }k_{ji}>0,\; t=1,\ldots, k_{ji}, s=1, \ldots, a_{jit}, k=1,\ldots, p(i)\}.\]
Note that this set is empty exactly when $k_{ji}=0$ for all $i=1,\ldots, n$ which is equivalent to $\ol a_{ji}=0$ for all $i=1,\ldots, n.$ 
If $N_j$ is the cardinality of $S_j,$ then 
\[N_{j}=\sum_{i=1, k_{ji}>0}^n\;\sum_{t=1}^{k_{ji}}a_{jit}\,p(i).\] Thus $N_j=0$ if and only if $k_{ji}=0$ for all $i=1,\ldots, n.$  

For any $j=1,\ldots, m$ and any $j'=1,\ldots, l^j,$ we define
\[S_{jj'}:=\{(i, t, s, k)\in S_j\; |\;-\gamma_{k}^i+ \alpha_{jit}+\Gamma_A=-\delta_{j'1}^j+\Gamma_A\}\]
and let $N_{jj'}$ denote the cardinality of $S_{jj'}.$ 

If $S_j$ is nonempty, there is a coset equation holding for any $(i,t,s,k)\in S_j$ and so $S_j=\bigcup_{j'=1}^{l^j}S_{jj'}.$ If $S_j$ is empty, then all the sets $S_{jj'}, j'=1,\ldots, l^j$ are empty too and we can still write $S_j=\bigcup_{j'=1}^{l^j}S_{jj'}.$ 

In addition, since $\delta_{j'1}^j+\Gamma_A\neq \delta_{{j''}1}^j+\Gamma_A$ for every $j'\neq j''$ when $j, j''=1,\ldots, l^j,$ we have that
$S_{jj'}$ and $S_{jj''}$ are disjoint if $j'\neq j''.$ As a result, we have that $N_j=\sum_{j'=1}^{l^j}N_{jj'}.$ 

If $S_{jj'}$ is nonempty, each pre-dimension formula (\ref{predimension}) implies that the number of $(i,t,s,k)\in S_{jj'}$ cannot be larger than $s^j_{j'}.$ Thus, we have that
\begin{equation}\label{dimension_formulas}
N_{jj'}\leq  s^j_{j'}
\end{equation} 
for every $j=1,\ldots,m,$ and every $j'=1,\ldots, l^j.$ We refer to this set of formulas as the {\em dimension formulas}. 

Adding the $(j, j')$-th dimension formulas for all $j'=1,\ldots,l^j,$ we obtain that 
\[
\sum_{j'=1}^{l^j} N_{jj'}=N_{j}= \sum_{i=1, k_{ji}>0}^n\sum_{t=1}^{k_{ji}}a_{jit}\,p(i) \leq  \sum_{j'=1}^{l^j} s^j_{j'}= q(j)  
\]
for any $j=1,\ldots,m.$   

The dimension formulas enable us to define an injective map $\Pi^j:S_j\to \{1, \ldots, q(j)\}$ with the property that 
\begin{equation}\label{map_Pi}
-\gamma_{k}^i+ \alpha_{jit}+\Gamma_A=-\delta^j_{\Pi^j(i,t,s,k)}+\Gamma_A
\end{equation}
for all $(i,t,s,k)\in S_j.$ Thus, the image of $\Pi^j$ has cardinality $N_j.$ The case when $N_j=q(j)$ corresponds exactly to the case when $\Pi^j$ is a bijection. 

\begin{remark}
If $\Gamma$ is the trivial group, we claim that both the pre-dimension formulas (\ref{predimension}) and the dimension formulas (\ref{dimension_formulas}) reduce to the non-graded dimension formulas (\ref{non-graded_dimension}). Indeed, if $\Gamma$ is trivial, all the shifts are zero and $l^j=1,$ $s^j_1=q(j),$ $N_j=N_{j1},$ and
$k_{ji}$ is either 0 or 1. The case $k_{ji}=1$ corresponds exactly to $a_{ji}>0.$ Thus, 
$S_j=S_{j1}=\{(i, 1, s, k)\; |\; i\in \{1,\ldots,n \}, a_{ji}\neq 0,\; k\in \{1, \ldots, p(i)\}, s=1,\ldots, a_{ji}\}$ is nonempty exactly when $a_{ji}>0$ for some $i=1,\ldots, n.$ In this case  the pre-dimension formulas (\ref{predimension}) become
\[\sum_{i=1, a_{ji}\neq 0}^n\;\sum_{k=1}^{p(i)} a_{ji}=\sum_{i=1}^n\sum_{k=1}^{p(i)} a_{ji} = \sum_{i=1}^n a_{ji}\,p(i)\leq \sum_{j'=1}^{1} s^j_{j'}=s^j_{1} =q(j)\]
for every $j=1,\ldots, m,$ and the dimension formulas (\ref{dimension_formulas})
become 
\[N_{j1}=N_j=\sum_{i=1}^n\sum_{k=1}^{p(i)} a_{ji}= \sum_{i=1}^n a_{ji}\,p(i) \leq  s^j_{1}=q(j).\]
\end{remark}

\begin{remark}
We also note that the dimension formulas and the coset equations imply the pre-dimension formulas. We shall not use this fact in Theorem \ref{fullness}, but we still illustrate why this fact holds. So, let us assume that the dimension formulas and the coset equations hold. For every $j=1,\ldots, m,$  $j'=1,\ldots, l^j,$ we have that
\[\sum_{(i,t,s,k)\in S_{jj'}}-\gamma_{k}^i+ \alpha_{jit}+\Gamma_A= N_{jj'}(-\delta_{j'1}^j+\Gamma_A)\leq s^j_{j'}(-\delta_{j'1}^j+\Gamma_A).\] 
Adding these inequalities for every $j'=1,\ldots, l^j$ produces 
\[\sum_{j'=1}^{l^j}\sum_{(i,t,s,k)\in S_{jj'}}-\gamma_{k}^i+ \alpha_{jit}+\Gamma_A \leq \sum_{j'=1}^{l^j} s^j_{j'}(-\delta_{j'1}^j+\Gamma_A).\]
If $N_j>0,$ which is exactly the case under which we have the $j$-th pre-dimension equation, we have that 
\[\sum_{j'=1}^{l^j}\sum_{(i,t,s,k)\in S_{jj'}}-\gamma_{k}^i+ \alpha_{jit}+\Gamma_A=\sum_{i=1, k_{ji}>0}^n\;\sum_{t=1}^{k_{ji}}\sum_{k=1}^{p(i)} a_{jit}(-\gamma_{k}^i+ \alpha_{jit}+\Gamma_A)\] and so the pre-dimension formulas hold. 
 
This shows that we have the following equivalence.

\medskip
\begin{center}
\begin{tabular}{ccc}
\begin{tabular}{|c|}\hline $\;\;$
pre-dimension formulas $\;\;$ \\
\hline
\end{tabular} & $\Longleftrightarrow$ & 
\begin{tabular}{|c|}\hline
$\;\;$ coset equations $\;$ + $\;$ dimension formulas $\;\;$\\
\hline
\end{tabular}
\end{tabular}
\end{center}
\medskip

\noindent This observation also implies that the equality holds in all the pre-dimension formulas (\ref{predimension}) if and only if the equality holds in all dimension formulas (\ref{dimension_formulas}). 
\end{remark}

We summarize our findings in the following proposition. 

\begin{proposition}
Let  $f: K^{\gr}_0(R)\rightarrow K^{\gr}_0(S)$ be a $\Z[\Gamma]$-module homomorphism for graded matricial $*$-algebras $R$ and $S$ over a graded $*$-field $A.$ 
\begin{enumerate}
\item[{\em (1)}] If $f$ is order-preserving, then $a_{jit}$ are nonnegative integers for all $j=1,\ldots, m,$ $i=1,\ldots, n,$ and $t=1,\ldots, k_{ji}.$

\item[{\em (2)}] If $f$ is contractive, then the formulas (\ref{predimension}) (equivalently (\ref{coset_equation}) and  (\ref{dimension_formulas})) hold. 
\end{enumerate}
In addition, if $f$ is contractive and unit-preserving, then the equality holds in formulas (\ref{predimension}) (equivalently (\ref{dimension_formulas})).
\end{proposition}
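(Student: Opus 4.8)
The plan is to deduce all three assertions from the module isomorphisms $K^{\gr}_0(R)\cong \Z[\Gamma/\Gamma_A]^n$ and $K^{\gr}_0(S)\cong \Z[\Gamma/\Gamma_A]^m$ furnished by part (3) of Proposition \ref{graded_division_ring}. Under these, the generators $[e^i_{11}]$ and $[f^j_{11}]$ become the standard basis cosets $\Gamma_A$ in the respective coordinates, and the positive cone of $K^{\gr}_0(S)$ becomes $\mathbb N[\Gamma/\Gamma_A]^m$. Once everything is transported into these free $\Z[\Gamma/\Gamma_A]$-modules, each claim reduces to a coordinate-wise comparison of coefficients, and no estimate beyond those already carried out above is needed; the proposition merely packages that computation.

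For part (1), I would use that $[e^i_{11}]\geq 0$, so order-preservation gives $f([e^i_{11}])=\sum_{j=1}^m\sum_{t=1}^{k_{ji}}a_{jit}\alpha_{jit}[f^j_{11}]\geq 0$. Transporting this into $\mathbb N[\Gamma/\Gamma_A]^m$, the $j$-th coordinate is $\sum_{t=1}^{k_{ji}}a_{jit}(\alpha_{jit}+\Gamma_A)$; since the cosets $\alpha_{jit}+\Gamma_A$ are distinct in $t$ for each fixed $j$, membership in $\mathbb N[\Gamma/\Gamma_A]$ forces each coefficient $a_{jit}$ to be a nonnegative integer, which is exactly claim (1).

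For part (2), I would invoke that a contractive $f$ is order-preserving and satisfies $f([1_R])\leq[1_S]$. Expanding $[1_R]=\sum_{i=1}^n\sum_{k=1}^{p(i)}[e^i_{kk}]$ and applying the relation $[e^i_{kk}]=-\gamma^i_k[e^i_{11}]$ coming from (\ref{matrix_units_identity}) (with $\gamma^i_1=0$), linearity of $f$ produces the explicit expression for $f([1_R])$ displayed above, while $[1_S]=\sum_{j=1}^m\sum_{j'=1}^{l^j}-s^j_{j'}\delta^j_{j'1}[f^j_{11}]$ is the analogue of (\ref{identity_equation}) for $S$. Comparing the two sides coordinate by coordinate under the isomorphism converts the single inequality $f([1_R])\leq[1_S]$ into the family of pre-dimension formulas (\ref{predimension}), one for each $j$ with $N_j>0$. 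The equivalence already recorded in the Remark above, between the pre-dimension formulas and the conjunction of the coset equations (\ref{coset_equation}) and the dimension formulas (\ref{dimension_formulas}), then yields the remaining content of (2).

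For the final assertion, unit-preservation upgrades the inequality to the equality $f([1_R])=[1_S]$, which forces equality in every coordinate and hence in every pre-dimension formula; the closing observation of the same Remark, that equality in the pre-dimension formulas is equivalent to equality in the dimension formulas, finishes the proof. The only point demanding care is the coordinate bookkeeping: one must check that the cosets on each side are grouped according to the partition already fixed in (\ref{rearanging}), so that comparing multiplicities coordinate-wise is legitimate. This is precisely what the disjointness of the sets $S_{jj'}$ and the identity $N_j=\sum_{j'=1}^{l^j}N_{jj'}$ guarantee, so I anticipate no genuine obstacle beyond careful indexing.
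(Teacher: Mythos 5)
Your proposal is correct and follows essentially the same route as the paper: the proposition there is explicitly a summary of the preceding discussion, which likewise transports everything through the isomorphisms $K^{\gr}_0(R)\cong\Z[\Gamma/\Gamma_A]^n$ and $K^{\gr}_0(S)\cong\Z[\Gamma/\Gamma_A]^m$ from Proposition \ref{graded_division_ring}(3), reads off nonnegativity of the $a_{jit}$ coordinate-wise, and extracts the pre-dimension formulas (hence, via the Remark, the coset equations and dimension formulas) from $f([1_R])\leq[1_S]$ using (\ref{identity_equation}). Nothing to add.
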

\begin{remark}
It can be shown that the converse of the statements (1) and (2) also holds. We do not prove the converses since we will not need them in the proof of the main result. The converse of the last sentence also clearly holds.  
\end{remark}

We adapt the definition of the maps $\phi^n_{k}$ from section \ref{subsection_non-graded} to the graded setting now. Note that for any positive integers $k$ and $n$ and $\ol\gamma\in \Gamma^n$ the definition of the $*$-algebra homomorphisms $\phi_k^n$ from section \ref{subsection_non-graded} becomes
\begin{equation}\label{action_graded_k}
\phi_{k}^{n, \ol \gamma}:\M_n(A)(\gamma_1,\ldots,\gamma_n)\to \M_{nk}(A)(\gamma_1,\dots,\gamma_1, \gamma_2,\dots,\gamma_2, \;\;\ldots,\;\; \gamma_n,\dots,\gamma_n)\;\mbox{ with }\;\phi_{k}^{n, \ol \gamma}:x\mapsto x\otimes 1_{k}
\end{equation}
where the matrix $x\otimes 1_{k}$ is defined in the same way as in the non-graded case in section \ref{subsection_non-graded}. 
Just as in the non-graded case, the map $\phi^{n, \ol \gamma}_k$ is a unital $*$-algebra homomorphism. Moreover, $\phi^{n, \ol \gamma}_k$ is a {\em graded} homomorphism. Indeed, if $x$ is a matrix of degree $\delta,$ then the elements of the entire $(i, j)$-th block of $x\otimes 1_{k}$ are in the component $A_{\delta+\gamma_j-\gamma_i}$ for any $i, j=1,\ldots, n.$ Thus, the matrix $x\otimes 1_{k}$ has degree $\delta.$   

Let $\delta\in\Gamma.$  We define a graded $*$-algebra homomorphisms $\phi^{n, \ol\gamma}_{\delta}$ as follows.
\begin{equation}\label{action_graded_delta}
\phi_{\delta}^{n, \ol \gamma}:\M_n(A)(\gamma_1,\ldots,\gamma_n)\to \M_n(A)(\gamma_1-\delta,\ldots,\gamma_n-\delta)\;\mbox{ is given by }\;
\phi_{\delta}^{n, \ol\gamma}:\; (x_{ij})\mapsto (x_{ij}).
\end{equation}
If $x_{ij}\in A_{\alpha+\gamma_j-\gamma_i},$ then it is considered as an element of $A_{\alpha+\gamma_j-\delta-(\gamma_i-\delta)}$ in the image of $\phi^{n,\ol\gamma}_\delta.$ Thus, this map is clearly a graded $*$-algebra homomorphism. 

For positive integers $k$ and $n,$ $\ol\gamma\in \Gamma^n$ $\delta\in \Gamma,$ and $x\in \M_n(A)(\ol\gamma),$ we denote the image of $x$ under the composition $\phi^{n, \ol\gamma}_k\phi^{n,\ol\gamma}_\delta$ as follows. 
\begin{equation}\label{action_graded}
\phi^{n, \ol\gamma}_k\phi^{n,\ol\gamma}_\delta(x)=x\otimes 1_{k\delta}. 
\end{equation}

The finishing touch we need for the proof of the main theorem of this section is an assumption on the grading of the $*$-field $A$. Namely, our proof requires us to be able to use the second part of Proposition \ref{permutation_of_components} for {\em any} element of $\Gamma_A.$ In other words, we need the assumption that any nonzero component $A_\gamma$ contains a unitary element $a$ i.e. an element $a$ such that $aa^*=a^*a=1.$ This motivates the following definition.
\begin{definition}
A  $\Gamma$-graded $*$-ring $A$ has {\em enough unitaries} if any nonzero component $A_\gamma$ contains a unitary element $a$ i.e. an element $a$ such that $aa^*=a^*a=1.$
\end{definition}

Note that any trivially $\Gamma$-graded $*$-field has enough unitaries since $1\in A_0$ is a unitary. In particular, any $*$-field graded by the trivial group has enough unitaries. Also if $A$ is a $*$-field, the $*$-field $R=A[x^{n}, x^{-n}],$ $\Z$-graded by  $R_{m}=Ax^{m}$ if $m\in n\Z$ and $R_m=0$ otherwise, has enough unitaries. Indeed, since  $\Gamma_R=n\Z,$ we have that $x^{kn}$ is a unitary element in $R_{nk}$ for any $k\in \Z.$ More generally, the $\Gamma$-graded group ring $A[\Gamma]$ with $A[\Gamma]_\gamma=\{k\gamma\, |\, k\in A\}$ for any $\gamma\in \Gamma$ equipped with the standard involution ($(k\gamma)^*=k^*\gamma^{-1}$) has enough unitaries since $\gamma\in\Gamma$ is a unitary element in $A[\Gamma]_\gamma.$

\begin{theorem}\label{fullness}
Let $A$ be a $\Gamma$-graded $*$-field with enough unitaries and let $R$ and $S$ be graded matricial $*$-algebras over $A$. 
If $f:K^{\gr}_0(R)\rightarrow K^{\gr}_0(S)$ is a contractive $\Z[\Gamma]$-module homomorphism, then there is a graded $A$-algebra $*$-homomorphism $\phi:R\rightarrow S$ such that $K^{\gr}_0(\phi)=f$. Furthermore, if $f$ is unit-preserving, then $\phi$ is unital. 
\end{theorem}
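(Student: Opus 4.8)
The plan is to build $\phi$ componentwise, imitating the non-graded construction of Section~\ref{subsection_non-graded} but replacing the naive block embedding by one forced to respect the grading. I would write $\phi=\sum_{j=1}^m\iota_j\phi_j$, where each $\phi_j:R\to\pi_j(S)=\M_{q(j)}(A)(\ol\delta^j)$ is to be a graded $*$-homomorphism; it then suffices to define the $\phi_j$ and to verify the two assertions about $K^{\gr}_0(\phi)$ and unitality. Because $f$ is order-preserving and contractive, all the combinatorial data of Section~\ref{subsection_graded_fullness} is available: the nonnegative integers $a_{jit}$, the index sets $S_j=\bigcup_{j'}S_{jj'}$, the dimension formulas $N_{jj'}\le s^j_{j'}$, and the injective map $\Pi^j:S_j\to\{1,\dots,q(j)\}$ satisfying the coset identity (\ref{map_Pi}).

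Fix $j$. For each $(i,t)$ with $k_{ji}>0$ and each $s=1,\dots,a_{jit}$, I would first apply the grading shift $\phi^{p(i),\ol\gamma^i}_{\alpha_{jit}}$ of (\ref{action_graded_delta}) to $\pi_i(x)$, landing in $\M_{p(i)}(A)(\gamma^i_1-\alpha_{jit},\dots,\gamma^i_{p(i)}-\alpha_{jit})$, which is a graded $*$-homomorphism by construction. The coset equation (\ref{coset_equation}) gives $(\gamma^i_k-\alpha_{jit})-\delta^j_{\Pi^j(i,t,s,k)}\in\Gamma_A$ for every $k$; since $A$ has enough unitaries, each such component contains a unitary, so Proposition~\ref{permutation_of_components}(2) supplies a graded $*$-isomorphism (conjugation by a homogeneous diagonal unitary) onto $\M_{p(i)}(A)(\delta^j_{\Pi^j(i,t,s,1)},\dots,\delta^j_{\Pi^j(i,t,s,p(i))})$. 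Placing the resulting $p(i)\times p(i)$ block into the rows and columns of $\M_{q(j)}(A)(\ol\delta^j)$ indexed by $\Pi^j(i,t,s,1),\dots,\Pi^j(i,t,s,p(i))$ — legitimate precisely because these positions carry the matching shifts — and putting $0$ in the $q(j)-N_j$ positions outside the image of $\Pi^j$ defines $\phi_j(x)$. Injectivity of $\Pi^j$ keeps the blocks disjoint, and $N_j\le q(j)$ is what makes everything fit; being a sum of compositions of graded $*$-homomorphisms, $\phi_j$ is itself one.

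To check $K^{\gr}_0(\phi)=f$, since the classes $[e^i_{11}]$ generate $K^{\gr}_0(R)$ it is enough to compute $[\phi(e^i_{11})]$. By the construction, $\phi_j(e^i_{11})$ is the diagonal projection with a single $1$ at position $\Pi^j(i,t,s,1)$ for each pair $(t,s)$ with $t=1,\dots,k_{ji}$, $s=1,\dots,a_{jit}$ (the conjugating unitaries fix the idempotent $e^i_{11}$, whose only nonzero entry is $1$). Hence $[\phi_j(e^i_{11})]=\sum_{t=1}^{k_{ji}}\sum_{s=1}^{a_{jit}}[f^j_{\ell\ell}]$ with $\ell=\Pi^j(i,t,s,1)$. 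Using (\ref{matrix_units_identity}) with $\gamma^i_1=\delta^j_1=0$ gives $[f^j_{\ell\ell}]=-\delta^j_\ell[f^j_{11}]$, and (\ref{map_Pi}) together with the fact that the $\Gamma$-action on $[f^j_{11}]$ factors through $\Gamma/\Gamma_A$ turns this into $\alpha_{jit}[f^j_{11}]$. Summing over $s$, $t$, $j$ yields $[\phi(e^i_{11})]=\sum_{j,t}a_{jit}\alpha_{jit}[f^j_{11}]=f([e^i_{11}])$, as needed.

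Finally, $\phi$ is unital exactly when $f$ is unit-preserving: the latter forces equality throughout (\ref{dimension_formulas}), so $N_j=q(j)$ and each $\Pi^j$ is a bijection; then no zero block occurs and $\phi_j(1_R)$ fills every diagonal position of $\M_{q(j)}(A)(\ol\delta^j)$ with $1$, giving $\phi(1_R)=1_S$. The step I expect to demand the most care is the blockwise definition of $\phi_j$: one must verify at once that the enough-unitaries isomorphisms of Proposition~\ref{permutation_of_components}(2) align the gradings so the blocks embed legitimately, that injectivity of $\Pi^j$ keeps them disjoint, and that the whole assembly is a single graded $*$-homomorphism rather than merely a graded linear map.
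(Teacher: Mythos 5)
Your proposal is correct and follows essentially the same route as the paper: the same combinatorial data (coset equations, dimension formulas, the injective map $\Pi^j$), the same use of Proposition \ref{permutation_of_components}(2) via the enough-unitaries hypothesis, and the same verification on the generators $[e^i_{11}]$. The only difference is bookkeeping — you place each $p(i)\times p(i)$ block directly into the positions prescribed by $\Pi^j$, whereas the paper first assembles all blocks into one $N_j\times N_j$ matrix and then applies a single graded $*$-isomorphism $\Phi^j$ (a permutation conjugation composed with a diagonal-unitary conjugation); these yield the same homomorphism.
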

\begin{proof}
Let us keep our existing notation for $R$ and $S$ and the assumptions that $\gamma_1^i=\delta^j_1=0$ for all $i=1,\ldots, n$ and $j=1,\ldots, m.$ 
Since we can take $\phi$ to be the zero map if $f$ is the zero map, let us assume that $f$ is nonzero and represent it using positive integers $a_{jit}$ and  $\alpha_{jit}\in\Gamma$ as in our previous discussion. Thus, the pre-dimension and dimension formulas (\ref{predimension}) and (\ref{dimension_formulas}) as well as the coset equations (\ref{coset_equation}) hold. We also keep the definitions of the sets $S_{jj'}$ and $S_j,$ their cardinalities $N_{jj'}$ and $N_j$ as well as the map $\Pi^j: S_{j}\to \{1, \ldots, q(j)\}$ for all $j=1,\ldots, m$ 
with property (\ref{map_Pi}). For any $(i,t,s,k)\in S_j,$ each equation of the form  
$-\gamma_{k}^i+ \alpha_{jit}+\Gamma_A=-\delta^j_{\Pi^j(i, t, s, k)}+\Gamma_A$ implies that 
$\delta^j_{\Pi^j(i,t,s,k)}-\gamma_{k}^i+\alpha_{jit}\in \Gamma_A.$ Since $A$ has enough unitaries, 
there is $\varepsilon^j_{\Pi^j(i, t, s, k)}\in\Gamma_A$ such that there is a unitary element in $A_{\varepsilon^j_{\Pi^j(i, t, s, k)}}$ and such that 
\[
-\gamma_{k}^i+ \alpha_{jit}=-\delta^j_{\Pi^j(i, t, s, k)}+\varepsilon^j_{\Pi^j(i, t, s, k)}.
\]

For every $j=1,\ldots, m$ with $N_j>0,$ let $\M_{N_j}(A)(\overline{(\gamma^i_k-\alpha_{jit})^{a_{jit}}})$ shorten the notation for the graded matrix algebra of
$N_{j}\times N_{j}$ matrices with shifts 
\begin{equation}\label{shifts}
\begin{array}{l}
(\gamma^1_1-\alpha_{j11})^{a_{j11}}, \ldots, (\gamma^1_{p(1)}-\alpha_{j11})^{a_{j11}},\;\;\; \ldots,\;\;\;
(\gamma^1_1-\alpha_{j1k_{1j}})^{a_{j1k_{1j}}}, \ldots, (\gamma^1_{p(1)}-\alpha_{j1k_{1j}})^{a_{j1k_{1j}}},\\ 
\ldots\\
(\gamma^n_1-\alpha_{jn1})^{a_{jn1}}, \ldots, (\gamma^n_{p(n)}-\alpha_{jn1})^{a_{jn1}},\;\;\; \ldots,\;\;\;
(\gamma^n_1-\alpha_{jn1})^{a_{jnk_{nj}}}, \ldots, (\gamma^n_{p(n)}-\alpha_{jnk_{nj}})^{a_{jnk_{nj}}}\\
\end{array}
\end{equation}
where $(\gamma^i_k-\alpha_{jit})^{a_{jit}}$ represent the term $\gamma^i_k-\alpha_{jit}$ listed $a_{jit}$ times for every $i=1,\ldots, n,$ $t=1, \ldots, k_{ji},$ and $k=1, \ldots, p(i).$  
Analogously to this convention, we let $\M_{N_j}(A)(\overline{\delta^j_{\Pi^j(i, t, s, k)}-\varepsilon^j_{\Pi^j(i, t, s, k)}})$ and $\M_{N_j}(A)(\overline{\delta^j_{\Pi^j(i, t, s, k)}})$ shorten the notation for the graded matrix algebra with shifts listed in the order which corresponds the order of shifts in (\ref{shifts}) for any $j=1, \ldots, m.$
    
Since the cardinality of the image of the map $\Pi^j$ is $N_j,$ let  $\sigma^j$ be a bijection of $\{1, \ldots, N_j\}$ and the image of $\Pi^j$ so that for every $l\in \{1, \ldots, N_j\}$ we have that $\sigma^j(l)=\Pi^j(i, t, s, k)$ for a unique element $(i, t, s, k)$ of $S_j,$ and, as a result,
\[-\gamma_{k}^i+ \alpha_{jit}=-\delta^j_{\Pi^j(i, t, s, k)}+\varepsilon^j_{\Pi^j(i, t, s, k)}=-\delta^j_{\sigma^j(l)}+\varepsilon^j_{\sigma^j(l)}.\]
In the case when $N_j$ is strictly less than $q(j),$ the complement of the image of $\Pi^j$ has $q(j)-N_j$ elements. Thus, using any bijective mapping of $\{N_j+1, \ldots, q(j)\}$ onto the complement of the image of $\Pi^j,$ we can extend $\sigma^j$ to a permutation of the set $\{1, \ldots, q(j)\}$ which we continue to call $\sigma^j.$ Let $\rho^j$ denote the inverse of $\sigma^j.$

Let $i_j(\M_{N_j}(A)(\delta^j_{\sigma^j(1)}, \ldots, \delta^j_{\sigma^j(N_j)}))$ denote the graded isomorphic copy of $\M_{N_j}(A)(\delta^j_{\sigma^j(1)}, \ldots, \delta^j_{\sigma^j(N_j)})$ in $\M_{q(j)}(A)(\delta^j_{\sigma^j(1)}, \ldots, \delta^j_{\sigma^j(q(j))}).$ 
By the definition of $\sigma^j$ and $\rho^j$ and the first part of Proposition \ref{permutation_of_components}, if $P_{\rho^j}$ denotes the permutation matrix which corresponds to the permutation $\rho^j,$ the conjugation by $P_{\rho^j}$ defines a graded $*$-isomorphism
$\M_{q(j)}(A)(\delta^j_{\sigma^j(1)}, \ldots, \delta^j_{\sigma^j(q(j))})\to \M_{q(j)}(A)(\delta^j_1, \ldots, \delta^j_{q(j)})$
such that its restriction on  $i_j(\M_{N_j}(A)(\delta^j_{\sigma^j(1)}, \ldots, \delta^j_{\sigma^j(N_j)}))$ is a graded $*$-isomorphism 
\[i_j(\M_{N_j}(A)(\delta^j_{\sigma^j(1)}, \ldots, \delta^j_{\sigma^j(N_j)}))\to \M_{N_j}(A)(\delta^j_1, \ldots, \delta^j_{N_j})\oplus 0_{q(j)-N_j}.\]
In the case when $N_j=q(j),$ we consider $0_{q(j)-N_j}$ to be a matrix $\varnothing$ of size $0\times 0.$

By the second part of  Proposition \ref{permutation_of_components}, there is a diagonal $N_j\times N_j$ matrix $D_j$ such that conjugating with $D_j$ produces a graded  
$*$-isomorphism 
$$\M_{N_j}(A)(\delta^j_{\sigma^j(1)}-\varepsilon^j_{\sigma^j(1)}, \ldots, \delta^j_{\sigma^j(N_j)}-\varepsilon^j_{\sigma^j(N_j)})\cong_{\gr} \M_{N_j}(A)(\delta^j_{\sigma^j(1)}, \ldots, \delta^j_{\sigma^j(N_j)}).$$ Thus, we have the following graded $*$-isomorphism. 
\begin{multline*}
\Phi^j: \M_{N_j}(A)(\overline{(\gamma^i_k-\alpha_{jit})^{a_{jit}}})= \M_{N_j}(A)(\delta^j_{\sigma^j(1)}-\varepsilon^j_{\sigma^j(1)}, \ldots, \delta^j_{\sigma^j(N_j)}-\varepsilon^j_{\sigma^j(N_j)})\cong_{\gr}\\ \M_{N_j}(A)(\delta^j_{\sigma^j(1)}, \ldots, \delta^j_{\sigma^j(N_j)})
\cong_{\gr} i_j(\M_{N_j}(A)(\delta^j_{\sigma^j(1)}, \ldots, \delta^j_{\sigma^j(N_j)}))
\cong_{\gr}\\
\M_{N_j}(A)(\delta^j_1, \ldots, \delta^j_{N_j})\oplus 0_{q(j)-N_j}\subseteq  \M_{q(j)}(A)(\delta^j_1, \ldots, \delta^j_{q(j)})
\end{multline*}
for any $j=1,\ldots,m.$  

If $N_j=0$ for some $j=1,\ldots, m,$ we consider $\M_{N_j}(A)(\overline{(\gamma^i_k-\alpha_{jit})^{a_{jit}}})$ to be the $0\times 0$ matrix and define $\Phi^j(\varnothing)$ to be $0\in \M_{q(j)}(A)(\delta^j_1,\ldots, \delta^j_{q(j)}).$

We are ready to define the graded $*$-algebra map $\phi: R\to S$ which corresponds to the map $f$ now. 
We define $\phi$ using the definitions of the maps in (\ref{action_graded_k}), (\ref{action_graded_delta}), and (\ref{action_graded}) and the map $\Phi^j$ as follows: 
\[
\phi(x) =  \sum_{j=1}^m\iota_j\Phi^j\left(\bigoplus_{i=1, k_{ji}>0}^{n}\;\bigoplus_{t=1}^{k_{ji}} \pi_i(x)\otimes 1_{a_{jit}\alpha_{jit}}\right)
\]
for any $x\in R.$ Note that the condition $k_{ji}>0$ ensures that the term  $\pi_i(x)\otimes 1_{a_{jit}\alpha_{jit}}$ is well-defined for any $t=1, \ldots, k_{ji}.$

First, we demonstrate that $\phi(x)$ is indeed an element of $S$ for any $x\in R.$ For $x\in R,$ we have that $\pi_i(x)\in \M_{p(i)}(A)(\gamma^i_1, \ldots, \gamma^i_{p(i)}),$ and so 
\[\pi_i(x)\otimes 1_{a_{jit}\alpha_{jit}}\in \M_{a_{jit}p(i)}(A)((\gamma^i_1-\alpha_{jit})^{a_{jit}}, \ldots, (\gamma^i_{p(i)}-\alpha_{jit})^{a_{jit}})\]
for any $t=1,\ldots, k_{ji}$ if $k_{ji}>0.$ Since $N_{j}=\sum_{i=1, k_{ji}>0}^n\;\sum_{t=1}^{k_{ji}}a_{jit}\,p(i)$ we have that 
\[\bigoplus_{i=1, k_{ji}>0}^n\;\bigoplus_{t=1}^{k_{ji}} \pi_i(x)\otimes 1_{a_{jit}\alpha_{jit}}\in \M_{N_j}(\overline{(\gamma^{i}_k-\alpha_{jit})^{a_{jit}}})\] 
if $N_j>0.$ Thus, we have that    
\[\Phi^j\left(\bigoplus_{i=1, k_{ji}>0}^{n}\;\bigoplus_{t=1}^{k_{ji}}  \pi_i(x)\otimes 1_{a_{jit}\alpha_{jit}}\right)\in \M_{N_j}(A)(\delta^j_1, \ldots, \delta^j_{N_j})\oplus 0_{q(j)-N_j}\subseteq  \M_{q(j)}(A)(\delta^j_1, \ldots, \delta^j_{q(j)}).\]
The above formula holds in the case when $N_j=0$ by the definition of $\Phi^j$ as well. So, it holds for any $j=1, \ldots, m$ regardless of the value of $N_j.$ This demonstrates that 
\[\phi(x)\in \sum_{j=1}^m\iota_j\left(\M_{q(j)}(A)(\delta^j_{1}, \ldots, \delta^j_{q(j)})\right) = \bigoplus_{j=1}^m \M_{q(j)}(A)(\delta^j_{1}, \ldots, \delta^j_{q(j)})=S\]
and so the  map $\phi$ is well-defined. The map $\phi$ is also a graded $*$-homomorphism since it is defined by the graded $*$-homomorphisms $\iota_j,$ 
$\Phi^j,$ $\phi_{a_{jit}}$, $\phi_{\alpha_{jit}},$ and $\pi_i.$ 

We show that $K^{\gr}_0(\phi)=f$ now. Recall that 
\[f([e_{11}^i])= \sum_{j=1, k_{ji}>0}^m \sum_{t=1}^{k_{ji}} a_{jit}\alpha_{jit}[f_{11}^j]\]
if $k_{ji}>0$ for some $j=1,\ldots, m$ and $f([e_{11}^i])=0$ otherwise. So, we need to show that $[\phi(e^i_{11})]=\sum_{j=1, k_{ji}>0}^m \sum_{t=1}^{k_{ji}} a_{jit}\alpha_{jit}[f_{11}^j]$ if $k_{ji}>0$ for some $j=1,\ldots, m$ and $[\phi(e^i_{11})]=0$ otherwise.

If $k_{ji}=0$ for all $j=1,\ldots, m,$ then we have that $\phi(e_{11}^i)=0\in S$  by the definition of $\phi$ and so $[\phi(e^i_{11})]=0\in K^{\gr}_0(S).$ 

If $k_{ji}>0$ for some $j=1,\ldots, m,$ let us consider one such $j.$ Since $k_{ji}>0,$ we have that $N_j>0$ also. In this case the matrix $\pi_{i}(e_{11}^{i}))\otimes 1_{a_{jit}\alpha_{jit}}$ is a diagonal matrix with the first $a_{jit}$ entries on the diagonal equal to 1 and the rest equal to 0. Note also that the matrix $\pi_{i'}(e_{11}^i)$ is a zero matrix for $i'\neq i.$ Let us consider the following matrix:   
$$E_{11}^{ij}:=\bigoplus_{i'=1, k_{ji'}>0}^{n}\;\bigoplus_{t=1}^{k_{ji'}}\pi_{i'}(e^i_{11})\otimes 1_{a_{ji't}\alpha_{ji't}}.$$ 
The matrix $E_{11}^{ij}$
is an $N_{j}\times N_{j}$ diagonal matrix with exactly $\sum_{t=1}^{k_{ji}}a_{jit}$ diagonal entries equal to 1 and the rest of the diagonal entries equal to 0.
The nontrivial entries of $E_{11}^{ij}$ correspond to values $(i, t, s, 1)\in S_j.$ If $\Pi^j$ maps one such $(i, t, s, 1)$ onto $\sigma^j(l)$ for some $l\in\{1, \ldots, q(j)\},$ then $E^{ij}_{11}$ has 1 on the $l$-th diagonal entry. Note that $l=\rho^j\Pi^j(i, t, s, 1)$ in this case. 

Thus, if we let $g_{kl}^j$ denote the standard matrix units of $\M_{q(j)}(\delta^j_1,\ldots, \delta^j_{q(j)}),$ so that $\pi_j(f^j_{kl})=g_{kl}^j$ and $\iota_j(g_{kl}^j)=f_{kl}^j$ for $k,l=1,\ldots, q(j),$ then $g^j_{\rho(k)\rho(l)}$ are the standard matrix units of $\M_{q(j)}(\delta^j_{\sigma^j(1)}, \ldots, \delta^j_{\sigma^j(q(j))})$ by Proposition \ref{permutation_of_components}. 

With this notation and the convention that $(g^j_{kk})^a$ represents $g^j_{kk}+g^j_{(k+1)(k+1)}+\ldots+g^j_{(k+a-1)(k+a-1)}$
for any $k=1,\ldots, q(j)$ and any positive integer $a,$
we have the following:
\[E_{11}^{ij}=\sum_{t=1}^{k_{ji}}\sum_{s=1}^{a_{jit}}g^j_{\rho^j\Pi^j(i, t, s, 1)\rho^j\Pi^j(i, t, s, 1)}=
\sum_{t=1}^{k_{ji}}\left(g^j_{\rho^j\Pi^j(i, t, 1, 1)\rho^j\Pi^j(i, t, 1, 1)}\right)^{a_{jit}}.\]

Thus, in $K_0^{\gr}(\M_{N_j}(A)(\overline{\delta^j_{\Pi^j(i, t, s, k)}-\varepsilon^j_{\Pi^j(i, t, s, k)}}))=K_0^{\gr}(\M_{N_j}(A)(\delta^j_{\sigma^j(1)}-\varepsilon^j_{\sigma^j(1)},\ldots,\delta^j_{\sigma^j(N_j)}-\varepsilon^j_{\sigma^j(N_j)})),$ we have that 
\[[E_{11}^{ij}]=\sum_{t=1}^{k_{ji}}[\left(g^j_{\rho^j\Pi^j(i, t, 1, 1)\rho^j\Pi^j(i, t, 1, 1)}\right)^{a_{jit}}]=
\sum_{t=1}^{k_{ji}}a_{jit}\left[ g^j_{\rho^j\Pi^j(i, t, 1, 1)\rho^j\Pi^j(i, t, 1, 1)}\right]=
\sum_{t=1}^{k_{ji}}a_{jit}\left[ g^j_{l_tl_t}\right]\] 
if $l_t\in\{1, \ldots, N_j\}$ is such that  $\sigma^j(l_t)=\Pi^j(i, t, 1, 1)$ for $t=1, \ldots, k_{ji}.$ By formula  (\ref{matrix_units_identity}), this implies  
that \[[E_{11}^{ij}]=\sum_{t=1}^{k_{ji}}a_{jit}\left[ g^j_{l_tl_t}\right]=
\sum_{t=1}^{k_{ji}}a_{jit}(\delta^j_1-\varepsilon^j_{1}-\delta^j_{\sigma^j(l_t)}+\varepsilon^j_{\sigma^j(l_t)})\left[g^j_{\rho(1)\rho(1)}\right]
\]
where $\varepsilon^j_1$ is defined as previously if $\rho(1)\in\{1,\ldots, N_j\}$ and it is $0$ otherwise. Thus, \[
[\Phi^j(E_{11}^{ij})]=
\sum_{t=1}^{k_{ji}}a_{jit} (\delta^j_1-\varepsilon^j_{1}-\delta^j_{\sigma^j(l_t)}+\varepsilon^j_{\sigma^j(l_t)}) \left[ g^j_{11}\right]\]
by the definition of the map $\Phi^j$ and the fact that $\Phi^j$ induces a $\Z[\Gamma]$-isomorphism on the $K_0^{\gr}$-level. 

Note that $ (\delta^j_1-\varepsilon^j_{1}-\delta^j_{\sigma^j(l_t)}+\varepsilon^j_{\sigma^j(l_t)}) \left[ g^j_{11}\right]=(-\delta^j_{\sigma^j(l_t)}+\varepsilon^j_{\sigma^j(l_t)})\left[ g^j_{11}\right]$ since $\delta^j_1$ and $\varepsilon^j_1$ are in $\Gamma_A.$ 
By the definition of the map $\sigma^j,$ we obtain $-\delta^j_{\sigma^j(l_t)}+\varepsilon^j_{\sigma^j(l_t)}=-\gamma_1^i+\alpha_{jit}.$
Thus,  
\[[\Phi^j(E_{11}^{ij})]=
\sum_{t=1}^{k_{ji}} a_{jit}(-\delta^j_{\sigma^j(l_t)}+\varepsilon^j_{\sigma^j(l_t)})[g_{11}^j]=\sum_{t=1}^{k_{ji}} a_{jit}(-\gamma^i_1+\alpha_{jit})[g_{11}^j]\]
and, since $\gamma_1^i=0,$
\[[\Phi^j(E_{11}^{ij})]=\sum_{t=1}^{k_{ji}} a_{jit}(-\gamma^i_1+\alpha_{jit})[g_{11}^j]=\sum_{t=1}^{k_{ji}} a_{jit}\alpha_{jit}[g_{11}^j] \]
which implies \[[\iota_j(\Phi^j(E_{11}^{ij}))]=
\sum_{t=1}^{k_{ji}} a_{jit}\alpha_{jit}[f_{11}^j].\]
As $\phi(e^i_{11})=\sum_{j=1}^m \iota_j(\Phi^j(E_{11}^{ij}))=\sum_{j=1, k_{ji}>0}^m \iota_j(\Phi^j(E_{11}^{ij})),$ we have that  
\[K_0(\phi)([e^{i}_{11}])=[\phi(e_{11}^i)]=\sum_{j=1, k_{ji}>0}^m
[\iota_j(\Phi^j(E_{11}^{ij}))]=
\sum_{j=1, k_{ji}>0}^m\sum_{t=1}^{k_{ji}} a_{jit}\alpha_{jit}[f_{11}^j]=f([e_{11}^i])\]
which finishes the proof in the case when  $k_{ji}>0$ for some $j=1,\ldots, m.$ Thus, we have that $K_0(\phi)([e^{i}_{11}])=f([e_{11}^i])$ for any $i=1,\ldots,n$ in either of the two cases. So $K_0^{\gr}(\phi)=f.$

Lastly, let us assume that $f$ is unit-preserving. In this case both the pre-dimension and the dimension formulas have equalities, $N_j=q(j)>0$ and $\Pi^j$ is a bijection for every $j=1,\ldots, m.$ Thus, for every $j=1,\ldots,m$ there is $i=1,\ldots, n$ with $k_{ji}>0.$ Moreover, for every $i=1,\ldots, n$ there is $j=1,\ldots, m$ with $k_{ji}>0$ since $f([e_{11}^i])\neq 0$ for every $i=1,\ldots, n$ (otherwise $f$ would map $1_R=\sum_{i=1}^n\sum_{k=1}^{p(i)}-\gamma^i_k[e^i_{11}]$ to 0).  

This enables us to define $E^{ij}_{kk}$ for every $i=1,\ldots, n$ every $j=1,\ldots, m$ such that $k_{ji}>0,$ and every $k=1,\ldots, p(i),$ analogously to $E^{ij}_{11}$ as follows.
$$E_{kk}^{ij}:=\bigoplus_{i'=1, k_{ji'}>0}^{n}\;\bigoplus_{t=1}^{k_{ji'}}\pi_{i'}(e^i_{kk})\otimes 1_{a_{ji't}\alpha_{ji't}}$$ 
for $e^i_{kk}\in R.$  
We also have that 
\[E_{kk}^{ij}=\sum_{t=1}^{k_{ji}}\sum_{s=1}^{a_{jit}}g^j_{\rho^j\Pi^j(i, t, s, k)\rho^j\Pi^j(i, t, s, k)}\] which implies 
\[\sum_{k=1}^{p(i)}E_{kk}^{ij}=\sum_{k=1}^{p(i)}\sum_{t=1}^{k_{ji}}\sum_{s=1}^{a_{jit}}g^j_{\rho^j\Pi^j(i, t, s, k)\rho^j\Pi^j(i, t, s, k)}.\]
By the definition of $\phi$ and since $\Pi^j$ is a bijection for any $i$ and $j$ with $k_{ji}>0,$ we have that 
\begin{multline*}
\phi(1_R)= \sum_{i=1}^n\sum_{k=1}^{p(i)}\phi(e^i_{kk}) =\sum_{i=1}^n\sum_{k=1}^{p(i)}\sum_{j=1, k_{ji}>0}^m\;\iota_j(\Phi^j(E^{ij}_{kk}))=\\
\sum_{j=1}^m\sum_{i=1, k_{ji}>0}^n\;\sum_{k=1}^{p(i)}\iota_j(\Phi^j(E^{ij}_{kk}))=
\sum_{j=1}^m\iota_j(\Phi^j(\sum_{i=1, k_{ji}>0}^n\;\sum_{k=1}^{p(i)}E^{ij}_{kk}))=\\
\sum_{j=1}^m\iota_j(\Phi^j(\sum_{i=1, k_{ji}>0}^n\;\sum_{k=1}^{p(i)}\sum_{t=1}^{k_{ji}}\sum_{s=1}^{a_{jit}}g^j_{\rho^j\Pi^j(i, t, s, k)\rho^j\Pi^j(i, t, s, k)}))
=\sum_{j=1}^m \iota_j(1_{q(j)})=1_S
\end{multline*}
which finishes the proof. 
\end{proof}

Let us examine the construction from Theorem \ref{fullness} in an example. 

\begin{example} Let $\Gamma=\Z/3\Z=\langle x | x^3=1\rangle$ so that $\Z[\Gamma]$ can be represented as $\Z[x]/(x^3=1).$ We use the multiplicative notation for the operation in $\Gamma$ in this example in order to distinguish between 0 in $\Z$ and the identity in $\Gamma.$ 

Let $A$ be any $*$-field trivially graded by $\Gamma$ and  let
\[R=\M_2(A)(1,x)\oplus A(x)\;\;\mbox{  and }\;\;S=\M_5(A)(1, 1, x, x, x^2)\oplus \M_4(A)(1, 1, x^2, x^2).\] 
Thus we have that $n=m=2,$ $l^1=3,\; l^2=2,$ $s^1_1=2,\; s^1_2=2,\; s^1_3=1,$ $s^2_1=2,$ and $s^2_2=2.$ 

If $f$ is a contractive homomorphism $K^{\gr}_0(R)\to K_0^{\gr}(S),$ let $f([e^i_{11}])=\sum_{j=1}^2\sum_{t=1}^3 a_{jit} x^{t-1}[f^j_{11}]$
where the coefficients $a_{jit}$ are nonnegative (we shall not be requiring that they are positive just yet). Considering the $j$-th terms of the relation 
$f([1_R])\leq [1_S],$ we obtain the following relations: 
\[\mbox{ for }j=1, \;\;\;\; a_{111}+a_{112}x+a_{113}x^2+a_{111}x^2+a_{112}+a_{113}x+a_{121}x^2+a_{122}+a_{123}x\leq 2+2x^2+x, \] 
\[\mbox{ for }j=2,\;\;\;\; a_{211}+a_{212}x+a_{213}x^2+a_{211}x^2+a_{212}+a_{213}x+a_{221}x^2+a_{222}+a_{223}x\leq 2+2x. \;\;\;\;\;\;\;\;\]
These imply the following.
\[
\begin{array}{llll}
j=1  & j'=1 & (\delta_{11}^1=1) &  a_{111}+a_{112}+a_{122}\leq 2 \\
& j'=2 & (\delta^1_{21}=x) &  a_{111}+a_{113}+a_{121}\leq 2 \\
& j'=3 & (\delta^1_{31}=x^2) & a_{112}+a_{113}+a_{123}\leq 1 \\
j=2 & j'=1 & (\delta^2_{11}=1) & a_{211}+a_{212}+a_{222}\leq 2 \\
& j'=2 & (\delta^2_{21}=x^2) & a_{213}+a_{212}+a_{223}\leq 2 \\
&& (\mbox{terms with }x) & a_{211}+a_{213}+a_{221}\leq 0\\
\end{array}
\] 
From the last equation, it follows that $a_{211}=a_{213}=a_{221}=0.$ Substituting these values in the remaining equations for $j=2$ produces 
$a_{212}+a_{222}\leq 2$ and $a_{212}+a_{223}\leq 2.$ In addition, 
\begin{center}
\begin{tabular}{lll}
$j=1$ & $j'=1,\;\; a_{111}+a_{112}+a_{122}\leq 2\; \Rightarrow$ & one of $a_{111}, a_{112},$ or $a_{122}$ has to be zero. Say $a_{112}=0.$\\
& $j'=2,\;\; a_{111}+a_{113}+a_{121} \leq 2\; \Rightarrow$ & one of $a_{111}, a_{113}, $ or $a_{121}$ has to be zero. Say $a_{113}=0.$\\
& $j'=3,\;\; a_{112}+a_{113}+a_{123}\leq 1\; \Rightarrow$ & $a_{123}\leq 1$ since   $a_{112}=a_{113}=0.$
\end{tabular}
\end{center}
Let us consider one such possible map $f,$ for example with 
\[a_{111}=2\Rightarrow a_{122}=a_{121}=0,\mbox{ and  with }\;a_{123}=1,\; a_{212}=1,\;  a_{222}=1,\; a_{223}=1.\]
Thus, $f=\left(\begin{array}{cc}
2 & x^2\\
x & x+x^2
\end{array}
\right),$ $k_{11}=k_{12}=k_{21}=1,$ and $k_{22}=2.$ 

For $x=\left(\left(\begin{array}{cc}
a & b\\
c & d\end{array}\right), e\right)\in R$ we have that  

$\bigoplus_{i=1}^{2}\;\bigoplus_{t=1}^{k_{1i}} \pi_i(x)\otimes 1_{a_{1it}\alpha_{1it}}=
\left(\begin{array}{cc}
a & b\\
c & d\end{array}\right)\otimes 1_{2}\oplus
e \otimes 1_{x^2}=
\left(\begin{array}{cccc|c}
a & 0 & b & 0 & 0\\
0 & a & 0 & b & 0 \\ 
c & 0 & d & 0 & 0\\
0 & c & 0 & d & 0\\ \hline 
0 & 0 & 0 & 0 & e\end{array}\right)\in \M_5(A)(1, 1, x, x, x^2)
$ and

$\bigoplus_{i=1}^{2}\;\bigoplus_{t=1}^{k_{2i}} \pi_i(x)\otimes 1_{a_{2it}\alpha_{2it}}=
\left(\begin{array}{cc}
a & b\\
c & d\end{array}\right)\otimes 1_{x}
\oplus e\otimes 1_{x}
\oplus e\otimes 1_{x^2}=
\left(\begin{array}{cc|cc}
a & b & 0 & 0\\
c & d & 0 & 0 \\ \hline
0 & 0 & e & 0\\
0 & 0 & 0 & e\\ 
\end{array}\right)\in \M_4(A)(x^2, 1, 1, x^2).
$

With the permutation $\rho^1$ being the identity and the permutation $\rho^2$ being 
$\left(
\begin{array}{cccc}
1 & 2 & 3 & 4 \\
3 & 1 & 2 & 4
\end{array}
\right),$ the map $\phi$ is given by 
\[\left(\left(\begin{array}{cc}
a & b\\
c & d\end{array}\right), e\right)\mapsto \left(
\left(\begin{array}{ccccc}
a & 0 & b & 0 & 0\\
0 & a & 0 & b & 0 \\ 
c & 0 & d & 0 & 0\\
0 & c & 0 & d & 0\\  
0 & 0 & 0 & 0 & e\end{array}\right),
\left(\begin{array}{cccc}
d & 0 & c & 0\\
0 & e & 0 & 0 \\ 
b & 0 & a & 0\\
0 & 0 & 0 & e\\ 
\end{array}\right)\right)\in \M_5(A)(1, 1, x, x, x^2)\oplus\M_4(A)(1, 1, x^2, x^2).\]
\label{example_graded}
\end{example}

Since a field which is trivially graded by a group has enough unitaries, Theorem \ref{fullness} has the following direct corollary. 

\begin{corollary}\label{nongraded_fullness}
If $R$ and $S$ are matricial $*$-algebras over a $*$-field $A$ and $f:K_0(R)\rightarrow K_0(S)$ is
a contractive homomorphism of abelian groups, then
there is an $A$-algebra $*$-homomorphism  $\phi:R\rightarrow S$ such that $K^{\gr}_0(\phi)=f$. Furthermore, if $f$ is unit-preserving, then $\phi$ is unital. 
\end{corollary}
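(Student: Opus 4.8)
The plan is to obtain Corollary~\ref{nongraded_fullness} as the specialization of Theorem~\ref{fullness} to the case of a trivial grade group. First I would take $\Gamma$ to be the trivial group $\{0\}$ and regard the $*$-field $A$ as trivially $\Gamma$-graded, so that $A_0=A$ and $A_\gamma=0$ for $\gamma\neq 0$. Under this grading every matricial $*$-algebra over $A$ is, tautologically, a graded matricial $*$-algebra over $A$: each summand $\M_{n_i}(A)$ is $\M_{n_i}(A)(\overline\gamma_i)$ with the shift tuple $\overline\gamma_i$ the zero tuple, and the $*$-transpose involution is unchanged. Thus $R$ and $S$ qualify as graded matricial $*$-algebras in the sense required by Theorem~\ref{fullness}.

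Next I would verify that the hypotheses of Theorem~\ref{fullness} are met. The trivially graded $*$-field $A$ has enough unitaries, since its only nontrivial component is $A_0=A$ and $1\in A_0$ satisfies $1\cdot 1^*=1^*\cdot 1=1$; this is exactly the remark recorded after the definition of enough unitaries. It remains to match up the map-theoretic data. When $\Gamma=\{0\}$ the group ring $\Z[\Gamma]$ is just $\Z$, so a $\Z[\Gamma]$-module homomorphism is merely a homomorphism of abelian groups, and $K^{\gr}_0(A)$ coincides with the ordinary $K_0(A)$ together with its usual pre-order and order-unit $[A]$, as noted when $K^{\gr}_0$ was introduced. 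Consequently the contractivity condition in the graded sense ($f$ order-preserving and $f([R])\le[S]$) is literally the contractivity condition for the non-graded $f:K_0(R)\to K_0(S)$, and likewise ``unit-preserving'' means $f([R])=[S]$ in both settings.

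With these identifications in place, I would simply apply Theorem~\ref{fullness} to the contractive $\Z[\Gamma]$-module homomorphism $f$ to produce a graded $A$-algebra $*$-homomorphism $\phi:R\to S$ with $K^{\gr}_0(\phi)=f$. Because the grading is trivial, a graded homomorphism is nothing more than an ordinary homomorphism, so $\phi$ is the desired $A$-algebra $*$-homomorphism and $K^{\gr}_0(\phi)=K_0(\phi)=f$. The final clause is immediate: if $f$ is unit-preserving then, by the last assertion of Theorem~\ref{fullness}, $\phi$ is unital. I do not anticipate any genuine obstacle here; the only real content is the bookkeeping of checking that each graded notion (enough unitaries, $\Z[\Gamma]$-linearity, the pre-order and order-unit, contractivity, and unitality) degenerates to its classical counterpart when $\Gamma$ is trivial, after which the corollary is a direct invocation of the theorem.
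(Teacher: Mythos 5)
Your proposal is correct and matches the paper's own derivation exactly: the corollary is stated in the paper as a direct consequence of Theorem~\ref{fullness} applied to a trivially graded $*$-field, which automatically has enough unitaries. Your additional bookkeeping (that $\Z[\Gamma]=\Z$, that graded contractivity and unit-preservation reduce to their classical counterparts, and that a graded homomorphism for the trivial grading is just a homomorphism) is all accurate and fills in the routine details the paper leaves implicit.
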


If we do not consider any involution on the field $A,$ we obtain the graded version of \cite[Lemma 15.23]{Goodearl_book} as a direct corollary of Theorem \ref{fullness} as we illustrate below. 

\begin{corollary}\label{noninvolutive_fullness}
Let $A$ be a $\Gamma$-graded field and let $R$ and $S$ be graded matricial algebras over $A$. If $f:K^{\gr}_0(R)\rightarrow K^{\gr}_0(S)$ is a contractive $\Z[\Gamma]$-module homomorphism, then there is a graded $A$-algebra homomorphism  $\phi:R\rightarrow S$ such that $K^{\gr}_0(\phi)=f$. Furthermore, if $f$ is unit-preserving, then $\phi$ is unital. 
\end{corollary}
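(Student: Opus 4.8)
The plan is to show that the entire argument proving Theorem \ref{fullness} survives intact once the involutive structure is forgotten, with the single adjustment that the enough-unitaries hypothesis is traded for a property automatically enjoyed by every graded field. First I would record the one structural fact we need: in a graded field $A$ every nonzero homogeneous element is invertible, so for each $\varepsilon\in\Gamma_A$ the component $A_\varepsilon$ is nonzero and hence contains an invertible element. This is exactly the hypothesis required to invoke the non-involutive permutation result \cite[Theorem 1.3.3]{Roozbeh_graded_ring_notes} (recorded as Remark \ref{noninvolutive_permutation}) for an arbitrary shift lying in $\Gamma_A$, in the same way that enough unitaries was used to invoke the second part of Proposition \ref{permutation_of_components} in the involutive setting.

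Next I would reproduce the combinatorial scaffolding of section \ref{subsection_graded_fullness}: write $R=\bigoplus_{i=1}^n\M_{p(i)}(A)(\ol\gamma^i)$ and $S=\bigoplus_{j=1}^m\M_{q(j)}(A)(\ol\delta^j)$ with $\gamma_1^i=\delta_1^j=0$, expand $f([e_{11}^i])=\sum_{j,t}a_{jit}\alpha_{jit}[f_{11}^j]$ with nonnegative $a_{jit}$, and derive the coset equations (\ref{coset_equation}) and the dimension formulas (\ref{dimension_formulas}) from the contractivity of $f$. None of this uses the involution: it rests only on the $\Z[\Gamma]$-module structure of $K^{\gr}_0$ and the identifications $K^{\gr}_0(R)\cong\Z[\Gamma/\Gamma_A]^n$, $K^{\gr}_0(S)\cong\Z[\Gamma/\Gamma_A]^m$ from part (3) of Proposition \ref{graded_division_ring}, so these steps transfer verbatim. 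The sets $S_j$, $S_{jj'}$, their cardinalities $N_j$, $N_{jj'}$, the injection $\Pi^j$, and the permutations $\sigma^j$, $\rho^j$ are then defined exactly as before.

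I would then define $\phi$ by the same formula,
\[
\phi(x)=\sum_{j=1}^m\iota_j\Phi^j\left(\bigoplus_{i=1,\,k_{ji}>0}^{n}\;\bigoplus_{t=1}^{k_{ji}}\pi_i(x)\otimes 1_{a_{jit}\alpha_{jit}}\right),
\]
where now $\Phi^j$ is the composite graded isomorphism assembled from the first part of \cite[Theorem 1.3.3]{Roozbeh_graded_ring_notes} for the permutation step and its second part for the shift step, the latter using an invertible element of $A_{\varepsilon^j}$ with $\varepsilon^j\in\Gamma_A$. The building maps $\phi_k^{n,\ol\gamma}$ and $\phi_\delta^{n,\ol\gamma}$ of (\ref{action_graded_k})--(\ref{action_graded}) are graded $A$-algebra homomorphisms regardless of any involution, so $\phi$ is a well-defined graded $A$-algebra homomorphism $R\to S$. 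The computation showing $K^{\gr}_0(\phi)([e_{11}^i])=f([e_{11}^i])$, together with the verification that $\phi(1_R)=1_S$ when $f$ is unit-preserving, is word-for-word the one in the proof of Theorem \ref{fullness}, since it only tracks degrees and the classes of matrix units.

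The one genuine point to check, and the crux of the whole matter, is that discarding enough unitaries costs nothing: building $\Phi^j$ requires an element realizing each $\varepsilon^j\in\Gamma_A$, which in the involutive case had to be a \emph{unitary}, whereas the non-involutive permutation result only demands an \emph{invertible} element, and a graded field supplies one in every supported component. I therefore expect no real obstacle beyond this bookkeeping. One could alternatively try to deduce the corollary from Theorem \ref{fullness} by equipping $A$ with a graded involution interchanging $A_\gamma$ and $A_{-\gamma}$; but such an involution with enough unitaries need not exist canonically for an arbitrary graded field, so the direct adaptation above is the cleaner route.
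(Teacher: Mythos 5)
Your proposal is correct and follows essentially the same route as the paper: the paper's proof of Corollary \ref{noninvolutive_fullness} likewise reruns the argument of Theorem \ref{fullness}, replacing the second part of Proposition \ref{permutation_of_components} with the second part of \cite[Theorem 1.3.3]{Roozbeh_graded_ring_notes}, using the fact that a graded field supplies an invertible (though not necessarily unitary) element in each supported component $A_{\varepsilon}$ with $\varepsilon\in\Gamma_A$. Your identification of this as the only point where the enough-unitaries hypothesis intervened is exactly the content of Remark \ref{noninvolutive_permutation} and of the paper's proof.
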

\begin{proof}
Without the requirement that $A$ has enough unitaries, we can obtain the elements 
$\varepsilon^j_{\Pi^j(i, t, s, k)}\in\Gamma_A$ such that there is an invertible (but not necessarily unitary) element in $A_{\varepsilon^j_{\Pi^j(i, t, s, k)}}$ 
for every $j=1,\ldots,m,$ such that $k_{ji}>0$ for at least one $i=1,\ldots,n$ and $(i,t,s,k)\in S_j$ (see Remark \ref{noninvolutive_permutation}). With this modification, the map $\Pi^j$ is defined in the same way as in the proof of Theorem \ref{fullness}.
In this case, one can follow all the steps of the proof of Theorem \ref{fullness} and use the second part of 
\cite[Theorem 1.3.3]{Roozbeh_graded_ring_notes} instead of the second part of Proposition \ref{permutation_of_components} as Remark \ref{noninvolutive_permutation} illustrates. For a contractive $\Z[\Gamma]$-module homomorphism  $f:K^{\gr}_0(R)\rightarrow K^{\gr}_0(S)$, this process would produce a graded $A$-algebra homomorphism  $\phi:R\rightarrow S$ such that $K^{\gr}_0(\phi)=f$ and which is unital provided that $f$ is unit-preserving.
\end{proof}

\section{Faithfulness}\label{section_faithfulness}

In this section, we show that the functor  $K^{\gr}_0$ is faithful on a quotient of the category $\mathcal C$ (defined in the first paragraph of section \ref{section_fullness}) for graded $*$-fields $A$ in which the involution has certain favorable properties. More precisely, in Theorem \ref{faithfulness}, we show that if $A_0$ is a 2-proper and $*$-pythagorean graded $*$-field, $R$ and $S$ are graded matricial $*$-algebras over $A,$ and $\phi$ and $\psi$ are graded $*$-homomorphisms $R\to S,$ then $K^{\gr}_0(\phi)= K^{\gr}_0(\psi)$ if and only if there is a unitary element $u$ of degree zero in $S$ such that $\phi(r)=u\psi(r)u^*$ for any $r\in R.$ This result is a graded, involutive version of an analogous non-graded and non-involutive result from \cite{Goodearl_book} and a non-involutive result from \cite{Roozbeh_Annalen}. 

We begin with some preliminary results regarding algebraic equivalence of projections in $*$-rings. Just as in the previous section, all rings are assumed to be unital but homomorphisms between them are not necessarily unit-preserving. 

\subsection{Algebraic and *-equivalence in a *-ring.}
Recall that idempotents $p,q$ of any ring $A$ are said to be {\em algebraically equivalent}, written as $p\sima_A q$ if and only if there are $x,y\in A$ such that $xy=p$ and $yx=q.$ If it is clear from the context in which ring we are considering all the elements $p,q,x,$ and $y$ to be, we simply write $p\sima q.$ In this case, we say that $x$ and $y$ {\em realize} the equivalence $p\sima q.$ The idempotents $p$ and $q$ are {\em similar} if there is an invertible element $u\in A$ such that  $p=uqu^{-1}.$ 

If $x,y \in A$ realize the algebraic equivalence of idempotents $p$ and $q,$ replacing $x$ and $y$ with $pxq$ and $qyp$ respectively produces elements which also realize the same equivalence but also have the properties that $px=x=xq$ and $qy=y=yp.$  With these adjustments, if $x,y \in A$ realize the equivalence $p\sima q$, then the left multiplication $L_x$ is a right $A$-module isomorphism $qA\cong pA$ and $L_y$ is its inverse. Conversely, if $qA\cong pA$ as right $A$-modules for some idempotents $p,q\in A,$ $\phi$ denotes the isomorphism $qA\to pA$ and $\psi$ its inverse, then $\phi(q)=x$ and $\psi(p)=y$ realize the algebraic equivalence of the idempotents $p$ and $q.$ This shows that 
\begin{center}
$p\sima q$ if and only if $pA\cong qA$ as right $A$-modules
\end{center}
for any idempotents $p,q\in A.$ Analogously, $p\sima q$ if and only if $Ap\cong Aq$ as left $A$-modules for any idempotents $p,q\in A.$

In an involutive ring, the selfadjoint idempotents, i.e. {\em the projections}, take over the role of idempotents. In this case, the following relation of projections represents the involutive version of the algebraic equivalence. The projections  $p,q$ of a $*$-ring $A$ are {\em $*$-equivalent}, 
written as $p\sims q,$ if and only if there is $x\in A$ such that $xx^*=p$ and $x^*x=q.$ In this case, we say that $x$ realizes the equivalence $p\sims q.$ 
The projections $p$ and $q$ are {\em unitarily equivalent} if there is a unitary $u\in A$ (i.e. an invertible element $u$ with $u^{-1}=u^*$) such that $p=uqu^{-1}.$ 

Several other authors have been studying conditions enabling a transfer from algebraic to $*$-equivalence on matrix algebras over a $*$-ring. In order to have such a transfer, one may need to impose some requirements on the involution. First, we consider the requirement that the involution is positive definite. Recall that an involution $*$ on $A$ is said to be \emph{$n$-proper} (or $n$-positive definite as some call this condition) if, for all $x_1, \dots,
x_n \in A$, $\sum_{i=1}^n x_ix_i^* = 0$ implies $x_i=0$ for each $i=1,\ldots,
n$ and that it is {\em positive definite} if it is $n$-proper for every $n.$ A 1-proper involution is usually simply said to be proper. A $\ast$-ring with a positive definite ($n$-proper) involution is said to be a {\em positive definite ($n$-proper)} ring. It is straightforward to check that a $*$-ring $A$ is $n$-proper if and only if $\M_n(A)$ is proper (see, for example \cite[Lemma 2.1]{Gonzalo_Ranga_Lia}). This implies that a $*$-ring $A$ is positive definite if and only if $\M_n(A)$ is positive definite for any $n.$  

The following property of a $*$-ring is also relevant. A $*$-ring $A$ is {\em $*$-pythagorean} if condition (P) below holds.  
\begin{enumerate}
\item[(P)] For every $x$ and $y$ in $A,$ there is $z\in A$ such that $xx^* + yy^* = zz^*.$
\end{enumerate}
Our interest in this condition is based on the following fact: by \cite[Lemma 3.1]{Handelman} and \cite[Theorem 1.12]{Ara_matrix_rings}, the following conditions are equivalent for a 2-proper $*$-field $A.$
\begin{enumerate}
\item The field $A$ is $*$-pythagorean. 

\item Algebraically equivalent projections of $\M_2(A)$ are $*$-equivalent. 

\item Algebraically equivalent projections of $\M_n(A)$ are $*$-equivalent for any $n.$
\end{enumerate}
 
It is direct to show that a $*$-pythagorean ring is positive definite if and only if it is 2-proper. The properties of being 2-proper and $*$-pythagorean are independent as the field of rational numbers with the identity involution and the field of complex numbers with the identity involution illustrate.  

The fields of complex and real numbers with the identity involution are $*$-pythagorean since $\sqrt{x^2+y^2}$ can be taken for $z$ in condition (P). The form of the element $z$ in these examples indicates the connection between condition (P) and the existence of a positive square root of a positive element. We illustrate that this connection is more general than it may first seem. 

The concept of positivity in a $*$-ring can be defined as follows. An element of a $*$-ring
$A$ is \emph{positive} if it is a finite 
sum of elements of the form $xx^*$ for $x \in A.$ The notation $x\geq 0$ is usually
used to denote that $x$ is positive. Note that a positive element $x$ is selfadjoint and so $x^2=xx^*.$    

As it turns out, a weaker condition than the existence of a positive square root of a positive element is sufficient for a $*$-ring to be $*$-pythagorean provided that $A$ is 2-proper. Let $(x)''$ denote the {\it double commutant} of an element $x$ in a ring $A$, i.e. the set of all elements which commute with every element which commutes with $x$. A $*$-ring $A$ is said to satisfy the {\em  weak square root axiom,} or the (WSR) axiom for short, if the following condition holds. 
\begin{enumerate}
\item[(WSR)] For every   $x\in A$ there is $r\in (x^*x)''$ such that $x^*x=r^*r.$ 
\end{enumerate}
We also say that $A$ satisfies the {\em (WSR) axiom matricially}, if $\M_n(A)$ satisfies the (WSR) axiom for every positive integer $n.$

We note that every $C^*$-algebra satisfies the (WSR) axiom (see \cite[remark following Definition 10 in Section 13]{Berberian}). 

By \cite[Section 1, Exercise 8A]{Berberian} (and also \cite[Proposition 6.10]{Berberian_web}), a $*$-ring with (WSR) is such that algebraically equivalent projections are $*$-equivalent. Thus, if $A$ is a 2-proper $*$-field, the assumption that $\M_2(A)$ satisfies the (WSR) axiom is stronger than the assumption that $A$ is $*$-pythagorean. 
This explains our decision to formulate Theorems \ref{faithfulness} and \ref{ultramatricial} using the assumptions that the zero-component $A_0$ of a graded $*$-field $A$ is 2-proper and $*$-pythagorean instead of the assumption that $A_0$ satisfies the (WSR) axiom matricially.  

We summarize some properties of algebraic and $*$-equivalence which we shall need in the proof of Theorem \ref{faithfulness}. 
\begin{lemma}\label{nongraded_equivalence} 
Let $A$ be a ring and let $p$ and $q$ be idempotents in $A.$
\begin{enumerate}[\upshape(1)]
\item \cite[Proposition 5.5]{Berberian_web} If $x\in pAq$ and $x'\in qAp$ realize the equivalence $p\sima q$ and if $y\in (1-p)A(1-q)$ and $y'\in (1-q)A(1-p)$ realize the equivalence $1-p\sima 1-q,$ then $u=x+y$ is invertible with inverse $u^{-1}=x'+y'$ and $p=uqu^{-1}.$ 

\item \cite[Theorem 4.1]{Goodearl_book} If $A$ is unit-regular, then  $p\sima q$ if and only if   $1-p\sima 1-q.$

\item \cite[Proposition 6.5]{Berberian_web} If $A$ is a $*$-ring and $p$ and $q$ are projections such that $x\in pAq$ realizes the $*$-equivalence $p\sims q$ and $y\in (1-p)A(1-q)$ realizes the $*$-equivalence $1-p\sima 1-q,$ then $u=x+y$ is unitary and $p=uqu^{-1}.$  
 
\item \cite[Theorem 3]{Prijatelj_Vidav} If $A$ is a 2-proper, $*$-pythagorean, regular $*$-ring, then $\M_n(A)$ has the same properties for any $n.$

\item  \cite[Lemma 3.1]{Handelman} and \cite[Theorem 1.12]{Ara_matrix_rings} If $A$ is a 2-proper, commutative, regular $*$-ring, then $A$ is $*$-pythagorean if and only if algebraically equivalent projections of $\M_n(A)$ are $*$-equivalent for any $n.$ 
\end{enumerate}
\end{lemma}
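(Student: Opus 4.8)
The plan is to treat Lemma~\ref{nongraded_equivalence} as an assembly of known facts, supplying short direct verifications for the two purely formal items (1) and (3) and importing (2), (4), and (5) from the cited sources. First I would dispatch the elementary part (1). Given that $x\in pAq$ and $x'\in qAp$ (so that $px=x=xq$ and $qx'=x'=x'p$ hold automatically), together with the analogous memberships for $y,y'$ with $1-p,1-q$, the crucial observation is that all four cross terms vanish: for instance $xy'=0$ because $x=pxq$ and $y'=(1-q)y'(1-p)$ produce the factor $q(1-q)=0$, and symmetrically $yx'=x'y=y'x=0$. Setting $u=x+y$ and $v=x'+y'$, I would then compute $uv=xx'+yy'=p+(1-p)=1$ and $vu=x'x+y'y=q+(1-q)=1$, so $u$ is invertible with $u^{-1}=v$; finally $pu=px=x=xq=uq$ gives $p=uqu^{-1}$.

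Part (3) is the involutive refinement of (1) and follows the identical template once the realizing elements are chosen as adjoint pairs. Since $x\in pAq$ realizes $p\sims q$ we have $xx^*=p$ and $x^*x=q$, with $x^*\in qAp$; likewise $y^*\in(1-q)A(1-p)$ satisfies $yy^*=1-p$ and $y^*y=1-q$. Taking $x'=x^*$ and $y'=y^*$ in the computation of part (1), the element $u=x+y$ is invertible with $u^{-1}=x^*+y^*=u^*$, that is, $u$ is unitary, and again $p=uqu^{-1}$. The only extra point beyond (1) is the bookkeeping that the inverse produced coincides with the adjoint.

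For the remaining three parts I would cite rather than reprove. Part (2) is the cancellation behaviour of unit-regular rings, \cite[Theorem 4.1]{Goodearl_book}; part (4) is the stability of the conjunction ``2-proper, $*$-pythagorean, regular'' under passage to $\M_n(A)$, \cite[Theorem 3]{Prijatelj_Vidav}; and part (5) is the Handelman--Ara characterization, \cite[Lemma 3.1]{Handelman} and \cite[Theorem 1.12]{Ara_matrix_rings}, identifying $*$-pythagoreanness of a 2-proper commutative regular $*$-ring with the upgrade of algebraic equivalence of projections to $*$-equivalence in every matrix size. The genuine mathematical content sits in (4) and (5) rather than in the formal manipulations, and the main obstacle---if one insisted on a self-contained account---would be the nontrivial direction of (5): propagating the $*$-pythagorean property from $A$ to all $\M_n(A)$ so that each algebraically equivalent pair of projections admits a single $x$ with $xx^*=p$ and $x^*x=q$. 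This is exactly the input we will need in Theorem~\ref{faithfulness} to convert a $K_0^{\gr}$-level equality into an honest unitary conjugation, and we rely on the cited lifting arguments to secure it.
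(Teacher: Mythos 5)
Your proposal is correct and takes essentially the same approach as the paper, which likewise treats the lemma as an assembly of cited facts (and your explicit verifications of (1) and (3) — vanishing cross terms, $uv=p+(1-p)=1$, $vu=q+(1-q)=1$, $pu=x=uq$, and in (3) the identification $u^{-1}=x^*+y^*=u^*$ — are exactly the standard computations from the cited source). The one small point the paper adds that you omit is that the references for (4) and (5) are stated for $*$-regular rings, so one needs the bridging observation that a $*$-ring is $*$-regular if and only if it is proper and regular, whence the hypotheses ``2-proper and regular'' suffice.
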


The references for the proofs of the statements in this lemma are provided in the lemma itself. The parts (4) and (5) can be formulated with the assumption that the ring is $*$-regular instead of regular and then they follow from statements of \cite[Theorem 3]{Prijatelj_Vidav} and \cite[Theorem 1.12]{Ara_matrix_rings} directly. The statements (4) and (5) still hold with the seemingly weaker assumption since a $*$-ring is $*$-regular if and only if it is proper and regular (\cite[Exercise 6A, Section 3]{Berberian}).

We adapt the concepts and claims from this section to the graded setting now.  

\subsection{Algebraic and *-equivalence in a graded *-ring}
Let $A$ be a $\Gamma$-graded ring. Any homogeneous idempotent $p$ is necessarily in the ring  $A_0.$ Thus, there are three notions of algebraic equivalence for idempotents $p,q \in A_0$ corresponding to the following scenarios. 
\begin{enumerate}
\item There are $x,y\in A_0$ such that $xy=p$ and $yx=q.$ This corresponds to the non-graded algebraic equivalence $p\sima_{A_0} q.$
 
\item There are $x\in A_\gamma$ and $y\in A_{-\gamma}$ such that $xy=p$ and $yx=q.$ In this case we say that  $p$ and $q$ are {\em graded algebraically equivalent} and write  $p\sima_{\gr} q.$

\item There are $x,y \in A$ such that $xy=p$ and $yx=q.$ This corresponds to the non-graded algebraic equivalence $p\sima_{A} q.$
\end{enumerate}
In any of these instances, we say that $x$ and $y$ {\em realize} the corresponding equivalence. Note that all three equivalence relations are equal if the ring is trivially graded. 

Just as in the non-graded case, if $x$ and $y$ realize any of the three versions of the algebraic equivalence of $p$ and $q,$ the elements $x$ and $y$ can be replaced by $pxq$ and $qyp$ respectively producing elements which also realize the same equivalence but also have the properties that $px=x=xq$ and $qy=y=yp.$

With these adjustments, if $x,y \in A_0$ realize the equivalence $p\sima_{A_0} q$, then the left multiplication $L_x$ is a graded right module isomorphism $qA\cong_{\gr}pA$ and $L_y$ is its inverse. In this case,  $L_x$ and $L_y$ are in $\Hom_{\Gr A}(A, A).$ 

Similarly, if $x\in A_\gamma$ and $y \in A_{-\gamma}$ realize the equivalence $p\sima_{\gr} q$, then $L_x$ is an isomorphism $qA\cong_{\gr}pA(\gamma)$ of graded right modules and $L_y$ is its inverse and $L_x$ and $L_y$ are in $\Hom_A(A, A).$ 

The converse of this statement also holds: if $qA\cong_{\gr}pA(\gamma)$ for some $\gamma\in \Gamma,$ then there is $x\in A_\gamma$ and $y\in A_{-\gamma}$ such that $xy=p$ and $yx=q.$ Thus, we have that 
\begin{center}
$p\sima_{\gr} q$ if and only if  $qA\cong_{\gr}pA(\gamma)$ as graded right $A$-modules for some $\gamma\in \Gamma$
\end{center}
for all homogeneous idempotents $p,q\in A.$ In particular, 
\begin{center}
$p\sima_{A_0} q$ if and only if  $qA\cong_{\gr}pA$ as graded right $A$-modules. 
\end{center}
This equivalence illustrates the reason our results require us to consider the relation $\sima_{A_0}$ only.    

If homogeneous idempotents of a ring represent the same element in the graded Grothendieck group, they can be enlarged to algebraically equivalent matrices but they are not necessarily algebraically equivalent themselves. However, for homogeneous idempotents of a graded field, a stronger statement holds and we note it in the following lemma.   
\begin{lemma}\label{sima_in_graded_field}
Let $A$ be a $\Gamma$-graded field and $R$ a graded matricial $A$-algebra. If $p$ and $q$ are homogeneous idempotents in $R,$ then 
\begin{center}
$[p]=[q]$ in $K^{\gr}_0(R)$ if and only if $p\sima_{R_0} q$. 
\end{center}
\end{lemma}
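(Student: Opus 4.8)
The plan is to identify, via the idempotent representation of $K^{\gr}_0$ recalled earlier, the class $[p]$ in $K^{\gr}_0(R)$ with the class $[pR]$ of the finitely generated graded projective right $R$-module $pR$ (note that $p\in R_0$, so $pR$ is a graded direct summand of $R$), and likewise $[q]$ with $[qR]$. By the equivalence recorded just before the lemma, applied with $R$ in place of the ambient ring, one has $p\sima_{R_0} q$ if and only if $pR\cong_{\gr}qR$ as graded right $R$-modules. Hence the lemma is equivalent to the assertion that $[pR]=[qR]$ in $K^{\gr}_0(R)$ if and only if $pR\cong_{\gr}qR$. The reverse implication is immediate, since graded isomorphic modules determine the same class in $\mathcal V^{\gr}(R)$ and hence in $K^{\gr}_0(R)$. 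The substance of the lemma is therefore the forward implication, and the key to it is that the natural map $\mathcal V^{\gr}(R)\to K^{\gr}_0(R)$ from the monoid into its group completion is \emph{injective}.

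To establish this injectivity I would first compute $\mathcal V^{\gr}(R)$. Write $R=\bigoplus_{i=1}^k \M_{n_i}(A)(\ol\gamma_i)$. The identity elements of the blocks are central homogeneous idempotents of degree $0$, so every finitely generated graded projective right $R$-module decomposes as a direct sum of finitely generated graded projective modules over the individual blocks, giving $\mathcal V^{\gr}(R)\cong \bigoplus_{i=1}^k \mathcal V^{\gr}\big(\M_{n_i}(A)(\ol\gamma_i)\big)$. By the graded Morita equivalence of Proposition \ref{equivalences_phi_and_psi}, which commutes with the shifts, each factor is isomorphic to $\mathcal V^{\gr}(A)$, and by part (3) of Proposition \ref{graded_division_ring} the latter is isomorphic to $\mathbb N[\Gamma/\Gamma_A]$. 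Consequently $\mathcal V^{\gr}(R)\cong \mathbb N[\Gamma/\Gamma_A]^k$, which is a free commutative monoid and in particular is cancellative.

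Since a cancellative commutative monoid embeds into its group completion, the natural map $\mathcal V^{\gr}(R)\to K^{\gr}_0(R)$ is injective. Thus, if $[p]=[q]$ in $K^{\gr}_0(R)$, i.e.\ $[pR]=[qR]$ in $K^{\gr}_0(R)$, then injectivity forces $[pR]=[qR]$ already in $\mathcal V^{\gr}(R)$; this means $pR\cong_{\gr}qR$ as graded right $R$-modules and hence $p\sima_{R_0}q$ by the equivalence cited above. Combined with the easy reverse direction, this completes the argument.

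The step I expect to require the most care is the identification $\mathcal V^{\gr}(R)\cong \mathbb N[\Gamma/\Gamma_A]^k$, and in particular verifying that the graded Morita equivalences induce monoid isomorphisms at the level of $\mathcal V^{\gr}$ and are compatible with the block decomposition of projectives over $R$. Once $\mathcal V^{\gr}(R)$ is known to be cancellative, the passage from equality in $K^{\gr}_0(R)$ to graded isomorphism of modules, and thus to $\sima_{R_0}$, is purely formal. I would emphasize that this cancellation is precisely what fails for a general graded ring, so the hypothesis that $A$ is a graded \emph{field} is essential: it is what makes every finitely generated graded projective module graded free and renders $\mathcal V^{\gr}$ free.
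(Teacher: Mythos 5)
Your proof is correct and follows essentially the same route as the paper's: both reduce to the computation $\mathcal V^{\gr}$ of a graded matricial algebra via the graded Morita equivalence and part (3) of Proposition \ref{graded_division_ring}, so that a finitely generated graded projective module is determined up to graded isomorphism by its class in $K_0^{\gr}$. The paper phrases this as each of $pR$ and $qR$ having a common graded free normal form indexed by the unique element of $\Z[\Gamma/\Gamma_A]$ they represent, whereas you phrase it as injectivity of $\mathcal V^{\gr}(R)\to K_0^{\gr}(R)$ via cancellativity of $\mathbb N[\Gamma/\Gamma_A]^k$ — the same fact in different words.
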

\begin{proof}
If $R$ is a matrix algebra and $[p]=[q]$ in $K^{\gr}_0(R),$ then $[pR]$ and $[qR]$ are the same element of $K^{\gr}_0(R)$ using the representation of  $K^{\gr}_0(R)$ by the classes of finitely generated graded projective modules. By \cite[Proposition 5.1.2]{Roozbeh_graded_ring_notes} and part (3) of Proposition \ref{graded_division_ring} (or, equivalently, \cite[Proposition 3.7.1]{Roozbeh_graded_ring_notes}), $pR$ and $qR$ are both graded isomorphic to a module of the form $A^{k_1}(\gamma_i)\oplus \ldots\oplus A^{k_n}(\gamma_n)$  where $\sum_{i=1}^n k_i\gamma_i+\Gamma_A$ is a unique element  of $\Z[\Gamma/\Gamma_A]$ which corresponds to $[pR]=[qR]$ under the isomorphism of $K_0^{\gr}(R)$ and $\Z[\Gamma/\Gamma_A].$ Thus,  $pR\cong_{\gr} qR$ and so $p\sima_{R_0} q.$

This argument extends from a single graded matrix algebra to a direct sum of graded matrix algebras, i.e. to a graded matricial algebra. 
\end{proof} 

If $A$ is a graded $*$-ring, there are three notions of $*$-equivalence for projections $p,q \in A_0.$  
\begin{enumerate}
\item There is $x\in A_0$ such that $xx^*=p$ and $x^*x=q.$ This corresponds to the non-graded $*$-equivalence $p\sims_{A_0} q.$ 
 
\item There is $x\in A_\gamma$ such that $xx^*=p$ and $x^*x=q.$ In this case we say that  $p$ and $q$ are {\em graded $*$-equivalent} and write  $p\sims_{\gr} q.$

\item There is $x\in A$ such that $xx^*=p$ and $x^*x=q.$ This corresponds to the non-graded $*$-equivalence $p\sims_{A} q.$
\end{enumerate}
In any of these instances, we say that $x$ {\em realizes} the corresponding $*$-equivalence.  

The three notions of algebraic equivalence in a graded ring $A$ correspond to the three notions of similarity. Since we shall be working with $\sima_{A_0}$ exclusively, the only similarity relation relevant for us is the following: two idempotents $p, q\in A_0$ are similar if there is an invertible element $x\in A_0$ such that $q=xpx^{-1}.$ Analogously, the three notions of $*$-equivalence in a graded $*$-ring $A$ correspond to the three notions of unitary equivalence. For us, just the following is relevant: two projections $p,q\in A_0$ are unitarily equivalent if $p=uqu^{-1}$ for some unitary $u\in A_0.$

One could consider the graded version of the definition of a $*$-pythagorean ring by restricting the elements from the definition to homogeneous components. However, since we only work with relations $\sima_{A_0}$ and $\sims_{A_0}$ in a graded $*$-ring $A,$ it will only be relevant for us whether $A_0$ is $*$-pythagorean or not. Thus, the non-graded definition of a $*$-pythagorean ring is sufficient for our consideration. Moreover, all the properties of the involution we shall need in the proof of Theorem \ref{faithfulness} reduce to the properties of the ring $A_0.$  

Before proving the main result of this section, we note that the zero-component of a graded matricial ring over a graded field $A$ is a unit-regular ring.  

\begin{lemma}\label{unit-regular}
If $A$ is a $\Gamma$-graded field, $n$ a positive integer and $\ol\gamma\in \Gamma^n,$ then 
$\M_n(A)(\ol\gamma)_0$ is isomorphic to a matricial algebra over $A_0$ and, as such, it is a unit-regular ring. 
\end{lemma}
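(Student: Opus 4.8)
The plan is to show that the zero-component $\M_n(A)(\ol\gamma)_0$ is, as a ring, a finite direct sum of full matrix rings over the field $A_0$, and then to invoke the standard fact that such rings are unit-regular. First I would record that $A_0$ is a field: it is commutative since $A$ is, and any nonzero $a\in A_0$ is a homogeneous element of the graded division ring $A$, hence invertible, with inverse necessarily of degree $0$ and so again in $A_0$. By formula (\ref{grading_on_matrices}) with $\delta=0$, the zero-component consists precisely of the matrices $(a_{ij})$ with $a_{ij}\in A_{\gamma_j-\gamma_i}$. Using the first part of Proposition \ref{permutation_of_components} (in its non-involutive form, a permutation of the shifts yields an isomorphic ring), I may assume the shifts are grouped by the cosets of $\Gamma_A$ in $\Gamma$ exactly as in (\ref{rearanging}), writing $\gamma_{l1},\dots,\gamma_{lr_l}$ for the $l$-th coset, $l=1,\dots,k$, with $\sum_l r_l=n$.

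Next I would establish the block decomposition. Since $A$ is a graded field, $A_{\gamma_j-\gamma_i}\neq 0$ if and only if $\gamma_i$ and $\gamma_j$ lie in the same coset of $\Gamma_A$. Consequently, in the grouped ordering every matrix in $\M_n(A)(\ol\gamma)_0$ is block-diagonal, its $l$-th diagonal block being the zero-component $B_l$ of $\M_{r_l}(A)(\gamma_{l1},\dots,\gamma_{lr_l})$, and matrix multiplication clearly respects this block structure. Hence $\M_n(A)(\ol\gamma)_0\cong\bigoplus_{l=1}^k B_l$ as rings.

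The key step is to identify each block $B_l$ with $\M_{r_l}(A_0)$. Within the $l$-th coset all differences $\gamma_{lj}-\gamma_{li}$ lie in $\Gamma_A$, so for each $i$ there is an invertible homogeneous element $v_i\in A_{\gamma_{l1}-\gamma_{li}}$ (take $v_1=1$). Conjugation by the diagonal matrix $\operatorname{diag}(v_1,\dots,v_{r_l})$ is exactly the content of the second part of \cite[Theorem 1.3.3]{Roozbeh_graded_ring_notes} (see Remark \ref{noninvolutive_permutation}); it gives a graded isomorphism $\M_{r_l}(A)(\gamma_{l1},\dots,\gamma_{lr_l})\cong_{\gr}\M_{r_l}(A)(\gamma_{l1},\dots,\gamma_{l1})$, which restricts to an isomorphism of zero-components. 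Concretely, $(a_{ij})\mapsto(v_i^{-1}a_{ij}v_j)$ is a ring isomorphism $B_l\to\M_{r_l}(A)(\gamma_{l1},\dots,\gamma_{l1})_0=\M_{r_l}(A_0)$, the target being a full matrix ring over $A_0$ because each entry $v_i^{-1}a_{ij}v_j$ has degree $(\gamma_{li}-\gamma_{l1})+(\gamma_{lj}-\gamma_{li})+(\gamma_{l1}-\gamma_{lj})=0$. Therefore $\M_n(A)(\ol\gamma)_0\cong\bigoplus_{l=1}^k\M_{r_l}(A_0)$, a matricial algebra over the field $A_0$.

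Finally, a full matrix ring over a field is unit-regular, and a finite direct sum of unit-regular rings is again unit-regular (see \cite{Goodearl_book}), which yields the claim. The only nonroutine point is the identification in the previous paragraph, namely that each diagonal block collapses to $\M_{r_l}(A_0)$; this rests squarely on the existence of an invertible homogeneous element in every degree of $\Gamma_A$, i.e.\ on $A$ being a graded field. The remaining steps are bookkeeping with the grading.
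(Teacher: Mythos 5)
Your proof is correct and follows essentially the same route as the paper: partition the shifts by cosets of $\Gamma_A$, permute them into coset blocks via Proposition \ref{permutation_of_components}, and read off the zero-component as $\M_{r_1}(A_0)\oplus\dots\oplus\M_{r_k}(A_0)$. You supply two details the paper leaves implicit (that the zero-component is block-diagonal because $A_{\gamma_j-\gamma_i}=0$ across distinct cosets, and that each block collapses to $\M_{r_l}(A_0)$ via conjugation by a diagonal matrix of invertible homogeneous elements), and both are handled correctly.
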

\begin{proof} The shifts $\gamma_1, \ldots, \gamma_n$ can be partitioned such that the elements of the same partition part belong to the same coset of the quotient group $\Gamma/\Gamma_A$ and the elements from different partition parts belong to different cosets just as in formula (\ref{rearanging}) in the first paragraph of section \ref{subsection_graded_fullness}. Let $\gamma_{l1},\dots,\gamma_{lr_l}$ be the $l$-th part for $l=1, \ldots, k$ where $k$ is the number of partition parts and $r_l$ is the number of elements in the $l$-th part. Using Proposition \ref{permutation_of_components}, we have that 
\[
\M_n(A)(\ol\gamma)\cong_{\gr}\M_n(A)(\gamma_{11},\dots,\gamma_{1r_1}, \gamma_{21},\dots,\gamma_{2r_2}, \;\;\ldots,\;\; \gamma_{k1},\dots,\gamma_{kr_k }).
\]
Considering the zero-components of both sides, we have that 
\[
\M_n(A)(\ol\gamma)_0\cong \M_{r_1}(A_0)\oplus\ldots\oplus \M_{r_k}(A_0).\]

Since $A_0$ is a field, a matricial algebra over $A_0$ is a unit-regular ring (\cite[Corollary 4.7]{Goodearl_book}). 
\end{proof}
Note that part (4) of Lemma \ref{nongraded_equivalence} implies that if $A$ is a $\Gamma$-graded $*$-field such that $A_0$ is 2-proper and $*$-pythagorean, then 
$\M_n(A)(\ol\gamma)_0$ is 2-proper, $*$-pythagorean, $*$-regular and unit-regular for any positive integer $n$ and $\ol\gamma\in \Gamma^n.$

We are ready to prove the main result of this section. Our proof adapts the ideas of the proof of \cite[Lemma 15.23]{Goodearl_book} to the graded and involutive setting. 

\begin{theorem}\label{faithfulness}
Let $A$ be a $\Gamma$-graded $*$-field such that $A_0$ is 2-proper and $*$-pythagorean. 
If $R$ and $S$ are graded matricial $*$-algebras and $\phi, \psi: R\to S$ are graded $A$-algebra $*$-homomorphisms (not necessarily unital), then the following are equivalent. 
\begin{enumerate}[\upshape(1)]
\item $K^{\gr}_0(\phi)=K^{\gr}_0(\psi)$

\item There exists an element  $x\in S_0$ such that $\phi(r)=x\psi(r)x^*,$ for all $r\in R,$ and such that $xx^*=\phi(1_R)$ and $x^*x=\psi(1_R).$ 

\item There exists a unitary element $u\in S_0$ such that $\phi(r)=u\psi(r)u^{-1},$ for all $r\in R.$  

\item There exists an invertible element $u\in S_0$ such that $\phi(r)=u\psi(r)u^{-1},$ for all $r\in R.$  
\end{enumerate}  
\end{theorem}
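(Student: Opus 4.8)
The plan is to establish the cycle of implications $(1)\Rightarrow(2)\Rightarrow(3)\Rightarrow(4)\Rightarrow(1)$, in which $(1)\Rightarrow(2)$ carries the constructive content and the other three steps follow quickly from the equivalence results already assembled in Lemma \ref{nongraded_equivalence} and the structural facts in Lemmas \ref{sima_in_graded_field} and \ref{unit-regular}. Throughout I keep the notation $R=\bigoplus_{i=1}^n\M_{p(i)}(A)(\overline\gamma_i)$ and write $e^i_{kl}$ for the homogeneous matrix units of $R$, so that each diagonal unit $e^i_{kk}$ is a projection of degree $0$, the unit $e^i_{kl}$ is homogeneous of degree $\gamma^i_l-\gamma^i_k$, and the classes $[e^i_{11}]$ generate $K^{\gr}_0(R)$. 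Recall from the remark following Lemma \ref{unit-regular} that $S_0$ is a matricial $A_0$-algebra which is 2-proper, $*$-pythagorean, $*$-regular and unit-regular; in particular $S_0$ is proper, so any $w\in S_0$ with $ww^*$ and $w^*w$ idempotent is automatically a partial isometry (as $(ww^*w-w)(ww^*w-w)^*=0$).

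For $(1)\Rightarrow(2)$, I would first note that $\phi(e^i_{kk}),\psi(e^i_{kk})$ are projections in $S_0$ and that the hypothesis gives $[\phi(e^i_{11})]=K^{\gr}_0(\phi)([e^i_{11}])=K^{\gr}_0(\psi)([e^i_{11}])=[\psi(e^i_{11})]$ in $K^{\gr}_0(S)$ for every $i$. By Lemma \ref{sima_in_graded_field} this yields $\phi(e^i_{11})\sima_{S_0}\psi(e^i_{11})$, and since $S_0$ is a direct sum of matrix algebras over the 2-proper, $*$-pythagorean field $A_0$, part (5) of Lemma \ref{nongraded_equivalence}, applied summand by summand, upgrades this to a $*$-equivalence realized by some $w_i\in S_0$ with $w_iw_i^*=\phi(e^i_{11})$ and $w_i^*w_i=\psi(e^i_{11})$. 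I would then define
\[
x_i=\sum_{k=1}^{p(i)}\phi(e^i_{k1})\,w_i\,\psi(e^i_{1k}),\qquad x=\sum_{i=1}^n x_i .
\]
A degree count shows $x_i\in S_0$, since the degree $\gamma^i_1-\gamma^i_k$ of $\phi(e^i_{k1})$ and the degree $\gamma^i_k-\gamma^i_1$ of $\psi(e^i_{1k})$ cancel. Using $e^i_{1k}e^i_{lm}e^i_{k'1}=\delta_{kl}\delta_{mk'}e^i_{11}$ together with $w_i\psi(e^i_{11})w_i^*=(w_iw_i^*)(w_iw_i^*)=\phi(e^i_{11})$, one collapses the products of matrix units to obtain the intertwining identity $x_i\psi(e^i_{lm})x_i^*=\phi(e^i_{lm})$ for all $l,m$. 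Because matrix units from distinct simple summands multiply to zero, every cross term $x_i\psi(r)x_{i''}^*$ with $r$ outside the relevant block vanishes, and summing yields $\phi(r)=x\psi(r)x^*$ for all $r\in R$, together with $xx^*=\sum_i\phi\!\left(\sum_k e^i_{kk}\right)=\phi(1_R)$ and, symmetrically, $x^*x=\psi(1_R)$.

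The remaining implications are short. For $(2)\Rightarrow(3)$ I regard $x$ as realizing $\phi(1_R)\sims_{S_0}\psi(1_R)$; unit-regularity of $S_0$ and part (2) of Lemma \ref{nongraded_equivalence} give $1_S-\phi(1_R)\sima_{S_0}1_S-\psi(1_R)$, which the 2-proper, $*$-pythagorean hypothesis turns into a $*$-equivalence realized by some $y\in S_0$, and part (3) of Lemma \ref{nongraded_equivalence} then shows $u=x+y$ is a unitary of $S_0$. Since $S_0$ is proper, $x$ and $y$ are partial isometries, hence $x=\phi(1_R)x\psi(1_R)$ and $y=(1_S-\phi(1_R))y(1_S-\psi(1_R))$; writing $\psi(r)=\psi(1_R)\psi(r)\psi(1_R)$ and using $u^{-1}=u^*$, one computes $u\psi(r)u^{-1}=(x+y)\psi(1_R)\psi(r)\psi(1_R)(x^*+y^*)=x\psi(r)x^*=\phi(r)$. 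The implication $(3)\Rightarrow(4)$ is immediate. For $(4)\Rightarrow(1)$, I would observe that $\phi=c_u\circ\psi$, where $c_u(s)=usu^{-1}$ is a graded automorphism of $S$ induced by the degree-zero invertible element $u$; since $c_u$ sends each homogeneous idempotent to a graded-similar one, $K^{\gr}_0(c_u)$ is the identity, whence $K^{\gr}_0(\phi)=K^{\gr}_0(\psi)$ by functoriality.

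I expect the main obstacle to be the bookkeeping in $(1)\Rightarrow(2)$: arranging the degree cancellations so that $x$ genuinely lies in $S_0$, and organizing both the collapse of the matrix-unit products and the vanishing of all cross terms between distinct simple summands. The one conceptual input driving the whole argument is the passage from algebraic to $*$-equivalence of projections, which is precisely where the assumptions that $A_0$ is 2-proper and $*$-pythagorean are used (via parts (2)--(5) of Lemma \ref{nongraded_equivalence}).
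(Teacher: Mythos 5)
Your proposal is correct and follows essentially the same route as the paper's proof: the same intertwiner $x=\sum_i\sum_k\phi(e^i_{k1})w_i\psi(e^i_{1k})$ built from a $*$-equivalence of $\phi(e^i_{11})$ and $\psi(e^i_{11})$ obtained via Lemma \ref{sima_in_graded_field} and part (5) of Lemma \ref{nongraded_equivalence}, the same enlargement $u=x+y$ using unit-regularity and parts (2)--(3) of that lemma, and the same inner-automorphism argument for $(4)\Rightarrow(1)$. The only (harmless) deviations are a sign slip in the stated degree of $e^i_{kl}$ (it is $\gamma^i_k-\gamma^i_l$, which still cancels as needed) and your properness argument for $x=\phi(1_R)x\psi(1_R)$, where the paper simply replaces $x$ by $\phi(1_R)x\psi(1_R)$.
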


\begin{proof}
Note that the assumptions on the field $A$ imply that any homogeneous projections $p,q$ of a graded matricial algebra over $A$ which are algebraically equivalent as elements of the zero-component are $*$-equivalent as elements of the zero-component by part (5) of Lemma \ref{nongraded_equivalence}. 

$(1) \Rightarrow (2).$  Let $R=\bigoplus_{i=1}^n\M_{p(i)}(A)(\gamma^i_1,\dots,\gamma^i_{p(i)})$ and let
$e_{kl}^i$, $i=1, \ldots, n$,  $k,l=1,\ldots, p(i),$ be the elements of $R$ such that $\pi_i(e^i_{kl})$ are the standard matrix units of 
$\M_{p(i)}(A)(\gamma^i_1,\dots,\gamma^i_{p(i)})$ where $\pi_i$ is the projection on the $i$-th component for every $i=1,\ldots, n.$  
Let $g_{kl}^i=\phi(e^i_{kl})$ and 
$h_{kl}^i=\psi(e^i_{kl})$ for  $k,l=1,\ldots,p(i)$ and $i=1,\ldots, n$ so that the elements $g_{kk}^{(i)}$ and $h_{kk}^{(i)}$ are projections in $S_0$ for every  $k=1,\ldots,p(i)$ and $i=1,\ldots, n.$
The condition (1) implies that 
\[[g_{11}^i]=[\phi(e_{11}^i)]=K^{\gr}_0(\phi)([e_{11}^i])=K_0^{\gr}(\psi)([e_{11}^i])=[\psi(e_{11}^i)]=[h_{11}^i].\]

By Lemma \ref{sima_in_graded_field}, $g^i_{11}\sima_{S_0}h^i_{11}$ and, by the assumptions of the theorem,  $g^i_{11}\sims_{S_0}h^i_{11}$. Let 
$x_i\in S_0$ for $i=1,\ldots, n$ be elements realizing the $*$-equivalence $g^i_{11}\sims h^i_{11}$ and let us choose them from $g^i_{11}S_0h^i_{11}.$ Let us consider an element $x\in S_0$ defined as follows:  
$$x=\sum_{i=1}^n\sum_{k=1}^{p(i)} g^i_{k1}x_ih_{1k}^i.$$ Note that $x^*=\sum_{i=1}^n\sum_{k=1}^{p(i)} h^i_{k1}x_i^*g_{1k}^i.$
We show that $xx^*=\phi(1_R),$ $x^*x=\psi(1_R),$ and that
\[
xh_{kl}^ix^*=g_{kl}^i \text{   and   } x^*g_{kl}^ix=h_{kl}^i
\]
for all $k,l=1,\ldots, p(i)$ and all $i=1,\ldots, n.$

Since $h_{1k}^i h_{l1}^j=0$ when $i\neq j$ and $k\neq l$ we have that 
\begin{align*}
xx^* & = \sum_{i=1}^n\sum_{k=1}^{p(i)} g^i_{k1}x_ih_{1k}^i \sum_{j=1}^n\sum_{l=1}^{p(j)} h^j_{l1}x_l^*g_{1l}^j = \sum_{i=1}^n\sum_{k=1}^{p(i)} g^i_{k1}x_ih_{1k}^i h^i_{k1}x_i^*g_{1k}^i  = \sum_{i=1}^n\sum_{k=1}^{p(i)} g^i_{k1}x_ih_{11}^ix_i^*g_{1k}^i\\
&  =  \sum_{i=1}^n\sum_{k=1}^{p(i)} g^i_{k1}x_ix_i^*g_{1k}^i 
 =  \sum_{i=1}^n\sum_{k=1}^{p(i)} g^i_{k1}g_{11}^ig_{1k}^i =  \sum_{i=1}^n\sum_{k=1}^{p(i)} g^i_{kk} =  \phi(\sum_{i=1}^n\sum_{k=1}^{p(i)} e^i_{kk}) = \phi(1_R).
\end{align*}
Analogously one shows that $x^*x=\psi(1_R).$ 

Let $i=1,\ldots, n$ and $k,l=1,\ldots, p(i)$ now. Using that $h_{1k}^i h_{l1}^j=0$ when $i\neq j$ and $k\neq l,$ we have that 
$$xh_{kl}^ix^* = g^i_{k1}x_ih_{1k}^i h_{kl}^i h^i_{l1}x_i^*g_{1l}^i=g^i_{k1}x_ih_{11}x_i^*g_{1l}^i=g^i_{k1}x_ix_i^*g_{1l}^i=g^i_{k1}g_{11}^ig_{1l}^i=g^i_{kl}.$$ 
Similarly we show that $x^*g_{kl}^ix=h_{kl}^i$ 
for any $k,l=1,\ldots, p(i)$ and any $i=1,\ldots, n.$  This implies $\phi(r)=x\psi(r)x^*$ for any $r\in R$ since the elements $e_{kl}^i,$  $k,l=1,\ldots, p(i),$  $i=1,\ldots, n,$ generate $R.$

$(2) \Rightarrow (3)$. Condition (2) implies that $\phi(1_R)\sims_{S_0}\psi(1_R).$ Moreover, the element $x\in S_0$ from condition (2) can be replaced by 
$\phi(1_R)x\psi(1_R)\in \phi(1_R)S_0\psi(1_R).$ Since the ring $S_0$ is unit-regular by Lemma \ref{unit-regular}, we can use part (2) of Lemma \ref{nongraded_equivalence} to conclude that 
$1_S-\phi(1_R)\sima_{S_0} 1_S-\psi(1_R)$ and, using the assumptions of the theorem, that  
$1_S-\phi(1_R)\sims_{S_0} 1_S-\psi(1_R).$ Let 
$y\in (1_S-\phi(1_R))S_0(1_S-\psi(1_R))$ be the element realizing $1_S-\phi(1_R)\sims_{S_0} 1_S-\psi(1_R).$ Then $u=x+y$ is unitary and $\phi(1_R)=u\psi(1_R)u^{-1}$ 
by part (3) of Lemma \ref{nongraded_equivalence}. 

Since $y=y(1_S-\psi(1_R)),$ we have that $y\psi(1_R)=0$ and $\psi(1_R)y^*=0.$ Thus, for any $r\in R,$ we have that 
\begin{align*}
u\psi(r)u^{-1} & = (x+y)\psi(r)(x^*+y^*) = x\psi(r)x^* +x\psi(r)y^*+y\psi(r)x^*+y\psi(r)z^*\\
&= x\psi(r)x^* +x\psi(r1_R)y^*+y\psi(1_Rr)x^*+y\psi(1_Rr1_R)y^*\\
&= x\psi(r)x^* +x\psi(r)\psi(1_R)y^*+y\psi(1_R)\psi(r)x^*+y\psi(1_R)\psi(r)\psi(1_R)y^*\\
&= x\psi(r)x^* = \phi(r).
\end{align*}

$(3) \Rightarrow (4)$ is a tautology.

$(4) \Rightarrow (1)$. Let $f_{kl}^j$,  $j=1, \ldots, m$,  $k,l=1,\ldots, q(j),$ be the elements of $S=\bigoplus_{j=1}^m \M_{q(j)}(A)(\delta^j_1,\ldots, \delta^j_{q(j)})$ such that $\pi_j(f^j_{kl}),$ for $k,l=1,\ldots, q(j),$ are the standard matrix units of $\M_{q(j)}(A))(\delta^j_1,\dots,\delta^j_{q(j)})$ for every $j=1,\ldots, m.$  Let $\theta$ denote the inner automorphism $a\mapsto uau^{-1}$ of $S.$ Since $f_{11}^j\sima_{S_0}\theta(f_{11}^j)$ for any $j=1,\ldots, m$ and the elements $[f_{11}^j], j=1,\ldots, m$ generate $K_0^{\gr}(S)$, we have that $K_0^{\gr}(\theta)$ is the identity map. Thus, $K_0^{\gr}(\phi)=K_0^{\gr}(\theta\psi)=K_0^{\gr}(\theta)K_0^{\gr}(\psi)=K_0^{\gr}(\psi).$
\end{proof}

\begin{remark}\label{remark_on_noninvolutive_faithfulness}
The direction $(4) \Rightarrow (1)$ of Theorem \ref{faithfulness} also follows from the following, more general argument. If $\theta:A\rightarrow A$ is an inner-automorphism of a  $\Gamma$-graded ring $A$  defined by $\theta(a)=u a u^{-1}$, where $u$ is an invertible element of degree $\gamma,$ then there is a   
right graded $A$-module isomorphism $P(-\gamma) \rightarrow P \otimes_{\theta} A$ given by $p
\mapsto p \otimes u$ with the inverse $p\otimes a \mapsto p u^{-1}a$ for any finitely generated graded projective right module $P.$ This induces an isomorphism between the functors $\underline{\hskip.3cm}\otimes_{\theta} A :\Pgr(A) \rightarrow \Pgr(A)$ and the 
$\gamma$-shift functor $\mathcal T_\gamma:\Pgr(A) \rightarrow \Pgr(A)$. Since the isomorphic functors  induce the same map on Quillen's $K_i$-groups, $i\geq 0$,  (see \cite[p.19]{Quillen}), one has $K^{\gr}_i(\underline{\hskip.3cm}\otimes_{\theta} A)=K^{\gr}_i(\mathcal T_\gamma)$. Therefore, if $\gamma=0,$ then $K^{\gr}_i(\theta):K^{\gr}_i(A)\rightarrow K^{\gr}_i(A)$ is the identity map.

Also note that (1) and (4) are equivalent conditions for {\em any} graded field $A$ as \cite[Theorem 2.13]{Roozbeh_Annalen} shows.
\end{remark}

If the grade group is trivial and the involution is not considered, \cite[Lemma 15.23]{Goodearl_book} shows that the conditions (1) and (4) are equivalent. The following direct corollary of Theorem \ref{faithfulness} relates these conditions with (2) and (3) for involutive matricial algebras.  

\begin{corollary}
Let $A$ be a 2-proper and $*$-pythagorean $*$-field. 
If $R$ and $S$ are matricial $*$-algebras and if $\phi, \psi: R\to S$ are algebra $*$-homomorphisms, then the following are equivalent. 
\begin{enumerate}[\upshape(1)]
\item $K_0(\phi)=K_0(\psi)$

\item There exists an element  $x\in S$ such that $\phi(r)=x\psi(r)x^*,$ for all $r\in R,$ and such that $xx^*=\phi(1_R)$ and $x^*x=\psi(1_R).$ 

\item There exists a unitary element $u\in S$ such that $\phi(r)=u\psi(r)u^{-1},$ for all $r\in R.$  

\item There exists an invertible element $u\in S$ such that $\phi(r)=u\psi(r)u^{-1},$ for all $r\in R.$  
\end{enumerate}   
\end{corollary}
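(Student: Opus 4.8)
The plan is to deduce this corollary directly from Theorem \ref{faithfulness} by specializing to the trivial grade group. First I would take $\Gamma=\{0\}$ and regard the $*$-field $A$ as trivially graded by $\Gamma$. Under this convention $A_0=A$, so the hypothesis that $A$ is 2-proper and $*$-pythagorean is precisely the hypothesis of Theorem \ref{faithfulness} that $A_0$ is 2-proper and $*$-pythagorean.

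Next I would check that all the objects and morphisms in the corollary match those of Theorem \ref{faithfulness} under this specialization. A matricial $*$-algebra over $A$ is exactly a graded matricial $*$-algebra over the trivially graded $A$, since every shift $\overline\gamma_i$ lies in $\Gamma^{n_i}=\{0\}^{n_i}$ and the grading is trivial. An algebra $*$-homomorphism between two such algebras is a graded $*$-homomorphism, because every homogeneous component sits in degree zero. Finally, when $\Gamma$ is trivial the group $K_0^{\gr}$ coincides with the ordinary $K_0$ (as noted after the definition of $K_0^{\gr}$), so that $K_0^{\gr}(\phi)=K_0(\phi)$ and $K_0^{\gr}(\psi)=K_0(\psi)$.

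With these identifications in place, condition (1) of the corollary is condition (1) of Theorem \ref{faithfulness}, and conditions (2)--(4) agree as well, once one observes that in the trivially graded setting the qualifier ``of degree zero'' appearing in Theorem \ref{faithfulness} is vacuous: every element of $S$ is homogeneous of degree zero, so ``unitary of degree zero'' becomes simply ``unitary'' and ``invertible of degree zero'' becomes ``invertible.'' The four equivalences of the corollary therefore follow immediately from the corresponding equivalences of Theorem \ref{faithfulness}.

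Since the argument is a pure specialization, I expect no genuine obstacle. The only point requiring a moment of care is precisely the verification that the degree-zero qualifiers in Theorem \ref{faithfulness} become vacuous under the trivial grading, so that the unitary element $u$ and the invertible element $u$ produced there are unrestricted elements of $S$, exactly as demanded by the statement of the corollary.
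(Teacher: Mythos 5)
Your proposal is correct and matches the paper exactly: the paper presents this statement as a direct corollary of Theorem \ref{faithfulness} obtained by taking the grade group trivial, so that $A_0=A$, $K_0^{\gr}=K_0$, and the degree-zero qualifiers become vacuous. No further comment is needed.
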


\section{Graded ultramatricial *-algebras}\label{section_ultramatricial}

Using the results of the previous two sections, in this section we prove that the functor $K_0^{\gr}$ completely classifies graded ultramatricial $*$-algebras over a graded $*$-field $A$ satisfying the assumptions of Theorems \ref{fullness} and \ref{faithfulness}. Our main result, Theorem \ref{ultramatricial},
generalizes \cite[Theorem 5.2.4]{Roozbeh_graded_ring_notes} as well as \cite[Theorem 15.26]{Goodearl_book}. Our proof uses Elliott-Bratteli intertwining method analogously to how it is used in the proof of Elliott's Theorem for AF $C^*$-algebras (see \cite[Theorem IV.4.3]{Davidson} for example). However, our proof crucially depends on the results of the earlier sections as well as on graded and involutive generalizations of the analogous $C^*$-algebra statements (for example Proposition \ref{matricial_to_ultramatricial}). Theorem \ref{ultramatricial} is also a graded generalization of \cite[Proposition 3.3]{Ara_matrix_rings} but the proof of \cite[Proposition 3.3]{Ara_matrix_rings} uses different arguments than those used in \cite[Theorem 15.26]{Goodearl_book} or those used by us. 

In this section, rings are not assumed to be unital so we start the section with a quick review of the graded Grothendieck group of a graded, possibly non-unital, ring.

\subsection{Graded Grothendieck group of a non-unital ring} 
If $A$ is a ring possibly without an identity element, the Grothendieck group $K_0(A)$ can be defined using the unitization of $A$ (see, for example,  \cite{Goodearl_Handelman}). 
If $A$ is a $\Gamma$-graded ring which is possibly non-unital, define the unitization $A^u$ of $A$ to be $A\oplus \Z$ 
with the addition given component-wise, the multiplication given by
\[(a,m)(b,n)=(ab+na+mb, mn),\]
for  $a,b \in A,$ $m,n\in \Z,$ and the $\Gamma$-grading given by 
\[A^u_0= A_0\times\Z, \hskip1cm A^u_\gamma= A_\gamma\times \{0\},\;\;\text{ for }\;\;\gamma\not =0. \]
Then, $A^u$ is a graded ring with the identity $(0,1)$ and $A$ is graded isomorphic to the graded two-sided ideal $A\times\{0\}$ of $A^u.$ The graded epimorphism $A^u\rightarrow A^u/A$ produces a natural homomorphism $K^{\gr}_0(A^u) \rightarrow K^{\gr}_0(A^u/A)$. The group $K^{\gr}_0(A)$ is defined as the kernel of this homomorphism:
\[K^{\gr}_0(A) :=\ker\big(K^{\gr}_0(A^u) \rightarrow K^{\gr}_0(A^u/A)\big).\]
The group $K^{\gr}_0(A)$ inherits the $\Gamma$-action and the pre-order structure from $K_0^{\gr}(A^u).$

If $A$ is an algebra over a field $K$, then $K$ could be used instead of $\Z$ in the definition of $A^u.$ Using the graded version of the statements from \cite[Section 12]{Goodearl_Handelman}, one can show that both constructions produce isomorphic $\Z[\Gamma]$-modules for $K_0^{\gr}(A)$. Moreover, 
if $R$ is any unital graded ring containing $A$ as a graded ideal, $R$ can be used instead of $A^u.$ More details can be found in \cite[Section 3.4]{Roozbeh_graded_ring_notes} and some examples in \cite[Section 3.7]{Roozbeh_graded_ring_notes}.

If $A$ is a graded $*$-ring which is possibly non-unital, then $A^u$ becomes a unital, graded $*$-ring with the involution given by 
$(a,n)^*=(a^*, n)$ for $a\in A$ and $n\in \Z.$ In this case, $A$ is graded $*$-isomorphic to $A\times\{0\}$ and the epimorphism $A^u\rightarrow A^u/A$ is also a $*$-map where $A^u/A$ is equipped with the identity involution. In this case, the group $K^{\gr}_0(A)$ inherits the action of $\Z_2$ from $K^{\gr}_0(A^u)$ too.  

If $A$ is a graded ring which is possibly non-unital, the set  
\[
\big \{x\in \ker\left( K^{\gr}_0(A^u)\to K^{\gr}_0(A^u/A)\right)\; | \;0\leq x\leq [A^u]\big\},
\] which we refer to as the {\em generating interval}, takes over the role of the order-unit $[A]$. 
If $A$ is unital, then the definition of its $K_0^{\gr}$-group coincides with the construction using the projective modules and the generating interval is naturally isomorphic to $\{x\in K^{\gr}_0(A)\, |\, 0\leq x\leq [A]\}$ (the non-graded version of this statement is shown in \cite[Proposition 12.1]{Goodearl_Handelman}). 
By following the terminology used for $C^*$-algebras (as well as for the unital case), an order and generating interval preserving map is refer to as contractive. In other words, if $A$ and $B$ are, possibly non-unital, graded rings, then $f: K^{\gr}_0(A)\to K^{\gr}_0(B)$ is {\em contractive} if $x\geq 0$ implies that $f(x)\geq 0$ and $0\leq x\leq [A^u]$ implies that $0\leq f(x)\leq [B^u].$ 

This construction is functorial in the sense that if $A$ and $B$ are, possibly non-unital, graded rings and $\phi:A\to B$ is a homomorphism of graded rings, then $\phi^u: A^u\to B^u,$ given by $(a,n)\mapsto (\phi(a), n),$ is a homomorphism of unital graded rings such that $K_0^{\gr}(\phi^u)$ is contractive. Consequently, $K_0^{\gr}(\phi)$ is a contractive $\Z[\Gamma]$-module homomorphism and we have the following commutative diagram: 
\[
\xymatrix{
K^{\gr}_0(A) \ar@{^{(}->}[r] \ar[d]^{K^{\gr}_0(\phi)} & K^{\gr}_0(A^u) \ar[r] \ar[d]^{K^{\gr}_0(\phi^u)} & K^{\gr}_0(A^u/A) \ar@{=}[d] \\ 
K^{\gr}_0(B) \ar@{^{(}->}[r] & K^{\gr}_0(B^u) \ar[r]  & K^{\gr}_0(B^u/B).
} 
\]
If the rings $A$ and $B$ are graded $*$-rings and $\phi$ is also a $*$-homomorphism, then $K_0^{\gr}(\phi^u)$ and, consequently, $K_0^{\gr}(\phi)$ are $\Z[\Z_2]$-module homomorphisms. If, additionally, $\phi$ is an isomorphism, then $\phi^u$ is a unital isomorphism and so both $K_0^{\gr}(\phi^u)$ and $K_0^{\gr}(\phi)$ are contractive $\Z[\Gamma]$-$\Z[\Z_2]$-bimodule isomorphisms. 
 
\begin{definition}
Let $A$ be a $\Gamma$-graded $*$-field. A ring $R$ is a \emph{graded ultramatricial $*$-algebra over $A$} if 
there is an $A$-algebra $*$-isomorphism of $R$ and a direct limit of a sequence of graded matricial $*$-algebras.
Equivalently, $R$ is a directed union of an ascending sequence of graded matricial $*$-subalgebras $R_1 \subseteq R_2 \subseteq \dots$ 
such that the inclusion $R_n\subseteq R_{n+1}$ is a graded $*$-homomorphism (not necessarily unital) for any $n.$  
\end{definition}

The $\gamma$-component of a graded ultramatricial $*$-algebra $R=\bigcup_{n=1}^\infty R_n$ as in the definition above is $R_\gamma= \bigcup_{n=1}^{\infty} (R_n)_{\gamma}.$ In particular, $R_0$ is an ultramatricial $*$-algebra over $A_0.$ Since the inclusion $R_n\subseteq R_{n+1}$ is a graded $*$-homomorphism, the corresponding map on the $K_0^{\gr}$-level is contractive. Moreover, if the algebras $ R_n$ are unital and $1_{R_n}=1_{R_{n+1}}$, then $R$ is a unital ring.  

If $A$ is a graded $*$-field, the group $\Z_2$ acts trivially on the $K_0^{\gr}$-group of any matricial algebra over $A$ by 
Proposition \ref{graded_division_ring}. Consequently, $\Z_2$ acts trivially on the $K_0^{\gr}$-group of any graded ultramatricial $*$-algebra over $A$ as well.   
In Examples \ref{infinite_graphs} and \ref{line_and_clock}, we present $K_0^{\gr}$-groups of some specific graded ultramatricial $*$-algebras over a graded $*$-field.  

\subsection{The main result}
We prove the main theorem of this section using Lemmas \ref{dir_lim_group_lemma} and \ref{unit_preservation} and Proposition \ref{matricial_to_ultramatricial} below. 
Lemma \ref{dir_lim_group_lemma} and  Proposition \ref{matricial_to_ultramatricial} and their proofs follow \cite[Lemma IV.3.2]{Davidson} and \cite[Lemma IV.4.2]{Davidson} which are the analogous statements for abelian groups and AF $C^*$-algebras. 

\begin{lemma} \label{dir_lim_group_lemma}
Let $\Gamma$ be a group and let $G$ be a direct limit of $\Z[\Gamma]$-modules $G_n$ with connecting maps $\psi_{nm}: G_n\to G_m$ for $m\geq n$ and translational maps $\psi_n: G_n\to G.$ If $H$ is a finitely generated $\Z[\Gamma]$-module and $f: H\to G_n$ is a $\Z[\Gamma]$-module homomorphism such that $\psi_nf=0,$ for some $n,$ then there is $m> n$ such that $\psi_{nm}f=0.$  
\end{lemma}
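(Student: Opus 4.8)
The plan is to reduce the statement to the elementary fact that an element of a direct limit of modules vanishes in the limit if and only if it already vanishes at some finite stage, and then to exploit the finite generation of $H$ to push all the relevant vanishings to a single common stage.

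First I would record the standard description of the direct limit $G=\varinjlim G_n$. Since the index set is directed and the $\psi_{nm}$ are $\Z[\Gamma]$-module homomorphisms satisfying the compatibility relations $\psi_{mp}\circ\psi_{nm}=\psi_{np}$ and $\psi_m\circ\psi_{nm}=\psi_n$ for $n\le m\le p$, an element $g\in G_n$ satisfies $\psi_n(g)=0$ if and only if there is some $m\ge n$ with $\psi_{nm}(g)=0$. This is immediate from the construction of $G$ as $\bigsqcup_n G_n$ modulo the relation identifying $g\in G_n$ with its images $\psi_{nm}(g)$: the equality $\psi_n(g)=\psi_n(0)$ in $G$ forces $\psi_{nm}(g)=\psi_{nm}(0)=0$ already at some finite stage $m$.

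Next I would use finite generation. Choose a finite generating set $h_1,\dots,h_k$ of $H$ as a $\Z[\Gamma]$-module and set $g_i=f(h_i)\in G_n$. The hypothesis $\psi_n f=0$ gives $\psi_n(g_i)=0$ for each $i$, so by the previous paragraph there is, for each $i$, an index $m_i>n$ with $\psi_{nm_i}(g_i)=0$ (the limit property yields some $m_i\ge n$, and composing with a further connecting map lets us take $m_i>n$). Putting $m=\max\{m_1,\dots,m_k\}$, which exceeds $n$, and using $\psi_{nm}=\psi_{m_i m}\circ\psi_{nm_i}$, I obtain $\psi_{nm}(g_i)=\psi_{m_i m}\bigl(\psi_{nm_i}(g_i)\bigr)=0$ for every $i$.

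Finally, the composite $\psi_{nm}f\colon H\to G_m$ is a $\Z[\Gamma]$-module homomorphism vanishing on the generators $h_1,\dots,h_k$, hence on all of $H$; thus $\psi_{nm}f=0$ with $m>n$, as required. The only point requiring care is the use of finite generation: it is precisely what allows the (a priori distinct) stages $m_i$ to be replaced by a single common stage $m$ via the directedness of the index set. This is the sole place where the hypothesis on $H$ is used, and without it the argument would genuinely fail.
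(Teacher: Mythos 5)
Your proof is correct and follows essentially the same route as the paper's: vanishing in the direct limit implies vanishing at a finite stage for each image of a generator, and finite generation lets you pass to a common stage $m>n$ by directedness. The only cosmetic difference is that the paper takes $m$ strictly larger than $\max\{m_1,\dots,m_k\}$ while you arrange $m_i>n$ beforehand; both yield the required $m>n$.
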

\begin{proof}
If $h_1, \ldots, h_k$ are generators of $H,$ then $\psi_n(f(h_i))=0$ for all $i=1,\ldots, k.$ Thus,
for every $i=1,\ldots, k$ there is $m_i\geq n$ such that $\psi_{nm_i}(f(h_i))=0$ by the universal property of the direct limit (more details can be found in the proof of \cite[Lemma IV.3.2]{Davidson}). We can take $m$ to be any integer strictly larger than the maximum of $m_1, \ldots, m_k$ since then we have that 
$\psi_{nm}f(h_i)=\psi_{m_im}\psi_{nm_i}f(h_i)=\psi_{m_im}(0)=0$ for any $i=1,\ldots,k.$ 
\end{proof}

If $A$ is a field and $K_0$ is considered as a functor mapping the category of $A$-algebras into the category of pre-ordered abelian groups with generating intervals and contractive morphisms, then $K_0$ preserves the direct limits by \cite[Proposition 12.2]{Goodearl_Handelman}. The graded and involutive version of this result can be proven analogously. The lemma below follows from this fact. 

\begin{lemma}\label{unit_preservation}
Let $A$ be a $\Gamma$-graded $*$-field, let $R_n,$ $n=1,2,\ldots,$ be unital graded matricial $*$-algebras, and let $R=\varinjlim R_n$ be a graded ultramatricial $*$-algebra over $A$ with graded translational $*$-maps $\phi_n: R_n\to R,$ $n=1,2,\ldots$ and graded connecting $*$-maps $\phi_{mn}$ (not necessarily unital). If $x_n\in K_0^{\gr}(R_n)$ is such that $0\leq K_0^{\gr}(\phi_n)(x_n)\leq [1_{R^u}],$ then there is $m> n$ such that $0\leq K_0^{\gr}(\phi_{nm})(x_n)\leq [1_{R_m}].$
\end{lemma}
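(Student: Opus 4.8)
The plan is to reduce the statement to Lemma \ref{dir_lim_group_lemma} by passing to the unitizations and applying the functoriality of $K_0^{\gr}$ together with the fact that $K_0^{\gr}$ preserves direct limits. First I would record the setup on the unitized level: since $R=\varinjlim R_n$ with graded $*$-connecting maps $\phi_{mn}$ and translational maps $\phi_n$, unitizing gives a direct system $R_n^u$ with unital graded $*$-maps $\phi_{mn}^u$ and $\phi_n^u$, and $R^u=\varinjlim R_n^u$ (the unitization commutes with the direct limit because it only adjoins a copy of $\Z$ in degree zero). Applying $K_0^{\gr}$ and using that $K_0^{\gr}$ preserves direct limits — the graded, involutive analogue of \cite[Proposition 12.2]{Goodearl_Handelman} noted just before the statement — I obtain that $K_0^{\gr}(R^u)=\varinjlim K_0^{\gr}(R_n^u)$ as a direct limit of $\Z[\Gamma]$-modules, with connecting maps $\psi_{nm}:=K_0^{\gr}(\phi_{nm}^u)$ and translational maps $\psi_n:=K_0^{\gr}(\phi_n^u)$.

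Next I would translate the two inequalities in the hypothesis and conclusion into vanishing conditions on suitable module homomorphisms, so that Lemma \ref{dir_lim_group_lemma} applies. The inequality $0\le K_0^{\gr}(\phi_n)(x_n)\le[1_{R^u}]$ really takes place inside $K_0^{\gr}(R^u)$, where $\psi_n$ denotes $K_0^{\gr}(\phi_n^u)$ restricted appropriately; writing $y_n:=\psi_n(x_n)$, the hypothesis says $y_n\ge 0$ and $[1_{R^u}]-y_n\ge 0$. The element $[1_{R^u}]$ is the image under $\psi_n$ of $[1_{R_n^u}]$ for every $n$, so both $y_n$ and $[1_{R^u}]-y_n=\psi_n\big([1_{R_n^u}]-x_n\big)$ are images under $\psi_n$ of fixed elements of $K_0^{\gr}(R_n^u)$. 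I would then consider the positive cone: an element of a direct limit of pre-ordered $\Z[\Gamma]$-modules is positive if and only if it is already positive at some finite stage. Hence $y_n\ge 0$ in the limit forces $\psi_{nm_1}(x_n)\ge 0$ for some $m_1>n$, and $[1_{R^u}]-y_n\ge0$ forces $\psi_{nm_2}\big([1_{R_{m_2}^u}]-x_n\big)\ge 0$ for some $m_2>n$, using that $\psi_{nm}([1_{R_n^u}])=[1_{R_m^u}]$ since the $\phi_{nm}^u$ are unital.

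Finally I would take $m$ larger than both $m_1$ and $m_2$; applying the further connecting maps preserves positivity, so at stage $m$ one has simultaneously $0\le K_0^{\gr}(\phi_{nm})(x_n)$ and $K_0^{\gr}(\phi_{nm})(x_n)\le[1_{R_m}]$, which is the desired conclusion. The main obstacle I anticipate is the careful bookkeeping between the unital and non-unital pictures — keeping straight that the generating-interval condition on $K_0^{\gr}(R)$ is by definition a condition inside $K_0^{\gr}(R^u)$, and that the connecting maps on unitizations are unital so they carry $[1_{R_n^u}]$ to $[1_{R_m^u}]$ — rather than any deep difficulty. The one genuine point requiring the preliminary lemmas is the ``positivity is detected at a finite stage'' step: here I would invoke the description of the positive cone in a direct limit of pre-ordered modules, which follows from the universal property exactly as in the proof of Lemma \ref{dir_lim_group_lemma}, applied to the finitely generated submodule generated by $x_n$ and $[1_{R_n^u}]$.
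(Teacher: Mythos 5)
Your argument is correct and is essentially the paper's proof: both rest on the continuity of $K_0^{\gr}$ (with positive cones and generating intervals) to find a preimage at a finite stage and on Lemma \ref{dir_lim_group_lemma} to identify it with the image of $x_n$ at some later stage; the paper simply treats the generating-interval condition as a single datum, where you split it into the two positivity conditions $x_n\geq 0$ and $[1_{R_n^u}]-x_n\geq 0$ and take an index beyond both resulting stages. The only slip is notational: $\psi_{nm_2}\bigl([1_{R_{m_2}^u}]-x_n\bigr)$ should read $\psi_{nm_2}\bigl([1_{R_{n}^u}]-x_n\bigr)=[1_{R_{m_2}^u}]-\psi_{nm_2}(x_n)$, which is clearly what you intend.
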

\begin{proof}
Throughout the proof, we use $\overline{h}$ to shorten the notation $K_0^{\gr}(h)$ for any graded $*$-homomorphism $h.$ 

Let  $x_n\in K_0^{\gr}(R_n)$ be such that $0\leq \ol\phi_n(x_n)\leq [1_{R^u}].$ Since the functor $K_0^{\gr}$ preserves the direct limits, there is $k\geq n$ and $x_k$ in the generating interval of $R_k$ such that $\ol\phi_k(x_k)=\ol\phi_n(x_n)=\ol\phi_k\ol\phi_{nk}(x_n).$ Thus, the map $\ol\phi_k$ maps the submodule of $K_0^{\gr}(R_k)$ generated by $\ol\phi_{nk}(x_n)-x_k$ to zero. By Lemma \ref{dir_lim_group_lemma}, there is $m>k,$ such that $\ol\phi_{km}(\ol\phi_{nk}(x_n)-x_k)=0$ which implies that $\ol\phi_{nm}(x_n)=\ol\phi_{km}(x_k).$ Since the connecting maps preserve the generating intervals and $x_k$ is in the generating interval of $R_k,$ we have that $ \ol\phi_{nm}(x_n)$ is in the generating interval of $R_m.$ Thus, $0\leq \ol\phi_{nm}(x_n)\leq [1_{R_m}].$
\end{proof}

\begin{proposition}\label{matricial_to_ultramatricial}
Let $A$ be a $\Gamma$-graded $*$-field with enough unitaries, let $R$ be a graded matricial $*$-algebra over $A,$ and let $S=\varinjlim S_m$ be a graded ultramatricial $*$-algebra over $A$ with graded translational $*$-maps $\psi_m: S_m\to S,$ $m=1,2,\ldots$ If $f:K^{\gr}_0(R)\to K^{\gr}_0(S)$ is a contractive $\Z[\Gamma]$-module homomorphism, then there is a positive integer $m$ and a graded $*$-homomorphism $\phi: R\to S_m$ such that $K^{\gr}_0(\psi_m\phi)=f$.     
\end{proposition}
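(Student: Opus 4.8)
The plan is to reduce the statement to the fullness theorem (Theorem~\ref{fullness}): it suffices to produce, for some index $m$, a contractive $\Z[\Gamma]$-module homomorphism $g\colon K^{\gr}_0(R)\to K^{\gr}_0(S_m)$ with $K^{\gr}_0(\psi_m)\circ g=f$, since Theorem~\ref{fullness} (which needs exactly that $A$ has enough unitaries, $R$ and $S_m$ matricial, $g$ contractive) then yields a graded $*$-homomorphism $\phi\colon R\to S_m$ with $K^{\gr}_0(\phi)=g$, whence $K^{\gr}_0(\psi_m\phi)=K^{\gr}_0(\psi_m)\circ g=f$. Throughout I abbreviate $\overline{h}=K^{\gr}_0(h)$. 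Two facts drive the argument. First, writing $R=\bigoplus_{i=1}^n\M_{p(i)}(A)(\overline\gamma_i)$, the classes $[e^i_{11}]$ generate $K^{\gr}_0(R)$ as a $\Z[\Gamma]$-module, and by part~(3) of Proposition~\ref{graded_division_ring} one has $K^{\gr}_0(R)\cong\Z[\Gamma/\Gamma_A]^n$, a finitely generated $\Z[\Gamma]$-module whose defining relations are the $\Gamma_A$-invariances $\gamma[e^i_{11}]=[e^i_{11}]$ for $\gamma\in\Gamma_A$. Second, since $K^{\gr}_0$ commutes with direct limits (the graded involutive analogue of \cite[Proposition 12.2]{Goodearl_Handelman}, recorded before Lemma~\ref{unit_preservation}), $K^{\gr}_0(S)=\varinjlim K^{\gr}_0(S_m)$ with translational maps $\overline{\psi_m}$ and connecting maps $\overline{\psi_{mm'}}$.

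First I would lift the images of the generators. For each $i$ the universal property of the direct limit provides an index and an element mapping onto $f([e^i_{11}])$; taking a common $m$ larger than these finitely many indices and pushing forward, I obtain $y_i\in K^{\gr}_0(S_m)$ with $\overline{\psi_m}(y_i)=f([e^i_{11}])$ for all $i$. I then try to define $g$ on generators by $g([e^i_{11}])=y_i$.

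The heart of the proof, and the step I expect to be the main obstacle, is well-definedness: the $y_i$ need not satisfy the relations of $K^{\gr}_0(R)$, so $g$ need not extend to a $\Z[\Gamma]$-homomorphism at stage $m$. To handle this I would introduce the free module $\Z[\Gamma]^n$ with the surjection $\pi\colon\Z[\Gamma]^n\twoheadrightarrow K^{\gr}_0(R)$ sending the $i$-th basis vector to $[e^i_{11}]$, set $H=\ker\pi$, and form the homomorphism $\widetilde g\colon\Z[\Gamma]^n\to K^{\gr}_0(S_m)$ sending the $i$-th basis vector to $y_i$. Since $\overline{\psi_m}\circ\widetilde g=f\circ\pi$ and $\pi(H)=0$, the restriction satisfies $\overline{\psi_m}\circ(\widetilde g|_H)=0$, so Lemma~\ref{dir_lim_group_lemma} furnishes $m'>m$ with $\overline{\psi_{mm'}}\circ(\widetilde g|_H)=0$. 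Then $\overline{\psi_{mm'}}\circ\widetilde g$ annihilates $H$ and factors through $\pi$ as a well-defined $\Z[\Gamma]$-homomorphism $g\colon K^{\gr}_0(R)\to K^{\gr}_0(S_{m'})$ with $\overline{\psi_{m'}}\circ g=f$. The applicability of Lemma~\ref{dir_lim_group_lemma} rests on $H$ being finitely generated: concretely, the relations are generated by the invariances $\gamma_s[e^i_{11}]=[e^i_{11}]$ for a finite generating set $\gamma_1,\dots,\gamma_r$ of $\Gamma_A$ (the $\Gamma$-action being by automorphisms), so it is exactly finite generation of the relation module that makes this step go through, and this is the delicate point of the argument.

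Finally I would arrange contractivity. The positive cone of $K^{\gr}_0(R)\cong\Z[\Gamma/\Gamma_A]^n$ is $\mathbb N[\Gamma/\Gamma_A]^n$ and is $\Gamma$-stable, so $g$ is order-preserving as soon as each $y_i\geq 0$; and, $R$ and $S_{m'}$ being unital, $g$ is contractive once in addition $g([1_R])\leq[1_{S_{m'}}]$. These are finitely many conditions, each attainable after passing to a larger stage: applying Lemma~\ref{unit_preservation} to each $y_i$ (using $0\leq f([e^i_{11}])\leq[1_{S^u}]$, valid since $[e^i_{11}]\leq[1_R]$ and $f$ is contractive) and to $g([1_R])$ (using $0\leq f([1_R])\leq[1_{S^u}]$) yields a common index, which I rename $m$, at which $y_i\geq 0$ and $g([1_R])\leq[1_{S_m}]$. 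Thus $g\colon K^{\gr}_0(R)\to K^{\gr}_0(S_m)$ is a contractive $\Z[\Gamma]$-module homomorphism with $\overline{\psi_m}\circ g=f$, and Theorem~\ref{fullness} produces the required graded $*$-homomorphism $\phi\colon R\to S_m$ with $K^{\gr}_0(\psi_m\phi)=f$.
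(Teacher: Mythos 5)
Your overall strategy is the paper's: lift $f$ on the generators $[e^i_{11}]$ back to a finite stage, arrange a contractive $\Z[\Gamma]$-module map $g$ with $\overline{\psi_m}\circ g=f$ using Lemmas \ref{dir_lim_group_lemma} and \ref{unit_preservation}, and finish by Theorem \ref{fullness}. The one step that does not survive scrutiny as written is your well-definedness argument. You apply Lemma \ref{dir_lim_group_lemma} to $H=\ker\bigl(\Z[\Gamma]^n\twoheadrightarrow K_0^{\gr}(R)\bigr)$ and justify its finite generation by choosing ``a finite generating set $\gamma_1,\dots,\gamma_r$ of $\Gamma_A$.'' But $\Gamma$ is an arbitrary abelian group and $\Gamma_A$ is merely a subgroup of it; nothing in the hypotheses makes $\Gamma_A$ finitely generated (take $\Gamma=\Gamma_A=\mathbb{Q}$, for instance). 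When $\Gamma_A$ is not finitely generated, $H$ --- which is the $n$-th power of the relative augmentation ideal of $\Z[\Gamma]$ generated by $\{\gamma-1 \mid \gamma\in\Gamma_A\}$ --- is not a finitely generated $\Z[\Gamma]$-module, and Lemma \ref{dir_lim_group_lemma} does not apply. So, as stated, this step is a gap.

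Fortunately the step is superfluous. By part (3) of Proposition \ref{graded_division_ring} together with Proposition \ref{equivalences_phi_and_psi}, $K_0^{\gr}(S_m)\cong\Z[\Gamma/\Gamma_A]^{k}$ as a $\Z[\Gamma]$-module, so $\Gamma_A$ acts trivially on it; hence $\widetilde g\bigl((\gamma-1)b_i\bigr)=(\gamma-1)y_i=0$ for every $\gamma\in\Gamma_A$, i.e.\ $\widetilde g$ annihilates $H$ already at stage $m$ and factors through $\pi$ with no passage to a larger index. (It is the triviality of the $\Gamma_A$-action on the target, not finite generation of the relation module, that makes the factorization work; this is why the paper extends $g$ from the generators without comment.) The remaining differences from the paper's proof are minor and both routes work: the paper lifts $f([e^i_{11}])$ and $f([1_R])$ to classes of idempotent matrices, so that positivity of $g$ is automatic, and uses Lemma \ref{dir_lim_group_lemma} once, to force the single relation $[1_R]=\sum_{i,k}(\gamma^i_1-\gamma^i_k)[e^i_{11}]$ to be respected before invoking Lemma \ref{unit_preservation}; you instead lift to arbitrary preimages and recover positivity of the $y_i$ and the bound $g([1_R])\leq[1_{S_m}]$ by applying Lemma \ref{unit_preservation} to finitely many elements, which is equally valid.
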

\begin{proof} 
Throughout the proof, we use $\overline{h}$ for $K_0^{\gr}(h)$ for any graded $A$-algebra $*$-homomorphism $h$ just as in the previous proof.  

Let $R=\bigoplus_{i=1}^n\M_{p(i)}(A)(\gamma^i_1,\dots,\gamma^i_{p(i)})$ and let $e_{kl}^i$, $i=1, \ldots, n$,  $k,l=1,\ldots, p(i),$ be the elements of $R$ such that $\pi_i(e^i_{kl})$ are the standard matrix units of $\M_{p(i)}(A)(\gamma^i_1,\dots,\gamma^i_{p(i)})$ for every $i=1,\ldots, n,$ where $\pi_i$ is a projection onto the $i$-th component of $R.$ Let $\psi_{lm}: S_l\to S_m$ be the connecting graded $*$-homomorphisms (not necessarily unital) for every $l\leq m.$
By the definition of the direct limit, there is a positive integer $l$ such that $f([1_R])$ and $f([e^i_{11}]), i=1,\ldots, n$ are all in $\ol\psi_l(K^{\gr}_0(S_l)).$ Thus, there are idempotent matrices $g_l$ and $g_{li}$ over $S_l$ such that 
\[\ol\psi_l([g_l])=f([1_R]) \;\; \text{ and }  \;\; \ol\psi_l([g_{li}])=f([e^i_{11}])\;\mbox{ for }\;i=1, \ldots, n.\] 
Since $[1_R]=\sum_{i=1}^n \sum_{k=1}^{p(i)}(\gamma^i_1-\gamma^i_k) [e^i_{11}]$ by Lemma \ref{matrix_units_lemma}, we have that 
\[\ol\psi_l\Big([g_l]-\sum_{i=1}^n \sum_{k=1}^{p(i)}(\gamma^i_1-\gamma^i_k)[g_{li}]\Big)=0.\]
Let $H$ be the finitely generated $\Z[\Gamma]$-submodule of $K^{\gr}_0(S_l)$ generated by the element $[g_l]-\sum_{i=1}^n \sum_{k=1}^{p(i)}(\gamma^i_1-\gamma^i_k)[g_{li}].$
By Lemma \ref{dir_lim_group_lemma} applied to $H$ and the inclusion of $H$ into $K^{\gr}_0(S_l),$ 
there is $m'> l$ such that $\ol\psi_{lm'}$ is zero on $H.$ Hence $\ol\psi_{lm'}\Big([g_l]-\sum_{i=1}^n \sum_{k=1}^{p(i)}(\gamma^i_1-\gamma^i_k)[g_{li}]\Big)=0.$

For $[g_{m'}]=\ol\psi_{lm'}([g_l]),$ we have that
\[0\leq \ol\psi_{m'}([g_{m'}])=\ol\psi_{m'}\ol\psi_{lm'}([g_l])=\ol\psi_{l}([g_l])=f([1_R])\leq [1_{S^u}].\]
Using Lemma \ref{unit_preservation}, we can find $m> m'$ such that 
\[0\leq \ol\psi_{m'm}([g_{m'}])\leq [1_{S_m}].\]

Define a map $g: K^{\gr}_0(R)\to K^{\gr}_0(S_m)$ by letting 
\[g([e^i_{11}])=\ol\psi_{lm}([g_{li}])\]
and extending this map to a $\Z[\Gamma]$-homomorphism. This construction ensures that $g$ is order-preserving.
It is also contractive since 
\begin{align*}
g([1_R]) &=g\Big(\sum_{i=1}^n \sum_{k=1}^{p(i)}(\gamma^i_1-\gamma^i_k) [e^i_{11}]\Big) =\sum_{i=1}^n \sum_{k=1}^{p(i)}(\gamma^i_1-\gamma^i_k)g([e^i_{11}])
=\sum_{i=1}^n \sum_{k=1}^{p(i)}(\gamma^i_1-\gamma^i_k)\ol\psi_{lm}([g_{li}])\\
& =\ol\psi_{m'm}\left(\sum_{i=1}^n \sum_{k=1}^{p(i)}(\gamma^i_1-\gamma^i_k)\ol\psi_{lm'}([g_{li}])\right)
=\ol\psi_{m'm}\ol\psi_{lm'}([g_l]) =\ol\psi_{m'm}[g_{m'}]\leq [1_{S_m}].
\end{align*}

The relation $\ol\psi_m g=f$ holds since $\ol\psi_mg([e_{11}^i])=\ol\psi_m\ol\psi_{lm}([g_{li}])= \ol \psi_l([g_{li}])=f([e^i_{11}]).$ 
We can apply Theorem~\ref{fullness} to matricial algebras $R$ and $S_m$ and the homomorphism $g$ to obtain a graded $*$-homomorphism $\phi: R\to S_m$ such that $\ol \phi=g.$ Thus, 
$\ol\psi_m\ol\phi=\ol\psi_m g=f.$
\end{proof}

We prove the main result of this section now. 

\begin{theorem}\label{ultramatricial}
Let $A$ be a $\Gamma$-graded $*$-field which has enough unitaries, and such that 
$A_0$ is 2-proper and $*$-pythagorean. 
Let $R$ and $S$ be (possibly non-unital) graded ultramatricial $*$-algebras over $A.$ If the map $f: K^{\gr}_0(R)\to K^{\gr}_0(S)$ is a contractive $\Z[\Gamma]$-module isomorphism, then there is a graded $A$-algebra $*$-isomorphism $\phi: R\to S$ such that $K^{\gr}_0(\phi)=f.$
\end{theorem}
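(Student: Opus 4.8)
The plan is to run the Elliott--Bratteli intertwining argument, for which the paper has assembled exactly the two ingredients needed: Proposition \ref{matricial_to_ultramatricial} (lifting a contractive map out of a matricial algebra to a genuine graded $*$-homomorphism into a finite stage, the ``fullness'' input) and Theorem \ref{faithfulness} (two graded $*$-homomorphisms between matricial algebras inducing the same map on $K^{\gr}_0$ are conjugate by a degree-zero unitary, the ``faithfulness'' input). Write $R=\varinjlim(R_i,\rho_{ij})$ and $S=\varinjlim(S_j,\sigma_{ij})$ with $R_i,S_j$ graded matricial $*$-algebras, graded translational $*$-maps $\rho_i\colon R_i\to R$ and $\sigma_j\colon S_j\to S$, and graded connecting $*$-maps $\rho_{ij},\sigma_{ij}$. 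Let $g=f^{-1}$; since $f$ is an isomorphism in the category of pre-ordered $\Z[\Gamma]$-modules with generating intervals, $g$ is contractive as well. Because $K^{\gr}_0$ commutes with direct limits (the graded, involutive analogue of \cite[Proposition 12.2]{Goodearl_Handelman}), it is enough to produce an \emph{exact} intertwining of the two direct systems.

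I would construct, by induction on $k$, strictly increasing index sequences $i_1<i_2<\cdots$ and $j_1<j_2<\cdots$ together with graded $*$-homomorphisms $\phi_k\colon R_{i_k}\to S_{j_k}$ and $\psi_k\colon S_{j_k}\to R_{i_{k+1}}$ satisfying the two $K^{\gr}_0$-compatibilities $K^{\gr}_0(\sigma_{j_k}\phi_k)=f\,K^{\gr}_0(\rho_{i_k})$ and $K^{\gr}_0(\rho_{i_{k+1}}\psi_k)=g\,K^{\gr}_0(\sigma_{j_k})$, together with the two exact commutation relations $\psi_k\phi_k=\rho_{i_k,i_{k+1}}$ and $\phi_{k+1}\psi_k=\sigma_{j_k,j_{k+1}}$. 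Each downward map $\phi_k$ is obtained by applying Proposition \ref{matricial_to_ultramatricial} to the contractive map $f\,K^{\gr}_0(\rho_{i_k})$, and each upward map $\psi_k$ by applying it to $g\,K^{\gr}_0(\sigma_{j_k})$; in each case this yields a finite index and a lift realizing the required $K^{\gr}_0$-identity, and Lemma \ref{unit_preservation} keeps the relevant classes inside the generating intervals so that contractivity is maintained throughout the non-unital bookkeeping.

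The core of the argument is upgrading these $K^{\gr}_0$-identities to the exact commutation relations. Once $\phi_k,\psi_k$ are in hand, a direct computation using $gf=\mathrm{id}$ and $\rho_{i_k}=\rho_{i_{k+1}}\rho_{i_k,i_{k+1}}$ shows that $K^{\gr}_0(\rho_{i_{k+1}})$ annihilates the difference $K^{\gr}_0(\psi_k\phi_k)-K^{\gr}_0(\rho_{i_k,i_{k+1}})$. Since $K^{\gr}_0(R_{i_k})\cong\Z[\Gamma/\Gamma_A]^{p}$ is a finitely generated $\Z[\Gamma]$-module by Proposition \ref{graded_division_ring}, Lemma \ref{dir_lim_group_lemma} furnishes an index $i'>i_{k+1}$ at which $K^{\gr}_0(\rho_{i_{k+1},i'})$ already kills this difference; replacing $\psi_k$ by $\rho_{i_{k+1},i'}\psi_k$ and relabelling $i_{k+1}:=i'$ preserves the $g$-compatibility (as $\rho_{i'}\rho_{i_{k+1},i'}=\rho_{i_{k+1}}$) and makes $\psi_k\phi_k$ and $\rho_{i_k,i_{k+1}}\colon R_{i_k}\to R_{i_{k+1}}$ induce the same map on $K^{\gr}_0$. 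Theorem \ref{faithfulness} then yields a degree-zero unitary $u\in(R_{i_{k+1}})_0$ with $\psi_k\phi_k=u\,\rho_{i_k,i_{k+1}}\,u^{*}$, and replacing $\psi_k$ by $u^{*}\psi_k(\,\cdot\,)u$ achieves $\psi_k\phi_k=\rho_{i_k,i_{k+1}}$ on the nose. Since conjugation by a degree-zero unitary is trivial on $K^{\gr}_0$ (Remark \ref{remark_on_noninvolutive_faithfulness}), the identity $K^{\gr}_0(\rho_{i_{k+1}}\psi_k)=g\,K^{\gr}_0(\sigma_{j_k})$ survives this adjustment, so the induction proceeds; the upward relations $\phi_{k+1}\psi_k=\sigma_{j_k,j_{k+1}}$ are arranged by the symmetric computation applied to $fg=\mathrm{id}$.

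Finally, the exact commutation relations make the $\phi_k$ compatible with the connecting maps of both systems, so they induce a graded $*$-homomorphism $\phi\colon R\to S$ with $\phi\rho_{i_k}=\sigma_{j_k}\phi_k$; likewise the $\psi_k$ induce $\psi\colon S\to R$, and the two families of relations give $\psi\phi\,\rho_{i_k}=\rho_{i_k}$ and $\phi\psi\,\sigma_{j_k}=\sigma_{j_k}$ for all $k$, whence $\psi\phi=\mathrm{id}_R$, $\phi\psi=\mathrm{id}_S$ and $\phi$ is a graded $*$-isomorphism. Passing to the limit in $K^{\gr}_0(\phi)\,K^{\gr}_0(\rho_{i_k})=K^{\gr}_0(\sigma_{j_k}\phi_k)=f\,K^{\gr}_0(\rho_{i_k})$ and using that the images of the $K^{\gr}_0(\rho_{i_k})$ exhaust $K^{\gr}_0(R)$ gives $K^{\gr}_0(\phi)=f$. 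I expect the main obstacle to be precisely the bookkeeping of the third paragraph: every time a triangle is forced to commute exactly (by enlarging an index via Lemma \ref{dir_lim_group_lemma} and then conjugating via Theorem \ref{faithfulness}), one must verify that the $K^{\gr}_0$-identity feeding the next inductive step is left undisturbed, and that the interleaved enlargements of the $i_k$ and $j_k$ can be organized so that both systems are genuinely exhausted in the limit.
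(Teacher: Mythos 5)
Your proposal is correct and follows essentially the same route as the paper's proof: an Elliott--Bratteli intertwining built from Proposition \ref{matricial_to_ultramatricial} (fullness into a finite stage), Lemma \ref{dir_lim_group_lemma} to push forward until the $K_0^{\gr}$-discrepancy vanishes, and Theorem \ref{faithfulness} to replace that $K_0^{\gr}$-equality by exact commutation via a degree-zero unitary, with the same final limit-passage argument. The only cosmetic difference is that the paper absorbs the unitary conjugation into the definition of the upward map rather than post-conjugating it, and it verifies explicitly (as you anticipate one must) that the conjugation leaves the $K_0^{\gr}$-identity feeding the next inductive step undisturbed.
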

\begin{proof}  
Throughout the proof, we use $\overline{h}$ to denote $K_0^{\gr}(h)$ for any graded $A$-algebra $*$-homomorphism $h.$ 

Let $\phi_{nm}: R_n\to R_m$ and $\psi_{nm}: S_n\to S_m$ for $n\leq m$ denote connecting graded $*$-homomorphisms and $\phi_n: R_n\to R$ and $\psi_n: S_n\to S$ denote  translational graded $*$-homomorphisms such that $R=\bigcup_n \phi_n(R_n)$ and $S=\bigcup_n \psi_n(R_n)$. We construct two increasing sequences of integers $n(1)< n(2)<\dots$ and $m(1)< m(2)<\dots$
and graded $*$-homomorphisms $\rho_i: R_{n(i)}\to S_{m(i)}$ and $\sigma_i: S_{m(i)}\to R_{n(i+1)}$ for $i=1, 2,\ldots$ such that the following relations hold, two on the algebra level and two on the $K^{\gr}_0$-group level.

\[\mbox{(1)}_i\;\;\;\sigma_i\rho_i=\phi_{n(i) n(i+1)}, \hskip3.2cm\mbox{(2)}_i\;\;\;\rho_{i+1}\sigma_i = \psi_{m(i) m(i+1)},\] 
\begin{center}
$\xymatrix{
{R_{n(i)}}\ar[r]^{\rho_i}\ar[dr]_{\phi_{n(i) n(i+1)}} & {S_{m(i)}}\ar[d]^{\sigma_i} \\ 
 & {R_{n(i+1)}}
}$\hskip3.4cm
$\xymatrix{
{S_{m(i)}}\ar[r]^{\sigma_i}\ar[dr]_{\psi_{m(i) m(i+1)}} & {R_{n(i+1)}}\ar[d]^{\rho_{i+1}} \\ 
 & {S_{m(i+1)}}
}$
\end{center}
\[\mbox{(3)}_i\;\;\;\overline \psi_{m(i)}\rho_i=f \overline \phi_{n(i)},\hskip3.4cm \mbox{(4)}_i\;\;\;\overline \phi_{n(i+1)}\sigma_i=f^{-1} \overline \psi_{m(i)}.\]
\begin{center}
$\xymatrix{
{K^{\gr}_0(R_{n(i)})}\ar[r]^{\overline\phi_{n(i)}}\ar[d]^{\overline \rho_i} & {K^{\gr}_0(R)}\ar[d]^{f} \\ 
{K^{\gr}_0(S_{m(i)})} \ar[r]^{\overline\psi_{m(i)}} & {K^{\gr}_0(S)}
}$\hskip3.4cm
$\xymatrix{
{K^{\gr}_0(S_{m(i)})}\ar[r]^{\overline \psi_{m(i)}}\ar[d]^{\overline\sigma_i} & {K^{\gr}_0(S)}\ar[d]^{f^{-1}} \\ 
{K^{\gr}_0(R_{n(i+1)})} \ar[r]^{\overline \phi_{n(i+1)}} & {K^{\gr}_0(R)}
}$
\end{center}
 
By (1)$_i$ and (2)$_i,$ we have $\psi_{m(i) m(i+1)}\rho_i=\rho_{i+1}\sigma_i\rho_i=\rho_{i+1} \phi_{n(i) n(i+1)},$ and  by (2)$_i$ and (1)$_{i+1},$ we have
$\phi_{n(i+1) n(i+2)}\sigma_i=\sigma_{i+1}\rho_{i+1}\sigma_i=\sigma_{i+1} \psi_{m(i) m(i+1)}.$ Thus we have the following commutative diagrams.  
\begin{center}
$\xymatrix{
{R_{n(i)}}\ar[r]^{\phi_{n(i) n(i+1)}}\ar[d]^{\rho_i} & {\;\;\;R_{n(i+1)}}\ar[d]^{\rho_{i+1}} \\ 
{S_{m(i)}} \ar[r]^{\psi_{m(i) m(i+1)}} & {\;\;\;S_{m(i+1)}}
}$\hskip3.4cm
$\xymatrix{
{S_{m(i)}}\ar[r]^{\psi_{m(i) m(i+1)}}\ar[d]^{\sigma_i} & {\;\;\;S_{m(i+1)}}\ar[d]^{\sigma_{i+1}} \\ 
{R_{n(i+1)}} \ar[r]^{\phi_{n(i+1) n(i+2)}} & {\;\;\;R_{n(i+2)}}
}$
\end{center}
 
First, we define $n(1), m(1), n(2)$ and maps $\rho_1, \sigma_1, \rho_2$ and show that all four relations hold for $i=1.$

{\bf Defining $\mathbf{\rho_1}$}. Let $n(1)=1$ and consider the composition of maps $f \overline \phi_1: K^{\gr}_0(R_1)\to K^{\gr}_0(S).$ By Proposition \ref{matricial_to_ultramatricial}, there exists a positive integer $m(1)$ and a graded $*$-homomorphism $\rho_1: R_1\to S_{m(1)}$ such that
$\overline \psi_{m(1)}\ol\rho_1=f \overline \phi_1,$ i.e. such that the relation (3)$_1$ holds. Note that Proposition \ref{matricial_to_ultramatricial} uses the assumption that $A$ has enough unitaries. 

{\bf Defining $\mathbf{\sigma_1}.$} 
Applying the same argument and Proposition \ref{matricial_to_ultramatricial} again to $f^{-1} \overline \psi_{m(1)}: K^{\gr}_0(S_{m(1)})\to K^{\gr}_0(R),$ we obtain a positive integer $n'(2)$ and a graded $*$-homomorphism $\sigma'_1: S_{m(1)}\to R_{n'(2)}$ such that $f^{-1} \overline \psi_{m(1)}=\ol\phi_{n'(2)}\ol{\sigma'_1}$.
In addition, 
\[\ol\phi_{n'(2)}\overline{\sigma'_1}\ol\rho_1=f^{-1}\overline\psi_{m(1)}\overline\rho_1=f^{-1}f \overline\phi_1=\overline\phi_1=\overline\phi_{n'(2)}\ol\phi_{1 n'(2)}
\]
so $\overline\phi_{n'(2)}\left(\overline{\sigma'_1}\ol\rho_1-\overline\phi_{1 n'(2)}\right)=0.$ By Lemma \ref{dir_lim_group_lemma}, there is integer $n(2)> n'(2)$ such that 
\[\overline\phi_{n'(2)n(2)}\left(\overline{\sigma'_1}\ol\rho_1-\overline\phi_{1 n'(2)}\right)=0.\]
Thus we have that $\ol\phi_{n'(2)n(2)}\ol{\sigma'_1}\ol\rho_1=\ol\phi_{n'(2)n(2)}\ol\phi_{1 n'(2)}= \ol\phi_{1 n(2)}.$ By Theorem \ref{faithfulness}, there is a unitary $x\in R_{n(2)}$ of degree zero such that the inner $*$-automorphism $\theta: a\mapsto xax^{-1}$ of $R_{n(2)}$ 
satisfies $\phi_{1 n(2)}=\theta\phi_{n'(2)n(2)}\sigma'_1\rho_1.$ The assumptions that $A_0$ is 2-proper and $*$-pythagorean guarantee that $\theta$ is a $*$-map on entire $R_{n(2)}$ not just on the image of $\phi_{1 n(2)}.$  

Define a graded $*$-homomorphism $\sigma_1: S_{m(1)}\to R_{n(2)}$ by $\sigma_1= \theta\phi_{n'(2)n(2)}\sigma'_1.$ The relation (1)$_1$ follows since $\sigma_1\rho_1=\theta\phi_{n'(2)n(2)}\sigma'_1\rho_1= \phi_{1 n(2)}.$ The relation (4)$_1$ also holds since 
\[\ol\phi_{n(2)}\ol\sigma_1=\ol\phi_{n(2)}\ol\theta \, \ol\phi_{n'(2)n(2)}\ol{\sigma'_1}=
\ol\phi_{n(2)}\ol\phi_{n'(2)n(2)}\ol{\sigma'_1}=\ol\phi_{n'(2)}\ol{\sigma'_1}=f^{-1}\overline\psi_{m(1)}.\]

{\bf Defining $\mathbf{\rho_2}$}. By Proposition \ref{matricial_to_ultramatricial}, for $f \overline\phi_{n(2)}: K^{\gr}_0(R_{n(2)})\to K^{\gr}_0(S),$  there is a positive integer $m'(2)>m(1)$ and a graded $*$-homomorphism $\rho'_2: R_{n(2)}\to S_{m'(2)}$ such that 
$\ol\psi_{m'(2)}\overline{\rho'_2}=f\ol\phi_{n(2)}.$ Since  
$\ol\psi_{m'(2)}\overline{\rho'_2}\ol\sigma_1=f\ol\phi_{n(2)}\ol\sigma_1=ff^{-1}\ol\psi_{m(1)}=\ol\psi_{m(1)}=\ol\psi_{m'(2)}\ol\psi_{m(1) m'(2)},$ we have that 
$\overline\psi_{m'(2)}\left(\overline{\rho'_2}\ol\sigma_1-\ol\psi_{m(1) m'(2)}\right)=0.$ By Lemma \ref{dir_lim_group_lemma},  there is an integer $m(2)> m'(2)$ such that $$\ol\psi_{m'(2)m(2)}\left(\overline{\rho'_2}\ol\sigma_1-\ol\psi_{m(1) m'(2)}\right)=0.$$ Thus, 
$\ol\psi_{m'(2)m(2)}\overline{\rho'_2}\ol\sigma_1=\ol\psi_{m'(2)m(2)}\ol\psi_{m(1) m'(2)} = \ol\psi_{m(1) m(2)}.$ By Theorem~\ref{faithfulness},  
there is a unitary $y\in S_{m(2)}$ of degree zero producing an inner $*$-automorphism $\theta'$ of $S_{m(2)}$ such that $\psi_{m(1) m(2)}=\theta'\psi_{m'(2)m(2)}\rho'_2\sigma_1.$ Define a graded $*$-homomorphism $\rho_2: R_{n(2)}\to S_{m(2)}$ by $\rho_2= \theta'\psi_{m'(2)m(2)}\rho'_2.$ The relation (2)$_1$ follows since $\rho_2\sigma_1=\theta'\psi_{m'(2)m(2)}\rho'_2\sigma_1= \psi_{m(1) m(2)}$ showing that all four relations hold for $i=1.$ 
 
{\bf The Inductive Step.} Continuing by induction, we obtain two increasing sequences of integers $n(1)< n(2)< \ldots$ and $m(1)<m(2)<\ldots$ and maps $\rho_i$ and $\sigma_i$ for which the four diagrams (1)$_i$ to (4)$_i$ commute for all $i=1,2,\ldots$.

{\bf The Final Step.}  We define the maps $\rho: R\to S$ and $\sigma: S\to R$ now. Since $R=\bigcup_{i} \phi_{n(i)}(R_{n(i)})$, every element $r\in R$ is in the image of some map $\phi_{n(i)}.$ If $r=\phi_{n(i)}(r_i),$ define $\rho(r)=\psi_{m(i)}\rho_i(r_i).$ 
This automatically produces the relation $\rho\phi_{n(i)}=\psi_{m(i)}\rho_i.$ We define $\sigma$ analogously so that  $\sigma\psi_{m(i)}=\phi_{n(i+1)}\sigma_i$. 
The maps $\rho$ and $\sigma$ are graded $A$-algebra $*$-homomorphisms by their definitions and we have the following diagram.    
\[
\xymatrix{
{R_{n(1)}\;}\ar[r]^{\phi_{n(1) n(2)}}\ar[d]^{\rho_1} & {\;\;R_{n(2)}\;\;}\ar[d]^{\rho_2}\ar[r]^{\phi_{n(2) n(3)}} &  {\;R_{n(3)}\;\;}\ar[d]^{\rho_3}\ar[r]^{\phi_{n(3) n(4)}} & {\ldots}\ar[r]\ar[d] & {\;R\;} \ar[d]^{\sigma} \\ 
{S_{m(1)}\;} \ar[r]_{\psi_{m(1) m(2)}}\ar[ur]^{\sigma_1} & {\;\;S_{m(2)}\;\;}\ar[r]_{\psi_{m(2) m(3)}}\ar[ur]^{\sigma_2} &  {\;S_{m(3)}\;\;\;}\ar[r]_{\psi_{m(3) m(4)}}\ar[ur]^{\sigma_3} & {\ldots}\ar[r]\ar[ur] & {\;S\;} \ar@<-.5ex>[u]^{\rho}
}
\]

Lastly, we check that $\rho$ and $\sigma$ are mutually inverse. If $r\in R$ is such that $r=\phi_{n(i)}(r_i),$ then, using  (1)$_i$ we check that
\[\sigma\rho(r)=\sigma\rho\phi_{n(i)}(r_i)=\sigma\psi_{m(i)}\rho_i(r_i) =\phi_{n(i+1)}\sigma_i\rho_i(r_i)=\phi_{n(i+1)}\phi_{n(i) n(i+1)}(r_i)=\phi_{n(i)}(r_i)=r.\] Similarly, if $s\in S$ is such that 
$s=\psi_{m(i)}(s_i),$ then,
using (2)$_{i}$ we check that
\[\rho\sigma(s)=\rho\sigma\psi_{m(i)}(s_i)=\rho\phi_{n(i+1)}\sigma_i(s_i) = \psi_{m(i+1)}\rho_{i+1}\sigma_i(s_i)=\psi_{m(i+1)}\psi_{m(i)m(i+1)}(s_i)=\psi_{m(i)}(s_i)=s.\] 

The commutativity of the diagrams (3)$_i$ and (4)$_i$ for every $i$ implies that $\overline\rho=f$ and $\overline \sigma=f^{-1}.$ 
\end{proof}

We wonder whether the assumptions of Theorem \ref{ultramatricial} on the graded $*$-field can be weakened. 
In other words, we wonder whether Theorem \ref{fullness} holds without assuming that the graded $*$-field has enough unitaries, whether Theorem \ref{faithfulness} holds without the assumptions that the 0-component of the graded $*$-field is 2-proper and $*$-pythagorean, and, consequently, whether these assumptions can be removed from Theorem \ref{ultramatricial}. 

Theorem \ref{ultramatricial} has the following direct corollary. 

\begin{corollary}\label{isomorphism_corollary}
Let $A$ be a $\Gamma$-graded $*$-field which has enough unitaries and such that $A_0$ is 2-proper and $*$-pythagorean.    
If $R$ and $S$ are graded ultramatricial $*$-algebras over $A,$ then $R$ and $S$ are isomorphic as graded rings if and only if $R$ and $S$ are isomorphic as graded $*$-algebras. 

If the involutive structure is not considered and $A$ is any graded field, two graded ultramatricial algebras over $A$ are isomorphic as graded rings if and only if they are isomorphic as graded algebras.
\end{corollary}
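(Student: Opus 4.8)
The plan is to obtain both statements as immediate consequences of Theorem~\ref{ultramatricial} (for the first) and its non-involutive analogue (for the second), so the work is entirely in the ``only if'' directions. In each biconditional the ``if'' direction is trivial: a graded $A$-algebra $*$-isomorphism is in particular a graded ring isomorphism, and likewise a graded $A$-algebra isomorphism is a graded ring isomorphism, so nothing needs to be proved there.

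For the first statement, I would begin with a graded ring isomorphism $\theta\colon R\to S$. The point to record is that, even though $\theta$ need be neither $A$-linear nor $*$-preserving, every graded ring homomorphism induces a contractive map on $K_0^{\gr}$-groups; applying this to $\theta$ and to $\theta^{-1}$ and using functoriality ($K_0^{\gr}(\theta)K_0^{\gr}(\theta^{-1})=K_0^{\gr}(\mathrm{id})=\mathrm{id}$, and symmetrically), I conclude that $K_0^{\gr}(\theta)\colon K_0^{\gr}(R)\to K_0^{\gr}(S)$ is a contractive $\Z[\Gamma]$-module isomorphism. Feeding $f=K_0^{\gr}(\theta)$ into Theorem~\ref{ultramatricial}, whose hypotheses on $A$ (enough unitaries, and $A_0$ being $2$-proper and $*$-pythagorean) are precisely those assumed in the corollary, I then obtain a graded $A$-algebra $*$-isomorphism $\phi\colon R\to S$. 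Hence $R$ and $S$ are isomorphic as graded $*$-algebras, as required; note that this in fact upgrades the bare ring isomorphism $\theta$ to a map $\phi$ that is simultaneously $A$-linear and $*$-preserving.

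The only subtlety worth dwelling on---and what passes here for the ``main obstacle''---is justifying that sensing $R$ and $S$ only through a $\Z[\Gamma]$-module isomorphism (which is all a ring isomorphism provides) is enough to recover a $*$-isomorphism. This is legitimate because Theorem~\ref{ultramatricial} demands no compatibility with the $\Z_2$-action, and because $\Z_2$ acts trivially on the $K_0^{\gr}$-group of every graded ultramatricial $*$-algebra over $A$ (as noted immediately after the definition of such algebras). Thus the $\Z[\Gamma]$-module structure already encodes the entire $\Z[\Gamma]$-$\Z[\Z_2]$-bimodule, and no information is lost by forgetting the involution at the level of $K_0^{\gr}$.

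For the second statement I would run the identical argument with $A$ an arbitrary graded field and all involutive data discarded. From a graded ring isomorphism $\theta\colon R\to S$ I again produce the contractive $\Z[\Gamma]$-module isomorphism $K_0^{\gr}(\theta)$, and I would invoke the non-involutive form of the classification: replace Theorem~\ref{fullness} by Corollary~\ref{noninvolutive_fullness} and the faithfulness input by its non-involutive analogue (Remark~\ref{remark_on_noninvolutive_faithfulness} together with \cite[Theorem 2.13]{Roozbeh_Annalen}) throughout the Elliott--Bratteli intertwining used in the proof of Theorem~\ref{ultramatricial}; this is exactly \cite[Theorem 5.2.4]{Roozbeh_graded_ring_notes}. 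The resulting graded $A$-algebra isomorphism $R\to S$ finishes the proof, and beyond confirming that a mere graded ring isomorphism supplies the contractive $\Z[\Gamma]$-module isomorphism required as input, the entire content has already been carried out in Theorems~\ref{fullness}, \ref{faithfulness}, and \ref{ultramatricial}.
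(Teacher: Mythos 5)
Your proposal is correct and follows essentially the same route as the paper: the ``only if'' directions are obtained by noting that a graded ring isomorphism induces a contractive $\Z[\Gamma]$-module isomorphism on $K_0^{\gr}$ and then invoking Theorem~\ref{ultramatricial} (respectively its non-involutive analogue via Corollary~\ref{noninvolutive_fullness} and Remark~\ref{remark_on_noninvolutive_faithfulness}), while the converse directions are trivial.
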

\begin{proof}
One direction of the first claim follows directly from Theorem \ref{ultramatricial} since every ring isomorphism $R\to S$ induces a contractive $\Z[\Gamma]$-module isomorphism of the $K_0^{\gr}$-groups. The other direction is trivial.  

The last sentence follows from Corollary \ref{noninvolutive_fullness}, the proof of Theorem \ref{ultramatricial}, and the fact that the conditions (1) and (4) of Theorem \ref{faithfulness} are equivalent for any graded field $A.$ 
\end{proof}

Disregarding the involutive structure of the field $A$ and imposing no assumption on the graded field $A$, we obtain \cite[Theorem 5.2.4]{Roozbeh_graded_ring_notes} as a direct corollary of Theorem \ref{ultramatricial}. 

\begin{corollary}\cite[Theorem 5.2.4]{Roozbeh_graded_ring_notes}\label{noninvolutive_ultramatricial}
Let $A$ be a $\Gamma$-graded field and let $R$ and $S$ be graded ultramatricial algebras over $A.$ If $f: K^{\gr}_0(R)\to K^{\gr}_0(S)$ is a contractive $\Z[\Gamma]$-module isomorphism, then there is a graded $A$-algebra isomorphism $\phi: R\to S$ such that $K^{\gr}_0(\phi)=f.$ 
\end{corollary}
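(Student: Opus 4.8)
The plan is to repeat the Elliott--Bratteli double intertwining of the proof of Theorem~\ref{ultramatricial} essentially verbatim, replacing its two involutive inputs with non-involutive counterparts that hold over an \emph{arbitrary} graded field, with none of the extra hypotheses (enough unitaries, $2$-proper, $*$-pythagorean) required. The proof of Theorem~\ref{ultramatricial} rests on exactly two external ingredients: Proposition~\ref{matricial_to_ultramatricial} (used to lift contractive maps out of a matricial algebra into a finite stage of the ultramatricial target) and Theorem~\ref{faithfulness} (used to straighten two composites that agree on $K_0^{\gr}$ into an honest conjugation). First I would supply non-involutive versions of both, and then transcribe the intertwining.

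First, I would establish a non-involutive analogue of Proposition~\ref{matricial_to_ultramatricial}: for a $\Gamma$-graded field $A$, a graded matricial algebra $R$, and a graded ultramatricial algebra $S=\varinjlim S_m$ with translational maps $\psi_m$, any contractive $\Z[\Gamma]$-module homomorphism $f\colon K_0^{\gr}(R)\to K_0^{\gr}(S)$ lifts to a graded $A$-algebra homomorphism $\phi\colon R\to S_m$ for some $m$ with $K_0^{\gr}(\psi_m\phi)=f$. Its proof is word-for-word that of Proposition~\ref{matricial_to_ultramatricial}: pull the finitely many generators $[1_R]$ and $[e^i_{11}]$ back to a finite stage via the direct limit, kill the resulting relation using Lemma~\ref{dir_lim_group_lemma}, arrange contractivity with Lemma~\ref{unit_preservation}, and produce the algebra map $\phi$ at the last step. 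The only change is that this final step invokes Corollary~\ref{noninvolutive_fullness} in place of Theorem~\ref{fullness}; since Corollary~\ref{noninvolutive_fullness} imposes no ``enough unitaries'' assumption (it relies on \cite[Theorem 1.3.3]{Roozbeh_graded_ring_notes}, for which an invertible homogeneous element suffices, and such elements always exist in a graded field), the hypothesis is dropped.

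Second, for the faithfulness input I would use the equivalence of conditions (1) and (4) of Theorem~\ref{faithfulness}, which --- as noted in Remark~\ref{remark_on_noninvolutive_faithfulness} via \cite[Theorem 2.13]{Roozbeh_Annalen} --- holds for \emph{any} graded field $A$ with no $2$-proper or $*$-pythagorean hypothesis: for graded matricial $A$-algebras and graded $A$-algebra homomorphisms $\phi,\psi$, one has $K_0^{\gr}(\phi)=K_0^{\gr}(\psi)$ if and only if $\phi(r)=u\psi(r)u^{-1}$ for some invertible $u\in S_0$. With these two ingredients in hand I would run the intertwining of Theorem~\ref{ultramatricial} line by line, building increasing sequences $n(1)<n(2)<\cdots$ and $m(1)<m(2)<\cdots$ together with graded $A$-algebra homomorphisms $\rho_i\colon R_{n(i)}\to S_{m(i)}$ and $\sigma_i\colon S_{m(i)}\to R_{n(i+1)}$ satisfying the analogues of (1)$_i$--(4)$_i$, applying the non-involutive Proposition~\ref{matricial_to_ultramatricial} wherever the original applied Proposition~\ref{matricial_to_ultramatricial} and the $(1)\Leftrightarrow(4)$ equivalence wherever the original applied Theorem~\ref{faithfulness}. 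The conjugating elements $x,y$ now need only be invertible of degree zero, so the inner automorphisms $\theta,\theta'$ are merely graded $A$-algebra automorphisms, and the $2$-proper and $*$-pythagorean hypotheses --- previously needed only to force $\theta,\theta'$ to be $*$-maps on the whole stage --- are no longer invoked.

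Finally, passing to the limit exactly as in the proof of Theorem~\ref{ultramatricial} yields mutually inverse graded $A$-algebra homomorphisms $\rho\colon R\to S$ and $\sigma\colon S\to R$ with $K_0^{\gr}(\rho)=f$, so $\phi=\rho$ is the required isomorphism. I do not expect a genuine obstacle: the content is a careful transcription, and the only point demanding care is confirming that both replacement ingredients really are unconditional over an arbitrary graded field --- in particular that Corollary~\ref{noninvolutive_fullness} and the $(1)\Leftrightarrow(4)$ equivalence carry no hidden use of unitaries or of the properness and pythagorean conditions. Once that is verified, every step of the original intertwining goes through mutatis mutandis.
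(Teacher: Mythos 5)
Your proposal is correct and takes essentially the same route as the paper: the paper obtains this corollary by rerunning the intertwining proof of Theorem~\ref{ultramatricial} with Corollary~\ref{noninvolutive_fullness} substituted for Theorem~\ref{fullness} and the equivalence of conditions (1) and (4) of Theorem~\ref{faithfulness} (valid for any graded field, per Remark~\ref{remark_on_noninvolutive_faithfulness}) substituted for the full faithfulness theorem, exactly as spelled out in the proof of Corollary~\ref{isomorphism_corollary}. Your identification of where each dropped hypothesis was actually used --- enough unitaries only in the fullness step, and $2$-properness with the $*$-pythagorean condition only to upgrade the conjugating elements to unitaries --- matches the paper's own accounting.
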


Considering grading by the trivial group, we obtain the following corollary of Theorem \ref{ultramatricial}.

\begin{corollary}\label{ultramatricial_trivial_grading}
Let $A$ be a 2-proper and $*$-pythagorean $*$-field and let $R$ and $S$ be ultramatricial $*$-algebras over $A.$ If $f: K_0(R)\to K_0(S)$ is a contractive $\Z$-module isomorphism, then there is an $A$-algebra $*$-isomorphism $\phi: R\to S$ such that $K_0(\phi)=f.$
\end{corollary}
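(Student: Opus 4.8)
The plan is to deduce this statement as the special case of Theorem~\ref{ultramatricial} in which the grade group $\Gamma$ is the trivial group $\{0\}$. First I would observe that a $*$-field $A$ equipped with the trivial grading is a $\Gamma$-graded $*$-field whose only nonzero homogeneous component is $A_0 = A$: the condition $A_\gamma^* = A_{-\gamma}$ holds because the only component to check is $A_0^* = A_0$, so $A$ is indeed a graded $*$-field in the sense of the earlier definition. Since the corollary assumes $A$ is $2$-proper and $*$-pythagorean and $A_0 = A$, the requirement of Theorem~\ref{ultramatricial} that $A_0$ be $2$-proper and $*$-pythagorean is met. I would then recall the remark in the excerpt that any trivially graded $*$-field has enough unitaries, since $1 \in A_0$ is a unitary and there are no other nonzero components whose unitaries need be exhibited; thus all three standing hypotheses of Theorem~\ref{ultramatricial} hold.

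Next I would translate the remaining data through the specialization $\Gamma = \{0\}$. Here the group ring $\Z[\Gamma]$ is simply $\Z$, so a $\Z[\Gamma]$-module is an abelian group and a contractive $\Z[\Gamma]$-module isomorphism is exactly a contractive $\Z$-module isomorphism. As noted in the excerpt, $K_0^{\gr}(A)$ coincides with the usual $K_0$-group when $\Gamma$ is trivial, and each shift functor $\mathcal T_\gamma$ is the identity; consequently the graded and ordinary pre-orders, order-units and generating intervals, and the associated notions of contractivity all agree. Moreover a graded ultramatricial $*$-algebra over the trivially graded $A$ is precisely an ultramatricial $*$-algebra over $A$, since each graded matricial summand $\M_{n_i}(A)(\overline\gamma_i)$ collapses to the ordinary matricial $*$-algebra $\M_{n_i}(A)$ when every shift is $0$, and graded $*$-homomorphisms reduce to ordinary $*$-homomorphisms.

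Under these identifications the map $f\colon K_0(R) \to K_0(S)$ of the corollary is exactly a contractive $\Z[\Gamma]$-module isomorphism between the $K_0^{\gr}$-groups of graded ultramatricial $*$-algebras over $A$, so Theorem~\ref{ultramatricial} produces a graded $A$-algebra $*$-isomorphism $\phi\colon R \to S$ with $K_0^{\gr}(\phi) = f$, which is the same as an $A$-algebra $*$-isomorphism $\phi$ satisfying $K_0(\phi) = f$. Because every step is a verification that the graded framework specializes correctly rather than a new argument, there is no genuine obstacle; the only points requiring any care are confirming that $A_0 = A$ and that \emph{enough unitaries} is automatic in the trivially graded case, both of which are immediate.
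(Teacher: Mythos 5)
Your proposal is correct and is exactly the paper's route: the paper derives this corollary by specializing Theorem~\ref{ultramatricial} to the trivial grade group, using precisely the observations you make (that a trivially graded $*$-field has enough unitaries via $1\in A_0$, that $A_0=A$, and that the graded notions of contractivity, ultramatricial algebra, and $K_0^{\gr}$ collapse to their ordinary counterparts). No gaps.
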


This corollary implies \cite[Proposition 3.3]{Ara_matrix_rings}. Note that Corollary \ref{isomorphism_corollary} also implies \cite[Proposition 3.3]{Ara_matrix_rings}.

\begin{corollary}\cite[Proposition 3.3]{Ara_matrix_rings}
Let $A$ be a 2-proper and $*$-pythagorean $*$-field. If $R$ and $S$ are ultramatricial $*$-algebras over $A,$ then $R$ and $S$ are isomorphic as rings if and only if $R$ and $S$ are isomorphic as $*$-algebras.   
\end{corollary}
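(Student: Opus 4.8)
The plan is to derive this corollary from Corollary \ref{ultramatricial_trivial_grading} in exactly the way that Corollary \ref{isomorphism_corollary} is derived from Theorem \ref{ultramatricial}. Since the grade group here is trivial, $K^{\gr}_0$ reduces to the ordinary $K_0$ and a $\Z[\Gamma]$-module becomes simply a $\Z$-module (an abelian group), so Corollary \ref{ultramatricial_trivial_grading} applies directly without any further reduction.

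First I would dispose of the easy direction: if $R$ and $S$ are isomorphic as $*$-algebras, then the underlying $*$-algebra isomorphism is in particular a ring isomorphism, so $R$ and $S$ are isomorphic as rings. This direction requires no hypothesis on $A$.

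For the converse, suppose $\alpha: R \to S$ is a ring isomorphism. Functoriality of $K_0$ yields a group isomorphism $K_0(\alpha): K_0(R) \to K_0(S)$ with inverse $K_0(\alpha^{-1})$. As observed in the paper, every ring homomorphism induces a contractive map on $K_0$-groups (the trivially graded specialization of the statement that every graded ring homomorphism induces a contractive map on $K^{\gr}_0$); applying this to both $\alpha$ and $\alpha^{-1}$ shows that $K_0(\alpha)$ is a contractive $\Z$-module isomorphism. Invoking Corollary \ref{ultramatricial_trivial_grading} then produces an $A$-algebra $*$-isomorphism $\phi: R \to S$ with $K_0(\phi) = K_0(\alpha)$, which is the desired $*$-algebra isomorphism.

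The main point worth flagging is that there is essentially no genuine obstacle, since all the hard work has already been carried out: the result is an immediate specialization of Corollary \ref{ultramatricial_trivial_grading}. The only subtlety is that one might expect to have to match the $\Z_2$-action on the $K_0$-groups, but this is automatic here. By Proposition \ref{graded_division_ring}, $\Z_2$ acts trivially on the $K_0$-groups of ultramatricial $*$-algebras over $A$, and in any case Corollary \ref{ultramatricial_trivial_grading} only requires a contractive $\Z$-module isomorphism rather than a $\Z[\Z_2]$-module isomorphism, so no compatibility with the involutive structure at the level of $K_0$ needs to be verified.
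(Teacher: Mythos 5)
Your proposal is correct and follows essentially the same route as the paper: the paper obtains this corollary as an immediate consequence of Corollary \ref{ultramatricial_trivial_grading} (equivalently, of Corollary \ref{isomorphism_corollary} specialized to the trivial grading), using exactly the observation that a ring isomorphism induces a contractive isomorphism on $K_0$-groups while the reverse implication is trivial. Your remark that no $\Z_2$-compatibility needs to be checked also matches the paper's discussion following Proposition \ref{graded_division_ring}.
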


Considering the field of complex numbers and the complex conjugate involution with the trivial grade group, the classification theorem for AF $C^*$-algebras \cite[Theorem 4.3]{Elliott} follows from Corollary \ref{ultramatricial_trivial_grading}. 

\begin{corollary}\cite[Theorem 4.3]{Elliott}
Let $R$ and $S$ be AF $C^*$-algebras. If $f: K_0(R)\to K_0(S)$ is a contractive $\Z$-module isomorphism, then there is a $*$-isomorphism $\phi: R\to S$ such that $K_0(\phi)=f.$
\end{corollary}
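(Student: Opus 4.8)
The plan is to reduce the statement to Corollary \ref{ultramatricial_trivial_grading} by passing from each AF $C^*$-algebra to a dense ultramatricial $*$-subalgebra over $\C.$ First I would realize $R$ and $S$ as norm-closures $R=\overline{\bigcup_n R_n}$ and $S=\overline{\bigcup_n S_n}$ of increasing unions of finite-dimensional $C^*$-subalgebras. Each finite-dimensional $C^*$-algebra is $*$-isomorphic to a finite direct sum of matrix algebras $\M_{p(i)}(\C),$ hence a matricial $*$-algebra over $\C$ in the sense of this paper, the involution being the conjugate transpose induced by complex conjugation on $\C.$ Thus the algebraic unions $R^a:=\bigcup_n R_n$ and $S^a:=\bigcup_n S_n$ are (trivially graded) ultramatricial $*$-algebras over the $*$-field $\C.$ Complex conjugation on $\C$ is $2$-proper (indeed positive definite, since $\sum_i z_i\overline{z_i}=0$ forces every $z_i=0$) and $*$-pythagorean (take $z=\sqrt{x\overline x+y\overline y}$ for condition (P)), so $\C$ satisfies the hypotheses of Corollary \ref{ultramatricial_trivial_grading} (and, being trivially graded, has enough unitaries automatically).

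Next I would identify the topological and the algebraic invariants. For an AF $C^*$-algebra the inclusion $R^a\hookrightarrow R$ induces an isomorphism $K_0(R^a)\cong K_0(R)$ respecting the positive cones and the generating intervals (scales). This rests on the standard facts that every projection of $R$ is unitarily equivalent to one lying in some $R_n,$ and that Murray--von Neumann equivalence of projections of $R$ already occurs inside some $R_n;$ consequently the monoid of equivalence classes of projections, together with the scale, is the same whether computed in $R$ or in $R^a.$ Equivalently, both $K_0(R)$ and $K_0(R^a)$ are realized as the direct limit $\varinjlim K_0(R_n)$ of ordered groups with order-units. Under these identifications the contractive $\Z$-module isomorphism $f\colon K_0(R)\to K_0(S)$ transports to a contractive $\Z$-module isomorphism $f^a\colon K_0(R^a)\to K_0(S^a).$

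Now Corollary \ref{ultramatricial_trivial_grading} applies to the ultramatricial $*$-algebras $R^a$ and $S^a$ over $\C$ and to $f^a,$ yielding a $\C$-algebra $*$-isomorphism $\phi^a\colon R^a\to S^a$ with $K_0(\phi^a)=f^a.$ It remains to extend $\phi^a$ to the completions. Here I would use that the restriction of $\phi^a$ to each finite-dimensional $C^*$-subalgebra $R_n$ is a $*$-homomorphism between $C^*$-algebras and is therefore automatically norm-decreasing; hence $\phi^a$ is contractive for the $C^*$-norms and extends uniquely to a $*$-homomorphism $\phi\colon R\to S.$ Applying the same argument to $(\phi^a)^{-1}$ gives a continuous inverse, so $\phi$ is a $*$-isomorphism. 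Finally, since $\phi$ restricts to $\phi^a$ on the dense subalgebras and $K_0$ is computed there, we get $K_0(\phi)=K_0(\phi^a)=f^a,$ which corresponds to $f$ under the identifications of the previous paragraph, so $K_0(\phi)=f.$

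The genuinely new ingredient, and the main obstacle, is the analytic bridge between the algebraic framework of this paper and the topological setting of $C^*$-algebras: establishing that $K_0$ of the AF $C^*$-algebra coincides, as an ordered group with scale, with the algebraic $K_0$ of its dense ultramatricial $*$-subalgebra, and that $*$-isomorphisms of the dense subalgebras extend by continuity to the completions. These are standard features of AF $C^*$-algebras (see e.g. \cite{Davidson}), but they lie outside the purely algebraic machinery developed above; every other step is a direct appeal to Corollary \ref{ultramatricial_trivial_grading} together with the elementary verification that $\C$ with complex conjugation is $2$-proper and $*$-pythagorean.
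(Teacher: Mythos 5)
Your proposal is correct and follows the same route as the paper, which derives this statement in a single sentence as a consequence of Corollary \ref{ultramatricial_trivial_grading} applied to $\C$ with the complex-conjugate involution (which is $2$-proper and $*$-pythagorean). The additional analytic details you supply --- identifying $K_0$ of an AF $C^*$-algebra with $K_0$ of its dense ultramatricial $*$-subalgebra as a scaled ordered group, and extending the algebraic $*$-isomorphism by continuity using automatic contractivity of $*$-homomorphisms --- are exactly the standard facts the paper leaves implicit, and you handle them correctly.
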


Considering the grade group to be trivial and disregarding the involutive structure, we obtain \cite[Theorem 15.26]{Goodearl_book} as a direct corollary.
\begin{corollary}\cite[Theorem 15.26]{Goodearl_book}
Let $A$ be a field and let $R$ and $S$ be ultramatricial algebras over $A.$ If $f: K_0(R)\to K_0(S)$ is a contractive $\Z$-module isomorphism, then there is an $A$-algebra isomorphism $\phi: R\to S$ such that $K_0(\phi)=f.$
\end{corollary}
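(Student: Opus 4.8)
The plan is to recognize this statement as the trivial-grading, non-involutive specialization of a result already in hand, rather than to reprove anything from scratch. Since the statement carries no involution, I would route the argument through the non-involutive Corollary \ref{noninvolutive_ultramatricial} (equivalently \cite[Theorem 5.2.4]{Roozbeh_graded_ring_notes}) rather than through Theorem \ref{ultramatricial}, whose hypotheses presuppose a $*$-structure on $A$ that an arbitrary field need not admit.

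First I would take the grade group $\Gamma$ to be the trivial group $\{0\}$ and regard $A$ as trivially $\Gamma$-graded, so that $A = A_0$ and $\Gamma_A = \{0\}$. Under this specialization every graded notion collapses to its ungraded counterpart: a graded matricial algebra over $A$ is exactly a matricial algebra, hence a graded ultramatricial algebra is an ordinary ultramatricial algebra, and a graded $A$-algebra homomorphism is just an $A$-algebra homomorphism. Next I would check the identifications on the invariant side: the group ring $\Z[\Gamma]$ reduces to $\Z[\{0\}] \cong \Z$, the graded Grothendieck group $K^{\gr}_0$ agrees with the usual $K_0$ for a trivially graded ring, and the pre-order together with the generating interval (or the order-unit $[A]$ in the unital case) that enter the definition of \emph{contractive} coincide with the classical ones. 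In particular a contractive $\Z[\Gamma]$-module isomorphism $f \colon K^{\gr}_0(R) \to K^{\gr}_0(S)$ is precisely a contractive $\Z$-module isomorphism $f \colon K_0(R) \to K_0(S)$.

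With these identifications in place, I would simply invoke Corollary \ref{noninvolutive_ultramatricial} to obtain a graded $A$-algebra isomorphism $\phi \colon R \to S$ with $K^{\gr}_0(\phi) = f$, which in the trivially graded setting is exactly an $A$-algebra isomorphism $\phi \colon R \to S$ with $K_0(\phi) = f$, as required.

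I do not expect a genuine obstacle here: all the substantive work — the constructive fullness of Theorem \ref{fullness} (and its non-involutive version, Corollary \ref{noninvolutive_fullness}), the faithfulness of Theorem \ref{faithfulness}, and the Elliott--Bratteli intertwining carried out in the proof of Theorem \ref{ultramatricial} — has already been performed at the graded level, and the present statement is a clean corollary once the trivial-grading dictionary is fixed. The one point that deserves a line of care is verifying that the ungraded and graded definitions of \emph{contractive} truly match under $\Gamma = \{0\}$, i.e.\ that the generating-interval and order-unit conventions line up; this is immediate from the definitions but is the single place where a careless reading could slip in a mismatch.
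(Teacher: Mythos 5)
Your proposal is correct and matches the paper's (unstated) route: the paper presents this as a direct corollary of the preceding graded results, and the right specialization is exactly the one you describe, namely Corollary \ref{noninvolutive_ultramatricial} with $\Gamma=\{0\}$, under which graded matricial/ultramatricial algebras, $K_0^{\gr}$, $\Z[\Gamma]$-modules, and the contractivity conventions all collapse to their classical counterparts. You are also right to avoid Theorem \ref{ultramatricial} itself, since an arbitrary field need not admit a $2$-proper, $*$-pythagorean involution; this is a point the paper leaves implicit.
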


\subsection{Graded rings of matrices of infinite size}
We conclude this section with a short discussion on the grading of the matrix rings of infinite size which we shall use in section \ref{subsection_no-exit}. Let $A$ be a $\Gamma$-graded ring and let $\kappa$ be an infinite cardinal. We let $\M_\kappa (A)$ denote the ring of infinite matrices over $A$, having rows and columns indexed by $\kappa$, with only finitely many nonzero entries. If $\ol\gamma$ is any function $\kappa\to \Gamma,$ we can consider it as an element of $\Gamma^\kappa.$ For any such $\ol\gamma,$ we let $\M_\kappa(A)(\ol\gamma)$ denote the $\Gamma$-graded ring $\M_\kappa(A)$ with the $\delta$-component consisting of the matrices $(a_{\alpha\beta}),$ $\alpha, \beta\in \kappa,$ such that $a_{\alpha\beta}\in A_{\delta+\ol\gamma(\beta)-\ol\gamma(\alpha)}.$ If $A$ is a graded $*$-ring,  the graded ring $\M_\kappa(A)(\ol\gamma)$ is a graded $*$-ring with the $*$-transpose involution. 

With these definitions, Proposition \ref{permutation_of_components} generalizes as follows. 

\begin{proposition}\label{permutation_general}
Let $A$ be a $\Gamma$-graded $*$-ring, $\kappa$ a cardinal and $\ol\gamma\in \Gamma^\kappa$. 
\begin{enumerate}[\upshape(1)]
\item If $\delta$ is in $\Gamma$, $\pi$ is a bijection $\kappa\to \kappa,$ and $\ol\gamma\pi+\delta$ denotes the map $\kappa\to \Gamma$ given by $\alpha\mapsto \ol\gamma(\pi(\alpha))+\delta,$ then the matrix rings
\begin{center}
$\M_\kappa (A)(\ol \gamma)\;\;$ and $\;\;\M_\kappa (A)(\ol\gamma\pi+\delta)$
\end{center}
are graded $*$-isomorphic.

\item If $\ol\delta \in \Gamma^\kappa$ is such that there is a unitary element  $a_\alpha\in A_{\ol\delta(\alpha)}$ for every $\alpha\in \kappa,$ then the matrix rings
\begin{center}
$\M_\kappa (A)(\ol \gamma)\;\;$ and $\;\;\M_\kappa (A)(\ol \gamma+\ol \delta)$
\end{center}
are graded $*$-isomorphic.
\end{enumerate}
\end{proposition}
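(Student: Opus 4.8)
The plan is to imitate the proof of Proposition~\ref{permutation_of_components}, but with one essential change of viewpoint. In the finite case the isomorphisms were realized as conjugations $M\mapsto PMP^{-1}$ by a permutation matrix $P_\pi$ (part (1)) and by a diagonal matrix of unitaries (part (2)). In the infinite-size setting these matrices $P$ have infinitely many nonzero entries and so do \emph{not} belong to $\M_\kappa(A)$; consequently I cannot invoke conjugation by a ring element. Instead I will write down the would-be conjugation formulas directly on matrix entries and verify by hand that they define graded $*$-isomorphisms. Well-definedness amounts to checking that the finite-support condition is preserved, which is immediate because multiplying entries by units (or reindexing them) neither creates nor destroys nonzero entries.

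For part (1), first note that, exactly as in the finite case, the grading on $\M_\kappa(A)(\ol\gamma)$ depends only on the differences $\ol\gamma(\beta)-\ol\gamma(\alpha)$, so adding the constant $\delta$ to every value of $\ol\gamma\pi$ leaves the graded ring unchanged; that is, $\M_\kappa(A)(\ol\gamma\pi+\delta)=\M_\kappa(A)(\ol\gamma\pi)$ via the identity map, and it suffices to treat $\delta=0$. I would then define $\phi\colon\M_\kappa(A)(\ol\gamma)\to\M_\kappa(A)(\ol\gamma\pi)$ by $\phi\big((a_{\alpha\beta})\big)=(a_{\pi(\alpha)\pi(\beta)})$. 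This is additive and, because $\pi$ is a bijection, multiplicative (one reindexes the summation variable by $\pi$) with inverse $(a_{\alpha\beta})\mapsto(a_{\pi^{-1}(\alpha)\pi^{-1}(\beta)})$. It is a $*$-map since $\phi\big((a_{\alpha\beta})^*\big)=(a^*_{\pi(\beta)\pi(\alpha)})=\phi\big((a_{\alpha\beta})\big)^*$, and it is graded because a degree-$\eta$ entry $a_{\pi(\alpha)\pi(\beta)}$ lies in $A_{\eta+\ol\gamma(\pi(\beta))-\ol\gamma(\pi(\alpha))}$, which is precisely the $(\alpha,\beta)$ slot of degree $\eta$ for the shift function $\ol\gamma\pi$.

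For part (2), I would define $\phi\colon\M_\kappa(A)(\ol\gamma)\to\M_\kappa(A)(\ol\gamma+\ol\delta)$ by $\phi\big((m_{\alpha\beta})\big)=(a^*_\alpha m_{\alpha\beta} a_\beta)$, the entrywise form of conjugation by the formal diagonal matrix of the unitaries $a_\alpha$. The relations $a_\alpha a^*_\alpha=a^*_\alpha a_\alpha=1$ make the candidate $(n_{\alpha\beta})\mapsto(a_\alpha n_{\alpha\beta} a^*_\beta)$ a genuine two-sided inverse and also force multiplicativity, since in each product the cross factor $a_\beta a^*_\beta=1$ telescopes in the middle. That $\phi$ is a $*$-map follows from $(a^*_\alpha m_{\alpha\beta} a_\beta)^*=a^*_\beta m^*_{\alpha\beta} a_\alpha$ matched against the definition applied to the $*$-transpose. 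For gradedness I use that $A$ is a graded $*$-ring, so $a^*_\alpha\in A_{-\ol\delta(\alpha)}$; hence a degree-$\eta$ entry $m_{\alpha\beta}\in A_{\eta+\ol\gamma(\beta)-\ol\gamma(\alpha)}$ is carried into $A_{\eta+(\ol\gamma(\beta)+\ol\delta(\beta))-(\ol\gamma(\alpha)+\ol\delta(\alpha))}$, exactly the degree-$\eta$ slot for the shift $\ol\gamma+\ol\delta$.

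The only conceptual point—and the sole place where the infinite-size setting genuinely differs from Proposition~\ref{permutation_of_components}—is recognizing that "conjugate by $P$'' must be replaced by an explicit entrywise formula, because $P$ is not an element of $\M_\kappa(A)$. Once the formulas are in hand, every verification reduces to the finite-size bookkeeping, and I expect no substantive obstacle beyond this translation.
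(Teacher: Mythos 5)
Your proof is correct and takes essentially the same route as the paper: since the permutation and diagonal matrices are not elements of $\M_\kappa(A)$, both you and the authors replace conjugation by explicit formulas (the paper defines the maps on the matrix units $e_{\alpha\beta}$ and extends linearly, you write the same maps entrywise) and then verify the ring, $*$, and graded properties directly. If anything, your entrywise formula $a^*_\alpha m_{\alpha\beta} a_\beta$ in part (2) is slightly more careful about which side the unitaries act on, which matters for a noncommutative graded $*$-ring $A$, but the argument is the same.
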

\begin{proof}
The proof parallels the proof of Proposition \ref{permutation_of_components}. To prove the first part, note that adding $\delta$ to each of the shifts does not change the matrix ring. For $\alpha,\beta\in \kappa,$ let $e_{\alpha \beta}$ denote the matrix with the field identity element in the $(\alpha, \beta)$ spot and zeros elsewhere. Define a map $\phi$ by mapping $e_{\alpha \beta}$ to $e_{\pi^{-1}(\alpha)\pi^{-1}(\beta)}$ and extending it to an $A$-linear map. Define also a map $\psi$ by mapping $e_{\alpha \beta}$ to $e_{\pi(\alpha)\pi(\beta)}$ and extending it to an $A$-linear map. The maps $\phi$ and $\psi$ are mutually inverse $*$-homomorphisms. Moreover,  $\phi$ and $\psi$ are graded maps by the same argument as the one used in Proposition \ref{permutation_of_components}.   

To prove the second part, let $a_\alpha \in A_{\ol\delta(\alpha)}$ be a unitary element for every $\alpha\in\kappa$. Consider a map $\phi$ given by $e_{\alpha \beta}\mapsto a^*_\alpha a_\beta e_{\alpha\beta}$ for every $\alpha,\beta\in \kappa$ and extend it to an $A$-linear map. This extension is clearly a graded $*$-homomorphism with the inverse given by the $A$-linear extension of the map $e_{\alpha \beta}\mapsto a_\alpha a^*_\beta e_{\alpha\beta}.$ 
\end{proof}

If $A$ is a graded field and $\omega$ the first infinite ordinal, then the algebra $\M_\omega(A)(\ol\gamma)$ is a graded ultramatricial algebra since $\M_\omega(A)(\ol\gamma)$ is the directed union, taken over the finite subsets $I$ of $\omega,$ of the subalgebras which have nonzero entries in the rows and columns which correspond to the elements of $I$.

\section{Isomorphism Conjecture for a class of Leavitt path algebras}\label{section_LPAs}

In this section, we use Theorem \ref{ultramatricial} to show that the graded version of the Generalized Strong Isomorphism Conjecture holds for a class of Leavitt path algebras. We start with a quick review of the main concepts. Just as in the previous section, rings are not assumed to be unital and homomorphisms are not assumed to be unit-preserving.

\subsection{Leavitt path algebras.}\label{subsection_LPAs}
Let $E=(E^0, E^1, \so, \ra)$  be a directed graph where $E^0$ is
the set of vertices, $E^1$ the set of edges, and $\so,\ra: E^1
\to E^0$ are the source and the range maps, respectively. For brevity, we refer to directed
graphs simply as graphs.

A vertex $v$ of a graph $E$ is said to be {\em regular} if the set $\so^{-1}(v)$ is nonempty
and finite. A vertex $v$ is called a {\em sink} if $\so^{-1}(v)$ is empty.  
A graph $E$ is \emph{row-finite} if sinks are the only vertices which are not regular, it is
\emph{finite} if $E$ is row-finite and $E^0$
is finite (in which case $E^1$ is necessarily finite as well), and it is {\em
countable} if both $E^0$ and $E^1$ are countable. 

A {\em path} $p$ in a graph $E$ is a finite sequence of edges
$p=e_1\ldots e_n$ such that $\ra(e_i)=\so(e_{i+1})$ for $i=1,\dots,n-1$. Such
path $p$ has length $|p|=n.$  The maps $\so$ and $\ra$ extend
to paths by $\so(p)=\so(e_1)$ and $\ra(p)=\ra(e_n)$ respectively and $\so(p)$ and $\ra(p)$ are the \emph{source} and the \emph{range} of $p$ respectively. We consider a vertex $v$ to be a \emph{trivial}
path of length zero with $\so(v)=\ra(v)=v$. A path $p = e_1\ldots e_n$ is said to be \emph{closed} if
$\so(p)=\ra(p)$. 
If $p$ is a nontrivial path, and if $v=\so(p)=\ra(p)$, then $p$ is called a \emph{closed path based at} $v$. If  $p = e_1\ldots e_n$  is a closed path and $\so(e_i) \neq \so(e_j)$ for every $i \neq j$, then $p$ is called a \emph{cycle}.  A graph $E$ is said to be {\em no-exit} if
$\so^{-1}(v)$ has just one element for every vertex $v$ of every cycle. 

An infinite path of a graph $E$ is a sequence of edges $e_1e_2\ldots$ such
that $\ra(e_i)=\so(e_{i+1})$ for all $i=1,2,\ldots$. An 
infinite path $p$ is an \emph{infinite sink} if it has
no cycles or exits. An infinite path $p$ \emph{ends in a sink} if there is $n\geq 1$
such that the subpath
$e_ne_{n+1}\hdots$ is an infinite sink, and $p$
\emph{ends in a cycle} if there is $n\geq 1$ and a
cycle $c$ of positive length such that the subpath
$e_ne_{n+1}\hdots$ is equal to the path $cc\hdots$.  

For a graph $E,$ consider the extended graph of $E$ to be the graph with the
same vertices and with edges $E^1\cup \{e^*\ |\ e\in E^1\}$ where
the range and source relations are the same as in $E$ for $e\in
E^1$ and $\so(e^*)=\ra(e)$  and $\ra(e^*)=\so(e)$ for the added edges.
Extend the map $^*$ to all the paths by defining $v^*=v$ for
all vertices $v$ and $(e_1\ldots e_n)^*=e_n^*\ldots e_1^*$ for all paths
$p=e_1\ldots e_n.$ If $p$ is a path, we refer to elements of the form
$p^*$ as ghost paths. Extend also the maps $\so$ and $\ra$ to ghost paths by
$\so(p^*)=\ra(p)$ and $\ra(p^*)=\so(p)$.  

For a graph $E$ and a field $K$, the \emph{Leavitt path algebra of $E$ over $K$}, denoted by $L_K(E)$, is the free $K$-algebra generated by the set  $E^0\cup E^1\cup\{e^*\ |\ e\in E^1\}$ satisfying the following relations for all vertices $v,w$ and edges $e,f$.
\begin{itemize}
\item[(V)]  $vw = \delta_{v,w}v$,

\item[(E1)]  $\so(e)e=e\ra(e)=e$,

\item[(E2)] $\ra(e)e^*=e^*\so(e)=e^*$,

\item[(CK1)] $e^*f=\delta _{e,f}\ra(e)$,

\item[(CK2)] $v=\sum_{e\in \so^{-1}(v)} ee^*$ for every regular vertex $v$.
\end{itemize}

The first four axioms imply that every
element of $L_K(E)$ can be represented as a sum of the form $\sum_{i=1}^n
a_ip_iq_i^*$ for some $n$, paths $p_i$ and $q_i$, and elements $a_i\in K,$ for 
$i=1,\ldots,n.$ Also, these axioms imply that $L_K(E)$ is a ring with identity if and only if $E^0$ is finite (in this case the identity is the sum of all the elements of $E^0$). If $E^0$ is not finite, the finite sums of vertices are the local units for the algebra $L_K(E).$   

If $K$ is a field with involution $*$ (and there is always at
least one such involution, the identity), the Leavitt path algebra $L_K(E)$ becomes an involutive algebra by 
$(\sum_{i=1}^n a_ip_iq_i^*)^* =\sum_{i=1}^n a_i^*q_ip_i^*$ for $a_i\in K$ and paths $p_i, q_i, i=1,\ldots, n.$

For an arbitrary abelian group $\Gamma$, one can equip $L_K(E)$ with a $\Gamma$-graded structure as follows. For any map $w: E^1\rightarrow \Gamma$ called the \emph{weight} map, define $w(e^*)=-w(e)$ for $e \in E^1,$ $w(v)=0$ for $v\in E^0,$ and extend $w$ to paths by letting $w(p)=\sum_{i=1}^n w(e_i)$ if $p$ is a path consisting of edges $e_1\ldots e_n.$ Since all of the Leavitt path algebra axioms agree with the weight map, if we let $L_K(E)_\gamma$ be the $K$-linear span of the elements $p q^*$ where $p, q$ are paths with $w(p)-w(q)=\gamma,$ then $L_K(E)$ is a $\Gamma$-graded $K$-algebra with the $\gamma$-component equal to $L_K(E)_\gamma.$ It can be shown that this grading is well-defined (i.e. that it  is independent of the representation of an element as $\sum_i a_i p_i q_i^*$ in the above definition of $L_K(E)_\gamma$). This grading is such that $L_K(E)_\gamma^*=L_K(E)_{-\gamma}$ for every $\gamma\in \Gamma$ and so $L_K(E)$ is a graded $*$-algebra. 

The \emph{natural} grading of a Leavitt path algebra is obtained by taking $\Gamma$ to be $\Z$ and the weight of any edge to be 1. Thus, the $n$-component $L_K(E)_n$ is  the $K$-linear span of the elements $pq^*$ where $p, q$ are paths with $|p|-|q|=n.$
Unless specified otherwise, we always assume that $L_K(E)$ is graded naturally by $\Z.$

\subsection{A class of no-exit row-finite graphs}\label{subsection_no-exit}

For any countable, row-finite graph $E$ and a field $K,$ the following conditions are equivalent by \cite[Theorem 3.7]{AAPM}. 
\begin{enumerate} 
\item $E$ is a no-exit graph such that every infinite path ends either in a cycle or a sink.
\item $L_K(E) \cong \bigoplus_{i\in I} \M_{\kappa_i} (K) \oplus \bigoplus_{i \in J} \M_{\kappa_i} (K[x,x^{-1}]),$ where $I$ and $J$ are countable sets, and each $\kappa_i$ is a countable cardinal.
\end{enumerate}
In \cite[Corollary 32]{Zak_Lia}, it is shown that the assumption on countability of the graph $E$ can be dropped (in which case $I,$ $J,$ and $\kappa_i$ may be of arbitrary cardinalities) and that the isomorphism in condition (2) can be taken to be a $*$-isomorphism. However, since the ultramatricial algebras are defined as countable direct limits of matricial algebras, we prove the main result of this section, Theorem \ref{classification}, by considering just countable graphs.  

Our first goal is to show the following.  
\begin{proposition} \label{no-exit_graphs}
If $K$ is a field and $E$ a row-finite, no-exit graph such that every infinite path ends either in a cycle or a sink, then 
$L_K(E)$ is graded $*$-isomorphic to the algebra 
$$\bigoplus_{i\in I} \M_{\kappa_i} (K)(\ol\alpha^i) \oplus \bigoplus_{i \in J} \M_{\mu_i} (K[x^{n_i},x^{-n_i}])(\ol\gamma^i)$$
where $I$ and $J$ are some sets, $\kappa_i$ and $\mu_i$ cardinals, $n_i$ positive integers, $\ol\alpha^i\in \Z^{\kappa_i}$ for $i\in I,$ and $\ol\gamma^i\in \Z^{\mu_i}$ for $i\in J.$
\end{proposition}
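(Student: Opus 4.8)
The plan is to produce the isomorphism explicitly from path monomials, using the structural decomposition already available in the non-graded setting and upgrading it to a graded $*$-isomorphism by tracking degrees. Throughout I work with the natural $\Z$-grading, so a monomial $pq^*$ is homogeneous of degree $|p|-|q|$, and the involution acts by $(pq^*)^*=qp^*$; in particular the matrix units built below are automatically homogeneous and $*$-compatible. By \cite[Theorem 3.7]{AAPM} and \cite[Corollary 32]{Zak_Lia} we already know that $L_K(E)$ is $*$-isomorphic to a direct sum of matrix algebras over $K$ (coming from the sinks and infinite sinks of $E$) and over $K[x,x^{-1}]$ (coming from the cycles of $E$), and that this isomorphism may be taken along the explicit path matrix units; since ultramatricial algebras are countable direct limits, I restrict to countable graphs as the paper does. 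The remaining task is therefore to identify the graded structure of each summand.

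For a sink $w$, let $T(w)$ be the set of paths $p$ with $\ra(p)=w$. Because $w$ emits no edge, (CK1) gives $q^*r=\delta_{q,r}\,w$ for $q,r\in T(w)$, so $\{pq^*: p,q\in T(w)\}$ is a complete set of matrix units spanning a (possibly non-unital) subalgebra isomorphic to $\M_{\kappa}(K)$ with $\kappa=|T(w)|$. Assigning to $p\in T(w)$ the shift $\alpha_p=|p|$, the degree $|p|-|q|=\alpha_p-\alpha_q$ of $pq^*$ matches the $(p,q)$-grading prescribed by (\ref{grading_on_matrices}), so this summand is graded $*$-isomorphic to $\M_{\kappa}(K)(\ol\alpha)$ with $\ol\alpha\in\Z^{\kappa}$; the base ring is $K$ trivially graded because an acyclic piece has no homogeneous units of nonzero degree. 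The summands arising from infinite sinks are treated the same way after writing such a piece as a direct limit of finite sink pieces, again yielding $\M_{\kappa}(K)(\ol\alpha)$ with the shifts read off from path lengths.

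The cycle summands are the crux. Fix a cycle $c=e_1\cdots e_n$ based at a vertex $v_0$, with cycle vertices $v_0,\ldots,v_{n-1}$. Since $E$ is no-exit, each cycle vertex emits only its cycle edge, so (CK2) gives $v_i=e_{i+1}e_{i+1}^*$ and hence $cc^*=c^*c=v_0$: the element $c$ is a homogeneous unitary of degree $n$ in the corner $v_0L_K(E)v_0$, and that corner equals $K[c,c^*]$, which as a graded algebra is $K[x^{n},x^{-n}]$ with $x^{n}:=c$. Let $R_c$ be the set of paths $p$ with $\ra(p)=v_0$ that do not contain $c$ as a subpath; since the only edge out of $v_0$ is $e_1$, every path ending at $v_0$ factors uniquely as $pc^{k}$ with $p\in R_c$ and $k\geq 0$, and for $p,q\in R_c$ one gets $p^*q=\delta_{p,q}\,v_0$. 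Consequently $\{p\,c^{k}\,q^*: p,q\in R_c,\ k\in\Z\}$ is a complete set of matrix units over $K[x^{n},x^{-n}]$, exhibiting this summand as $\M_{\mu}(K[x^{n},x^{-n}])(\ol\gamma)$ with $\mu=|R_c|$ and $\ol\gamma$ recording the lengths $|p|$ (with the sign dictated by (\ref{grading_on_matrices})); the $*$-transpose is matched by $(p\,c^{k}\,q^*)^*=q\,c^{-k}\,p^*$. Any monomial $\alpha\beta^*$ ending at an arbitrary cycle vertex $v_m$ is brought into this form by inserting $v_m=b_mb_m^*$, where $b_m$ is the arc $v_m\to v_0$, so that $\alpha\beta^*=(\alpha b_m)(\beta b_m)^*$ with $\alpha b_m,\beta b_m$ ending at $v_0$.

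Finally I would assemble the global isomorphism. The subalgebras attached to distinct sinks and cycles are pairwise orthogonal, since any product of their matrix units contains a factor $q^*r$ with $\ra(q)\neq\ra(r)$, which vanishes. To see that they span $L_K(E)$ it suffices to place every monomial $pq^*$ in the sum: writing $\ra(p)=\ra(q)=u$, repeatedly apply (CK2) to replace $u$ by $\sum_{e\in\so^{-1}(u)}ee^*$, rewriting $pq^*=\sum_e(pe)(qe)^*$, and stop a branch as soon as its endpoint is a sink or a cycle vertex; row-finiteness keeps each step a finite sum, and the hypothesis that every infinite path ends in a sink or a cycle is precisely what forbids an infinitely expanding branch, so the process terminates. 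I expect this last step, together with the cycle bookkeeping of the previous paragraph, to be the main obstacle: one must verify rigorously that the reduced-path factorization makes the matrix units over $K[x^{n},x^{-n}]$ well defined and independent, and that the CK2-expansion terminates and lands exactly in the claimed summands, so that the resulting graded $*$-isomorphism---after reordering shifts via Proposition \ref{permutation_general} if convenient---has the asserted form. The verifications that the map is $K$-linear, graded, and $*$-preserving are then routine consequences of the construction.
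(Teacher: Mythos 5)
Your construction is essentially the one the paper gives: both arguments take the explicit path matrix units underlying \cite[Theorem 3.7]{AAPM} and \cite[Corollary 32]{Zak_Lia} ($pq^*$ for paths into a sink, $pc^kq^*$ for a cycle $c$), observe that they are homogeneous and $*$-compatible, and read the shifts off the path lengths so that the degrees match formula (\ref{grading_on_matrices}). Your treatment of finite sinks and of cycles --- the identification of the corner $\so(c)L_K(E)\so(c)$ with $K[x^{|c|},x^{-|c|}]$, the unique factorization of a path into $v_0$ as $pc^k$ with $p$ not containing $c$, and the shift $|p|$ --- coincides with the paper's.

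The one place where your sketch has a genuine hole is the infinite sinks. First, your spanning argument (``apply (CK2) and stop a branch as soon as its endpoint is a sink or a cycle vertex'') does not terminate for a monomial $pq^*$ whose range lies on an infinite sink: no vertex of an infinite sink is a sink or a cycle vertex, so that branch expands forever. Such monomials must instead be \emph{collapsed}: writing $p=q_ja$ and $q=q_lb$, where $q_j,q_l$ are the portions before entering the infinite sink and $a,b$ are arcs along it, the no-exit hypothesis gives $ee^*=\so(e)$ along the sink, so $ab^*$ reduces to the connecting element the paper denotes $x_{ijl}$ and $pq^*=q_jx_{ijl}q_l^*$. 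Second, the shifts for such a summand are \emph{not} simply the lengths $|q_{ij}|$: since $x_{ijl}$ has nonzero degree equal to the displacement along the sink, the correct shift is $|q_{ij}|-k$, where $k$ records the position at which $q_{ij}$ enters the sink (equivalently, in your direct-limit picture, the length of the path prolonged to a fixed truncation vertex, which differs from $|q_{ij}|-k$ by a constant and hence gives the same graded $*$-ring by Proposition \ref{permutation_general}(1)). A smaller imprecision: orthogonality of the summands does not follow from ``$q^*r=0$ whenever $\ra(q)\neq\ra(r)$,'' which is false in general (e.g.\ $e^*(ef)=f$); one must use that a path cannot be prolonged past a sink and that a no-exit graph traps any path entering a cycle, so neither of $q,r$ can be an initial segment of the other. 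With these repairs your argument closes up and agrees with the paper's proof.
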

\begin{proof}
The proof adapts the idea of the proofs of \cite[Theorem 3.7]{AAPM} and \cite[Corollary 32]{Zak_Lia} to the grading setting. 

Let $E$ be a row-finite, no-exit graph such that every infinite path ends either in a cycle or a sink and consider $K$ to be trivially graded by $\Z.$ Let $\{s_i\}_{i \in I_1}$ be all the finite sinks in $E^0$, and let $\{u_i\}_{i \in I_2}$ be all the infinite sinks in $E$ (where distinct $u_i$ have no edges in common, and where the starting vertex of each $u_i$ is fixed, though it can be chosen arbitrarily). Let $\{c_i\}_{i \in J}$ be all the cycles which contain at least one edge and where the starting vertex of each $c_i$ is a fixed, though arbitrary, vertex of $c_i.$ 

For each $i \in I_1,$ let $\{p_{ij}\}_{j \in L_i}$ be all the paths which end in $s_i.$ For each $i \in I_2,$ let $\{q_{ij}\}_{j \in M_i}$ be all the paths which end in $u_i$ such that $\ra(q_{ij})$ is the only vertex of both $q_{ij}$ and $u_i.$ For each $i \in J,$ let $\{r_{ij}\}_{j \in N_i}$ be all the paths which end in $\so(c_i)$ but do not contain $c_i.$ It can be shown (see \cite[Theorem 3.7]{AAPM} and \cite[Corollary 32]{Zak_Lia}) that $$\bigcup_{i \in I_1} \{p_{ij}p_{il}^* \mid j,l \in L_i\} \cup \bigcup_{i \in I_2} \{q_{ij}x_{ijl}q_{il}^* \mid j,l \in M_i\} \cup \bigcup_{i \in J} \{r_{ij}c_i^kr_{il}^* \mid j,l \in N_i, k\in \Z\}$$ is a basis for $L_K(E)$, where $c_i^k$ is understood to be $\so(c_i)$ when $k=0$ and $(c_i^*)^{-k}$ when $k < 0$, and where, writing each $u_i$ as $u_i = e_{i1}e_{i2}\hdots$ ($e_{ij} \in E^1$), we define
$$x_{ijl} = \left\{ \begin{array}{cl}
e_{ik}e_{i(k+1)}\hdots e_{i(m-1)} & \text{if } \ra(q_{ij}) = \so(e_{ik}), \ra(q_{il}) = \so(e_{im}), \text{ and } 1\leq k<m\\
e_{i(k-1)}^*e_{i(k-2)}^*\hdots e_{im}^* & \text{if } \ra(q_{ij}) = \so(e_{ik}), \ra(q_{il}) = \so(e_{im}), \text{ and } k>m\geq 1\\
\ra(q_{ij}) & \text{if } \ra(q_{ij})=\ra(q_{il})
\end{array}\right.$$
for $i\in I_2, j,l\in M_i.$ Note that $\deg(x_{ijl})=m-k$ in any of the three cases of the above formula.

Then one can define a map $$f: L_K(E)\to \bigoplus_{i\in I_1} \M_{|L_i|} (K) \oplus \bigoplus_{i \in I_2} \M_{|M_i|} (K)  \oplus \bigoplus_{i \in J} \M_{|N_i|} (K[x^{|c_i|},x^{-|c_i|}])\mbox{ by letting}$$  
\begin{center}
$f(p_{ij}p_{il}^*) = e^i_{jl} \in \M_{|L_i|} (K)$, $f(q_{ij}x_{ijl}q_{il}^*) = e^i_{jl} \in \M_{|M_i|} (K)$, and $f(r_{ij}c_i^kr_{il}^*) = x^ke^i_{jl} \in \M_{|N_i|} (K[x^{|c_i|},x^{-|c_i|}]),$
\end{center}
where $e^i_{jl}$ is the matrix unit in the corresponding matrix algebra, 
and then extending this map $K$-linearly to all of $L_K(E)$. Note that for all the basis elements we have
\begin{center}
\begin{tabular}{l}
$f((p_{ij}p_{il}^*)^*) = f(p_{il}p_{ij}^*) = e^i_{lj} = (e^i_{jl})^* = f(p_{ij}p_{il}^*)^*,$\\
$f((q_{ij}x_{ijl}q_{il}^*)^*) = f(q_{il}x_{ijl}^*q_{ij}^*) = e^i_{lj} = (e^i_{jl})^* = f(q_{ij}x_{ijl}q_{il}^*)^*,$ and\\
$f((r_{ij}c_i^kr_{il}^*)^*) = f(r_{il}c_i^{-k}r_{ij}^*) = x^{-k}e^i_{lj} = (x^ke^i_{jl})^* = f(r_{ij}c_i^kr_{il}^*)^*,$
\end{tabular}
\end{center}
so that the map $f$ is a $*$-homomorphism. 

We turn to the graded structure now. Let us define the elements
\begin{center}
\begin{tabular}{lll}
$\ol\alpha^i \in \Gamma^{|L_i|}$ by &$\ol\alpha^i(j)=|p_{ij}|,$&\\
$\ol\beta^i \in \Gamma^{|M_i|}$ by  &$\ol\beta^i(j) =|q_{ij}|-k $ &if $\ra(q_{ij})=\so(e_{i(k+1)})$ and \\
$\ol\gamma^i \in \Gamma^{|N_i|}$ by &$\ol\gamma^i(j)=|r_{ij}|.$&
\end{tabular}
\end{center}
and let us consider the graded algebra 
$$R:=\bigoplus_{i\in I_1} \M_{|L_i|} (K)(\ol\alpha^i) \oplus \bigoplus_{i \in I_2} \M_{|M_i|}(K)(\ol\beta^i)  \oplus \bigoplus_{i \in J} \M_{|N_i|} (K[x^{|c_i|},x^{-|c_i|}])(\ol\gamma^i).$$ 
If $e^i_{jl}$ is a matrix unit in  $\M_{|L_i|} (K)(\ol\alpha^i),$ then  $\deg(e^i_{jl})=\ol\alpha^i(j)-\ol\alpha^i(l)=|p_{ij}|-|p_{il}|=\deg(p_{ij}p_{il}^*)$
for all $i\in I_1,$ $j,l\in L_i.$ Similarly, if $e^i_{jl}$ is a matrix unit in $\M_{|N_i|}(K[x^{|c_i|},x^{-|c_i|}]),$ then  $ \deg(x^{k|c_i|}e^i_{jl})=k|c_i|+\ol\gamma^i(j)-\ol\gamma^i(l)=k|c_i|+|r_{ij}|-|r_{il}|=\deg(r_{ij}c_i^kr_{il}^*)$ for all  $i\in J,$ $j,l\in N_i,$ and $k\in \Z.$ If $e^i_{jl}$ is a matrix unit in $\M_{|M_i|}(K)(\ol\beta^i),$ then  $\deg(e^i_{jl})=\ol\beta^i(j)-\ol\beta^i(l)=|q_{ij}|-k-|q_{il}|+m=\deg(q_{ij}x_{ijl}q_{il}^*)$ since $\deg(x_{ijl})=m-k.$

This shows that the map $f$ is also a graded map. Hence, $f$ is a graded $*$-isomorphism of $L_K(E)$ and $R.$ 
\end{proof}

Before proving the main theorem of this section, let us look at some examples. Since \cite{Roozbeh_Israeli} and \cite{Roozbeh_Annalen} contain examples of Leavitt path algebras of {\em finite} graphs from the class which we consider (and a related class called polycephaly graphs), we concentrate on graphs with infinitely many vertices.  

\begin{example} \label{infinite_graphs}
Let us consider a graph with vertices $v_0, v_1\ldots$ and edges $e_0, e_1,\ldots$ such that $\so(e_n)=v_n$ and $\ra(e_n)=v_{n+1}$ for all $n=0,1\ldots$ (represented below).
\[\xymatrix{{\bullet^{v_0}} \ar [r] & {\bullet^{v_1}} \ar [r] &  {\bullet^{v_2}} \ar [r] & {\bullet^{v_3}} \ar@{.}[r] & }\] 
By taking $v_0$ as the source of the infinite sink, the Leavitt path algebra of this graph is graded $*$-isomorphic to  
$\M_\omega(K)(0, -1, -2, -3, \ldots)$ where $\omega$ is the first infinite ordinal. 

By taking $v_2,$ for example, as the source of the infinite sink, the Leavitt path algebra of this graph can also be represented by   
$\M_\omega(K)(2, 1, 0, -1, -2,\ldots).$ By Proposition \ref{permutation_general}, these two representations are graded $*$-isomorphic. 

Consider the graph below now. 
\[\xymatrix{{\bullet} \ar [r] & {\bullet^{v}} \ar [r]  & {\bullet} \ar [r] &  {\bullet} \ar [r] &  {\bullet^w} \ar [r] &
{\bullet} \ar@{.}[r]  & \\ & {\bullet}\ar[u] \ar[ur] & {\bullet}\ar[r] & {\bullet}\ar[u]
}\]
By taking $v$ as the source of the infinite sink, the Leavitt path algebra of this graph can be represented as $\M_\omega(K)(1, 1, 0, 0, 0, -1, -1, -2, -3, -4, \ldots).$ Similarly, considering $w,$ for example, as the source of the infinite sink, we obtain another graded $*$-isomorphic representation $\M_\omega(K)(4, 4, 3, 3, 3, 2, 2, 1, 0, -1, \ldots).$
\end{example}

The graphs from the previous example have just one (infinite) sink and so the graded Grothendieck groups of the corresponding Leavitt path algebras are isomorphic to $\Z[x,x^{-1}]$ as $\Z[x,x^{-1}]$-modules. From this example, it may appear that the graded Grothendieck group does not distinguish between Leavitt path algebras of these graphs. However, the identity map on $\Z[x,x^{-1}]\cong K^0_{\gr}(L_K(E))\to \Z[x,x^{-1}]\cong K^0_{\gr}(L_K(F)),$ where $E$ and $F$ are two graphs from Example \ref{infinite_graphs}, is not contractive. So, it turns out that $K_0^{\gr}$ is sensitive enough to distinguish the algebras which are not graded $*$-isomorphic. The next example illustrates this point in more details.  

\begin{example}\label{line_and_clock}
Let $E$ and $F$ be the graphs below and let $K$ be any field trivially graded by $\Z$. 
\[
\xymatrix{\\
E: &  \ar@{.>}[r] & \ar@{.>}[r] & \bullet \ar[r] & \bullet \ar[r] &  \bullet  
\\}\hskip2cm
\xymatrix{
&& \bullet \ar[d] & \bullet \ar[dl]\\
F: & \ar@{.>}[r] & \bullet  &  \bullet \ar[l]   \\
&\bullet \ar@{.>}[ru] & \bullet \ar[u] &  \bullet \ar[ul]   
}\]
Using arguments similar to those in the previous example, we can deduce that $L_K(E)$ is graded $*$-isomorphic to $\M_\omega(K)(0,1,2,\ldots)$ and $L_K(F)$ to $\M_\omega(K)(0, 1, 1, 1,\ldots).$ The Grothendieck groups of both of these algebras are isomorphic to $\Z$ and the two algebras are $*$-isomorphic as non-graded algebras. The {\em graded} Grothendieck groups of both algebras are isomorphic to $\Z[x,x^{-1}]$ and we claim that there is no contractive $\Z[x,x^{-1}]$-module isomorphism of the two graded Grothendieck groups. So, these algebras are not graded $*$-isomorphic. 

The order-unit $[1_{\M_n(K)(0,1,2,\ldots, n-1)}]$ of $K_0^{\gr}(\M_n(K)(0,1,2,\ldots, n-1))$ for $n\geq 1,$ corresponds to the element $1+x^{-1}+x^{-2}+\ldots+x^{-n+1}\in \Z[x,x^{-1}].$ By using Lemma \ref{unit_preservation}, one can see that the generating interval of the algebra $\M_\omega(K)(0,1,2,\ldots)$ consists of the elements of the form $\sum_{i\in I} x^{-i}$ where $I$ is a finite set of nonnegative integers. Similarly, the order-unit of $K_0^{\gr}(\M_n(K)(0,1,1,\ldots, 1))$ for $n\geq 1,$ corresponds to the element $1+(n-1)x^{-1}\in \Z[x,x^{-1}]$ and 
the generating interval of the algebra  $\M_\omega(K)(0,1,1,\ldots)$ consists of the elements of the form $\delta+kx^{-1}$ where $k$ is a nonnegative integer and $\delta=0$ or $\delta=1.$

Assume that there is a contractive isomorphism $f$ between the copy of  $\Z[x,x^{-1}]$ corresponding to $K^{\gr}_0(L_K(E))$ and the copy of  $\Z[x,x^{-1}]$ corresponding to $K^{\gr}_0(L_K(F))$. Then, for every $n>1,$ there is a nonnegative integer $m$ and $\delta\in\{0, 1\}$ such that $f(1+x^{-1}+\ldots+x^{-n+1})=\delta+mx^{-1}.$ If $k$ is a nonnegative integer and $\varepsilon\in\{0, 1\}$ such that $f(1)=\varepsilon+kx^{-1},$ then $(1+x^{-1}+\ldots+x^{-n+1})(\varepsilon+kx^{-1})=\delta+mx^{-1}.$ This leads to a contradiction by the assumption that $n>1.$ Thus, $f$ cannot be contractive. 
\end{example}

\subsection{Generalized Strong Isomorphism Conjecture for a class of Leavitt path algebras}

The finishing touch for the main result of this section, Theorem \ref{classification}, is the proposition below. 
\begin{proposition}
Consider the $\Z[x,x^{-1}]$-module structure on $\Z^n$ for any positive integer $n,$ to be given by  $x(a_1,\ldots, a_n)=(a_n, a_1, \ldots, a_{n-1}).$ 
If $I, J$ are countable sets, $n_i, m_j$ are positive integers for $i\in I$ and $j\in J,$ and $f$ is a contractive $\Z[x,x^{-1}]$-module isomorphism 
\[f:\bigoplus_{i\in I} \Z^{n_i} \to \bigoplus_{j\in J} \Z^{m_j},\]
then there is a bijection $\pi:I\to J$ such that $n_i=m_{\pi(i)}$ and such that $f$ maps $\Z^{n_i}$ onto $\Z^{m_{\pi(i)}}.$
\label{missing_part} 
\end{proposition}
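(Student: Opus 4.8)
The plan is to recover each cyclic summand from the order structure together with the $\Z[x,x^{-1}]$-action, and then to match the summands on the two sides by tracking orbits of the ``atoms'' of the positive cone. First I would record the two structural facts I intend to exploit. Each module $\Z^{n}$ occurring here is exactly $\Z[x,x^{-1}]/(x^{n}-1)$, with positive cone $\mathbb N^{n}$ (the vectors with nonnegative coordinates), so the positive cone of the domain is $\bigoplus_{i\in I}\mathbb N^{n_i}$ and that of the codomain is $\bigoplus_{j\in J}\mathbb N^{m_j}$; and the generator $x$ acts on the standard basis $e_1,\dots,e_n$ of $\Z^{n}$ as the single cycle $e_1\mapsto e_2\mapsto\cdots\mapsto e_n\mapsto e_1$. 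Since $f$ is a contractive module isomorphism, both $f$ and $f^{-1}$ are order-preserving, so $f$ is an order-isomorphism for these pre-orders.

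Next I would characterize the standard basis vectors purely order-theoretically. Call a nonzero positive element $a$ an \emph{atom} if there is no $b$ with $0<b<a$. A direct check shows that the atoms of $\bigoplus_{i}\mathbb N^{n_i}$ are precisely the standard basis vectors $e^{i}_{k}$ (those with a single nonzero coordinate, equal to $1$), and similarly for the codomain. Because $f$ is an order-isomorphism it carries atoms to atoms in both directions, and hence restricts to a bijection from the set of all domain atoms onto the set of all codomain atoms.

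Finally I would organize the atoms into $\langle x\rangle$-orbits. Since $\bigoplus_i\Z^{n_i}$ and $\bigoplus_j\Z^{m_j}$ are decompositions of $\Z[x,x^{-1}]$-modules, the action of $x$ preserves each summand, and on the atoms of $\Z^{n_i}$ it acts as a single $n_i$-cycle; thus the $\langle x\rangle$-orbits on the domain atoms are exactly the sets $O_i$ of atoms of the summands $\Z^{n_i}$, with $|O_i|=n_i$, and likewise the codomain orbits are sets $O'_j$ with $|O'_j|=m_j$. As $f$ is $\Z[x,x^{-1}]$-linear, $f(x^{k}v)=x^{k}f(v)$, so $f$ sends each orbit onto an orbit; being a bijection on atoms it induces a bijection of the orbit sets, that is, a bijection $\pi\colon I\to J$ with $f(O_i)=O'_{\pi(i)}$. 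Injectivity of $f$ then gives $n_i=|O_i|=|O'_{\pi(i)}|=m_{\pi(i)}$, and since the atoms in $O_i$ form a $\Z$-basis of $\Z^{n_i}$ while $f$ maps this basis bijectively onto the $\Z$-basis $O'_{\pi(i)}$ of $\Z^{m_{\pi(i)}}$, we conclude $f(\Z^{n_i})=\Z^{m_{\pi(i)}}$, as required.

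The only genuinely delicate point is the passage from the hypothesis ``contractive isomorphism'' to ``atoms go to atoms'': this requires that $f^{-1}$, and not merely $f$, be order-preserving, which is exactly what being an isomorphism in the pre-ordered category supplies. Everything after that—identifying the atoms, preserving orbit sizes, and reading off $\pi$—is routine, so I expect no further obstruction.
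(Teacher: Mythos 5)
Your proof is correct, but it takes a genuinely different route from the paper's. The paper argues in coordinates: it expands $f(u^{i}_{1})$ and $f^{-1}(v^{j}_{l})$ in the standard bases, observes that order-preservation of $f$ and of $f^{-1}$ forces all coefficients to be nonnegative, and then uses the identity $f^{-1}(f(u^{i_0}_{1}))=u^{i_0}_{1}$ to collapse the resulting double sum of nonnegative products to a single product equal to $1$; this shows $f(u^{i_0}_{1})=v^{j_0}_{l_0}$ for a unique $j_0$. The equality $n_{i_0}=m_{j_0}$ is then obtained by a divisibility argument, applying $x^{m_{j_0}}$ and $x^{n_{i_0}}$ to get the two inequalities $n_{i_0}\le m_{j_0}$ and $m_{j_0}\le n_{i_0}$. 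You instead characterize the standard basis vectors intrinsically as the atoms of the positive cone, deduce that $f$ restricts to a bijection of atoms, and read off $\pi$ and the equality $n_i=m_{\pi(i)}$ by matching $\langle x\rangle$-orbits and counting their elements. The two arguments use exactly the same hypotheses --- order-preservation of both $f$ and $f^{-1}$ together with $x$-equivariance, and neither uses the generating-interval clause of contractivity --- and your reliance on $f^{-1}$ being order-preserving is precisely the paper's own implicit assumption when it asserts that the coefficients $b^{k}_{ij}$ of $f^{-1}$ are nonnegative. Your version is more conceptual and avoids the index-heavy positivity computation, replacing the divisibility step by the simpler observation that a bijection of orbits preserves cardinalities; the paper's version is more hands-on but extracts the same structural fact (basis vectors go to basis vectors) from the collapse of the coefficient sum.
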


\begin{proof}
Let $u^i_1,\ldots, u^i_{n_i}$ denote the standard $\Z$-basis of $\Z^{n_i},$ $i\in I,$ and let $v_1^j,\ldots, v_{m_j}^j$ denote the standard $\Z$-basis of $\Z^{m_j},$  $j\in J.$ Note that $xu^i_k=u^i_{k+1}$ for $k=1,\ldots, n_i-1$ and  $xu^i_{n_i}=u^i_1.$ 
To shorten the notation, for $k> n_i$ we define $u^i_k=u^i_{k'}$ if $k\equiv k'$ modulo $n_i$ for some $k'=1,\ldots, n_i.$ Thus $xu^i_k=u^i_{k+1}$ for every $k=1,\ldots, n_i.$ Similarly, $xv^j_l=v^j_{l+1}$ for $l=1,\ldots, m_j-1,$ $xv^j_{m_j}=v^j_1,$ and we use the analogous convention for $v_l^j$ if $l>m_j.$

For every $i\in I,$ there is a finite subset $J_i$ of $J$ and integers $a_{ji}^l,$ $j\in J_i,$ $l=1,
\ldots, m_j,$ such that $f(u^i_1)=\sum_{j\in J_i}\sum_{l=1}^{m_j} a_{ji}^l v_{l}^j.$
The condition that $f$ is order preserving implies that the integers $a_{ji}^l$ are nonnegative. 
The condition that $f$ is a $\Z[x,x^{-1}]$-homomorphism implies that $$f(u^i_k)=\sum_{j\in J_i}\sum_{l=1}^{m_j} a_{ji}^l v_{l+k-1}^j$$ for every $k=1,\ldots, n_i.$ 

Analogously, for every $j\in J,$ there is a finite subset $I_j$ of $I$ and nonnegative integers $b_{ij}^k,$ $i\in I_j,$ $k=1,\ldots, n_i,$ such that 
$$f^{-1}(v^j_l)=\sum_{i\in I_j}\sum_{k=1}^{n_i} b_{ij}^k u_{k+l-1}^i$$ for every $l=1,\ldots, m_j.$

Fix $i_0\in I.$ We intend to find a unique $j_0\in J$ with $m_{j_0}=n_{i_0}$ and $f(u_1^{i_0})=v_{l_0}^{j_0}$ for some $l_0=1, \ldots, m_{j_0}$ so that we can define $\pi(i_0)=j_0.$

The condition that $u^{i_0}_1=f^{-1}(f(u^{i_0}_1))$ implies that 
\[\sum_{j\in J_{i_0}}\sum_{l=1}^{m_j}  \sum_{i\in I_j}\sum_{k=1}^{n_{i}} a_{ji_0}^l b_{ij}^{k} u_{k+l-1}^{i}=u_1^{i_0}.\]
The condition that the coefficients are nonnegative implies that all the products  $a_{ji_0}^l b_{ij}^{k}$ are zero except for exactly one product $a_{j_0i_0}^{l_0} b_{i_0j_0}^{k_0}$ for some $j_0\in J_{i_0},$ some $l_0=1, \ldots, m_{j_0}$ and some $k_0=1, \ldots, n_i$ and, in this case, $a_{j_0i_0}^{l_0} b_{i_0j_0}^{k_0}=1$ and $k_0+l_0-1\equiv 1$ modulo $n_{i_0}.$ Hence $a_{j_0i_0}^{l_0}=1$ and $b_{i_0j_0}^{k_0}=1.$ 
The relation $b_{i_0j_0}^{k_0}=1$ implies that $i_0\in I_{j_0}$ and the relation $a_{j_0i_0}^{l_0}=1$ implies that $b_{ij_0}^{k}=0$ for all $i\in I_{j_0}$ and $k=1,\ldots, n_{i}$ unless $i=i_0$ and $k=k_0.$ Thus, $f^{-1}(v^{j_0}_1)=u^{i_0}_{k_0}$ and $f^{-1}(v^{j_0}_{l_0})=u^{i_0}_{k_0+l_0-1}=u^{i_0}_1$ because $k_0+l_0-1\equiv 1$ modulo $n_{i_0}.$ Hence $f(u^{i_0}_{k_0})=v^{j_0}_1$ and $f(u^{i_0}_1)=v^{j_0}_{l_0}.$ 

Since $v^{j_0}_{l_0}=x^{m_{j_0}}v^{j_0}_{l_0},$ we have that $f(u^{i_0}_1)=x^{m_{j_0}}f(u^{i_0}_1)=f(x^{m_{j_0}}u^{i_0}_1).$ This implies that $u^{i_0}_1=x^{m_{j_0}}u^{i_0}_1$ and so $n_{i_0}\leq m_{j_0}.$
Also, since $u^{i_0}_{k_0}=x^{n_{i_0}}u^{i_0}_{k_0},$ we have that $f^{-1}(v^{j_0}_1)=x^{n_{i_0}}f^{-1}(v^{j_0}_1)=f^{-1}(x^{n_{i_0}}v^{j_0}_1).$ This implies that $v^{j_0}_1=x^{n_{i_0}}v^{j_0}_1$ and so $m_{j_0}\leq n_{i_0}.$
Thus we have that $n_{i_0}=m_{j_0}.$ 

The element $j_0$ is unique for $i_0,$ so the correspondence $i_0\mapsto j_0$ defines an injective map $\pi: I\to J$ with the required properties. Starting with any $j\in J$ and repeating the same procedure produces a unique $i\in I_j$ such that $n_i=m_j$ and $f(u_1^i)=v^j_l$ for some $l=1,\ldots, m_j.$ This shows that $\pi$ is onto.    
\end{proof}

We prove the main theorem of this section now. 

\begin{theorem}\label{classification} 
If $E$ and $F$ are countable, row-finite, no-exit graphs in which each infinite path ends in a sink or a cycle and $K$ is a 2-proper, $*$-pythagorean field, then $L_K(E)$ and $L_K(F)$ are graded $*$-isomorphic if and only if there is a contractive $\Z[x,x^{-1}]$-module isomorphism $K^{\gr}_0(L_K(E))\cong K^{\gr}_0(L_K(F)).$
\end{theorem}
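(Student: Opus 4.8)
The forward implication is immediate: a graded $*$-isomorphism $L_K(E)\to L_K(F)$ induces a contractive $\Z[x,x^{-1}]$-module isomorphism $K^{\gr}_0(L_K(E))\cong K^{\gr}_0(L_K(F))$, as observed for graded $*$-homomorphisms in general. The substance is the converse, and the plan is to reduce it to Theorem \ref{ultramatricial}, applied not once but several times over different graded $*$-fields.

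First I would use Proposition \ref{no-exit_graphs} to replace $L_K(E)$ and $L_K(F)$, up to graded $*$-isomorphism, by their standard forms
\[
R=\bigoplus_{i\in I}\M_{\kappa_i}(K)(\ol\alpha^i)\oplus\bigoplus_{i\in J}\M_{\mu_i}(K[x^{n_i},x^{-n_i}])(\ol\gamma^i)
\]
and the analogous $R',$ transporting $f$ to a contractive $\Z[x,x^{-1}]$-module isomorphism $K^{\gr}_0(R)\to K^{\gr}_0(R').$ Each sink summand $\M_{\kappa_i}(K)(\ol\alpha^i)$ is a graded ultramatricial $*$-algebra over the trivially $\Z$-graded field $K$ (a directed union of finite matricial blocks when $\kappa_i$ is infinite), while each cycle summand $\M_{\mu_i}(K[x^{n_i},x^{-n_i}])(\ol\gamma^i)$ is a graded ultramatricial $*$-algebra over $K[x^{n_i},x^{-n_i}].$ Both base fields have enough unitaries and their zero-components equal the $2$-proper, $*$-pythagorean field $K,$ so Theorem \ref{ultramatricial} is available over each of them; the obstacle is that no single field serves for all summands at once. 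Indeed, by Proposition \ref{graded_division_ring}(3) a sink summand contributes a free $\Z[x,x^{-1}]$-module to $K^{\gr}_0,$ whereas a cycle summand of length $n_i$ contributes a copy of $\Z[\Z/n_i\Z]\cong\Z^{n_i},$ on which $x^{n_i}-1$ acts as zero, and these two behaviours of the $x$-action cannot coexist in the $K^{\gr}_0$-group of a graded ultramatricial algebra over a fixed graded field.

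The plan is thus to split $K^{\gr}_0(R)$ using its torsion submodule $T(R)=\{m : (x^N-1)m=0 \text{ for some } N\geq 1\}.$ A direct computation identifies $T(R)$ with the cycle part $\bigoplus_{i\in J}\Z^{n_i}$ (the sink part being torsion-free), so $f,$ being a module isomorphism, carries $T(R)$ onto $T(R')$ and descends to an isomorphism $\bar f$ of the torsion-free quotients. Since each summand in the standard form is a graded ideal, $K^{\gr}_0(R)$ is the direct sum, as a pre-ordered module with generating interval, of the $K^{\gr}_0$-groups of the summands; hence both the restriction $f|_{T(R)}$ and the induced map $\bar f$ are again contractive. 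For the torsion (cycle) part I would apply Proposition \ref{missing_part} to $f|_{T(R)}\colon\bigoplus_{i\in J}\Z^{n_i}\to\bigoplus_{j\in J'}\Z^{m_j},$ obtaining a bijection $\pi\colon J\to J'$ with $n_i=m_{\pi(i)}$ such that $f$ maps the $i$-th copy $\Z^{n_i}$ onto the $\pi(i)$-th copy.

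For each $i$ the summands $\M_{\mu_i}(K[x^{n_i},x^{-n_i}])(\ol\gamma^i)$ and $\M_{\mu'_{\pi(i)}}(K[x^{n_i},x^{-n_i}])(\ol\gamma'^{\pi(i)})$ now lie over the \emph{same} graded $*$-field $K[x^{n_i},x^{-n_i}],$ and $f$ furnishes a contractive $\Z[x,x^{-1}]$-module isomorphism of their graded Grothendieck groups; Theorem \ref{ultramatricial} then produces a graded $*$-isomorphism of these summands, and summing over $i\in J$ gives a graded $*$-isomorphism of the whole cycle parts. For the torsion-free (sink) part, $\bar f$ is a contractive isomorphism between the $K^{\gr}_0$-groups of two graded ultramatricial $*$-algebras over the trivially graded field $K,$ so Theorem \ref{ultramatricial} yields a graded $*$-isomorphism of the sink parts. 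Taking the direct sum of the two isomorphisms gives $R\cong_{\gr}R',$ hence $L_K(E)$ and $L_K(F)$ are graded $*$-isomorphic. The decomposition by cycle length via Proposition \ref{missing_part} is the crucial step and the main obstacle, precisely because $K^{\gr}_0$ cannot be handled over one base field; I note that the resulting graded $*$-isomorphism need not induce $f$ on $K^{\gr}_0$ (the off-diagonal part of $f$ between sink and cycle summands is discarded), but this is immaterial since only the existence of some graded $*$-isomorphism is claimed.
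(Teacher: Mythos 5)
Your proposal is correct and follows essentially the same route as the paper: reduce to the standard form via Proposition \ref{no-exit_graphs}, separate the cycle part of $K_0^{\gr}$ from the sink part by the behaviour of the $x$-action (torsion versus free), match cycle lengths with Proposition \ref{missing_part}, and apply Theorem \ref{ultramatricial} over each of the graded fields $K$ and $K[x^{n_i},x^{-n_i}]$. The only (harmless) deviations are that you handle the sink part by passing to the torsion-free quotient rather than by restricting $f$ to $\Z[x,x^{-1}]^I$, and that you apply Theorem \ref{ultramatricial} cycle-summand by cycle-summand instead of grouping the summands with equal cycle length; both variants are valid.
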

\begin{proof}
Let $E$ and $F$ be countable, row-finite, no-exit graphs in which each infinite path ends in a sink or a cycle and consider $K$ to be trivially graded by $\Z.$ By Proposition \ref{no-exit_graphs}, there is a graded ultramatricial $*$-algebra
\[R=\bigoplus_{i\in I} \M_{\kappa_i} (K)(\ol\alpha^i) \oplus \bigoplus_{i \in J} \M_{\mu_i} (K[x^{n_i},x^{-n_i}])(\ol\gamma^i)\] which is graded $*$-isomorphic to $L_K(E)$ and a graded ultramatricial $*$-algebra \[S=\bigoplus_{i\in I'} \M_{\kappa'_i} (K)((\ol\alpha')^i) \oplus \bigoplus_{i \in J'} \M_{\mu'_i} (K[x^{n'_i},x^{-n'_i}])((\ol\gamma')^i)\] which is graded $*$-isomorphic to $L_K(F).$ To prove the theorem, it is sufficient to show that if there is a 
contractive $\Z[x,x^{-1}]$-module isomorphism $K_0^{\gr}(R) \cong K_0^{\gr}(S),$ then there is a graded algebra $*$-isomorphism of rings $R$ and $S$. 

Since $K_0^{\gr}(R)\cong \Z[x, x^{-1}]^I\oplus \bigoplus_{i\in J}\Z^{n_i}$ and $K_0^{\gr}(S)\cong \Z[x, x^{-1}]^{I'}\oplus  \bigoplus_{i\in J'}\Z^{n'_i},$ let us assume that $f$ is a contractive   $\Z[x,x^{-1}]$-module isomorphism $\Z[x, x^{-1}]^I\oplus \bigoplus_{i\in J}\Z^{n_i} \to \Z[x, x^{-1}]^{I'}\oplus  \bigoplus_{i\in J'}\Z^{n'_i}.$ We claim that $f$ maps $ \bigoplus_{i\in J}\Z^{n_i}$ onto $ \bigoplus_{i\in J'}\Z^{n'_i}$ and $\Z[x, x^{-1}]^I$ onto $\Z[x, x^{-1}]^{I'}.$ The key point  of the argument is that $x$ acts differently on an element of $\Z[x, x^{-1}]^I$ than on an element of $ \bigoplus_{i\in J}\Z^{n_i}.$ In particular, if $a \in \bigoplus_{i\in J}\Z^{n_i},$ and $a(i)=(a^i_1, \ldots, a^i_{n_i})$ for some $i\in J,$ then $(xa)(i)=(a^i_{n_i}, a^i_1, \ldots,a^i_{n_i-1}).$ If $a\in \Z[x,x^{-1}]^I,$ then $(xa)(i)=xa(i)$ for $i\in I.$ 
  
To prove this claim, let  $a \in \bigoplus_{i\in J}\Z^{n_i}.$ If $f(a)=(b,c)$ where $b\in \Z[x, x^{-1}]^{I'}$ and $c\in  \bigoplus_{i\in J'}\Z^{n'_i},$ we show that $b=0.$ Let $J_0$ be a finite subset  $J$ which is the support of $a,$ i.e. such that $a(i)\neq 0$ if and only if $i\in J_0$ and let $m$ be the least common multiple of $n_i,$ $i\in J_0$ so that $x^ma=a.$ Thus, we have that 
\[(b,c)=f(a)=f(x^ma)=x^mf(a)=(x^mb, x^mc).\]
This implies that $x^mb=b.$ Since the action of $x^m$ on any element $\Z[x, x^{-1}]^{I'}$ is trivial just if that element is trivial, we obtain that $b=0.$ Thus, the image of $\bigoplus_{i\in J}\Z^{n_i}$ under $f$ is contained in $\bigoplus_{i\in J'}\Z^{n'_i}.$ Using the analogous argument for $f^{-1},$ we obtain that the image of $\bigoplus_{i\in J'}\Z^{n'_i}$ under $f^{-1}$ is contained in $\bigoplus_{i\in J}\Z^{n_i}.$

Thus, $f$ restricts to the contractive isomorphisms 
\[f_a:  \Z[x, x^{-1}]^{I}\to \Z[x, x^{-1}]^{I'}\;\;\mbox{ and }\;\;f_c:\bigoplus_{i\in J}\Z^{n_i}\to \bigoplus_{i\in J'}\Z^{n'_i}.\]
We can use Theorem \ref{ultramatricial} 
directly for the map $f_a$. Indeed, $K$ is 2-proper and $*$-pythagorean by the assumptions. Since $K$ is trivially graded by $\Z,$ it has enough unitaries. Theorem \ref{ultramatricial} 
implies that there is a graded $K$-algebra $*$-isomorphism  
\[
\phi_a: \bigoplus_{i\in I} \M_{\kappa_i} (K)(\ol\alpha^i)\to  \bigoplus_{i\in I'} \M_{\kappa'_i} (K)((\ol\alpha')^i).
\]

Let us consider $f_c:\bigoplus_{i\in J}\Z^{n_i}\to \bigoplus_{i\in J'}\Z^{n'_i}$ now. Proposition \ref{missing_part} implies that there is a bijection $\pi: J\to J'$ such that $n_i=n'_{\pi(i)}$ and that $f_c$ maps $\Z^{n_i}$ onto $\Z^{n'_{\pi(i)}}$ for any $i\in J.$ 
Let $\sim$ be the equivalence on $J$ given by $i\sim j$ if and only if $n_i=n_j$ and let $\ol J$ denote the set of cosets.  
The relation $\sim$ induces an equivalence on $J',$ which we also denote by $\sim,$ given by $i'=\pi(i)\sim j'=\pi(j)$ if and only if $i\sim j.$   
Thus, $f_c$ restricts to the map $f_c^{\ol j}:\bigoplus_{k\in\ol j}\Z^{n_{k}}\to \bigoplus_{k\in\ol{\pi(j)}}\Z^{n'_{\pi(k)}}$ for any $\ol j\in \ol J.$ 
The graded field $K[x^{n_j},x^{-n_j}]$ is such that $K[x^{n_j},x^{-n_j}]_0\cong K$ is 2-proper and $*$-pythagorean. A nonzero component of $K[x^{n_j},x^{-n_j}]$  is of the form $\{kx^{in_j}| k\in K\}$ for some $i\in \Z$ and it contains $x^{in_j}$ which is a unitary element. 
Hence the assumptions of Theorem \ref{ultramatricial}
are satisfied so there is a graded $K[x^{n_j},x^{-n_j}]$-algebra (hence also $K$-algebra) $*$-isomorphism 
\[\phi_c^{\ol j}: \bigoplus_{k \in \ol j} \M_{\mu_k} (K[x^{n_j},x^{-n_j}])(\ol\gamma^k)\to \bigoplus_{k \in \ol{\pi(j)}} \M_{\mu'_k} (K[x^{n'_{\pi(j)}},x^{-n'_{\pi(j)}}])((\ol\gamma')^k)
\] for any $\ol j\in \ol J.$  

Letting $\phi_c =\bigoplus_{\ol j\in \ol J}\phi_c^{\ol j}$ and $\phi=\phi_a\oplus \phi_c,$ we obtain a graded $K$-algebra $*$-isomorphism $\phi: R\to S.$ 
\end{proof}

\begin{corollary}\label{noninvolutive_classification}
If $E$ and $F$ are countable, row-finite, no-exit graphs in which each infinite path ends in a sink or a cycle and $K$ is any field, then $L_K(E)$ and $L_K(F)$ are graded isomorphic if and only if there is a contractive $\Z[x,x^{-1}]$-module isomorphism $K^{\gr}_0(L_K(E))\cong K^{\gr}_0(L_K(F)).$
\end{corollary}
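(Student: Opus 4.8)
The plan is to mirror the proof of Theorem \ref{classification} almost verbatim, replacing each appeal to the involutive classification Theorem \ref{ultramatricial} by its non-involutive counterpart, Corollary \ref{noninvolutive_ultramatricial}, which holds over an arbitrary graded field with no hypotheses on the involution. The forward implication is immediate: any graded isomorphism $L_K(E)\cong_{\gr} L_K(F)$ induces, by functoriality of $K_0^{\gr}$, a contractive $\Z[x,x^{-1}]$-module isomorphism on the graded Grothendieck groups.

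For the converse, first I would invoke Proposition \ref{no-exit_graphs} to replace $L_K(E)$ and $L_K(F)$ by the explicit graded algebras $R$ and $S$ of the form $\bigoplus_i \M_{\kappa_i}(K)(\ol\alpha^i)\oplus\bigoplus_i\M_{\mu_i}(K[x^{n_i},x^{-n_i}])(\ol\gamma^i)$. Here the proposition actually supplies graded $*$-isomorphisms, but since only graded isomorphisms are needed for the present statement, this is more than enough. It then suffices to produce a graded $K$-algebra isomorphism $R\to S$ from a contractive $\Z[x,x^{-1}]$-module isomorphism $f\colon K_0^{\gr}(R)\to K_0^{\gr}(S)$.

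Next I would carry over the decomposition argument from Theorem \ref{classification} unchanged: writing $K_0^{\gr}(R)\cong \Z[x,x^{-1}]^I\oplus\bigoplus_{i\in J}\Z^{n_i}$ and similarly for $S$, the observation that $x$ acts with infinite order on the first summand but with finite order on each $\Z^{n_i}$ forces $f$ to restrict to contractive isomorphisms $f_a$ and $f_c$ on the two summands. The essential substitution happens here: to $f_a$ and to each restriction $f_c^{\ol j}$ I would apply Corollary \ref{noninvolutive_ultramatricial} in place of Theorem \ref{ultramatricial}, yielding graded $K$-algebra isomorphisms $\phi_a$ and $\phi_c^{\ol j}$. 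Proposition \ref{missing_part}, which is purely module-theoretic and involution-free, applies verbatim to decompose $f_c$ according to a bijection $\pi\colon J\to J'$ matching the cycle lengths $n_i=n'_{\pi(i)}$. Assembling $\phi=\phi_a\oplus\bigoplus_{\ol j}\phi_c^{\ol j}$ then gives the desired graded $K$-algebra isomorphism $R\to S$.

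The main point to verify — though it presents no real obstacle — is that Corollary \ref{noninvolutive_ultramatricial} imposes no conditions on the graded field, so neither the trivially graded field $K$ nor the graded fields $K[x^{n_j},x^{-n_j}]$ need be $2$-proper or $*$-pythagorean, and the ``enough unitaries'' hypothesis used in the involutive setting is likewise unnecessary. Thus the entire argument goes through for an arbitrary field $K$, precisely at the places where the involutive Theorem \ref{classification} required the extra hypotheses on $K$.
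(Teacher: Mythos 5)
Your proposal is correct and is exactly the route the paper takes: its proof of this corollary is the one-line statement that the argument of Theorem \ref{classification} goes through with Corollary \ref{noninvolutive_ultramatricial} substituted for Theorem \ref{ultramatricial}, and you have simply spelled out the (correct) details of why each step — Proposition \ref{no-exit_graphs}, the $x$-action decomposition, and Proposition \ref{missing_part} — survives without the hypotheses on $K$.
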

\begin{proof}
The proof is analogous to the proof of Theorem \ref{classification} with Corollary \ref{noninvolutive_ultramatricial} used instead of Theorem \ref{ultramatricial}.  
\end{proof}

Theorem \ref{classification} implies the graded version of the Generalized Strong Isomorphism Conjecture for Leavitt path algebras of graphs and over fields which are as in the corollary below. 

\begin{corollary}\label{graded_GSIC} {\bf (Graded GSIC)}
Let $E$ and $F$ be countable, row-finite, no-exit graphs in which each infinite path ends in a sink or a cycle. Consider the following conditions. 
\begin{enumerate}[\upshape(1)]
\item $L_K(E) \cong_{\gr} L_K(F)$ as graded rings.
 
\item $L_K(E) \cong_{\gr} L_K(F)$ as graded algebras.

\item $L_K(E) \cong_{\gr} L_K(F)$ as graded $*$-algebras.
\end{enumerate}
The conditions (1) and (2) are equivalent for any field $K$. If $K$ is a 2-proper, $*$-pythagorean field, then all three conditions are equivalent.  

\end{corollary}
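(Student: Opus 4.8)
The plan is to reduce everything to the two classification results already established, Theorem \ref{classification} and Corollary \ref{noninvolutive_classification}, exploiting the fact that the functor $K_0^{\gr}$ is defined already on \emph{graded rings}, so that a graded ring isomorphism by itself produces the input these results require. First I would dispose of the trivial implications: a graded $*$-algebra isomorphism is in particular a graded algebra isomorphism, and a graded algebra isomorphism is in particular a graded ring isomorphism, so $(3)\Rightarrow(2)\Rightarrow(1)$ hold for every field $K$. It then remains only to establish the reverse implications.

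For the equivalence of $(1)$ and $(2)$ over an arbitrary field $K$, I would start from a graded ring isomorphism $\phi:L_K(E)\to L_K(F)$ as in $(1)$. Since any graded ring homomorphism induces a contractive map on the $K_0^{\gr}$-groups (as recorded in the subsection on the pre-order), both $K_0^{\gr}(\phi)$ and $K_0^{\gr}(\phi^{-1})$ are contractive, and being mutually inverse they exhibit a contractive $\Z[x,x^{-1}]$-module isomorphism $K_0^{\gr}(L_K(E))\cong K_0^{\gr}(L_K(F))$. Corollary \ref{noninvolutive_classification} then yields a graded algebra isomorphism $L_K(E)\cong_{\gr}L_K(F)$, which is $(2)$. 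Combined with the trivial direction, this gives $(1)\Leftrightarrow(2)$.

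Finally, assume $K$ is $2$-proper and $*$-pythagorean. The only new implication needed is $(1)\Rightarrow(3)$, since $(3)\Rightarrow(2)\Rightarrow(1)$ is already covered. Starting once more from a graded ring isomorphism as in $(1)$, the same functoriality argument furnishes a contractive $\Z[x,x^{-1}]$-module isomorphism of the graded Grothendieck groups. Now Theorem \ref{classification}, whose hypotheses are precisely that $K$ be $2$-proper and $*$-pythagorean and that such a contractive isomorphism exist, delivers a graded $*$-isomorphism $L_K(E)\cong_{\gr}L_K(F)$, which is $(3)$. This closes the cycle $(1)\Rightarrow(3)\Rightarrow(2)\Rightarrow(1)$ and proves all three conditions equivalent.

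I do not anticipate a genuine obstacle: the substantive content lives entirely in Theorem \ref{classification} and Corollary \ref{noninvolutive_classification}. The single point to handle with care is the observation that $K_0^{\gr}$ is functorial already at the level of graded rings, so that the weakest hypothesis $(1)$ supplies exactly the datum---a contractive $\Z[x,x^{-1}]$-module isomorphism---demanded by the strongest conclusion $(3)$; this is what lets one bypass $(2)$ and pass directly from $(1)$ to $(3)$.
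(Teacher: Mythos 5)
Your argument is correct and is essentially the paper's own proof: both reduce $(1)\Rightarrow(2)$ and $(1)\Rightarrow(3)$ to Corollary \ref{noninvolutive_classification} and Theorem \ref{classification} respectively, via the observation that a graded ring isomorphism induces a contractive $\Z[x,x^{-1}]$-module isomorphism on $K_0^{\gr}$, with the remaining implications trivial. No issues.
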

\begin{proof}
If $K$ is any field, the conditions (1) and (2) are equivalent by Corollary \ref{noninvolutive_classification}.

If $K$ is a 2-proper, $*$-pythagorean field, the condition (1) implies (3) since every graded ring homomorphism induces a contractive isomorphism of the $K^{\gr}_0$-groups so that Theorem \ref{classification} can be used. The implications (3) $\Rightarrow$ (2) $\Rightarrow$ (1) hold trivially. 
\end{proof}


\begin{thebibliography}{10}

\bibitem{Abrams_Aranda_Pino_1} G. Abrams, G. Aranda Pino, \emph{The Leavitt path algebra of a graph,} J. Algebra {\bf 293 (2)} (2005), 319--334.

\bibitem{AAPM} G. Abrams, G. Aranda Pino, F. Perera, and M. Siles Molina, \textit{Chain conditions for Leavitt path algebras,} Forum Math. \textbf{22} (2010) 95--114.

\bibitem{Abrams_Tomforde} G. Abrams, M. Tomforde, \emph{Isomorphism and Morita equivalence of graph algebras}, Trans. Amer. Math. Soc. {\bf 363 (7)} (2011),  3733--3767.

\bibitem{Ara_matrix_rings} P. Ara, \emph{Matrix rings over *-regular rings and pseudo rank functions}, Pacific J. Math., {\bf 129 (2)} (1987), 209--241. 

\bibitem{Ara_Moreno_Pardo} P. Ara, M.A. Moreno, E. Pardo, \emph{Nonstable $K$-theory for graph algebras,} Algebr. Represent. Theory {\bf 10 (2)} (2007), 157--178.

\bibitem{Gonzalo_Ranga_Lia} G. Aranda Pino, K.L. Rangaswamy, L. Va\v s, \emph{$^\ast$-regular Leavitt path algebra of arbitrary graphs}, Acta Math. Sci. Ser. B Engl. Ed. \textbf{28} (2012), 957--968.

\bibitem{Gonzalo_paper} G. Aranda Pino, L. Va\v s, \emph{Noetherian Leavitt path algebras and their regular algebras}, Mediterr. J. Math., {\bf 10 (4)}
(2013), 1633 -- 1656.

\bibitem{Baranov} A. A. Baranov, \emph{Classification of the direct limits of involution simple associative algebras and the corresponding dimension groups}, J. Algebra {\bf 381} (2013), 73--95.

\bibitem{Berberian} S.K. Berberian, \emph{Baer $*$-rings}, Die Grundlehren der mathematischen Wissenschaften 195, Springer-Verlag, Berlin-Heidelberg-New York, 1972.

\bibitem{Berberian_web} S.K. Berberian, \emph{Baer rings and Baer $*$-rings}, 1988, preprint at \url{https://www.ma.utexas.edu/mp_arc/c/03/03-181.pdf}.

\bibitem{Davidson} K.R. Davidson, \emph{$C^*$-algebras by Example}, Field Institute Monographs {\bf 6}, American Mathematical Society, 1996.

\bibitem{Elliott} G.A. Elliott, \emph{On the classification of inductive limits of sequences of semisimple finite-dimensional algebras}, J. Algebra {\bf 38 (1)} (1976), 29--44. 

\bibitem{Gabe_et_al} J. Gabe, E. Ruiz, M. Tomforde, T. Whalen, {\em $K$-theory for Leavitt path algebras: Computation and classification}, J. Algebra {\bf 433 (1)} (2015), 35--72.

\bibitem{Goodearl_book} K.R. Goodearl, \emph{von Neumann regular rings}, 2nd ed., Krieger Publishing Co., Malabar, FL, 1991.

\bibitem{Goodearl_Handelman}  K. R. Goodearl, D. E. Handelman,  \emph{Classification of ring and $C^*$-algebra direct limits of finite-dimensional semisimple real algebras}, Memoirs of the American Mathematical Society {\bf 372}, 1987.

\bibitem{Handelman}  D. E. Handelman, \emph{Coordinatization applied to finite Baer $*$-rings}, Trans. Amer. Math. Soc, {\bf 235} (1978), 1--34.

\bibitem{Roozbeh_Annalen} R. Hazrat, \emph{The graded Grothendieck group and classification of Leavitt path algebras,} Math. Annalen {\bf 355 (1)} (2013), 273--325.

\bibitem{Roozbeh_Israeli} R. Hazrat, \emph{The graded structure of Leavitt path algebras}, Israel J. Math, {\bf 195} (2013), 833--895.

\bibitem{Roozbeh_graded_ring_notes} R. Hazrat, \emph{Graded rings and graded Grothendieck groups}, London Math. Soc. Lecture Note Ser. 435, Cambridge Univ. Press, 2016.

\bibitem{Zak_Lia} Z. Mesyan, L. Va\v s, \emph{Traces on semigroup rings and Leavitt path algebras}, Glasg. Math. J., {\bf 58 (1)} (2016), 97--118. 

\bibitem{Prijatelj_Vidav} N. Prijatelj, I. Vidav, \emph{On special $*$-regular rings}, Michigan Math. J., {\bf 18} (1971), 213--221.

\bibitem{Quillen} D. Quillen, \emph{Higher algebraic $K$-theory. I}. Algebraic $K$-theory, I: Higher $K$-theories (Proc. Conf., Battelle Memorial Inst., Seattle, Wash., 1972), pp. 85--147. Lecture Notes in Math., Vol. 341, Springer, Berlin 1973.

\bibitem{Ruiz_Tomforde} E. Ruiz, M. Tomforde, {\em Classification of unital simple Leavitt path algebras of infinite graphs}, J. Algebra. {\bf 384} (2013), 45--83.

\end{thebibliography}
\end{document}